\definecolor{Myblue}{rgb}{0,0,0.6}  
	\pgfplotsset{width=7cm,compat=1.8}
\tikzset{
    string/.style={draw=#1, postaction={decorate}, decoration={markings,mark=at position .51 with {\arrow[color=#1]{>}}}},
    costring/.style={draw=#1, postaction={decorate}, decoration={markings,mark=at position .51 with {\arrow[draw=#1]{<}}}},
    ostring/.style={draw=#1, postaction={decorate}, decoration={markings,mark=at position .47 with {\arrow[draw=#1]{>}}}},
    ustring/.style={draw=#1, postaction={decorate}, decoration={markings,mark=at position .56 with {\arrow[draw=#1]{>}}}},
    oostring/.style={draw=#1, postaction={decorate}, decoration={markings,mark=at position .43 with {\arrow[draw=#1]{>}}}},
    uustring/.style={draw=#1, postaction={decorate}, decoration={markings,mark=at position .59 with {\arrow[draw=#1]{>}}}},
    directed/.style={string=blue!50!black}, 
    odirected/.style={ostring=blue!50!black}, 
    udirected/.style={ustring=blue!50!black}, 
    oodirected/.style={oostring=blue!50!black}, 
    uudirected/.style={uustring=blue!50!black},     
    redirected/.style={costring= blue!50!black},
    redirectedgreen/.style={costring= green!50!black},
    directedgreen/.style={string= green!50!black},
    redirectedlightgreen/.style={costring= green!65!black},
    directedlightgreen/.style={string= green!65!black},
}
\tikzset{-dot-/.style={decoration={
  markings,
  mark=at position 0.5 with {\fill circle (1.875pt);}},postaction={decorate}}}
\tikzset{
	Fdot/.style={circle, draw, fill, inner sep=0pt}, 
	Odot/.style={circle, draw, inner sep=0.1pt, minimum size=0.1cm}
	}
\newcommand{\raisemath}[1]{\mathpalette{\raisem@th{#1}}}
\newcommand{\raisem@th}[3]{\raisebox{#1}{$#2#3$}}
\newcommand{\E}{\text{e}}
\newcommand{\I}{\text{i}}
\newcommand{\B}{\mathcal{B}}
\newcommand{\C}{\mathds{C}}
\newcommand{\D}{\mathds{D}}
\newcommand{\K}{\mathds{K}}
\newcommand{\N}{\mathds{N}}
\newcommand{\R}{\mathds{R}}
\newcommand{\Z}{\mathds{Z}}
\def\1{\ifmmode\mathrm{1\!l}\else\mbox{\(\mathrm{1\!l}\)}\fi}
\newcommand{\be}{\begin{equation}}
\newcommand{\ee}{\end{equation}}
\newcommand{\bes}{\begin{equation*}}
\newcommand{\ees}{\end{equation*}}
\newcommand{\cc}[1] {\overline{#1}}
\newcommand{\inv}[0]{{-1}}
\newcommand{\id}{\text{id}}
\newcommand{\Hom}{\operatorname{Hom}}
\newcommand{\End}{\operatorname{End}}
\newcommand{\ev}{\operatorname{ev}}
\newcommand{\eval}{\operatorname{eval}}
\newcommand{\coev}{\operatorname{coev}}
\def\lra{\longrightarrow}
\newcommand{\Bord}{\operatorname{Bord}}
\newcommand{\Bords}{\operatorname{Bord}_{3}^{\APLstar}}
\newcommand{\Cube}{\operatorname{Cube}}
\newcommand{\Bordstrat}{\operatorname{Bord}^{\mathrm{strat}}}
\newcommand{\Bordd}{\operatorname{Bord}_{3}^{\mathrm{def}}(\mathds{D})}
\newcommand{\G}{\mathcal{G}}
\newcommand{\tz}{\mathcal T_\zz}
\newcommand{\tzp}{\mathcal T_{\mathcal Z'}}
\newcommand{\Obj}{\mathrm{Obj}}
\newcommand{\zz}{\mathcal{Z}}
\newcommand{\Fdp}{\operatorname{\mathcal F}_{\textrm{d}}^{\textrm{p}}}
\newcommand{\zzd}{\mathcal{Z}^{\mathrm{def}}}
\newcommand{\zztriv}{\mathcal{Z}^{\mathrm{triv}}} 
\newcommand{\zzAtriv}{\mathcal{Z}_{\mathrm{triv}}^{\Cat{A}}} 
\newcommand{\unit}{\operatorname{\mathbf{1}}}
\newcommand{\Borddef}{\operatorname{Bord}_{3}^{\mathrm{def}}}
\newcommand{\eps}{\varepsilon}
\newcommand{\sta}{\boxempty}
\newcommand{\fus}{\otimes}
\newcommand{\dagg}{^{\dagger}}
\newcommand{\hash}{^{\#}}
\newcommand{\Cat}[1]         {\operatorname{\mathcal{#1}}}
\newcommand{\Catpre}[1]         {\operatorname{\mathcal{#1}^{\mathrm{pre}}}}
\newcommand{\Vect}{\operatorname{Vect}}
\newcommand\arxiv[2]      {\href{http://arXiv.org/abs/#1}{#2}}
\newcommand\doi[2]        {\href{http://dx.doi.org/#1}{#2}}
\theoremstyle{definition}
\newtheorem{definition}{Definition}
\newtheorem{proposition}[definition]{Proposition}
\newtheorem{theorem}[definition]{Theorem}
\newtheorem{lemma}[definition]{Lemma}
\newtheorem{corollary}[definition]{Corollary}
\newtheorem{remark}[definition]{Remark}
\numberwithin{equation}{section}
\numberwithin{definition}{section}
\numberwithin{figure}{section}
\newcommand\void[1]{}
\begin{document}

\title{3-dimensional defect TQFTs \\ and their tricategories}

\author{%
\!\!\!\!\!\!\!Nils Carqueville$^*$ \quad
Catherine Meusburger$^\#$ \quad
Gregor Schaumann$^*$%
\\[0.5cm]
   \hspace{-1.8cm}  \normalsize{\texttt{\href{mailto:nils.carqueville@univie.ac.at}{nils.carqueville@univie.ac.at}}} \\  %
   \hspace{-1.8cm}  \normalsize{\texttt{\href{mailto:catherine.meusburger@math.uni-erlangen.de}{catherine.meusburger@math.uni-erlangen.de}}} \\
   \hspace{-1.8cm}  \normalsize{\texttt{\href{mailto:gregor.schaumann@univie.ac.at}{gregor.schaumann@univie.ac.at}}}\\[0.1cm]
   \hspace{-1.2cm} {\normalsize\slshape $^*$Fakult\"at f\"ur Mathematik, Universit\"at Wien, Austria}\\[-0.1cm]
   \hspace{-1.2cm} {\normalsize\slshape $^\#$Department Mathematik, Friedrich-Alexander-Universit\"at Erlangen-N\"urnberg, Germany}\\[-0.1cm]
}

\date{}
\maketitle

\begin{abstract}
We initiate a systematic study of 3-dimensional `defect' topological quantum field theories, that we introduce as symmetric monoidal functors on stratified and decorated bordisms. 
For every such functor we construct a tricategory with duals, which is the natural categorification of a pivotal bicategory. 
This captures the algebraic essence of defect TQFTs, and it gives precise meaning to the fusion of line and surface defects as well as their duality operations. 
As examples, we discuss how Reshetikhin-Turaev and Turaev-Viro theories embed into our framework, and how they can be extended to defect TQFTs. 
\end{abstract}

\newpage

\tableofcontents

\section{Introduction and summary}
\label{sec:intro}

In the approach of Atiyah and Segal, an $n$-dimensional topological quantum field theory (TQFT) is a symmetric monoidal functor from $\Bord_n$ to vector spaces. 
Such functors in particular provide topological invariants of manifolds. 
Even without leaving the realm of topology, one can obtain finer invariants, and a much richer theory, by augmenting the bordism category with decorated lower-dimensional subspaces. 
These so-called `defects' appear naturally in both physics and pure mathematics. 
The present paper is dedicated to unearthing the general algebraic structure of the 3-dimensional case of such `defect TQFTs'. 

Already in the 2-dimensional case, the above process of enriching topological bordisms with defect structure is one of iterated categorification: 
By a classical result, 2-dimensional \textsl{closed} TQFTs (where objects in $\Bord_2$ are closed 1-manifolds as in the original definition of \cite{AtiyahTQFT}, and all manifolds that we consider are oriented) are equivalent to commutative Frobenius algebras \cite{Kockbook}. 
By allowing labelled boundaries for objects and morphisms, one ascends to \textsl{open/closed} TQFTs $\zz^{\textrm{oc}}$ \cite{l0010269, ms0609042, lp0510664}. 
These are known to be equivalent to a commutative Frobenius algebra (what $\zz^{\textrm{oc}}$ assigns to $S^1$) together with a Calabi-Yau category (whose Hom sets are what $\zz^{\textrm{oc}}$ does to intervals with labelled endpoints) and certain relations between the two.
In fact for all known examples it is only the Calabi-Yau category that matters, and the Frobenius algebras can be recovered as its Hochschild cohomology \cite{c0412149}. 
Hence the transition from 2-dimensional closed to open/closed TQFTs is one from algebras to categories. 

The natural conclusion of this line of thought is to consider 2-dimensional \textsl{defect} TQFTs, of which the closed and open/closed flavours are special cases: now bordisms may be decomposed into decorated components which are separated by labelled 1-dimensional submanifolds (called `defects'), for example: 
\be\label{eq:decoratedtorus}
\begin{tikzpicture}[scale=1.9, fill opacity=0.9, baseline=3.5cm]
  \begin{axis}[
      hide axis,
      view={60}{45},
      axis equal image,
    ]
    \addplot3 [
      surf,
      shader=flat,
      colormap/blackwhite, 
      samples=42,
      samples y=42,
      z buffer=sort,
      domain=0:360,
      y domain=0:360
    ] (
              {(3.5 + 1.5*cos(y))*cos(x)},
              {(3.5 + 1.5*cos(y))*sin(x)},
              {1.5*sin(y)});
    \addplot3 [
     samples=42,
     samples y=1,
     domain=0:360,
      line width=1.2pt, 
      color=green!50!black,
      decoration={markings, mark=at position 0.7 with {\arrow{>}}}, 
      postaction={decorate},
      >=stealth
    ] (
              {(3.5 + 1.5*cos(90))*cos(x)},
              {(3.5 + 1.5*cos(90))*sin(x)},
              {1.5*sin(90)});
    \addplot3 [
      samples=72,
      samples y=1,
      domain=1530:1550,
      line width=1.2pt, 
      color=green!50!black
    ] (
              {(3.5 + 1.5*cos(x))*cos(15 + 0.2 * x)},
              {(3.5 + 1.5*cos(x))*sin(15 + 0.2 * x)},
              {1.5*sin(x)});
    \addplot3 [
      samples=72,
      samples y=1,
      domain=1550:1740,
      line width=1.2pt, 
      densely dashed,
      color=green!50!black
    ] (
              {(3.5 + 1.5*cos(x))*cos(15 + 0.2 * x)},
              {(3.5 + 1.5*cos(x))*sin(15 + 0.2 * x)},
              {1.5*sin(x)});
    \addplot3 [
      samples=72,
      samples y=1,
      domain=1740:1890,
      line width=1.2pt, 
      decoration={markings, mark=at position 0.6 with {\arrow{>}}}, postaction={decorate},
      >=stealth,
      color=green!50!black
    ] (
              {(3.5 + 1.5*cos(x))*cos(15 + 0.2 * x)},
              {(3.5 + 1.5*cos(x))*sin(15 + 0.2 * x)},
              {1.5*sin(x)});
              
    \addplot3 [
      samples=72,
      samples y=1,
      domain=450:590,
      line width=1.2pt, 
      decoration={markings, mark=at position 0.6 with {\arrow{>}}}, postaction={decorate},
      >=stealth,
      color=green!50!black
    ] (
              {(3.5 + 1.5*cos(x))*cos(15 + 0.2 * x)},
              {(3.5 + 1.5*cos(x))*sin(15 + 0.2 * x)},
              {1.5*sin(x)});
    \addplot3 [
      samples=72,
      samples y=1,
      domain=590:780,
      line width=1.2pt, 
      densely dashed,
      color=green!50!black
    ] (
              {(3.5 + 1.5*cos(x))*cos(15 + 0.2 * x)},
              {(3.5 + 1.5*cos(x))*sin(15 + 0.2 * x)},
              {1.5*sin(x)});
    \addplot3 [
      samples=72,
      samples y=1,
      domain=780:810,
      line width=1.2pt, 
      color=green!50!black
    ] (
              {(3.5 + 1.5*cos(x))*cos(15 + 0.2 * x)},
              {(3.5 + 1.5*cos(x))*sin(15 + 0.2 * x)},
              {1.5*sin(x)});
\end{axis}
%
\fill[color=green!50!black, opacity=1] (1.93,1.44) circle (1.8pt) node[black, below] (0up) {};
\fill[color=green!50!black, opacity=1] (1.60,2.88) circle (1.8pt) node[black, below] (0up) {};
\fill[color=green!50!black, opacity=1] (2.82,2.73) circle (1.8pt) node[black, above] (0up) {};
\fill[color=green!50!black, opacity=1] (3.04,1.83) circle (1.8pt) node[black, left] (0up) {};
%
%
\draw[line width=1] (1.3,1.3) node[line width=0pt] (beta) {{\footnotesize $\alpha$}};
\draw[line width=1] (3.4,2.1) node[line width=0pt] (beta) {{\footnotesize $\beta$}};
\draw[line width=1] (0.8,2.05) node[line width=0pt] (beta) {{\footnotesize $A$}};
\draw[line width=1] (2.7,2.45) node[line width=0pt] (beta) {{\footnotesize $X$}};
\draw[line width=1] (3.05,1.25) node[line width=0pt] (beta) {{\footnotesize $Y$}};
\end{tikzpicture}
\ee
One finds that every 2-dimensional defect TQFT naturally gives rise to a `pivotal 2-category' \cite{dkr1107.0495}, which is a 2-category with very well-behaved duals. 
Examples of such 2-categories derived from TQFTs include those of smooth projective varieties and Fourier-Mukai kernels (also known as `B-twisted sigma models') \cite{cw1007.2679}, 
symplectic manifolds and Lagrangian correspondences (`A-twisted sigma models') \cite{WehrheimFFP}, 
or isolated singularities and matrix factorisations (`affine Landau-Ginzburg models') \cite{cm1208.1481}. 

\medskip

In principle, the notion of defect TQFT generalises to arbitrary dimension: an $n$-dimensional defect TQFT should be a symmetric monoidal functor
$$
\Bord_n^{\textrm{def}} \lra \Vect_\Bbbk
$$
where $\Bord_n^{\textrm{def}}$ is some suitably augmented version of $\Bord_n$ that also allows decorated submanifolds of various codimensions. 
One generally expects such TQFTs to be described by $n$-categories with additional structure \cite{k1004.2307}, but the details have not been worked out for $n>2$. 
Despite the naturality of the notion, not even a precise definition of defect TQFT has appeared in the literature for $n>2$. 
In the present paper we set out to remedy both issues for $n=3$. 

Higher categories also feature prominently in `extended' TQFT, in which decompositions along lower-dimensional subspaces promote $\Bord_n$ itself to a higher category. 
This approach has received increased focus following Lurie's influential work on the cobordism hypothesis \cite{l0905.0465}. 
Among the recent results here are that fully extended 3-dimensional TQFTs valued in the $(\infty,3)$-category of monoidal categories are classified by spherical fusion categories \cite{dsps1312.7188}, that 3-2-1 extended TQFTs valued in 2-Vect are classified by modular tensor categories \cite{bdspv1509.06811}, and the work on boundary conditions and surface defects in \cite{ks1012.0911, fsv1203.4568}. 

The crucial difference to our approach is that for us a defect TQFT is an ordinary symmetric monoidal functor, and everything that leads to higher-categorical structures is contained in an augmented (yet otherwise ordinary) bordism category. 
On the other hand, the approach of extended TQFT is to assume higher categories and higher functors from the outset. 
In this sense (which we further discuss in the bulk of the paper) our approach is more minimalistic, and possibly broader. 

We find that the most conceptual and economical way of systematically studying 3-dimensional defect TQFT is through the notion of `defect bordisms' which we will introduce in Section~\ref{sec:bordisms}.  
Roughly, defect bordisms are bordisms that come with a \textsl{stratification} whose 1-, 2-, and 3-strata\footnote{A priori we do not allow decorated 0-strata in the interior of a defect bordism, as we will see in Section~\ref{sec:tricatfromTQFT} how such data can be obtained from a defect TQFT.} are decorated by a choice of label sets~$\D$, as for example the cubes depicted below.  
Defect bordisms form the morphisms in a symmetric monoidal category $\Bordd$, whose objects are compatibly decorated stratified closed surfaces such as~\eqref{eq:decoratedtorus}. 
In Section~\ref{sec:tricatfromTQFT} we shall define a 3-dimensional defect TQFT to be a symmetric monoidal functor
\be\label{eq:ZdefectTQFT1}
\zz: \Bordd \lra \Vect_\Bbbk \, . 
\ee
 
\medskip

Our main result is the categorification of the result that a 2-dimensional defect TQFT gives rise to a pivotal 2-category. 
To state it precisely we recall that while every weak 2-category is equivalent to a strict 2-category, the analogous statement is not true in the 3-dimensional case \cite{GPS}. 
As we will review in Section~\ref{subsec:free2andGray}, the generically strictest version of a 3-category is a so-called `Gray category'. 
It should hence come as no surprise that in Sections~\ref{subsec:tricatfromZ} and~\ref{subsec:GraycatwithdualsfromZ} we will prove: 

\begin{theorem}
Every 3-dimensional defect TQFT $\zz: \Bordd \to \Vect_\Bbbk$ gives rise to a $\Bbbk$-linear Gray category with duals $\tz$. 
\end{theorem}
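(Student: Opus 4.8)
The plan is to extract the Gray category $\tz$ directly from the TQFT $\zz$ by evaluating it on carefully chosen families of defect bordisms, mimicking the 2-dimensional construction of \cite{dkr1107.0495} one categorical level up. First I would fix the underlying data: objects of $\tz$ are the 3-stratum labels in $\D$ (the bulk theories); 1-morphisms are the 2-stratum labels (surface defects, possibly composite along admissible configurations); 2-morphisms are the 1-stratum labels (line defects); and 3-morphisms are obtained as vector-space elements, namely $\zz$ applied to a small ball containing a point-like configuration of the relevant strata, with a chosen stratified $S^2$ as its boundary object. Concretely, given two parallel line defects inside a fixed surface defect with fixed bulk data, the $3$-Hom space is $\zz(S^2_{\text{config}})$ and a $3$-morphism is a vector therein; horizontal and vertical compositions of $3$-morphisms come from gluing balls along hemispheres, so associativity and unitality of these compositions are immediate from functoriality of $\zz$ and the gluing axioms in $\Bordd$.

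Next I would construct the weak composition structure. Composition of 1-morphisms (fusion of surface defects) and of 2-morphisms (fusion of line defects within a surface) is defined by placing the relevant strata side by side in a cylinder and reading off the label of the fused stratum; since $\D$ is closed under such fusion by the setup of Section~\ref{sec:bordisms}, this is well-defined. The interchange data — which is exactly what a Gray category records and a strict $3$-category would force to be the identity — comes from evaluating $\zz$ on the ``movie'' bordism that slides one surface defect past another, i.e. the defect analogue of the Gray interchanger $2$-cell. I would check that these interchangers are invertible (using that the reverse movie provides an inverse bordism, again by functoriality) and that they satisfy the Gray category axioms (the two hexagon-type coherence diagrams) by exhibiting, for each axiom, an isotopy of stratified bordisms whose two decompositions give the two sides of the equation; $\zz$ then sends this single bordism to a single linear map, forcing the equality. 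The relevant isotopies are the standard ones underlying the movie-move calculus for surfaces in $3$-space, so this is bookkeeping rather than genuine difficulty, though it is voluminous.

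For the duals I would proceed stratum by stratum, as the excerpt's footnote signals: the adjunction data for a $2$-morphism (a line defect bent back on itself) comes from $\zz$ evaluated on a ``cap'' bordism containing a $U$-turn of that line inside its ambient surface, with the Zorro/zig-zag identities following from the corresponding isotopy of stratified bordisms; the adjunction data for a $1$-morphism (folding a surface defect) is obtained analogously from a bordism containing a fold locus, where the fold line itself is decorated by a $0$-stratum-like label that, per the footnote, is produced from $\zz$ rather than assumed — this is where the promised Section~\ref{sec:tricatfromTQFT} mechanism for generating $0$-strata from the TQFT enters. The pivotal/sphericity-type conditions (that left and right duals agree coherently at each level) are again proved by producing the requisite rotation isotopies in $\Bordd$ and applying $\zz$.

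The main obstacle I expect is not any single coherence check but organising the \emph{hierarchy} of coherences so that the duality data at the level of $1$-morphisms interacts correctly with that at the level of $2$- and $3$-morphisms — i.e.\ verifying that the folds of surface defects, the $U$-turns of line defects, and the cup/cap $3$-morphisms assemble into a single consistent ``Gray category with duals'' structure in the precise sense to be defined in Section~\ref{subsec:GraycatwithdualsfromZ}. In particular one must show that the $0$-stratum labels conjured up from $\zz$ (needed to decorate fold loci and the corners where several surface defects meet) are themselves equipped with enough structure to make the higher zig-zag identities hold, and that the whole package is $\Bbbk$-linear in the $3$-morphisms compatibly with all compositions. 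The strategy throughout is uniform — every axiom is witnessed by one stratified bordism admitting two gluing decompositions — so the real work is enumerating exactly which bordisms are needed and confirming the isotopies exist, which I would defer to the detailed constructions in Sections~\ref{subsec:tricatfromZ} and~\ref{subsec:GraycatwithdualsfromZ}.
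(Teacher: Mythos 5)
Your overall strategy coincides with the paper's: the objects, 1- and 2-morphisms are built combinatorially from the label sets in~$\D$ alone, 3-morphisms are obtained by evaluating~$\zz$ on a decorated sphere enclosing a point-like configuration, all compositions come from gluing cubes/balls, and every coherence axiom is witnessed by a single stratified bordism admitting two decompositions, so that functoriality of~$\zz$ plus isotopy invariance does the work. Two points in your sketch are, however, concretely off. First, you claim that composing 1-morphisms (or 2-morphisms) amounts to ``reading off the label of the fused stratum'' because ``$\D$ is closed under such fusion''. It is not: $D_3,D_2,D_1$ are bare sets with source/target and folding maps and carry no fusion product. In the paper the composite $\beta\sta\alpha$ is the \emph{formal} juxtaposition --- a cube containing an ordered stack of labelled parallel planes, i.e.\ a 1-morphism of the free pre-2-category $\mathcal F^{\textrm{p}}\K^{\D}$ --- and 2-morphisms are entire string-diagram cylinders, not single elements of $D_1$. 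If you insist on a single fused label there is nowhere to send the composite.

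Second, and more structurally, the paper takes 2-morphisms only up to \emph{progressive} isotopy (hence the free pre-2-category rather than the free 2-category). This is essential: the two sides of the interchange law are then genuinely distinct 2-morphisms, and the tensorator $\sigma_{X,Y}$ --- which is~$\zz$ evaluated on a cube in which the two \emph{lines} $X$ and $Y$, sitting on parallel stationary surface stacks, braid past each other in projection, not a movie of one surface sliding past another --- is an invertible 3-morphism between them. If 2-morphisms were identified up to all isotopies, the source and target of $\sigma_{X,Y}$ would coincide and the Gray structure would collapse. Two smaller remarks: your worry about ``0-stratum-like labels'' at fold loci is misplaced, since in the paper folds and cusps are not singular and need no decoration --- only genuine vertices of 3d diagrams are decorated, by elements of the $\zz$-state spaces, and the triangulator is simply $\zz$ applied to the undecorated ``Zorro's Z'' ball; and the well-definedness of the 3-Hom spaces under change of representing diagram, which the paper handles by a limit over a direct system of isomorphisms induced by isotopy cylinders, needs to be addressed rather than assumed.
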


The Gray category $\tz$ is an invariant associated to $\zz$ which measures among other things how closed 3-dimensional TQFTs associated to unstratified bordisms are `glued together'. 
If there are no labels for 2-dimensional defects in the chosen decoration data~$\D$, then $\tz$ reduces to a product of ordinary categories. 

Very roughly, the idea behind the construction of $\tz$ is to directly transport the decorated structure of bordisms in $\Bordd$ into the graphical calculus for Gray categories with duals, developed in \cite{BMS}. 
This leads to objects~$u$, 1-morphisms $\alpha: u \to v$, and 2-morphisms $X: \beta \to \gamma$ being decorated cubes such as
$$
\begin{tikzpicture}[thick,scale=2.5,color=blue!50!black, baseline=0.0cm, >=stealth, 
				style={x={(-0.9cm,-0.4cm)},y={(0.8cm,-0.4cm)},z={(0cm,0.9cm)}}]
\fill [blue!20,opacity=0.2] (1,0,0) -- (1,1,0) -- (0,1,0) -- (0,1,1) -- (0,0,1) -- (1,0,1);
\draw[
	 color=gray, 
	 opacity=0.3, 
	 semithick,
	 dashed
	 ] 
	 (1,0,0) -- (0,0,0) -- (0,1,0)
	 (0,0,0) -- (0,0,1);
\draw[line width=1] (0.4, 0.6, 0) node[line width=0pt] (beta) {{\footnotesize $u$}};
\draw[
	 color=gray, 
	 opacity=0.4, 
	 semithick
	 ] 
	 (0,1,1) -- (0,1,0) -- (1,1,0) -- (1,1,1) -- (0,1,1) -- (0,0,1) -- (1,0,1) -- (1,0,0) -- (1,1,0)
	 (1,0,1) -- (1,1,1);
\end{tikzpicture}
\, , \quad
\begin{tikzpicture}[thick,scale=2.5,color=blue!50!black, baseline=0.0cm, >=stealth, 
				style={x={(-0.9cm,-0.4cm)},y={(0.8cm,-0.4cm)},z={(0cm,0.9cm)}}]
\fill [blue!20,opacity=0.2] (1,0,0) -- (1,1,0) -- (0,1,0) -- (0,1,1) -- (0,0,1) -- (1,0,1);
\draw[
	 color=gray, 
	 opacity=0.3, 
	 semithick,
	 dashed
	 ] 
	 (1,0,0) -- (0,0,0) -- (0,1,0)
	 (0,0,0) -- (0,0,1);
\fill [blue!20,opacity=0.2] (1,0,0) -- (1,1,0) -- (0,1,0) -- (0,1,1) -- (0,0,1) -- (1,0,1);
\coordinate (b1) at (0.25, 0, 0);
\coordinate (b2) at (0.5, 0, 0);
\coordinate (b3) at (0.75, 0, 0);
\coordinate (b4) at (0.25, 1, 0);
\coordinate (b5) at (0.5, 1, 0);
\coordinate (b6) at (0.75, 1, 0);
\coordinate (t1) at (0.25, 0, 1);
\coordinate (t2) at (0.5, 0, 1);
\coordinate (t3) at (0.75, 0, 1);
\coordinate (t4) at (0.25, 1, 1);
\coordinate (t5) at (0.5, 1, 1);
\coordinate (t6) at (0.75, 1, 1);
%
\fill [magenta!50,opacity=0.7] (b1) -- (b4) -- (t4) -- (t1);
\fill [red!50,opacity=0.7] (b2) -- (b5) -- (t5) -- (t2);
\fill [red!30,opacity=0.7] (b3) -- (b6) -- (t6) -- (t3);
%
\draw[line width=1] (0.88, 0.5, 0) node[line width=0pt] (beta) {{\footnotesize $u$}};
\draw[line width=1] (0.09, 0.92, 0) node[line width=0pt] (beta) {{\footnotesize $v$}};
\draw[line width=1] (0.75, 0.85, 0.5) node[line width=0pt] (beta) {{\footnotesize $\alpha_1$}};
\draw[line width=1] (0.5, 0.85, 0.5) node[line width=0pt] (beta) {{\footnotesize $\alpha_2$}};
\draw[line width=1] (0.25, 0.85, 0.5) node[line width=0pt] (beta) {{\footnotesize $\alpha_3$}};
%
\draw[
	 color=gray, 
	 opacity=0.4, 
	 semithick
	 ] 
	 (0,1,1) -- (0,1,0) -- (1,1,0) -- (1,1,1) -- (0,1,1) -- (0,0,1) -- (1,0,1) -- (1,0,0) -- (1,1,0)
	 (1,0,1) -- (1,1,1);
\end{tikzpicture}
\, , \quad
\begin{tikzpicture}[thick,scale=2.5,color=blue!50!black, baseline=0.0cm, >=stealth, 
				style={x={(-0.9cm,-0.4cm)},y={(0.8cm,-0.4cm)},z={(0cm,0.9cm)}}]
\draw[
	 color=gray, 
	 opacity=0.4, 
	 semithick
	 ] 
	 (0,1,1) -- (0,1,0) -- (1,1,0) -- (1,1,1) -- (0,1,1) -- (0,0,1) -- (1,0,1) -- (1,0,0) -- (1,1,0)
	 (1,0,1) -- (1,1,1);
\clip (1,0,0) -- (1,1,0) -- (0,1,0) -- (0,1,1) -- (0,0,1) -- (1,0,1);
\fill [blue!20,opacity=0.2] (1,0,0) -- (1,1,0) -- (0,1,0) -- (0,1,1) -- (0,0,1) -- (1,0,1);
\draw[
	 color=gray, 
	 opacity=0.3, 
	 semithick,
	 dashed
	 ] 
	 (1,0,0) -- (0,0,0) -- (0,1,0)
	 (0,0,0) -- (0,0,1);
\coordinate (topleft2) at (0.5, 1, 1);
\coordinate (bottomleft2) at (0.5, 1, 0);
\coordinate (topright1) at (0.2, 0, 1);
\coordinate (topright2) at (0.5, 0, 1);
\coordinate (topright3) at (0.7, 0, 1);
\coordinate (bottomright1) at (0.2, 0, 0);
\coordinate (bottomright2) at (0.5, 0, 0);
\coordinate (bottomright3) at (0.7, 0, 0);
\coordinate (b1s) at (0.2, 0.4, 0);
\coordinate (b2s) at (0.5, 0.7, 0);
\coordinate (b3s) at (0.7, 0.4, 0);
\coordinate (t1s) at (0.2, 0.4, 1);
\coordinate (t2s) at (0.5, 0.7, 1);
\coordinate (t3s) at (0.7, 0.4, 1);
\draw[line width=1] (0.2, 0.4, 0.5) node[line width=0pt, right] (beta) {{\footnotesize $X_3$}};
%
\fill [magenta!30,opacity=0.7] 
(topright1) -- (bottomright1) -- (b1s) -- (t1s);
%
\fill [red!60,opacity=0.7] 
(t1s) -- (b1s) -- (b2s) -- (t2s);
%
\draw[color=green!60!black, 
	postaction={decorate}, 
	decoration={markings,mark=at position .51 with {\arrow[color=green!60!black]{>}}}, 
	very thick]%
(b1s) -- (t1s);
\fill[color=green!60!black] (b1s) circle (0.8pt) node[left] (0up) {};
%
\fill [magenta!45,opacity=0.7] 
(topright2) -- (bottomright2) -- (b2s) -- (t2s);
%
\fill [magenta!55,opacity=0.7] 
(t3s) -- (b3s) -- (b2s) -- (t2s);
%
\fill [red!15,opacity=0.7] 
(topleft2) -- (bottomleft2) -- (b2s) -- (t2s);
%
\draw[color=green!50!black, 
	postaction={decorate}, 
	decoration={markings,mark=at position .51 with {\arrow[color=green!50!black]{>}}}, 
	very thick]%
(b2s) -- (t2s);
\fill[color=green!50!black] (b2s) circle (0.8pt) node[left] (0up) {};
%
\fill [red!25,opacity=0.7] 
(topright3) -- (bottomright3) -- (b3s) -- (t3s);
%
\draw[color=green!60!black, 
	postaction={decorate}, 
	decoration={markings,mark=at position .51 with {\arrow[color=green!60!black]{>}}}, 
	very thick]%
(b3s) -- (t3s);
\draw[line width=1] (0.7, 0.4, 0.5) node[line width=0pt, left] (beta) {{\footnotesize $X_1$}};
\draw[line width=1] (0.5, 0.7, 0.5) node[line width=0pt, left] (beta) {{\footnotesize $X_2$}};
\draw[line width=1] (0.2, 0.1, 0.92) node[line width=0pt] (beta) {{\footnotesize $\gamma_3$}};
\draw[line width=1] (0.5, 0.1, 0.92) node[line width=0pt] (beta) {{\footnotesize $\gamma_2$}};
\draw[line width=1] (0.7, 0.1, 0.92) node[line width=0pt] (beta) {{\footnotesize $\gamma_1$}};
\draw[line width=1] (0.5, 0.92, 0.5) node[line width=0pt] (beta) {{\footnotesize $\beta_1$}};
\fill[color=green!60!black] (t1s) circle (0.8pt) node[left] (0up) {};
\fill[color=green!50!black] (t2s) circle (0.8pt) node[left] (0up) {};
\fill[color=green!60!black] (t3s) circle (0.8pt) node[left] (0up) {};
\fill[color=green!60!black] (b3s) circle (0.8pt) node[left] (0up) {};
%
\draw[
	 color=gray, 
	 opacity=0.4, 
	 semithick
	 ] 
	 (0,1,1) -- (0,1,0) -- (1,1,0) -- (1,1,1) -- (0,1,1) -- (0,0,1) -- (1,0,1) -- (1,0,0) -- (1,1,0)
	 (1,0,1) -- (1,1,1);
\end{tikzpicture}
\, , 
$$
respectively, where we will extract the labels $u,v,\alpha_i, \beta_j, \gamma_k, X_l$ from the given decoration data~$\D$. 
It is only in the definition of 3-morphisms where the functor~$\zz$ is used, namely by evaluating it on suitably decorated spheres, cf.~Section~\ref{subsec:tricatfromZ}. 
(Accordingly, if one replaces $\Vect_\Bbbk$ by another symmetric monoidal category~$\mathcal C$ in~\eqref{eq:ZdefectTQFT1}, the Gray category $\tz$ is not $\Bbbk$-linear, but $\mathcal C$-enriched.) 
The three types of composition in $\tz$ correspond to stacking cubes along the $x$-, $y$-, and $z$-direction. 

There are two types of duals in $\tz$, corresponding to orientation reversal of surfaces (for 1-morphisms~$\alpha$) and lines (for 2-morphisms~$X$). 
In Section~\ref{subsec:GraycatwithdualsfromZ} we will see how these duals are exhibited by coevaluation 2- and 3-morphisms such as 
$$
\coev_\alpha = 
\begin{tikzpicture}[thick,scale=2.5,color=blue!50!black, baseline=0.0cm, >=stealth, 
				style={x={(-0.9cm,-0.4cm)},y={(0.8cm,-0.4cm)},z={(0cm,0.9cm)}}]
\draw[
	 color=gray, 
	 opacity=0.4, 
	 semithick
	 ] 
	 (0,1,1) -- (0,1,0) -- (1,1,0) -- (1,1,1) -- (0,1,1) -- (0,0,1) -- (1,0,1) -- (1,0,0) -- (1,1,0)
	 (1,0,1) -- (1,1,1);
\clip (1,0,0) -- (1,1,0) -- (0,1,0) -- (0,1,1) -- (0,0,1) -- (1,0,1);
\fill [blue!20,opacity=0.2] (1,0,0) -- (1,1,0) -- (0,1,0) -- (0,1,1) -- (0,0,1) -- (1,0,1);
\draw[
	 color=gray, 
	 opacity=0.3, 
	 semithick,
	 dashed
	 ] 
	 (1,0,0) -- (0,0,0) -- (0,1,0)
	 (0,0,0) -- (0,0,1);
\coordinate (p1) at (0.1, 0, 0);
\coordinate (p2) at (0.9, 0, 0);
\coordinate (q1) at (0.1, 0, 1);
\coordinate (q2) at (0.9, 0, 1);
\coordinate (p12) at (0.2, 0, 0);
\coordinate (p22) at (0.8, 0, 0);
\coordinate (q12) at (0.2, 0, 1);
\coordinate (q22) at (0.8, 0, 1);
\coordinate (p13) at (0.3, 0, 0);
\coordinate (p23) at (0.7, 0, 0);
\coordinate (q13) at (0.3, 0, 1);
\coordinate (q23) at (0.7, 0, 1);
\coordinate (S1) at (0.5, 0.9, 0);
\coordinate (S2) at (0.5, 0.7, 0);
\coordinate (S3) at (0.5, 0.5, 0);
%
\foreach \z in {0, 0.0194, ..., 1.0}
{
	\draw[color=red!40, very thick, opacity=0.4]%
	($(S1) + (0,0,\z)$) .. controls +(-0.02,0,0) and +(0,0.9,0) .. ($(p1) + (0,0,\z)$);
}
\foreach \z in {0, 0.0194, ..., 1.0}
{
	\draw[color=red!70, very thick, opacity=0.4]%
	($(S2) + (0,0,\z)$) .. controls +(-0.02,0,0) and +(0,0.7,0) .. ($(p12) + (0,0,\z)$);
}
\foreach \z in {0, 0.0194, ..., 1.0}
{
	\draw[color=magenta!80, very thick, opacity=0.4]%
	($(S3) + (0,0,\z)$) .. controls +(-0.01,0,0) and +(0,0.5,0) .. ($(p13) + (0,0,\z)$);
	\draw[color=magenta!80, very thick, opacity=0.4]%
	($(p23) + (0,0,\z)$) .. controls +(0,0.5,0) and +(0.01,0,0) .. ($(S3) + (0,0,\z)$);
}
%
\foreach \z in {0, 0.0194, ..., 1.0}
{
	\draw[color=red!70, very thick, opacity=0.4]%
	($(p22) + (0,0,\z)$) .. controls +(0,0.7,0) and +(0.02,0,0) .. ($(S2) + (0,0,\z)$);
}
%
\foreach \z in {0, 0.0194, ..., 1.0}
{
	\draw[color=red!40, very thick, opacity=0.4]%
	($(p2) + (0,0,\z)$) .. controls +(0,0.9,0) and +(0.02,0,0) .. ($(S1) + (0,0,\z)$);
}
%
\draw[line width=1] (0.89, 0.75, 0) node[line width=0pt] (beta) {{\footnotesize $u$}};
\draw[line width=1] (0.5, 0.1, 0.95) node[line width=0pt] (beta) {{\footnotesize $v$}};
%
\draw[
	 color=gray, 
	 opacity=0.4, 
	 semithick
	 ] 
	 (0,1,1) -- (0,1,0) -- (1,1,0) -- (1,1,1) -- (0,1,1) -- (0,0,1) -- (1,0,1) -- (1,0,0) -- (1,1,0)
	 (1,0,1) -- (1,1,1);
\end{tikzpicture}
\, , \quad 
\coev_X = 
\zz\left(
\begin{tikzpicture}[thick,scale=2.5,color=blue!50!black, baseline=0.0cm, >=stealth, 
				style={x={(-0.9cm,-0.4cm)},y={(0.8cm,-0.4cm)},z={(0cm,0.9cm)}}]
\draw[
	 color=gray, 
	 opacity=0.4, 
	 semithick
	 ] 
	 (0,1,1) -- (0,1,0) -- (1,1,0) -- (1,1,1) -- (0,1,1) -- (0,0,1) -- (1,0,1) -- (1,0,0) -- (1,1,0)
	 (1,0,1) -- (1,1,1);
\clip (1,0,0) -- (1,1,0) -- (0,1,0) -- (0,1,1) -- (0,0,1) -- (1,0,1);
\fill [blue!20,opacity=0.2] (1,0,0) -- (1,1,0) -- (0,1,0) -- (0,1,1) -- (0,0,1) -- (1,0,1);
\draw[
	 color=gray, 
	 opacity=0.3, 
	 semithick,
	 dashed
	 ] 
	 (1,0,0) -- (0,0,0) -- (0,1,0)
	 (0,0,0) -- (0,0,1);
\coordinate (topleft1) at (0.2, 0, 1);
\coordinate (topleft2) at (0.5, 0, 1);
\coordinate (topleft3) at (0.7, 0, 1);
\coordinate (bottomleft1) at (0.2, 0, 0);
\coordinate (bottomleft2) at (0.5, 0, 0);
\coordinate (bottomleft3) at (0.7, 0, 0);
\coordinate (topright1) at (0.2, 1, 1);
\coordinate (topright2) at (0.5, 1, 1);
\coordinate (topright3) at (0.7, 1, 1);
\coordinate (bottomright1) at (0.2, 1, 0);
\coordinate (bottomright2) at (0.5, 1, 0);
\coordinate (bottomright3) at (0.7, 1, 0);
\coordinate (t1) at (0.2, 0.2, 1);
\coordinate (t2) at (0.5, 0.3, 1);
\coordinate (t3) at (0.7, 0.2, 1);
\coordinate (t1s) at (0.2, 0.8, 1);
\coordinate (t2s) at (0.5, 0.7, 1);
\coordinate (t3s) at (0.7, 0.8, 1);
%
\fill [magenta!30,opacity=0.7] 
(topleft1) -- (t1) .. controls +(0,0,-0.95) and +(0,0,-0.95) .. (t1s) -- (topright1) -- (bottomright1) -- (bottomleft1);
\draw[color=green!60!black, 
	postaction={decorate}, 
	decoration={markings,mark=at position .51 with {\arrow[color=green!60!black]{<}}}, 
	very thick]%
(t1) .. controls +(0,0,-0.95) and +(0,0,-0.95) .. (t1s);
%
\foreach \x in {0, 0.02, ..., 0.98}
{
	\draw[color=red!50, very thick, opacity=0.4]%
	($(t2) + \x*(t1) - \x*(t2)$) .. controls +(0,0,-0.95) and +(0,0,-0.95) .. ($(t2s) + \x*(t1s) - \x*(t2s)$);
}
%
\fill [magenta!45,opacity=0.7] 
(topleft2) -- (t2) .. controls +(0,0,-0.95) and +(0,0,-0.95) .. (t2s) -- (topright2) -- (bottomright2) -- (bottomleft2);
\fill [red!15,opacity=0.7] 
(t2) .. controls +(0,0,-0.95) and +(0,0,-0.95) .. (t2s);
\draw[color=green!50!black, 
	postaction={decorate}, 
	decoration={markings,mark=at position .51 with {\arrow[color=green!50!black]{<}}}, 
	very thick]%
(t2) .. controls +(0,0,-0.95) and +(0,0,-0.95) .. (t2s);
%
\foreach \x in {0.06, 0.08, ..., 1.0}
{
	\draw[color=magenta!50, very thick, opacity=0.4]%
	($(t2) + \x*(t3) - \x*(t2)$) .. controls +(0,0,-0.95) and +(0,0,-0.95) .. ($(t2s) + \x*(t3s) - \x*(t2s)$);
}
%
\fill [red!25,opacity=0.7] 
(topleft3) -- (t3) .. controls +(0,0,-0.95) and +(0,0,-0.95) .. (t3s) -- (topright3) -- (bottomright3) -- (bottomleft3);
\draw[color=green!60!black, 
	postaction={decorate}, 
	decoration={markings,mark=at position .51 with {\arrow[color=green!60!black]{<}}}, 
	very thick]%
(t3) .. controls +(0,0,-0.95) and +(0,0,-0.95) .. (t3s);
%
\fill[color=green!60!black] (t1) circle (0.8pt) node[left] (0up) {};
\fill[color=green!60!black] (t2) circle (0.8pt) node[left] (0up) {};
\fill[color=green!60!black] (t3) circle (0.8pt) node[left] (0up) {};
\fill[color=green!60!black] (t1s) circle (0.8pt) node[left] (0up) {};
\fill[color=green!60!black] (t2s) circle (0.8pt) node[left] (0up) {};
\fill[color=green!60!black] (t3s) circle (0.8pt) node[left] (0up) {};
%
\draw[
	 color=gray, 
	 opacity=0.4, 
	 semithick
	 ] 
	 (0,1,1) -- (0,1,0) -- (1,1,0) -- (1,1,1) -- (0,1,1) -- (0,0,1) -- (1,0,1) -- (1,0,0) -- (1,1,0)
	 (1,0,1) -- (1,1,1);
\end{tikzpicture}
\right)
, 
$$
respectively. 
These are the two distinct analogues of the coevaluation maps $%
\begin{tikzpicture}[very thick, scale=0.39, color=blue!50!black, baseline=-0.2cm, rotate=180, >=stealth]
\draw[-{stealth[length=1.6mm, scale width=1.1]}]  
(2,0) .. controls +(0,1) and +(0,1) .. (3,0);
\end{tikzpicture} 
$ 
for duals in 2-dimensional string diagrams. 

\medskip

After having established the natural 3-categorical structure $\tz$ associated to a defect TQFT~$\zz$, we will present several examples of such~$\zz$ and~$\tz$ in Section~\ref{sec:examples}. 
Even the trivial TQFT (which assigns~$\Bbbk$ to everything in sight) leads to a non-trivial Gray category as we show in Section~\ref{subsec:case-trivial-defect}. 
In Section~\ref{subsec:resh-tura-as} we explain how the Reshetikhin-Turaev construction for a modular tensor category~$\mathcal C$ fits into our framework as a special class of defect TQFTs $\zz^{\mathcal C}$ with only 1-dimensional defects. 
We also prove that $\mathcal T_{\zz^{\mathcal C}}$ recovers~$\mathcal C$ viewed as a 3-category. 
Finally, in Section~\ref{subsec:htqfts-as-defect}, we shall utilise the work on homotopy quantum field theory in \cite{Hqft1} to construct two classes of 3-dimensional defect TQFTs  for every $G$-graded spherical fusion category. 
In particular we obtain interesting surface defects which allow us to extend Turaev-Viro theories to defect TQFTs. 

\medskip

We end this introduction by mentioning very briefly a number of applications and further developments of the work presented in this paper. 
(1) It would be interesting to compare our approach to the correspondence between TQFTs and higher categories developed in \cite{blobcomplex}; in particular one could expect that our construction of $\tz$ from~$\zz$ factors through a `disc-like 3-category'. 
(2) In our framework one can extend Reshetikhin-Turaev theory to produce not only invariants for 3-dimensional manifolds with embedded ribbons, but also with embedded surfaces. 
(3) In \cite{KR0909.3643} it was argued that Rozansky-Witten theory gives rise to a 3-category $\mathcal T^{\textrm{RW}}$ which brings together algebraic and symplectic geometry; it would be interesting to understand $\mathcal T^{\textrm{RW}}$ as a Gray category with duals. 
(4) There is a theory of `orbifold completion' of pivotal 2-categories \cite{cr1210.6363} which generalises certain group actions on 2-dimensional TQFTs via the algebraic language of defects, leading to new equivalences of categories \cite{CRCR, cqv2015}. The idea of orbifold completion applies to TQFTs of any dimension~$n$ and should be developed for $n=3$. It is expected that this will also give rise to new TQFTs. 
(5) 3-dimensional TQFT has applications in quantum computing, where orbifoldable group actions have received much attention, see e.\,g.~\cite{BJQ, fs1310.1329}. Our approach offers a robust and conceptual framework for generalisations of these constructions.

\subsubsection*{Acknowledgements} 

It is a pleasure to thank Ingo Runkel for helpful discussions and ideas. 
The work of N.\,C.~is partially supported by a grant from the Simons Foundation. 
N.\,C.~and G.\,S.~are partially supported by the stand-alone project P\,27513-N27 of the Austrian Science Fund.

\section{Bordisms}
\label{sec:bordisms}

For any $n\in\Z_+$, there is a category $\Bord_n$ of closed oriented $(n-1)$-dimensional manifolds and their bordisms \cite{Hirsch, Kockbook}. 
The objective of this section is to introduce a much bigger category of stratified and decorated bordisms, into which $\Bord_n$ embeds non-fully. 
For this purpose we briefly review and decide on a certain flavour of stratified manifolds with boundary, define stratified bordism categories $\Bordstrat_n$, and finally arrive at the stratified decorated (or `defect') bordism category $\Bordd$.

\subsection{Stratified bordisms}
\label{subsec:stratbord}

By a \textsl{filtration} of a topological space~$\Sigma$ we mean a finite sequence $\mathcal F = (\Sigma \supset F_m \supset F_{m-1} \supset \dots \supset F_0 \supset F_{-1} = \emptyset)$ of not necessarily closed subspaces $F_j \subset \Sigma$. We sometimes use the notation $\mathcal F=(F_j)$.
The only topological spaces we will consider are 
oriented manifolds (possibly with boundary). 
Hence from hereon we obey the following 

\medskip

\noindent
\textbf{Convention. }
By a ``manifold'' we mean 
	an ``oriented real (topological) manifold''. 
	The boundary of a manifold with boundary is outward-oriented. 

\medskip

There are many different notions of stratification and stratified spaces in the literature,  such as  Whitney stratifications \cite{Whit1,Whit2},  stratified sets by Thom and Mather \cite{Thom, Mather}, and variants of these definitions in the piecewise-linear or topological context. 
In addition to their setting, these notions differ  in their choice of regularity conditions imposed on the strata, which then  ensure that the strata have neighbourhoods with certain properties, and that different strata can be glued smoothly. 

As we only use stratifications as a substrate to carry defect data, we work with a minimal definition  of stratification as a filtration by smooth manifolds that satisfies the `frontier condition' and the `finiteness condition' defined below. 
We circumvent the question about appropriate regularity conditions and gluing of strata by requiring certain normal forms around strata, see Section~\ref{subsec:defectbord}. 

\begin{definition}
\label{def:stratman}
An $m$-dimensional 
	\textsl{stratified manifold (without boundary)}
is an $m$-dimensional 
	manifold (without boundary)~$\Sigma$ 
together with a filtration $\mathcal F = (\Sigma = F_m \supset F_{m-1} \supset \dots \supset F_0 \supset F_{-1} = \emptyset)$ subject to the following conditions: 
\begin{enumerate}
\item
$\Sigma_j := F_j \setminus F_{j-1}$ is a $j$-dimensional 
	smooth 
manifold (which may be empty) for all 
	$j \in \{0,1,\dots,m\}$; 
connected components of $\Sigma_j$ are denoted  $\Sigma_j^\alpha$ and are referred to as \textsl{$j$-strata}; each $j$-stratum is 
	equipped with a choice of orientation (which for $j=m$ may differ from the orientation induced from~$\Sigma$). 
\item 
\textsl{Frontier condition:}
for all strata $\Sigma_i^\alpha, \Sigma_j^\beta$ with $\Sigma_i^\alpha \cap \overline{\Sigma}_j^\beta \neq \emptyset$, we have $\Sigma_i^\alpha \subset \overline{\Sigma}_j^\beta$ 
	(where $\overline{\Sigma}_j^\beta$ denotes the closure of $\Sigma_j^\beta$). 
\item 
\textsl{Finiteness condition:} the total number of strata is finite. 
\end{enumerate}
\end{definition}

We often denote a stratified manifold $(\Sigma, \mathcal F)$ simply by~$\Sigma$. 
For the set of $j$-strata we write $S_j$, so we have the topological decomposition 
$$
\Sigma = 
	\bigcup_{j=0}^m
\, \bigcup_{\Sigma_j^\alpha \in S_j} \Sigma_j^\alpha \, . 
$$

Every 
	smooth 
$m$-dimensional manifold~$\Sigma$ is trivially a stratified manifold when equipped with the filtration $\Sigma = F_m \supset \emptyset \supset \dots \supset \emptyset$. 
In this case there is a single stratum, namely $\Sigma = F_m = \Sigma_m$. 
A less trivial example is the unit disc $\Sigma = D_2 := \{ x \in \R^2 \,|\, |x| < 1 \}$ with three 2-strata, three 1-strata, and one (positively oriented) 0-stratum as follows: 
$$
\begin{tikzpicture}[very thick,scale=1.0,color=blue!50!black, baseline=0.38cm]
\filldraw[blue!15, line width=0] 
(0,0) circle (1.75);
\draw[
	color=red!70!black, 
	>=stealth,
	decoration={markings, mark=at position 0.5 with {\arrow{>}},
					}, postaction={decorate}
	] 
 (0,0) .. controls (200:0.7) and (250:1) .. (220:1.75);
 \draw[
	color=red!70!black, 
	>=stealth,
	decoration={markings, mark=at position 0.5 with {\arrow{<}},
					}, postaction={decorate} 
	] 
 (0,0) .. controls (20:0.7) and (-50:1) .. (-30:1.75);
  \draw[
	color=red!70!black, 
	>=stealth,
	decoration={markings, mark=at position 0.5 with {\arrow{<}},
					}, postaction={decorate}
	] 
 (0,0) -- (90:1.75); 
\draw[line width=1, color=red!70!black] 
(0.29,0.8) node[line width=0pt] (Xbottom) {{\footnotesize $\Sigma_1^2$}}
(-10:0.93) node[line width=0pt] (Xbottom) {{\footnotesize $\Sigma_1^1$}}
(205:0.85) node[line width=0pt] (Xbottom) {{\footnotesize $\Sigma_1^3$}};
%
\fill[color=green!50!black] (0,0) circle (3.0pt) node[below] {{\footnotesize $+$}};
\draw[line width=1] 
(-0.3,0.2) node[color=green!50!black, line width=0pt] (Xbottom) {{\footnotesize $\Sigma_0^1$}}; 
%
\draw[line width=1] 
(0,-1.4) node[line width=0pt] (Xbottom) {{\footnotesize $\Sigma_2^1$}}
(35:1.4) node[line width=0pt] (Xbottom) {{\footnotesize $\Sigma_2^2$}}
(145:1.4) node[line width=0pt] (Xbottom) {{\footnotesize $\Sigma_2^3$}}
(-55:1.5) node[line width=0pt] (Xbottom) {{\footnotesize $\circlearrowleft$}}
(200:1.5) node[line width=0pt] (Xbottom) {{\footnotesize $\circlearrowleft$}}
(-15:1.5) node[line width=0pt] (Xbottom) {{\footnotesize $\circlearrowright$}}
;
\end{tikzpicture}%
$$
Below we will sometimes suppress orientations in pictures of stratified manifolds. 

\begin{definition}
\label{def:stratmorph}
A \textsl{morphism} from an $m$-dimensional stratified manifold~$\Sigma$ to an $m'$-dimensional stratified manifold~$\Sigma'$ is a 
	continuous 
map $f: \Sigma \rightarrow \Sigma'$ such that $f(\Sigma_j) \subset \Sigma'_j$ and $f|_{\Sigma_j^\alpha}:\Sigma_j^\alpha\to {\Sigma'}_{j}^{\alpha'}$ are orientation-preserving 
	smooth 
maps between strata for all 
	$j \in \{0,1,\dots,m\}$. 
The morphism is called an \textsl{embedding} if all the $f|_{\Sigma_j^\alpha}$ are diffeomorphisms onto their images. 
\end{definition}

We illustrate this definition with an example. Take for $\Sigma=S^2$ the trivially stratified 2-sphere and for $\Sigma'$ the 2-sphere $S^2$ stratified by two meridians and the north and the south pole. 
Then any map $f: \Sigma \to \Sigma'$ that sends the unit sphere to a point $p\in \Sigma'_2$ is a morphism of stratified manifolds. 

\medskip

Stratified manifolds and their morphisms form a category, however not the one we are ultimately interested in. 
Instead, for a fixed $n\in\Z_+$, we wish to define a category whose objects are $(n-1)$-dimensional stratified manifolds, and whose morphisms are `stratified bordisms'. 
To arrive at this notion we first consider manifolds with boundary~$M$ and their neat submanifolds as an intermediate step: a submanifold (with boundary) $N \subset M$ is called \textsl{neat} if 
	$\partial N = N \cap \partial M$ and $N\cap \accentset{\circ}{M}\neq \emptyset$. (In the smooth case, neatness typically also involves a transversality condition, see for instance \cite{Hirsch}; in the case of our stratified bordisms, we replace this condition by requiring appropriate collars for boundary parametrisations in the category $\Bordstrat_n$ to be defined on the next page.)

\begin{definition}
\label{def:stratmanbdry}
A $n$-dimensional \textsl{stratified manifold with boundary} is an $n$-dimensional manifold with boundary~$M$ together with a filtration $M = F_n \supset F_{n-1} \supset \dots \supset F_0 \supset F_{-1} = \emptyset$ such that: 
\begin{enumerate}
\item 
The interior~$\accentset{\circ}{M}$ together with the filtration 
	$(\accentset{\circ}{M} \cap F_j)$ 
is a stratified manifold. 
\item 
$M_j := F_j \setminus F_{j-1}$ is a neat $j$-dimensional 
	smooth
submanifold for all 
	$j \in \{0,1,\dots,n\}$. 
	Connected components of $M_j$ are denoted $M_j^\alpha$ and are referred to as \textsl{$j$-strata}; each $j$-stratum is equipped with a choice of orientation (which may differ from the orientation induced from~$M$). 
\item 
$\partial M$ together with the filtration 
	$(\partial M \cap F_{j+1})$ 
is an $(n-1)$-dimensional stratified manifold (with orientations induced by those of the strata in the $M_j$). 
\end{enumerate}
\end{definition}

As in the case without boundary, every manifold with boundary can be viewed as (trivially) stratified. 
A nontrivial example of a 2-dimensional stratified manifold with boundary is
$$
\begin{tikzpicture}[very thick,scale=1.0,color=blue!50!black]
\fill[color=blue!15] (0,0) ellipse (3.75 and 1.5);
\fill [blue!30,opacity=0.9] ($(-1.25,0)+(-35:0.75)$) arc (-35:35:0.75) arc (120:60:1.25 and 0.5) -- ($(1.25,0)+(145:0.75)$) arc (145:215:0.75) arc (-60:-120:1.25 and 0.5); 
\draw (0,0) ellipse (3.75 and 1.5);
\fill[color=white] (1.25,0) circle (0.75);
\fill[color=white] (-1.25,0) circle (0.75);
\draw (-1.25,0) circle (0.75);
\draw (1.25,0) circle (0.75);
\draw [blue!30!black,opacity=0.9] ($(-1.25,0)+(-35:0.75)$) arc (-35:35:0.75); 
\draw [blue!30!black,opacity=0.9] ($(1.25,0)+(215:0.75)$) arc (215:145:0.75); 
\draw[color=red!50,
	 opacity=0.9] 
	 ($(-1.25,0)+(90:0.75)$) .. controls (-0.3,1) and (-0.7,1.2) ..  (90:3.75 and 1.5);
\draw[color=red!90,
	 opacity=0.9] 
	 (240:1.25 and 0.5) arc (240:300:1.25 and 0.5);
\draw[color=red!60,
	 opacity=0.9] 
	 (120:1.25 and 0.5) arc (120:60:1.25 and 0.5);
%
\fill[color=red!90!black] (90:3.75 and 1.5) circle (3.0pt) node[above] {};
\fill[color=red!90!black] ($(-1.25,0)+(90:0.75)$) circle (3.0pt) node[above] {};
\fill[color=red!80!black] ($(-1.25,0)+(35:0.75)$) circle (3.0pt) node[above] {};
\fill[color=red!70!black] ($(-1.25,0)+(-35:0.75)$) circle (3.0pt) node[above] {};
%
\fill[color=red!80!black] ($(1.25,0)+(145:0.75)$) circle (3.0pt) node[above] {};
\fill[color=red!70!black] ($(1.25,0)+(215:0.75)$) circle (3.0pt) node[above] {};
\fill[color=green!50!black] (-0.4,-1.2) circle (3.0pt) node[above] (0up) {};
\fill[color=green!50!black] (2.7,0.2) circle (3.0pt) node[above] (0up) {};
\end{tikzpicture}%
$$
where matching colours of interior strata and adjacent boundary strata indicate the induced filtration and orientations. 

\begin{definition}
A \textsl{morphism} from an $n$-dimensional stratified manifold with boundary~$M$ to an $n'$-dimensional stratified manifold with boundary~$M'$ is a 
	continuous 
map $f: M \rightarrow M'$ with $f(\partial M) \subset \partial M'$ such that
\begin{enumerate}
\item
$f(M_j) \subset M'_j$ and $f|_{M_j^\alpha}$ are 
	smooth 
orientation-preserving maps between strata for all 
	$j \in \{0,1,\dots,n\}$,  
and 
\item 
$f|_{\partial M}: \partial M \rightarrow \partial M'$ is a morphism of stratified manifolds. 
\end{enumerate}
\end{definition}

\medskip

We may now define the category $\Bordstrat_n$ of stratified bordisms in analogy to the standard bordism category $\Bord_n$: Objects of $\Bordstrat_n$ are closed $(n-1)$-dimensional stratified manifolds~$\Sigma$. 
	A bordism $\Sigma \rightarrow \Sigma'$ is a compact stratified $n$-manifold $M$, together with an isomorphism $\Sigma^{\mathrm{rev}} \sqcup \Sigma' \to \partial M$ of (germs of collars around) stratified $(n-1)$-manifolds, where $\Sigma^{\mathrm{rev}}$ is~$\Sigma$ with reversed orientations for all strata. Two bordisms $M,N \colon \Sigma \to \Sigma'$ are equivalent if there is an isomorphism (of stratified manifolds with boundary, i.\,e.~a homeomorphism whose restrictions to strata are orientation-preserving diffeomorphisms) $M \to N$ compatible with the boundary parameterisations. The set of morphisms $\Hom_{\Bordstrat_n}(\Sigma,\Sigma')$ consists of the equivalence classes of bordisms $[M] \colon \Sigma \to \Sigma'$. We will often write~$M$ for~$[M]$. 

	The composition of morphisms in $\Bordstrat_n$ is defined by applying the standard construction in terms of collars (see for instance \cite{Hirsch}) in $\Bord_n$ finitely many times to the boundary strata.
Together with disjoint union this makes $\Bordstrat_n$ into a symmetric monoidal category, of which $\Bord_n$ is a  subcategory. 
However, it is not a full subcategory, due to the presence of strata that do not reach the boundary.

\subsection{Standard stratified bordisms} 
\label{subsec:defectbord}

We could now proceed to study a rich class of `stratified TQFTs' as symmetric monoidal functors $\Bordstrat_n \rightarrow \Vect_\Bbbk$. 
However for our purposes this notion is simultaneously too narrow and too broad. 
It is too narrow in the sense that we will consider additional structure in the form of `defect decorations' as discussed below. 
And it is too broad because from now on we shall make the following restrictive assumptions: 
\begin{enumerate}
\item
$n=3$. 
\item 
There are no 0-strata in the interior of bordisms. 
\item 
Every 1-stratum in a bordism has a neighbourhood which is of `standard form' (as explained below, compare~\eqref{eq:starlikecylinder}), and similarly for 0-strata of boundaries (compare~\eqref{eq:starlikedisc}). 
\end{enumerate}

\begin{remark}
The reason for condition (ii) is one of perspective: for us the fundamental entity will be a `defect TQFT' (cf.~Definition~\ref{def:defectTQFT}), which is a functor~$\zz$ on a certain 3-dimensional decorated bordism category $\Bordd$ (which is defined at the end of the present section). 
By removing small balls around potential 0-strata in the interior of a stratified bordism, we can always transform a bordism with inner 0-strata into one without such points by adding new sphere-shaped boundary components.  
We will see that the state spaces of the TQFT on these decorated spheres are natural label sets for the removed 0-strata. 
In this sense a set of coherent defect labels for 0-strata is already contained in $\zz$, and requiring 0-dimensional data from the start would lead to unnecessarily complicated consistency issues. 
(Our constructions in Sections~\ref{subsec:tricatfromZ} and~\ref{subsec:GraycatwithdualsfromZ} will make these statements precise.)

On the other hand, condition (iii) is merely one of convenience. We do not address the question whether all 1-strata in $\Bordstrat_3$ have neighbourhoods which are of standard form. 
We are exclusively interested in the latter as they encode the relevant combinatorial data of `surface operators' meeting at `line operators'. 
\end{remark}

Next we detail the standard forms of condition (iii) above, starting with the case of neighbourhoods around 0-strata in 2-dimensional stratified manifolds. 
The underlying manifold is the open disc $D_2 = \{ x \in \C \,|\, |x| < 1 \}$, where we identify $\R^2$ with the complex plane. 
For every 
	$p\in\N = \Z_{\geqslant 0}$ 
and for every $\varepsilon \in \{ \pm \}^{\times (p+2)}$ we endow $\Sigma = D_2$ with a stratification given by
\begin{align*}
\Sigma_0 & = \{ 0 \} \, , \\
\Sigma_1 & = \bigcup_{\alpha=1}^{p} \underbrace{\Big\{ 
	r \E^{2\pi\I(\alpha-1)/p} 
\,\Big|\, 0<r<1 \,  \Big\}}_{=: \Sigma_1^\alpha} \, , \\
\Sigma_2 & =  \big( D_2 \setminus \Sigma_1 \big) \setminus \Sigma_0 
\end{align*}
where the orientations of the strata are encoded in the tuple~$\varepsilon$: the 1-stratum $\Sigma_1^\alpha$ is oriented away from the origin iff $\varepsilon_{\alpha} = +$ for all $\alpha \in \{ 1,2,\dots,p \}$, the 0-stratum $\Sigma_0$ has orientation $\varepsilon_{p+1}$, and all 2-strata have the same orientation $\varepsilon_{p+2}$. 
We denote this `star-like' stratified disc as $D_2^{p,\varepsilon}$. 
For example, we have 
\be\label{eq:starlikedisc}
D_2^{3, (+,+,-,-,-)} = 
\begin{tikzpicture}[very thick,scale=0.7,color=blue!50!black, baseline=-0.1cm]
\filldraw[blue!15, line width=0] 
(0,0) circle (1.75);
\draw[
	color=red!70!black, 
	>=stealth,
	decoration={markings, mark=at position 0.5 with {\arrow{>}},
					}, postaction={decorate}
	] 
 (0,0) -- (0:1.75);
 \draw[
	color=red!70!black, 
	>=stealth,
	decoration={markings, mark=at position 0.5 with {\arrow{>}},
					}, postaction={decorate} 
	] 
 (0,0) -- (120:1.75);
  \draw[
	color=red!70!black, 
	>=stealth,
	decoration={markings, mark=at position 0.5 with {\arrow{<}},
					}, postaction={decorate}
	] 
 (0,0) -- (240:1.75); 
%
\fill[color=green!50!black] (0,0) circle (3.5pt) node[above] {{\footnotesize $-$}};
%
\foreach \x in {1,...,3}
	\draw[line width=1] (120*\x - 60:1.4) node[line width=0pt] (Xbottom) {{\footnotesize $\circlearrowright$}};
\end{tikzpicture}%
\, .
\ee

The standard neighbourhood around 1-strata in 3-dimensional stratified bordisms is set to be a cylinder over some star-like disc $D_2^{p,\varepsilon}$. 
Since these 1-strata may have nonempty boundary, there are actually four types of cylinders, namely $D_2^{p,\varepsilon} \times I$ for~$I$ one of the intervals $[0,1]$, $[0,1)$, $(0,1]$, or $(0,1)$. 
	The $j$-strata of $D_2^{p,\varepsilon} \times I$ are $s \times I$, where~$s$ ranges over the $(j-1)$-strata of $D_2^{p,\varepsilon}$ for $j \in \{1,2,3\}$; 
	the 1-stratum of $D_2^{p,\varepsilon} \times I$ is oriented upwards iff $\varepsilon_{p+1}=+$, a 2-stratum $\Sigma_1^\alpha \times I$ has the orientation induced from the standard orientation of~$\R^3$ iff $\varepsilon_\alpha=+$, and the 3-strata all have this standard orientation iff $\varepsilon_{p+2}=+$. 
	The standard neighbourhoods around 1-strata 
look like `symmetrically opened books with~$p$ pages': 
\be\label{eq:starlikecylinder}
D_2^{3, (+,+,-,-,-)} \times I = 
\begin{tikzpicture}[thick,scale=1.0,color=blue!50!black, baseline=-1.9cm]
\fill [blue!15,
      opacity=0.5, 
      left color=blue!15, 
      right color=white] (-1.25,0) -- (-1.25,-3.5) arc (180:360:1.25 and 0.5) -- (1.25,0) arc (0:180:1.25 and -0.5);
\fill [blue!35,opacity=0.5] (-1.25,-3.5) arc (180:360:1.25 and 0.5) -- (1.25,-3.5) arc (0:180:1.25 and 0.5);
\fill [blue!25,opacity=0.5] (-1.25,0) arc (180:360:1.25 and 0.5) -- (1.25,0) arc (0:180:1.25 and 0.5);
%
\fill [red,opacity=0.4] (0,0) -- (0,-3.5) -- (1.25,-3.5) -- (1.25,0);
\fill [red,opacity=0.4] (0,0) -- ($(0,0)+(120:1.25 and 0.5)$) -- ($(0,-3.5)+(120:1.25 and 0.5)$) -- (0,-3.5) -- (0,0);
\fill [red,opacity=0.4] (0,0) -- ($(0,0)+(245:1.25 and 0.5)$) -- ($(0,-3.5)+(245:1.25 and 0.5)$) -- (0,-3.5) -- (0,0);
%
\draw[color=green!50!black, very thick] (0,0) -- (0,-3.5);  
%
\fill[color=green!50!black] (0,0) circle (2.5pt) node[above] (0up) {};
\fill[color=green!50!black] (0,-3.5) circle (2.5pt) node[below] (0down) {};
\end{tikzpicture}
\, .
\ee

Now we can be clear about which stratified bordisms we restrict to. 
To wit, we define $\Bords$ to be the subcategory of $\Bordstrat_3$ as follows: 
For every object $\Sigma \in \Bords$ and every 0-stratum in~$\Sigma$ there exists a chart, i.\,e.~an isomorphism of stratified manifolds with boundary, from a neighbourhood of the 0-stratum  to some star-like disc $D_2^{p,\varepsilon}$; 
morphisms~$M$ in $\Bords$ do not have 0-strata, around every 1-stratum in~$M$ there exists a chart to some cylinder $D_2^{p,\varepsilon} \times I$, 
	and all 3-strata have the same orientation, induced by the orientation on~$M$. 

\medskip

\subsection{Decorated bordisms}
\label{subsec:decoratedbordisms}

Geometrically, $\Bords$ is the bordism category on which we want to study defect TQFTs -- however not `combinatorially'. 
Indeed, we will allow decorations of all strata according to certain rules, which are encoded in the notion of `defect data' in Definition \ref{def:defectdata} below. 

To motivate this notion, let~$M$ be a stratified bordism in $\Bords$, and recall that we write $S_j$ for the set of $j$-dimensional strata of~$M$. 
To every 2-stratum $M_2^\alpha \in S_2$ we can associate the unique pair of 3-strata 
	$(M_3^\beta, M_3^\gamma) \in S_3 \times S_3$ 
whose closures both contain $M_2^\alpha$. 
Furthermore, orientations induce a sense of direction: by convention, we think of $M_3^\beta$ as the `source' (and $M_3^\gamma$ as the `target') of $M_2^\alpha$ iff an arrow from $M_3^\beta$ to $M_3^\gamma$ together with a positive frame on $M_2^\alpha$ is a positive frame in~$M$.\footnote{Here we use the fact that all 3-strata have the same orientation.} 
This defines a map 
$$
m_2: S_2 \lra S_3 \times S_3 
$$
which we immediately extend to a map $m_2: S_2 \times \{ \pm \} \rightarrow S_3 \times S_3$ by setting 
	$m_2(M_2^\alpha,+) = m_2(M_2^\alpha)$ and $m_2(M_2^\alpha,-) = m_2((M_2^\alpha)^{\textrm{rev}})$. 

From the stratified bordism~$M$ we can also extract a map $m_1$ with domain $S_1$, which keeps track of the neighbourhoods of 1-strata. 
Indeed, by choosing a small positive loop around a given 1-stratum and collecting the incident 2-strata together with their orientations in their cyclic order produces a map 
\be\label{eq:m1map}
m_1 : 
S_1 \lra S_3 \sqcup \bigsqcup_{m\in\Z_{+}} \underbrace{\big( (S_2 \times \{ \pm \}) \times \dots \times (S_2 \times \{ \pm \}) \big)/C_m}_{\text{cyclically ordered list of $m$ elements}} 
\ee
	where~$C_m$ is the cyclic group of order~$m$, the second argument in $S_2 \times \{ \pm \}$ is~$+$ iff the orientation of the respective 2-stratum together with the tangent vector of the loop is a positive frame for~$M$, and 
an image in $S_3$ means that the chosen 1-stratum does not touch any 2-strata (`$m=0$'), but only a single ambient 3-stratum. 
(We remark that in the case $m>0$ in \eqref{eq:m1map} the 3-strata can be recovered from the 2-strata with the help of the map $m_2$.)

Now we can codify the data and rules for decorated stratified bordisms. 
We introduce three sets $D_1,D_2,D_3$ of labels for the 1-, 2-, 3-strata of~$M$, respectively, source and target maps $s,t:D_2\to D_3$ that correspond to the map $m_2: S_2\to S_3\times S_3$, 
and a `folding map'~$f$ that corresponds to the map $m_1$ in \eqref{eq:m1map}. Of these structures, only the folding map is substantially new when compared to the 2-dimensional case of \cite{dkr1107.0495}: 

\begin{definition}
\label{def:defectdata}
By \textsl{defect data} $\mathds D$ we mean a choice of
\begin{itemize}
\item 
three sets $D_3,D_2,D_1$; 
\item 
\textsl{source} and \textsl{target maps} $s,t: D_2 \rightarrow D_3$, which are extended to maps $s,t: D_2 \times \{ \pm \} \rightarrow D_3$ by  
\begin{align}
\!\!
  s(x,+) = s(x) \, , \quad
  t(x,+) = t(x) \, , \quad 
  s(x,-) = t(x) \, , \quad
  t(x,-) = s(x) \, ;
\end{align}
\item 
a \textsl{folding map}
$$
f: D_1 \lra D_3 \sqcup \bigsqcup_{m\in\Z_{+}} \big( (D_2 \times \{ \pm \}) \times \dots \times (D_2 \times \{ \pm \}) \big)/C_m
$$
satisfying the condition that every element $( (\alpha_1, \varepsilon_1), \dots,  (\alpha_m, \varepsilon_m))$ in 
	$f(D_1) \setminus D_3$ 
has the property
$$
	s(\alpha_{i+1},\varepsilon_{i+1}) = t (\alpha_i, \varepsilon_i)
\quad\text{for all }
i\in \{1,2,\ldots,m\} \mod m \, . 
$$
\end{itemize}
\end{definition}

Finally we are in a position to define the \textsl{($\mathds D$-decorated) defect bordism category} $\Bordd$ for given defect data~$\mathds D$. 
	Roughly, objects 
and morphisms in $\Bordd$ are those of $\Bords$ together with a decoration consistent with~$\mathds D$. 
	To give the precise definition, we first define a \textsl{$\mathds D$-decorated bordism} to be bordism~$M$ in $\Bords$ together with three \textsl{label maps} $\ell_j: S_j \rightarrow D_j$ for $j \in \{ 1,2,3 \}$ which are required to be consistent with~$\mathds D$ in the sense that the diagrams
$$
\begin{tikzpicture}[
			     baseline=(current bounding box.base), 
			     >=stealth,
			     descr/.style={fill=white,inner sep=3.5pt}, 
			     normal line/.style={->}
			     ] 
\matrix (m) [matrix of math nodes, row sep=3.5em, column sep=2.0em, text height=1.1ex, text depth=0.1ex] {%
S_2 &&& D_2
\\
S_3 \times S_3 &&& D_{3} \times D_{3} 
\\
};
\path[font=\footnotesize] (m-1-1) edge[->] node[auto] {$ \ell_2 $} (m-1-4);
\path[font=\footnotesize] (m-1-1) edge[->] node[left] {$ m_2 $} (m-2-1);
\path[font=\footnotesize] (m-1-4) edge[->] node[right] {$ (s,t) $} (m-2-4);
\path[font=\footnotesize] (m-2-1) edge[->] node[below] {$ \ell_3 \times \ell_3 $} (m-2-4);
\end{tikzpicture}
$$
and
$$
\begin{tikzpicture}[
			     baseline=(current bounding box.base), 
			     >=stealth,
			     descr/.style={fill=white,inner sep=3.5pt}, 
			     normal line/.style={->}
			     ] 
\matrix (m) [matrix of math nodes, row sep=3.5em, column sep=2.0em, text height=1.1ex, text depth=0.1ex] {%
S_1 &&& S_3 \sqcup  \bigsqcup_{m\in\Z_{+}} \big( (S_2 \times \{ \pm \}) \times \dots \times (S_2 \times \{ \pm \}) \big)/C_m
\\
D_1 &&& D_3 \sqcup  \bigsqcup_{m\in\Z_{+}} \big( (D_2 \times \{ \pm \}) \times \dots \times (D_2 \times \{ \pm \}) \big)/C_m
\\
};
\path[font=\footnotesize] (m-1-1) edge[->] node[auto] {$ m_1 $} (m-1-4);
\path[font=\footnotesize] (m-1-1) edge[->] node[left] {$ \ell_1 $} (m-2-1);
\path[font=\footnotesize] (m-1-4) edge[->] node[right] {$ \ell_3 \sqcup (\ell_{2} \times 1) \times\dots\times (\ell_{2} \times 1) $} (m-2-4);
\path[font=\small] (m-2-1) edge[->] node[below] {$ f $} (m-2-4);
\end{tikzpicture}
$$
commute. We say that a $j$-stratum $M_j^\alpha$ \textsl{is decorated by} $x\in D_j$ if $\ell_j(M_j^\alpha)=x$. Similarly, a \textsl{$\mathds D$-decorated surface} is an object~$\Sigma$ in $\Bords$ together with the structure of a $\mathds D$-decorated bordism on the cylinder $\Sigma \times [0,1]$, and we say that a $(j-1)$-stratum of~$\Sigma$ \textsl{is decorated by} $x\in D_j$ if the corresponding $j$-stratum in the cylinder is decorated by~$x$. A morphism $M \to N$ of $\mathds D$-decorated bordisms is a morphism of stratified bordisms such that every stratum in~$M$ carries the same decoration as the stratum in~$N$ into which it is mapped. 

Objects in $\Bordd$ are $\mathds D$-decorated surfaces. A morphism $\Sigma \to \Sigma'$ between two such objects is an equivalence class of $\mathds D$-decorated bordisms whose underlying bordisms represent morphisms between the underlying stratified surfaces in $\Bords$ subject to the condition that the labels of strata with boundary in~$M$ match with the labels of the corresponding strata in~$\Sigma$ or~$\Sigma'$. Here, two $\mathds D$-decorated bordisms are equivalent if there is an isomorphism of $\mathds D$-decorated bordisms between them which is compatible with the boundary parameterisations. $\Bordd$ is a symmetric monoidal category with composition, identities, monoidal and symmetric structure inherited from $\Bords$. 

	An example of an object in $\Bordd$ is the decorated 2-sphere 
$$
\Sigma = 
\begin{tikzpicture}[very thick,scale=2,color=green!60!black=-0.1cm, >=stealth, baseline=0]
\clip (0,0) circle (1.0 cm);
\fill[ball color=white!95!blue] (0,0) circle (1.0 cm);
\coordinate (v1) at (-0.4,-0.6);
\coordinate (v2) at (0.4,-0.6);
\coordinate (v3) at (0.4,0.6);
\coordinate (v4) at (-0.4,0.6);
\draw[color=red!80!black, very thick, rounded corners=0.5mm, postaction={decorate}, decoration={markings,mark=at position .5 with {\arrow[draw=red!80!black]{>}}}] 
	(v2) .. controls +(0,-0.25) and +(0,-0.25) .. (v1);
\draw[color=red!80!black, very thick, rounded corners=0.5mm, postaction={decorate}, decoration={markings,mark=at position .62 with {\arrow[draw=red!80!black]{>}}}] 
	(v4) .. controls +(0,0.15) and +(0,0.15) .. (v3);
\draw[color=red!80!black, very thick, rounded corners=0.5mm, postaction={decorate}, decoration={markings,mark=at position .5 with {\arrow[draw=red!80!black]{>}}}] 
	(v4) .. controls +(0.25,-0.1) and +(-0.05,0.5) .. (v2);
\draw[color=red!80!black, very thick, rounded corners=0.5mm, postaction={decorate}, decoration={markings,mark=at position .58 with {\arrow[draw=red!80!black]{>}}}] 
	(v3) .. controls +(-0.9,0.99) and +(-0.75,0.4) .. (v1);
\draw[color=red!80!black, very thick, rounded corners=0.5mm, postaction={decorate}, decoration={markings,mark=at position .5 with {\arrow[draw=red!80!black]{>}}}] 
	(v1) .. controls +(-0.15,0.5) and +(-0.15,-0.5) .. (v4);
\draw[color=red!80!black, very thick, rounded corners=0.5mm, postaction={decorate}, decoration={markings,mark=at position .5 with {\arrow[draw=red!80!black]{>}}}] 
	(v3) .. controls +(0.25,-0.5) and +(0.25,0.5) .. (v2);
\fill (v1) circle (1.6pt) node[black, opacity=0.6, right, font=\tiny] { $+$ };
\fill (v1) circle (1.6pt) node[black, opacity=0.6, below, font=\tiny] { $x_1$ };
\fill (v2) circle (1.6pt) node[black, opacity=0.6, right, font=\tiny] { $+$ };
\fill (v2) circle (1.6pt) node[black, opacity=0.6, below, font=\tiny] { $x_2$ };
\fill (v3) circle (1.6pt) node[black, opacity=0.6, right, font=\tiny] { $-$ };
\fill (v3) circle (1.6pt) node[black, opacity=0.6, above, font=\tiny] { $x_3$ };
\fill (v4) circle (1.6pt) node[black, opacity=0.6, right, font=\tiny] { $-$ };
\fill (v4) circle (1.6pt) node[black, opacity=0.6, above, font=\tiny] { $x_4$ };
\fill (0,-0.7) circle (0pt) node[black, opacity=0.6, font=\tiny] { $\alpha_1$ };
\fill (0,0.05) circle (0pt) node[black, opacity=0.6, font=\tiny] { $\alpha_2$ };
\fill (0.7,0.1) circle (0pt) node[black, opacity=0.6, font=\tiny] { $\alpha_3$ };
\fill (-0.4,0) circle (0pt) node[black, opacity=0.6, font=\tiny] { $\alpha_4$ };
\fill (0,0.63) circle (0pt) node[black, opacity=0.6, font=\tiny] { $\alpha_5$ };
\fill (-0.77,0.45) circle (0pt) node[black, opacity=0.6, font=\tiny] { $\alpha_6$ };
\fill (0,-0.3) circle (0pt) node[black, opacity=0.6, font=\tiny] { $u_1$ };
\fill (0.3,0.25) circle (0pt) node[black, opacity=0.6, font=\tiny] { $u_2$ };
\fill (0.7,0.4) circle (0pt) node[black, opacity=0.6, font=\tiny] { $u_3$ };
\fill (-0.6,0.2) circle (0pt) node[black, opacity=0.6, font=\tiny] { $u_4$ };
\end{tikzpicture}
$$
where $u_i \in D_3$, $\alpha_j \in D_2$ and $x_k \in D_1$, and the decorated cylinder (with suppressed labels~$u_i$ for 3-strata) 
$$
\begin{tikzpicture}[very thick,scale=2,color=green!60!black=-0.1cm, >=stealth, baseline=0]
\clip (0,0) circle (1.0 cm);
\fill[ball color=white!95!blue] (0,0) circle (1.0 cm);
\coordinate (v1) at (-0.4,-0.6);
\coordinate (v2) at (0.4,-0.6);
\coordinate (v3) at (0.4,0.6);
\coordinate (v4) at (-0.4,0.6);
\newcommand{\klei}{0.35}
\coordinate (w1) at (-0.4*\klei,-0.6*\klei);
\coordinate (w2) at (0.4*\klei,-0.6*\klei);
\coordinate (w3) at (0.4*\klei,0.6*\klei);
\coordinate (w4) at (-0.4*\klei,0.6*\klei);
	\fill[ball color=white] (0,0) circle (0.35 cm);
%
\fill [red!80!black,opacity=0.3] (w2) -- (v2) .. controls +(0,-0.25) and +(0,-0.25) .. (v1) -- (w1) .. controls +(0,-0.25*\klei) and +(0,-0.25*\klei) .. (w2);
\fill [red!80!black,opacity=0.3] (w4) -- (v4) .. controls +(0,0.15) and +(0,0.15) .. (v3) -- (w3) .. controls +(0,0.15*\klei) and +(0,0.15*\klei) .. (w4);
\fill [red!80!black,opacity=0.3] (w4) -- (v4) .. controls +(0.25,-0.1) and +(-0.05,0.5) .. (v2) -- (w2) .. controls +(0.25*\klei,-0.1*\klei) and +(-0.05*\klei,0.5*\klei) .. (w4);
\fill [red!80!black,opacity=0.3] (w3) -- (v3) .. controls +(-0.9,0.99) and +(-0.75,0.4) .. (v1) -- (w1) .. controls +(-0.9*\klei,0.99*\klei) and +(-0.75*\klei,0.4*\klei) .. (w3);
\fill [red!80!black,opacity=0.3] (w1) -- (v1) .. controls +(-0.15,0.5) and +(-0.15,-0.5) .. (v4) -- (w4) .. controls +(-0.15*\klei,0.5*\klei) and +(-0.15*\klei,-0.5*\klei) .. (w1);
\fill [red!80!black,opacity=0.3] (w3) -- (v3) .. controls +(0.25,-0.5) and +(0.25,0.5) .. (v2) -- (w2) .. controls +(0.25*\klei,-0.5*\klei) and +(0.25*\klei,0.5*\klei) .. (w3);
%
\draw[thick, opacity=1, postaction={decorate}, decoration={markings,mark=at position .6 with {\arrow[draw=green!60!black]{<}}}] (w1) -- (v1);
\draw[thick, opacity=1, postaction={decorate}, decoration={markings,mark=at position .6 with {\arrow[draw=green!60!black]{<}}}] (w2) -- (v2);
\draw[thick, opacity=1, postaction={decorate}, decoration={markings,mark=at position .7 with {\arrow[draw=green!60!black]{>}}}] (w3) -- (v3);
\draw[thick, opacity=1, postaction={decorate}, decoration={markings,mark=at position .5 with {\arrow[draw=green!60!black]{>}}}] (w4) -- (v4);
\fill (v1) circle (0.9pt) node[black, opacity=1.0, right, font=\tiny] {};
\fill (v2) circle (0.9pt) node[black, opacity=1.0, right, font=\tiny] {};
\fill (v3) circle (0.9pt) node[black, opacity=1.0, right, font=\tiny] {};
\fill (v4) circle (0.9pt) node[black, opacity=1.0, right, font=\tiny] {};
\fill (w1) circle (0.9pt) node[black, opacity=1.0, right, font=\tiny] {};
\fill (w2) circle (0.9pt) node[black, opacity=1.0, right, font=\tiny] {};
\fill (w3) circle (0.9pt) node[black, opacity=1.0, right, font=\tiny] {};
\fill (w4) circle (0.9pt) node[black, opacity=1.0, right, font=\tiny] {};
\fill (0,-0.7) circle (0pt) node[black, opacity=0.6, font=\tiny] { $\alpha_1$ };
\fill (0.05,0.1) circle (0pt) node[black, opacity=0.6, font=\tiny] { $\alpha_2$ };
\fill (0.45,0.1) circle (0pt) node[black, opacity=0.6, font=\tiny] { $\alpha_3$ };
\fill (-0.4,-0.2) circle (0pt) node[black, opacity=0.6, font=\tiny] { $\alpha_4$ };
\fill (0,0.63) circle (0pt) node[black, opacity=0.6, font=\tiny] { $\alpha_5$ };
\fill (-0.6,0.3) circle (0pt) node[black, opacity=0.6, font=\tiny] { $\alpha_6$ };
\fill (-0.17,-0.43) circle (0pt) node[black, opacity=0.6, font=\tiny] { $x_1$ };
\fill (0.17,-0.43) circle (0pt) node[black, opacity=0.6, font=\tiny] { $x_2$ };
\fill (0.39,0.4) circle (0pt) node[black, opacity=0.6, font=\tiny] { $x_3$ };
\fill (-0.33,0.35) circle (0pt) node[black, opacity=0.6, font=\tiny] { $x_4$ };
\end{tikzpicture}
$$
represents the identity morphism on~$\Sigma$.

\section{Tricategories from defect TQFTs}
\label{sec:tricatfromTQFT}

For any choice of defect data~$\D$ and field~$\Bbbk$, we have arrived at our central object of study: 
\begin{definition}
\label{def:defectTQFT}
A \textsl{3-dimensional defect TQFT} is a symmetric monoidal functor
$$
\zz: \Bordd \lra \Vect_\Bbbk \, .
$$
\end{definition}
The goal of this section is to extract a 3-categorical structure from any given defect TQFT~$\zz$ which captures as much of~$\zz$ as possible. 
We achieve this goal in Theorems~\ref{thm:TZGray} and~\ref{thm:TZGrayduals} by constructing a certain type of tricategory with extra structure from~$\zz$, namely a `$\Bbbk$-linear Gray category with duals' $\tz$. 

We shall perform this task in two main steps. 
The first step is to define objects, 1- and 2-morphisms for $\tz$, and this involves only  the choice of defect data~$\D$, with no need for the functor~$\zz$. 
In this sense we may think of the first three layers of $\tz$ as purely `kinematic'. 
On the other hand, the second step is about `dynamics': 3-morphisms and the dualities of $\tz$ are built by evaluating~$\zz$ on particular defect bordisms. 

The programme for this section is thus as follows. 
We will review all relevant 2- and 3-categorical structures -- in particular Gray categories -- in Section~\ref{subsec:free2andGray}, and discuss various notions of duality in Section~\ref{subsec:duals}. 
We also present the 2- and 3-dimensional graphical calculus which we shall employ extensively. 
Then in Section~\ref{subsec:tricatfromZ} we construct the Gray category $\tz$ associated to a defect TQFT~$\zz$, and in Section~\ref{subsec:GraycatwithdualsfromZ} we prove that $\tz$ in fact naturally carries the structure of a Gray category with duals.

\subsection{Free 2-categories and Gray categories}
\label{subsec:free2andGray}

This section serves as a brief review of free bicategories and Gray categories. We also recall their respective graphical calculi, state our conventions, and construct the free 2-category associated to a set of defect data~$\D$. 

\subsubsection{Bicategories}
\label{subsubsec:bicats}

We assume familiarity with the basics of bicategories and their 2-functors, as presented for example in \cite[Sect.\,1.5]{LeinsterBook1} and \cite[App.\,A.3]{GregorDiss}. 
By a \textsl{2-category} we mean a strict bicategory, i.\,e.~one whose associator and unitor 2-morphisms are all identities. 
Put differently, a 2-category is a category enriched over Cat. 

Explicitly, in a 2-category~$\B$ there are strictly associative and unital compositions $\fus$ and $\circ$ of 1- and 2-morphisms, respectively, together with strict functors 
$$
X \otimes (-): \B(\gamma,\alpha) \lra \B(\gamma,\beta)
\, , \quad 
(-) \otimes X : \B(\beta,\gamma) \lra \B(\alpha,\gamma) 
$$ 
for all $X \in \B(\alpha,\beta)$, where by convention $X \fus (-)$ sends a morphism~$\Phi$ to $1_X \fus \Phi$. 
The tensor product~$\fus$ is required to satisfy the \textsl{interchange law} 
\be\label{eq:interchangelaw}
\big(\Psi \otimes 1_{X'} \big) \circ \big(1_Y \otimes \Phi\big) 
=
\big( 1_{Y'} \otimes \Phi \big) \circ \big( \Psi \otimes 1_{X} \big) 
\ee
for all 2-morphisms $\Phi: X \rightarrow X'$ and $\Psi: Y \rightarrow Y'$. 
If one keeps the same data but does not require the interchange law, one arrives at the notion of a \textsl{pre-2-category}.  

By generalising Mac Lane's strictification and coherence results for monoidal categories \cite{Maclane} to bicategories, one obtains that every bicategory~$\B$ is equivalent to a 2-category $\B^{\textrm{str}}$, and that for every (weak) 2-functor  $F: \mathcal B \rightarrow \mathcal C$ there exists a strict 2-functor $F^{\textrm{str}} : \B^{\textrm{str}} \rightarrow \mathcal C^{\textrm{str}}$ such that the diagram
$$
\begin{tikzpicture}[
			     baseline=(current bounding box.base), 
			     >=stealth,
			     descr/.style={fill=white,inner sep=3.5pt}, 
			     normal line/.style={->}
			     ] 
\matrix (m) [matrix of math nodes, row sep=3.5em, column sep=2.0em, text height=1.5ex, text depth=0.1ex] {%
\B &&& \mathcal C
\\
\B^{\textrm{str}} &&& \mathcal C^{\textrm{str}}
\\
};
\path[font=\footnotesize] (m-1-1) edge[->] node[auto] {$ F $} (m-1-4);
\path[font=\footnotesize] (m-1-1) edge[->] node[left] {$ \cong $} (m-2-1);
\path[font=\footnotesize] (m-1-4) edge[->] node[right] {$ \cong $} (m-2-4);
\path[font=\footnotesize] (m-2-1) edge[->] node[below] {$ F^{\textrm{str}} $} (m-2-4);
\end{tikzpicture}
$$
commutes. In this sense bicategories can be completely strictified. 

One class of bicategories which will be relevant for us are those `freely generated by a computad'. 
Let us first recall that by unpacking the definition of \cite[Sect.\,2]{StreetComputad}, a \textsl{computad}~$\K$ consists of three sets 
$\K_0, \K_1, \K_2$ together with four ($\sigma$ource and $\tau$arget) maps $\sigma_0, \tau_0, \sigma_1, \tau_1$ as follows: 
$\sigma_1$ and~$\tau_1$ are maps $\K_1 \rightarrow \K_2$, from which we define `sets of allowed $\K_1$-words' 
$$
\K_1^{(m)} = 
\Big\{ 
(k_1, k_2, \dots, k_m) \in \K_1^{\times m} \,\Big|\, \sigma_1(k_{j+1}) = \tau_1(k_j) \text{ for all } 
	j \in \{ 1,2,\dots,m-1\} 
\Big\}
$$
for all $m\in \Z_+$, and for convenience we set $\K_1^{(0)} = \K_2$. 
Then by definition $\sigma_0$ and $\tau_0$ are maps $\K_0 \rightarrow \bigsqcup_{m\in\N} \K_1^{(m)}$ 
	compatible with $\sigma_1, \tau_1$ in the sense that 
	$$
	\sigma_1  \sigma_0 = \sigma_1  \tau_0
	\, , \quad
	\tau_1  \tau_0 = \tau_1  \sigma_0
	$$
	where for all $m\in \Z_+$ we set $\sigma_1(k_1, \dots, k_m) = \sigma_1(k_1)$ and $\tau_1(k_1, \dots, k_m) = \tau_1(k_m)$.\footnote{Two special cases of a computad~$\K$ are as follows: if $\K_2$ has only one element, then~$\K$ is a tensor scheme as in \cite{JoyalStreetGeo}; and if $\sigma_0, \tau_0$ have images only in $\K_1^{(1)}$, then~$\K$ is a 2-globular set.}

Computads form the objects of a category Comp. 
There is a forgetful functor from the category of 2-categories to Comp which has a left adjoint~$\mathcal F$. 
The \textsl{free 2-category} $\mathcal F\K$ associated to a computad~$\K$ has a geometric description in terms of string diagrams \cite{StreetComputad}. 
We briefly review this construction; contrasting it with our 3-categorical construction in Section~\ref{subsec:tricatfromZ} will prove useful. 
\begin{itemize}
\item 
Objects of $\mathcal F \K$ are unit squares labelled with elements $\alpha \in \K_2$: 
$$
\begin{tikzpicture}[thick,scale=3.0,color=black, baseline=0cm]
\fill [blue!40,opacity=0.4] (0,0) -- (0,1) -- (1,1) -- (1,0);
\draw[line width=1] (0.875,0.15) node[line width=0pt] (alpha) {{\footnotesize $\alpha$}};
\end{tikzpicture}
$$
\item 
1-morphisms $X:\alpha \rightarrow \beta$ are isotopy classes of (unoriented) stratified unit squares with no 0-strata, the 1-strata are $m\in \N$ horizontal lines labelled by elements $(X_1,X_2,\dots,X_m) \in \K_1^{(m)}$ from bottom to top, and the $m+1$ 2-strata are labelled with elements 
	$\alpha_{1}, \alpha_{2},\dots, \alpha_{m+1} \in \K_2$ 
such that $\sigma_1(X_1) = \alpha = \alpha_1$, $\sigma_1(X_{j+1}) = \tau_1 (X_{j})$ for all $j \in \{ 1,2,\dots,m-1 \}$, and $\tau_1(X_m) = \beta = \alpha_{m+1}$. For example: 
\be
\label{eq:example1minFK}
\begin{tikzpicture}[thick,scale=3.0,color=black, baseline=0cm, rotate=90]
\fill [blue!40,opacity=0.4] (0,0) -- (0,1) -- (0.25,1) -- (0.25,0);
\fill [blue!50,opacity=0.5] (0.25,0) -- (0.25,1) -- (0.5,1) -- (0.5,0);
\fill [blue!30,opacity=0.4] (0.5,0) -- (0.5,1) -- (0.75,1) -- (0.75,0);
\fill [blue!50,opacity=0.4] (0.75,0) -- (0.75,1) -- (1,1) -- (1,0);
\draw[line width=1] (0.125,0.4) node[line width=0pt] (alpha) {{\footnotesize $\;\;\;\;\;\;\alpha_{1} = \alpha$}};
\draw[line width=1] (0.375,0.4) node[line width=0pt] (alpha) {{\footnotesize $\;\;\;\;\;\;\alpha_{2}\phantom{ = \beta}$}};
\draw[line width=1] (0.625,0.4) node[line width=0pt] (alpha) {{\footnotesize $\;\;\;\;\;\;\alpha_{3}\phantom{ = \beta}$}};
\draw[line width=1] (0.875,0.4) node[line width=0pt] (alpha) {{\footnotesize $\;\;\;\;\;\;\alpha_{4} = \beta$}};
%
\draw[
	color=red!60, 
	very thick
	] 
 (0.25,0) --  (0.25,1);
\draw[
	color=red!90!black, 
	very thick
	] 
 (0.5,0) --  (0.5,1);
\draw[
	color=red!90, 
	very thick
	] 
 (0.75,0) --  (0.75,1);
\draw[line width=1] (0.33,0.8) node[line width=0pt] (alpha) {{\footnotesize $X_1$}};
\draw[line width=1] (0.58,0.8) node[line width=0pt] (alpha) {{\footnotesize $X_2$}};
\draw[line width=1] (0.83,0.8) node[line width=0pt] (alpha) {{\footnotesize $X_3$}};
\end{tikzpicture}
\ee
Only isotopies which leave the lines straight and respect their order are considered. We call such isotopies \textsl{linear}. 
\item 
2-morphisms $\Phi: X \rightarrow Y$ are equivalence classes of  string diagrams which stratify the unit square 
and are  \textsl{progressive}, e.\,g.~such that the vertical projection map restricted to each 1-stratum is a diffeomorphism onto its image.
 Moreover, there must be a 
 neighbourhood of the right and left boundaries of the square in which the diagram looks like the (squeezed) squares~$X$ and~$Y$, respectively.\footnote{Our unusual convention of reading string diagrams from right to left, as opposed to vertically, will become advantageous once they will appear in 3-dimensional diagrams below.} 
Such a diagram consists of embedded progressive 
lines (the 1-strata) which may meet at points (the 0-strata) in the interior, but not with the top and bottom boundaries. 
All $j$-strata are decorated with elements of $\K_j$ such that the source and target relations are satisfied. 
As in the example below, we will however often suppress such decorations: 
$$
\begin{tikzpicture}[thick,scale=3.0,color=black, baseline=0cm]
\fill [blue!60,opacity=0.4] (0,0) -- (0,1) -- (1,1) -- (1,0);
\fill [blue!10,opacity=0.9] 
(0,0) -- (1,0) -- (1,0.15) .. controls (0.9, 0.15) and (0.8, 0.1) .. (0.7, 0.25) -- (0.7, 0.25) .. controls (0.6, 0.3) and (0.4, 0.35) .. (0.25, 0.55) -- (0.25, 0.55) .. controls (0.2, 0.45) and (0.1, 0.4) .. (0, 0.4);
\fill [blue!40,opacity=0.9] 
(1,0.15) .. controls (0.9, 0.15) and (0.8, 0.1) .. (0.7, 0.25) -- (0.7, 0.25) .. controls (0.8, 0.35) and (0.9, 0.35) .. (1,0.35) -- (1,0.15);
\fill [blue!15,opacity=0.9] 
(1,1) -- (1, 0.69) .. controls (0.9, 0.69) and (0.6, 0.75) .. (0.25, 0.55) -- (0.25, 0.55) .. controls (0.2, 0.75) and (0.1, 0.8) .. (0, 0.8) -- (0,1) -- (1,1);
\fill [blue!35,opacity=0.9] 
(0.25, 0.55) .. controls (0.2, 0.65) and (0.1, 0.6) .. (0, 0.6) -- (0,0.4) .. controls (0.1, 0.4) and (0.2, 0.45) .. (0.25, 0.55); 
%
\draw[color=red!60, 
	very thick,
	opacity=1.0] 
	 (1,0.15) .. controls (0.9, 0.15) and (0.8, 0.1) .. (0.7, 0.25);
\draw[color=red!70, 
	very thick,
	opacity=1.0] 
	 (1,0.35) .. controls (0.9, 0.35) and (0.8, 0.35) .. (0.7, 0.25);
\draw[color=red!90, 
	very thick,
	opacity=1.0] 
	 (0.7, 0.25) .. controls (0.6, 0.3) and (0.4, 0.35) .. (0.25, 0.55);
\draw[color=red!60, 
	very thick,
	opacity=1.0] 
	 (1, 0.69) .. controls (0.9, 0.69) and (0.6, 0.75) .. (0.25, 0.55);
\draw[color=red!60, 
	very thick,
	opacity=1.0] 
	 (0.25, 0.55) .. controls (0.2, 0.65) and (0.1, 0.6) .. (0, 0.6);
\draw[color=red!60, 
	very thick,
	opacity=1.0] 
	 (0.25, 0.55) .. controls (0.2, 0.75) and (0.1, 0.8) .. (0, 0.8);
\draw[color=red!60, 
	very thick,
	opacity=1.0] 
	 (0.25, 0.55) .. controls (0.2, 0.45) and (0.1, 0.4) .. (0, 0.4);
%
\fill[color=green!50!black] (0.7, 0.25) circle (0.9pt) node[above] (0up) {};
\fill[color=green!50!black] (0.25, 0.55) circle (0.9pt) node[above] (0up) {};
\end{tikzpicture}
$$
Two diagrams are equivalent if there is an isotopy of progressive diagrams relating the two. 
In particular this means that the interchange law simply reads
\be\label{eq:2dinterchangelaw}
\begin{tikzpicture}[thick,scale=3.0,color=black, baseline=1.4cm]
\fill [blue!40,opacity=0.4] (0,0) -- (1,0) -- (1,0.33) -- (0,0.33);
\fill [blue!50,opacity=0.5] (0,0.33) -- (1,0.33) -- (1,0.66) -- (0,0.66);
\fill [blue!30,opacity=0.4] (0,0.66) -- (1,0.66) -- (1,1) -- (0,1);
%
\draw[color=red!60, 
	very thick,
	opacity=1.0] 
	 (1,0.33) -- (0.66, 0.33);
\draw[color=red!90!black, 
	very thick,
	opacity=1.0] 
	 (0.66, 0.33) -- (0,0.33);
\draw[color=red!80!black, 
	very thick,
	opacity=1.0] 
	 (1,0.66) -- (0.33, 0.66);
\draw[color=red!80, 
	very thick,
	opacity=1.0] 
	 (0.33, 0.66) -- (0, 0.66);
%
\fill[color=green!60!black] (0.66, 0.33) circle (0.9pt) node[above] (0up) {};
\fill[color=green!50!black] (0.33, 0.66) circle (0.9pt) node[above] (0up) {};
\end{tikzpicture}
=
\begin{tikzpicture}[thick,scale=3.0,color=black, baseline=1.4cm]
\fill [blue!40,opacity=0.4] (0,0) -- (1,0) -- (1,0.33) -- (0,0.33);
\fill [blue!50,opacity=0.5] (0,0.33) -- (1,0.33) -- (1,0.66) -- (0,0.66);
\fill [blue!30,opacity=0.4] (0,0.66) -- (1,0.66) -- (1,1) -- (0,1);
%
\draw[color=red!60, 
	very thick,
	opacity=1.0] 
	 (1,0.33) -- (0.33, 0.33);
\draw[color=red!90!black, 
	very thick,
	opacity=1.0] 
	 (0.33, 0.33) -- (0,0.33);
\draw[color=red!80!black, 
	very thick,
	opacity=1.0] 
	 (1,0.66) -- (0.66, 0.66);
\draw[color=red!80, 
	very thick,
	opacity=1.0] 
	 (0.66, 0.66) -- (0, 0.66);
%
\fill[color=green!60!black] (0.33, 0.33) circle (0.9pt) node[above] (0up) {};
\fill[color=green!50!black] (0.66, 0.66) circle (0.9pt) node[above] (0up) {};
\end{tikzpicture}
\ee
in the free 2-category $\mathcal F \K$. 
\item 
Horizontal composition of 1- and 2-morphisms is given by vertical(!) alignment of the respective squares, followed by a rescaling (to end up with a square again; the precise way in which the rescaling is performed is irrelevant, since different rescalings are related by isotopies). 
Similarly, vertical composition of 2-morphisms is defined by horizontal alignment. 
The unit 1-morphism $1_\alpha$ on an object~$\alpha$ is the diagram for~$\alpha$ (regarded as 1-morphism with no lines), and the unit 2-morphism $1_X$ on a 1-morphism~$X$ is the diagram for~$X$. 
\end{itemize}

Slight variations of the above construction lead to two closely related bicategorical structures. 
Firstly, the \textsl{free pre-2-category} $\mathcal F^{\textrm{p}} \K$ associated to~$\K$ has the same diagrams for its objects, 1- and 2-morphisms as $\mathcal F \K$, but the equivalence relation on 2-morphisms is different: 
two string diagrams are equivalent in $\mathcal F^{\textrm{p}} \K$ iff they are related by a \textsl{progressive isotopy}. 
This is an isotopy of progressive diagrams where no two horizontal coordinates of distinct vertices are allowed to coincide during the isotopy. 
Thus, the equality \eqref{eq:2dinterchangelaw} does not hold in $\mathcal F^{\textrm{p}} \K$.

Secondly, the \textsl{free bicategory} $\mathcal F^{\textrm{b}} \K$ associated to~$\K$ has again the same diagrams for its objects, 1- and 2-morphisms as $\mathcal F \K$, and 2-morphisms in $\mathcal F^{\textrm{b}} \K$ are isotopy classes of string diagrams as in $\mathcal F \K$. 
However, non-indentical (but possibly isotopic) diagrams for 1-morphisms are not identified in $\mathcal F^{\textrm{b}} \K$; (objects and) 1-morphisms simply \textsl{are} diagrams here. 
Identities between 1-morphisms in $\mathcal F \K$ generically amount only to 2-isomorphisms in $\mathcal F^{\textrm{b}} \K$, and horizontal composition is not strictly associative or unital in $\mathcal F^{\textrm{b}} \K$. 

\medskip

We shall now use the above construction to produce the free 2-category associated to a set of defect data $\D = (D_3, D_2, D_1, s, t, f)$ (cf.~Definition~\ref{def:defectdata}). 
The first step is to identify the computad $\K^{\D}$ that~$\D$ naturally gives rise to. 
For this we abbreviate $\K^{\D} = \K$ and set 
\be
\label{eq:K2K1}
\K_2 = D_3 
\, , \quad
	\K_1 = D_3 \sqcup ( D_2 \times \{ \pm \} )
\, , \quad
\begin{tikzpicture}[
			     baseline=(current bounding box.base), 
			     >=stealth,
			     descr/.style={fill=white,inner sep=3.5pt}, 
			     normal line/.style={->},
			     baseline=-0.1cm
			     ] 
\matrix (m) [matrix of math nodes, row sep=3.5em, column sep=3.3em, text height=1.5ex, text depth=0.1ex] {%
	\K_1 \supset D_2 \times \{ \pm \}
&& \K_2 \, ,
\\
};
\path[font=\footnotesize] 
	(m-1-1) edge[->, out = 10, in=160] node[above, sloped] {$ \sigma_1|_{D_2 \times \{ \pm \}} = s $} (m-1-3)
	(m-1-1) edge[->, out = -10, in=200] node[below, sloped] {$ \tau_1|_{D_2 \times \{ \pm \}} = t $} (m-1-3);
\end{tikzpicture}
\ee
	and $\sigma_1(u) = u = \tau_1(u)$ if $u\in D_3 \subset \K_1$, 
and we immediately extend $\sigma_1, \tau_1$ to $\K_1^{(m)}$ by setting $\sigma_1\big( (\alpha_1,\varepsilon_1), \dots, (\alpha_m,\varepsilon_m) \big) = \sigma_1(\alpha_1,\varepsilon_1)$ and $\tau_1\big( (\alpha_1,\varepsilon_1), \dots, (\alpha_m,\varepsilon_m) \big) = \tau_1(\alpha_m,\varepsilon_m)$ for all $\big((\alpha_1,\varepsilon_1), \dots, (\alpha_m,\varepsilon_m)\big) \in \K_1^{(m)}$ and $m\in \Z_+$. 
Writing 
$$
\big( (\alpha_1,\varepsilon_1), \dots, (\alpha_m,\varepsilon_m) \big)^\# 
=
\big( (\alpha_m, -\varepsilon_m), \dots, (\alpha_1, -\varepsilon_1) \big)
$$
for the reversed list, we set 
$$
	\K_0 = D_3 \sqcup \bigsqcup_{m,m' \in \N, \, (m,m') \neq (0,0)} \; \bigsqcup_{A,A'} f^{-1} \big(A' \sta A^\# \big )
$$
where the second disjoint union is over all pairs $(A,A') \in \K_1^{(m)} \times \K_1^{(m')}$ with 
	$\sigma_1(A) = \sigma_1(A')$ and $\tau_1(A) = \tau_1(A')$ if $m,m' >0$, $\sigma_1(A) = \tau_1(A)$ if $m'=0$, $\sigma_1(A') = \tau_1(A')$ if $m=0$, 
and `$\sta$' denotes 
	concatenation of lists (where we treat~$A^\#$ (resp.~$A'$) as the empty list if~$m'$ (resp.~$m$) is zero). 
Finally, the maps $\sigma_0, \tau_0$ are defined by
$$
X \in f^{-1} \big(A' \sta A^\# \big ) 
\quad
\implies
\quad
\sigma_0 (X) = A \, , 
\quad
\tau_0(X) = A' 
$$
	for $X \in \K_0 \setminus D_3$ with $(A,A') \in \K_1^{(m)} \times \K_1^{(m')}$ with $(m,m') \neq (0,0)$, and $\sigma_0(X) = X = \tau_0(X)$ if $X\in D_3 \subset \K_0$. 

\begin{lemma}
$\K^{\D}$ is a computad. 
\end{lemma}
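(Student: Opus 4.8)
The plan is to unwind the definition of computad recalled above and check, piece by piece, that the construction of $\K = \K^{\D}$ supplies exactly the required data; nothing beyond elementary bookkeeping is involved. First I would note that $\K_2 = D_3$, $\K_1 = D_3 \sqcup (D_2 \times \{\pm\})$ and $\K_0$ are indeed sets, being built from the sets $D_1, D_2, D_3$ by finite disjoint unions, Cartesian products, quotients by the cyclic groups~$C_m$, and passage to preimages under~$f$. Next, $\sigma_1, \tau_1 \colon \K_1 \to \K_2$ are well defined: on the summand $D_2 \times \{\pm\}$ they are the maps $s$ and $t$ of the defect data, which take values in $D_3 = \K_2$, and on the summand $D_3 \subset \K_1$ they are the identity; their extension to $\bigsqcup_m \K_1^{(m)}$ by $\sigma_1(k_1,\dots,k_m) = \sigma_1(k_1)$, $\tau_1(k_1,\dots,k_m) = \tau_1(k_m)$, with the convention that an element of $\K_1^{(0)} = \K_2$ is its own source and target, is then immediate.

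The only part with any content is the verification for $\sigma_0, \tau_0$. Here I would exploit that $\K_0 \setminus D_3$ is \emph{by construction} the disjoint union, over the tuples $(m,m',A,A')$ with $(A,A') \in \K_1^{(m)} \times \K_1^{(m')}$ subject to the stated matching conditions, of the sets $f^{-1}(A' \sta A^\#)$; so an element $X \in \K_0 \setminus D_3$ carries this indexing datum, and one simply declares $\sigma_0(X) = A \in \K_1^{(m)}$ and $\tau_0(X) = A' \in \K_1^{(m')}$, together with $\sigma_0(X) = X = \tau_0(X)$ for $X \in D_3 \subset \K_0$. These maps land in $\bigsqcup_{m\in\N} \K_1^{(m)}$ by design, so it only remains to check the two compatibility identities $\sigma_1\sigma_0 = \sigma_1\tau_0$ and $\tau_1\tau_0 = \tau_1\sigma_0$. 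On $D_3 \subset \K_0$ both sides of each identity are the identity. On the summand indexed by $(m,m',A,A')$ the first identity reads $\sigma_1(A) = \sigma_1(A')$ and the second reads $\tau_1(A) = \tau_1(A')$; for $m,m' > 0$ these are precisely two of the conditions cutting out the indexing set, and in the degenerate cases $m=0$ or $m'=0$ -- where $A$ or $A'$ is a single element of $\K_2$, serving as its own $\sigma_1$ and $\tau_1$ -- the remaining stated conditions (e.g.\ $\sigma_1(A) = \tau_1(A)$ when $m'=0$) supply what is needed.

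The one step that takes a moment's thought -- which I would isolate as a short remark -- is that the matching conditions on $(A,A')$ are the natural ones, namely that they are exactly what makes the cyclic orbit of $A' \sta A^\#$ a well-formed element of the codomain of~$f$. For this one records that the reversal $(\cdot)^\#$ interacts with source and target by $\sigma_1(A^\#) = \tau_1(A)$ and $\tau_1(A^\#) = \sigma_1(A)$, which follows at once from the identities $s(x,-) = t(x)$ and $t(x,-) = s(x)$ of Definition~\ref{def:defectdata} together with the definition of $(\cdot)^\#$; combined with $\tau_1(A) = \tau_1(A')$ this yields composability of the last letter of~$A'$ with the first letter of~$A^\#$, and combined with $\sigma_1(A) = \sigma_1(A')$ it yields the cyclic closure. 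Strictly this observation is not logically needed for~$\K$ to be a computad -- the codomain of~$f$ is the set of \emph{all} cyclic tuples, irrespective of composability -- but it is the only place where the precise shape of the matching conditions enters, so it is worth noting. I do not anticipate any real obstacle here: the entire content of the Lemma is that $\K^{\D}$ was defined so as to make this bookkeeping go through.
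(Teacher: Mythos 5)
Your verification is correct and is exactly the content behind the paper's own one-line proof, which simply asserts that the claim ``follows immediately from the properties of~$\D$''; there is no difference in approach, only in the level of detail you supply. The one point deserving a second glance is the degenerate case $m=0$ or $m'=0$, where the compatibility $\sigma_1\sigma_0=\sigma_1\tau_0$ (resp.\ $\tau_1\tau_0=\tau_1\sigma_0$) forces the single $\K_2$-element in the pair $(A,A')$ to be the common value $\sigma_1=\tau_1$ of the other member, so the indexing set must be read with that identification understood --- which is the intended reading and is what your argument in effect uses.
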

\begin{proof}
This follows immediately from the properties of~$\D$. 
\end{proof}

\begin{corollary}
Every set of defect data~$\D$ gives rise to 
	the free 2-category $\mathcal F \K^{\D}$, the free bicategory $\mathcal F^{\textrm{b}} \K^{\D}$, and 
the free pre-2-category $\mathcal F^{\textrm{p}} \K^{\D}$. 
\end{corollary}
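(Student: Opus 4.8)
The statement carries no real difficulty: it is just the instantiation of the three general constructions recalled above in this subsection at the particular computad $\K^\D$. The plan is therefore as follows. First I would invoke the preceding Lemma, which says that $\K^\D$ — with $\K_2 = D_3$, $\K_1 = D_3 \sqcup (D_2 \times \{\pm\})$, the set $\K_0$ as in the displayed formula, and the source/target maps $\sigma_0,\tau_0$ and $\sigma_1,\tau_1$ defined around~\eqref{eq:K2K1} — is a computad in the sense of \cite[Sect.\,2]{StreetComputad}. (For the Lemma itself the only points to unwind are that $\sigma_0,\tau_0$ are well defined on $\K_0$, which holds because $\K_0 \setminus D_3$ is assembled as a \emph{disjoint} union indexed by the admissible pairs $(A,A')$, so that each $X$ carries its pair $(A,A') = (\sigma_0(X),\tau_0(X))$ with it; and that $\sigma_1\sigma_0 = \sigma_1\tau_0$ together with $\tau_1\tau_0 = \tau_1\sigma_0$, which is precisely the source/target matching imposed on those pairs, combined with the folding-map condition $s(\alpha_{i+1},\varepsilon_{i+1}) = t(\alpha_i,\varepsilon_i)$ of Definition~\ref{def:defectdata}. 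All of this is immediate from the properties of~$\D$.)

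With the computad $\K^\D$ in hand, I would simply feed it into the three constructions. Applying the left adjoint~$\mathcal F$ of the forgetful functor from 2-categories to Comp yields the free 2-category $\mathcal F\K^\D$: its objects are unit squares labelled by elements of $D_3$, its 1-morphisms are linear-isotopy classes of stratified squares whose horizontal 1-strata are labelled by words in $\K_1$ (subject to the $\sigma_1,\tau_1$-matching) and whose intervening 2-strata are labelled by $D_3$, and its 2-morphisms are isotopy classes of progressive string diagrams with 0-strata labelled by elements of $\K_0$. Weakening the equivalence relation on 2-morphisms to progressive isotopy gives the free pre-2-category $\mathcal F^{\textrm{p}}\K^\D$, and not identifying non-identical diagrams for 1-morphisms gives the free bicategory $\mathcal F^{\textrm{b}}\K^\D$; these are exactly the two variants described above.

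I do not expect any genuine obstacle here, since the corollary is purely formal once the Lemma is granted. If anything requires care, it is the bookkeeping in the second disjoint union defining $\K_0$: one should check that whenever $A \in \K_1^{(m)}$ and $A' \in \K_1^{(m')}$ satisfy the stated matching conditions, the concatenation $A' \sta A^\#$ is again an admissible word (so that $f^{-1}(A' \sta A^\#) \subseteq D_1$ makes sense) and that list-concatenation is compatible with the cyclic quotients appearing in the codomain of the folding map. Both are short verifications using the relations $s(x,-) = t(x)$, $t(x,-) = s(x)$ from Definition~\ref{def:defectdata}, and they are precisely the content that the Lemma records.
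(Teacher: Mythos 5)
Your proposal is correct and matches the paper's (implicit) argument exactly: the corollary is stated without proof precisely because it is the instantiation of the three free constructions at the computad $\K^{\D}$, which the preceding Lemma (itself proved by "this follows immediately from the properties of~$\D$") supplies. Your additional remarks unpacking why $\sigma_0,\tau_0$ are well defined and why the concatenations $A'\sta A^\#$ are admissible are consistent with, and slightly more explicit than, what the paper records.
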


In Section~\ref{subsec:tricatfromZ} we will be guided to make a tricategory out of $\mathcal \K^{\D}$ by not necessarily identifying isotopic string diagrams, and by adding a third dimension to relate them. 
However, first we have to discuss the type of tricategories which we shall consider.

\subsubsection{Tricategories}
\label{subsubsec:tricats}

A (strict) 3-category is easily defined: it is a category enriched over (strict) 2-categories. 
Relaxing associativity and other relations of the various compositions to hold only up to higher morphisms, subject to certain coherence conditions, leads to the notion of a `tricategory' \cite{GPS, Gurskibook}. 
In this sense tricategories are to bicategories as bicategories are to categories. 
Yet this analogy is only superficial: contrary to the 2-dimensional situation, not every tricategory is equivalent to a strict 3-category.\footnote{Every tricategory with only one object and one 1-morphism whose braided monoidal category of 2- and 3-morphisms is non-symmetric is a counterexample.} 
This is one reason why 3-dimensional category theory (and accordingly 3-dimensional TQFT) is considerably richer than the 2-dimensional case. 

While a generic tricategory cannot be completely strictified, it was shown in \cite{GPS} that `the next best thing' is true: every tricategory is equivalent to a tricategory whose compositions are all strictly associative and unital, but for which the interchange law may only hold up to isomorphism. 
Such tricategories are called `Gray categories' as they are categories enriched over the symmetric monoidal category of 2-categories and strict 2-functors with the `Gray tensor product', cf.~\cite{GrayBook}. 
Another way to concisely define Gray categories is as `strict opcubical tricategories'. 
For our purposes though it is appropriate to unpack this into building blocks which are familiar from 1- and 2-category theory:\footnote{It was shown in \cite{GPS} that Definition~\ref{def:Graycat} is equivalent to the notion of strict opcubical tricategory.
} 

\begin{definition}
\label{def:Graycat}
A \textsl{Gray category}~$\G$ consists of the following data: 
\begin{enumerate}
\item
a set of \textsl{objects} denoted $\Obj(\G)$, or simply~$\G$; 
\item 
for all $u,v \in \G$, a 2-category $\G(u,v)$ of \textsl{1-}, \textsl{2-}, and \textsl{3-morphisms}, with horizontal and vertical composition denoted `$\fus$' and `$\circ$'; 
\item 
for all $u\in \G$, a 1-morphism $1_u \in \G(u,u)$; 
\item 
for all $u,v,w \in \G$ and 
	all 1-morphisms $\alpha \in \G(u,v)$, 
(strictly associative and strictly unital) strict 2-functors 
$$
\alpha \sta(-): \G(w,u) \lra \G(w,v)
\, , \quad 
(-) \sta \alpha : \G(v,w) \lra \G(u,w) 
\, ;
$$ 
\item
for all 2-morphisms $Y: \alpha \rightarrow \alpha'$ in $\G(u,v)$ and all 2-morphisms $X: \beta \rightarrow \beta'$ in $\G(v,w)$, an invertible 3-morphism  
\be\label{eq:tensorator}
\sigma_{X,Y} : 
\big( X \sta 1_{\alpha'} \big) \fus \big( 1_{\beta} \sta Y \big) 
\lra 
\big( 1_{\beta'} \sta Y \big) \fus \big( X \sta 1_\alpha \big)
\ee
called \textsl{tensorator}, natural in~$X$ and~$Y$. 
\end{enumerate}
The tensorator is subject to the following conditions: 
\begin{enumerate}
	\setcounter{enumi}{5}
\item 
$\sigma_{X, 1_{\alpha}} = 1_{X \sta 1_{\alpha}}$ 
and 
$\sigma_{1_{\beta}, Y} = 1_{1_{\beta} \sta Y}$; 
\item 
for all 2-morphisms $Y': \alpha' \to \alpha''$ and $X': \beta' \to \beta''$, 
\begin{align*}
\sigma_{X, Y' \fus Y} & = \Big( \big( 1_{1_{\beta'}} \sta 1_{Y'} \big) \fus \sigma_{X,Y} \Big) \circ \Big( \sigma_{X,Y'} \fus \big( 1_{1_{\beta}} \sta 1_Y \big) \Big) \, , \\
\sigma_{X' \fus X, Y} & = \Big( \sigma_{X',Y} \fus \big( 1_X \sta 1_{1_{\alpha}} \big) \Big) \circ \Big( \big( 1_{X'} \sta 1_{1_{\alpha'}} \big) \fus \sigma_{X,Y} \Big) \; ; 
\end{align*}
\item
for all 1-morphisms~$\alpha$ and 2-morphisms $X,Y$ such that the following expressions are defined: 
$$
\sigma_{X \sta 1_\alpha, Y} = \sigma_{X, 1_\alpha \sta Y}
\, , \quad
\sigma_{1_\alpha \sta X, Y} = 1_{1_{\alpha}} \sta \sigma_{X,Y}
\, , \quad
\sigma_{X, Y \sta 1_\alpha} = \sigma_{X,Y} \sta 1_{1_{\alpha}} \, . 
$$
\end{enumerate}
\end{definition}

We adopt the convention that the $\sta$-composite of two 2- or 3-morphisms $\Psi: \alpha \rightarrow \alpha'$ and $\Phi: \beta \rightarrow \beta'$ is 
$\Phi \sta \Psi= (\Phi \sta 1_{\alpha'}) \fus (1_\beta \sta \Psi)$. 
In the following we will often refer to $\sta$-composition as the \textsl{Gray product}.  
We say that a Gray category is \textsl{$\Bbbk$-linear} (or simply \textsl{linear} if the field~$\Bbbk$ is known from the context) if 
	the categories of 2- and 3-morphisms are $\Bbbk$-linear, 
and the compositions~$\sta$ and~$\fus$ induce $\Bbbk$-linear functors. 
Furthermore, we call a 1-morphism $\alpha: u \rightarrow v$ in a Gray category $\Cat{G}$ 
 a \textsl{biequivalence} \cite{Gurskibook}, if there exists a 1-morphism
 $\beta: v \rightarrow u$ such that 
$\alpha \sta \beta$ is equivalent to $1_{v}$ in the bicategory $\Cat{G}(v,v)$, and $\beta \sta \alpha$ is equivalent to $1_{u}$ in $\Cat{G}(u,u)$. 
This allows us to formulate the following internal notion of equivalence:\footnote{We note that Definition~\ref{def:Grayequiv} gives a minimal notion of equivalence of Gray categories, adopted to the examples we will consider. In general, even a quasi-inverse of a strict equivalence may be non-strict.} 

\begin{definition}  \label{def:Grayequiv}
A \textsl{strict equivalence $\Cat{G} \rightarrow \Cat{G}'$} between Gray categories
 $\G, \G'$ consists of
\begin{enumerate}
\item
a \textsl{strict functor of Gray categories} $F: \Cat{G} \rightarrow \Cat{G}'$, i.\,e.~a function $F_{0}: \Obj(\Cat{G}) \rightarrow \Obj(\Cat{G}')$ and strict 2-functors $F_{u,v}: \Cat{G}(u,v)\rightarrow \Cat{G}'(F_{0}(u),F_{0}(v))$, such that $F_{u,u}(1_{u})=1_{F_{0}(u)}$ and $F_{w,u}(\Psi \sta \Phi)=F_{v,u}(\Psi) \sta F_{w,v}(\Phi)$ for all $u,v,w \in \Obj(\G)$ and 2- or 3-morphisms $\Phi$ and $\Psi$ for which this expression is defined. Moreover, we require $F_{u,v}(\sigma_{X,Y})=\sigma'_{F_{v,w}(X),F_{u,v}(Y)}$ for all~$X$ and~$Y$ as in \eqref{eq:tensorator}. 
If $\Cat{G}$ and $\Cat{G}'$ are linear Gray categories, the 2-functors $F_{u,v}$ are required to be linear.  
\item 
Each 2-functor $F_{u,v}$ is an \textsl{equivalence of 2-categories}, i.\,e.~$F_{u,v}$ is biessentially surjective (to wit, every object of $ \Cat{G}'(F_{0}(u),F_{0}(v))$ is equivalent to $F_{u,v}(\alpha)$ for some 
$\alpha \in \Cat{G}(u,v)$), and it induces an equivalence on the categories of 2- and 3-morphisms. 
\item
$F_{0}$ is \textsl{triessentially surjective}, i.\,e.~every object $u' \in \Obj(\Cat{G}')$ is biequivalent to an object $F_{0}(u)$ for some $u \in \Obj(\Cat{G})$.
\end{enumerate}
\end{definition} 

\medskip
Just like every calculation in a 2-category can be performed with the help of progressive 2-dimensional string diagrams \cite{JoyalStreetGeo}, there also exists a diagrammatic calculus for Gray categories. 
It was originally studied in \cite{TrimbleSurfaceDiagrams} and fully developed in \cite[Sect.\,2.5]{BMS}, to which we refer for a precise treatment. 
In the following we only present the basic picture. 

Given a Gray category~$\G$, any 3-morphism~$\Phi$ in~$\G$, i.\,e.~a 2-morphism in $\G(u,v)$ for some $u,v \in \G$, can be presented and evaluated as an (isotopy class of a) \textsl{progressive Gray category diagram} as follows. 
Such a diagram is a 
	progressive 3d diagram\footnote{A \textsl{progressive 3d diagram} is a stratification of the unit cube $[0,1]^3 \subset \R^3$ such that for each $j$-stratum~$s$ we have $\partial ([0,1]^3) \cap s = s \cap ( (0,1)^{3-j} \times \partial ([0,1]^j) )$, the side faces $[0,1] \times \{0\} \times [0,1]$ and $[0,1] \times \{1\} \times [0,1]$ are progressive 2-dimensional diagrams, the projection $(x,y,z) \mapsto (y,z)$ is a regular map of each surface, and projection $(x,y,z) \mapsto z$ is a regular map for each line, cf.\ \cite[Def.\,2.8\,\&\,2.22]{BMS}.} 
together with a decoration of all $j$-strata by 
	$(3-j)$-morphisms in~$\G$ for all $j\in \{ 0,1,2,3 \}$ (where we use the convention that 0-morphisms are objects). 
The decoration must be compatible with the source and target relations, where $\sta$-composition is in negative $x$-direction, $\fus$-composition is in negative $y$-direction, and $\circ$-composition is in $z$-direction. 
The source and target 2-morphisms of~$\Phi$ are respectively identified with the bottom and top of the cube. 
It follows that the projection of a Gray category diagram for~$\Phi$ to the $x=0$ plane produces a string diagram in the 2-category $\G(u,v)$ whose evaluation is~$\Phi$. 


For example, 
	consider objects $u,v,w \in \G$, 
	1-morphisms $\alpha, \beta, \gamma \in \G(u,v)$, $\delta \in \G(u,w)$, $\varepsilon \in \G(w,v)$, $\zeta \in \G(w,u)$, 
	2-morphisms $X \colon \varepsilon \sta \delta \to \alpha$, $Y \colon \gamma \sta \zeta \to \varepsilon$, $Z \colon 1_u \to \zeta \sta \delta$, $X' \colon \beta \to \alpha$, and $Y' \colon \gamma \to \beta$. Then 
the Gray category diagram (whose origin is at the lower back corner)
$$
\begin{tikzpicture}[thick,scale=4.0,color=blue!50!black, baseline=0.0cm, >=stealth, 
				style={x={(-0.9cm,-0.4cm)},y={(0.8cm,-0.4cm)},z={(0cm,0.9cm)}}]
\draw[
	 color=gray, 
	 opacity=0.3, 
	 semithick,
	 dashed
	 ] 
	 (1,0,0) -- (0,0,0) -- (0,1,0)
	 (0,0,0) -- (0,0,1);
\fill [blue!40,opacity=0.1] (0,0,0) -- (1,0,0) -- (1,1,0) -- (0,1,0) -- (0,1,1) -- (0,0,1) -- (1,0,1) -- (1,0,0);
%
\coordinate (rho) at (0.6, 0.2, 1);
\coordinate (kappa) at (0.15, 0.4, 1);
\coordinate (eta) at (0.6, 0.9, 1);
%
\coordinate (Psi) at (0.6, 0.4, 0.55);
%
\coordinate (nu) at (0.7, 0.2, 0);
\coordinate (mu) at (0.7, 0.7, 0);
%
\fill [magenta!50,opacity=0.7] (Psi) -- (kappa) -- (rho);
\draw[line width=1] (0.4, 0.3, 0.92) node[line width=0pt] (beta) {{\footnotesize $\eps$}};
\fill [red!20,opacity=0.7] (Psi) -- (kappa) -- (0.3, 1, 1) -- (0.4, 1, 0) -- (mu);
	\draw[line width=1] (0.4, 0.3, 0.81) node[line width=0pt] (beta) {{\footnotesize $w$}};
\draw[line width=1] (0.4, 0.8, 0.45) node[line width=0pt] (beta) {{\footnotesize $\gamma$}};
\fill [magenta!30,opacity=0.7] (Psi) -- (kappa) -- (eta);
\draw[line width=1] (0.4, 0.6, 0.9) node[line width=0pt] (beta) {{\footnotesize $\zeta$}};
\draw[color=green!50!black, ultra thick] (Psi) -- node[pos=0.86, color=blue!50!black, below] {$\footnotesize \;\;Y$} (kappa);  
\fill [magenta!60,opacity=0.7] (Psi) -- (rho) -- (eta);
\draw[line width=1] (0.5, 0.33, 0.75) node[line width=0pt] (beta) {{\footnotesize $\delta$}};
\draw[color=green!50!black, ultra thick] (Psi) -- node[pos=0.8, color=blue!50!black, below] {$\footnotesize Z$} (eta);  
\fill [magenta!30,opacity=0.7] (Psi) -- (mu) -- (nu);
\draw[line width=1] (0.7, 0.45, 0.15) node[line width=0pt] (beta) {{\footnotesize $\beta$}};
\fill [red!30,opacity=0.7] (Psi) -- (nu) -- (0.8, 0, 0) -- (0.7, 0., 1) -- (rho);
\draw[line width=1] (0.7, 0.13, 0.45) node[line width=0pt] (beta) {{\footnotesize $\alpha$}};
\draw[color=green!50!black, ultra thick] (Psi) -- node[pos=0.6, color=blue!50!black, left] {$\footnotesize X$} (rho);  
\draw[color=green!50!black, ultra thick] (Psi) -- node[pos=0.6, color=blue!50!black, right] {$\footnotesize \!Y'$} (mu);  
\draw[color=green!50!black, ultra thick] (Psi) -- node[pos=0.7, color=blue!50!black, left] {$\footnotesize {}\;{}X'$} (nu);  
%
\fill[color=black!80] (Psi) circle (0.9pt) node[left] (0up) {$\footnotesize \Phi$};
%
\draw[line width=1] (0.85, 0.5, 0) node[line width=0pt] (beta) {{\footnotesize $u$}};
\draw[line width=1] (0.12, 0.9, 0) node[line width=0pt] (beta) {{\footnotesize $v$}};
%
\draw[
	 color=gray, 
	 opacity=0.4, 
	 semithick
	 ] 
	 (0,1,1) -- (0,1,0) -- (1,1,0) -- (1,1,1) -- (0,1,1) -- (0,0,1) -- (1,0,1) -- (1,0,0) -- (1,1,0)
	 (1,0,1) -- (1,1,1);
%
\draw[
	 color=gray, 
	 opacity=0.9, 
	 thick, 
	 ->
	 ] 
	 (1, 1.15, 0) -- node[pos=0.5, color=black, below, sloped] {$\footnotesize \sta\text{-composition}$} (0, 1.15, 0); 
\draw[
	 color=gray, 
	 opacity=0.9, 
	 thick, 
	 ->
	 ] 
	 (1.15, 1, 0) -- node[pos=0.5, color=black, below, sloped] {$\footnotesize \fus\text{-composition}$} (1.15, 0, 0); 
\draw[
	 color=gray, 
	 opacity=0.9, 
	 thick, 
	 ->
	 ] 
	 (1, -0.15, 0) -- node[pos=0.5, color=black, above, sloped] {$\footnotesize \circ\text{-composition}$} (1, -0.15, 1); 
%
\draw[line width=1] (1.23, 0, 0) node[line width=0pt, black] (beta) {{\footnotesize $-y$}};
\draw[line width=1] (-0.02, 1.23, 0) node[line width=0pt, black] (beta) {{\footnotesize $-x$}};
\draw[line width=1] (1.1, -0.1, 1.08) node[line width=0pt, black] (beta) {{\footnotesize $z$}};
\end{tikzpicture}
$$
evaluates to the string diagram 
$$
\begin{tikzpicture}[thick,scale=4.8,color=blue!50!black, baseline=0.0cm, >=stealth]
%
\coordinate (rho) at (0.2, 1);
\coordinate (kappa) at (0.4, 1);
\coordinate (eta) at (0.9, 1);
%
\coordinate (Psi) at (0.4, 0.55);
%
\coordinate (nu) at (0.2, 0);
\coordinate (mu) at (0.7, 0);
%
\fill [magenta!50,opacity=0.7] (Psi) -- (kappa) -- (rho);
\fill [red!20,opacity=0.7] (Psi) -- (kappa) -- (1, 1) -- (1, 0) -- (mu);
\fill [magenta!30,opacity=0.7] (Psi) -- (kappa) -- (eta);
\fill [magenta!30,opacity=0.7] (Psi) -- (rho) -- (eta);
\draw[color=green!50!black, ultra thick] (Psi) -- node[pos=0.8, color=blue!50!black, below] {} (eta);  
\draw[line width=1] (0.79, 0.73) node[line width=0pt] (beta) {$\footnotesize \;1_\gamma \sta Z$};
\fill [magenta!30,opacity=0.7] (Psi) -- (mu) -- (nu);
\fill [red!30,opacity=0.7] (Psi) -- (nu) -- (0, 0) -- (0, 1) -- (rho);
\draw[color=green!50!black, ultra thick] (Psi) -- node[pos=0.6, color=blue!50!black, left] {$\footnotesize X$} (rho);  
\draw[color=green!50!black, ultra thick] (Psi) -- node[pos=0.6, color=blue!50!black, right] {$\footnotesize Y'$} (mu);  
\draw[color=green!50!black, ultra thick] (Psi) -- node[pos=0.7, color=blue!50!black, left] {$\footnotesize X'$} (nu);  
\draw[color=green!50!black, ultra thick] (Psi) -- node[pos=0.86, color=blue!50!black, right] {$\footnotesize \!Y \sta 1_\delta$} (kappa);  
\fill[color=black!80] (Psi) circle (0.9pt) node[left] (0up) {$\footnotesize \Phi$};
%
\draw[
	 color=gray, 
	 opacity=0.9, 
	 thick, 
	 ->
	 ] 
	 (1, -0.1) -- node[pos=0.5, color=black, below, sloped] {$\footnotesize \fus\text{-composition}$} (0, -0.1); 
\draw[
	 color=gray, 
	 opacity=0.9, 
	 thick, 
	 ->
	 ] 
	 (-0.1, 0) -- node[pos=0.5, color=black, above, sloped] {$\footnotesize \circ\text{-composition}$} (-0.1, 1); 
%
\draw[line width=1] (0.31, 0.9) node[line width=0pt] (beta) {{\footnotesize \rotatebox{-64}{$\eps\sta\delta$}}};
\draw[line width=1] (0.8, 0.45) node[line width=0pt] (beta) {{\footnotesize $\gamma$}};
\draw[line width=1] (0.54, 0.78) node[line width=0pt] (beta) {{\footnotesize \rotatebox{40}{$\gamma\sta\zeta\sta\delta$}}};
\draw[line width=1] (0.45, 0.15) node[line width=0pt] (beta) {{\footnotesize $\beta$}};
\draw[line width=1] (0.13, 0.45) node[line width=0pt] (beta) {{\footnotesize $\alpha$}};
\end{tikzpicture}
$$ 
of a map 
	$\Phi \colon X' \fus Y' \rightarrow X \fus (Y \sta 1_\delta) \fus (1_{\gamma} \sta Z)$. 

In this language, the tensorator $\sigma_{X,Y}$ in Definition~\ref{def:Graycat} is simply a cube with two $y = \text{const.}$ planes with embedded $X$- and $Y$-lines braiding past each other: 
\be\label{eq:tensoratordiagram}
\sigma_{X,Y} = 
\begin{tikzpicture}[thick,scale=3.0,color=blue!50!black, baseline=0.0cm, >=stealth, 
				style={x={(-0.9cm,-0.4cm)},y={(0.8cm,-0.4cm)},z={(0cm,0.9cm)}}]
\draw[
	 color=gray, 
	 opacity=0.3, 
	 semithick,
	 dashed
	 ] 
	 (1,0,0) -- (0,0,0) -- (0,1,0)
	 (0,0,0) -- (0,0,1);
\coordinate (d) at (-0.34, 0, 0);
\coordinate (d2) at (-0.67, 0, 0);
\fill [blue!40,opacity=0.1] ($(d2) + (1,0,0)$) -- ($(d2) + (1,1,0)$) -- ($(d2) + (0.67,1,0)$) -- ($(d2) + (0.67,1,1)$) -- ($(d2) + (0.67,0,1)$) -- ($(d2) + (1,0,1)$);
%
\coordinate (X0) at (0.33, 0.1, 0);
\coordinate (X1) at (0.33, 0.9, 1);
\coordinate (Y0) at (0.67, 0.9, 0);
\coordinate (Y1) at (0.67, 0.1, 1);
%
\fill [magenta!50,opacity=0.7] (0.33, 0, 0) -- (X0) -- (X1) -- (0.33, 1, 1) -- (0.33, 0, 1);
\fill [magenta!70,opacity=0.7] (X0) -- (X1) -- (0.33, 1, 1) -- (0.33, 1, 0) -- (0.33, 0.2, 0);
\draw[color=green!50!black, ultra thick] (X0) -- node[pos=0.9, color=blue!50!black, below] {$\footnotesize X$} (X1);  
\draw[line width=1] (0.4, 0.99, 0.5) node[line width=0pt] (beta) {{\footnotesize $\beta$}};
\draw[line width=1] (0.4, 0.2, 0.92) node[line width=0pt] (beta) {{\footnotesize $\beta'$}};
%
\fill [blue!20,opacity=0.1] ($(d) + (1,0,0)$) -- ($(d) + (1,1,0)$) -- ($(d) + (0.67,1,0)$) -- ($(d) + (0.67,1,1)$) -- ($(d) + (0.67,0,1)$) -- ($(d) + (1,0,1)$);
%
\fill [blue!20,opacity=0.2] (1,0,0) -- (1,1,0) -- (0.67,1,0) -- (0.67,1,1) -- (0.67,0,1) -- (1,0,1);
%
\fill [red!70,opacity=0.7] (0.67, 0, 0) -- (Y0) -- (Y1) -- (0.67, 0, 1);
\fill [red!30,opacity=0.7] (Y1) -- (0.67, 1, 1) -- (0.67, 1, 0) -- (0.67, 0.2, 0) -- (Y0);
\draw[color=green!80!black, ultra thick] (Y0) -- node[pos=0.2, color=blue!50!black, right] {$\footnotesize Y$} (Y1);  
\draw[line width=1] (0.73, 0.99, 0.5) node[line width=0pt] (beta) {{\footnotesize $\alpha$}};
\draw[line width=1] (0.73, 0.2, 0.5) node[line width=0pt] (beta) {{\footnotesize $\alpha'$}};
%
%
\draw[line width=1] (0.85, 0.5, 0) node[line width=0pt] (beta) {{\footnotesize $u$}};
\draw[line width=1] (0.45, 0.9, 0) node[line width=0pt] (beta) {{\footnotesize $v$}};
\draw[line width=1] (0.12, 0.9, 0) node[line width=0pt] (beta) {{\footnotesize $w$}};
%
\draw[
	 color=gray, 
	 opacity=0.4, 
	 semithick
	 ] 
	 (0,1,1) -- (0,1,0) -- (1,1,0) -- (1,1,1) -- (0,1,1) -- (0,0,1) -- (1,0,1) -- (1,0,0) -- (1,1,0)
	 (1,0,1) -- (1,1,1);
\end{tikzpicture}
\ee
This makes it diagrammatically clear how the interchange law~\eqref{eq:2dinterchangelaw} is weakened in a Gray category: the left and right sides of~\eqref{eq:2dinterchangelaw} precisely correspond to the bottom and top boundary of the cube in~\eqref{eq:tensoratordiagram}, respectively, i.\,e.~the source and target of $\sigma_{X,Y}$. 
Hence the interchange law holds in the strict sense iff the tensorator is the identity. 

From the diagram~\eqref{eq:tensoratordiagram} for the tensorator it is also apparent how braided monoidal categories are special cases of Gray categories. 
Indeed, if in the definition~\eqref{eq:tensorator} of $\sigma_{X,Y}$ we take $u=v=w$ and $\alpha = \alpha' = \beta = \beta' = 1_u$, then $(\sigma_{X,Y})$ gives a braided structure on the monoidal category of 2-endomorphisms of $1_u$. 

Finally, we observe that, analogously to the 2-dimensional case, the axioms of a Gray category are built into its diagrammatic calculus: 
Since the unit $1_\alpha$ on a 1-morphism~$\alpha$ corresponds to an unstratified $\alpha$-decorated plane, and since $\sta$-composition corresponds to stacking such planes together, conditions (vi) and (viii) in Definition~\ref{def:Graycat} are manifest in the diagrammatic calculus. 
Similarly, 
	the first identity of 
condition (vii) is encoded in the isotopy invariance of Gray category diagrams: 
$$
\begin{tikzpicture}[thick,scale=3.0,color=blue!50!black, baseline=0.0cm, 
				style={x={(-0.9cm,-0.4cm)},y={(0.8cm,-0.4cm)},
    z={(0cm,0.9cm)}}]
\draw[
	 color=gray, 
	 opacity=0.3, 
	 semithick,
	 dashed
	 ] 
	 (1,0,0) -- (0,0,0) -- (0,1,0)
	 (0,0,0) -- (0,0,1);
\coordinate (d) at (-0.34, 0, 0);
\coordinate (d2) at (-0.67, 0, 0);
\fill [blue!40,opacity=0.1] ($(d2) + (1,0,0)$) -- ($(d2) + (1,1,0)$) -- ($(d2) + (0.67,1,0)$) -- ($(d2) + (0.67,1,1)$) -- ($(d2) + (0.67,0,1)$) -- ($(d2) + (1,0,1)$);
%
\coordinate (X0) at (0.33, 0.1, 0);
\coordinate (X1) at (0.33, 0.9, 1);
\coordinate (Y0) at (0.67, 0.95, 0);
\coordinate (Y1) at (0.67, 0.35, 1);
\coordinate (Ys0) at (0.67, 0.7, 0);
\coordinate (Ys1) at (0.67, 0.1, 1);
%
%
\fill [magenta!50,opacity=0.7] (0.33, 0, 0) -- (X0) -- (X1) -- (0.33, 1, 1) -- (0.33, 0, 1);
\fill [magenta!70,opacity=0.7] (X0) -- (X1) -- (0.33, 1, 1) -- (0.33, 1, 0) -- (0.33, 0.2, 0);
\draw[color=green!50!black, ultra thick] (X0) -- node[pos=0.9, color=blue!50!black, below] {$\footnotesize X$} (X1);  
\draw[line width=1] (0.4, 0.99, 0.5) node[line width=0pt] (beta) {{\footnotesize $\beta$}};
\draw[line width=1] (0.4, 0.2, 0.92) node[line width=0pt] (beta) {{\footnotesize $\beta'$}};
%
\fill [blue!20,opacity=0.1] ($(d) + (1,0,0)$) -- ($(d) + (1,1,0)$) -- ($(d) + (0.67,1,0)$) -- ($(d) + (0.67,1,1)$) -- ($(d) + (0.67,0,1)$) -- ($(d) + (1,0,1)$);
%
\fill [blue!20,opacity=0.2] (1,0,0) -- (1,1,0) -- (0.67,1,0) -- (0.67,1,1) -- (0.67,0,1) -- (1,0,1);
%
\fill [red!20,opacity=0.7] (0.67, 0, 0) -- (Ys0) -- (Ys1) -- (0.67, 0, 1);
\fill [red!70,opacity=0.7] (Y0) -- (Ys0) -- (Ys1) -- (Y1);
\fill [red!30,opacity=0.7] (Y1) -- (0.67, 1, 1) -- (0.67, 1, 0) -- (0.67, 0.2, 0) -- (Y0);
\draw[color=green!80!black, ultra thick] (Y0) -- node[pos=0.2, color=blue!50!black, right] {$\footnotesize Y$} (Y1);  
\draw[color=green!60!black, ultra thick] (Ys0) -- node[pos=0.2, color=blue!50!black, left] {$\footnotesize Y'$} (Ys1);  
\draw[line width=1] (0.73, 0.99, 0.64) node[line width=0pt] (beta) {{\footnotesize $\alpha$}};
\draw[line width=1] (0.73, 0.55, 0.64) node[line width=0pt] (beta) {{\footnotesize $\alpha'$}};
\draw[line width=1] (0.73, 0.2, 0.64) node[line width=0pt] (beta) {{\footnotesize $\alpha''$}};
%
%
\draw[line width=1] (0.85, 0.5, 0) node[line width=0pt] (beta) {{\footnotesize $u$}};
\draw[line width=1] (0.45, 0.9, 0) node[line width=0pt] (beta) {{\footnotesize $v$}};
\draw[line width=1] (0.12, 0.9, 0) node[line width=0pt] (beta) {{\footnotesize $w$}};
%
\draw[
	 color=gray, 
	 opacity=0.4, 
	 semithick
	 ] 
	 (0,1,1) -- (0,1,0) -- (1,1,0) -- (1,1,1) -- (0,1,1) -- (0,0,1) -- (1,0,1) -- (1,0,0) -- (1,1,0)
	 (1,0,1) -- (1,1,1);
\end{tikzpicture}
=
\begin{tikzpicture}[thick,scale=3.0,color=blue!50!black, baseline=0.0cm, 
				style={x={(-0.9cm,-0.4cm)},y={(0.8cm,-0.4cm)},
    z={(0cm,0.9cm)}}]
\draw[
	 color=gray, 
	 opacity=0.3, 
	 semithick,
	 dashed
	 ] 
	 (1,0,0) -- (0,0,0) -- (0,1,0)
	 (0,0,0) -- (0,0,1);
\coordinate (d) at (-0.34, 0, 0);
\coordinate (d2) at (-0.67, 0, 0);
\fill [blue!40,opacity=0.1] ($(d2) + (1,0,0)$) -- ($(d2) + (1,1,0)$) -- ($(d2) + (0.67,1,0)$) -- ($(d2) + (0.67,1,1)$) -- ($(d2) + (0.67,0,1)$) -- ($(d2) + (1,0,1)$);
%
\coordinate (X0) at (0.33, 0.1, 0);
\coordinate (X1) at (0.33, 0.9, 1);
\coordinate (Y0) at (0.67, 0.95, 0);
\coordinate (Y1) at (0.67, 0.35, 1);
\coordinate (Ys0) at (0.67, 0.7, 0);
\coordinate (Ys1) at (0.67, 0.1, 1);
%
%
\fill [magenta!50,opacity=0.7] (0.33, 0, 0) -- (X0) -- (X1) -- (0.33, 1, 1) -- (0.33, 0, 1);
\fill [magenta!70,opacity=0.7] (X0) -- (X1) -- (0.33, 1, 1) -- (0.33, 1, 0) -- (0.33, 0.2, 0);
\draw[color=green!50!black, ultra thick] (X0) -- node[pos=0.9, color=blue!50!black, below] {$\footnotesize X$} (X1);  
\draw[line width=1] (0.4, 0.99, 0.5) node[line width=0pt] (beta) {{\footnotesize $\beta$}};
\draw[line width=1] (0.4, 0.2, 0.92) node[line width=0pt] (beta) {{\footnotesize $\beta'$}};
%
\fill [blue!20,opacity=0.1] ($(d) + (1,0,0)$) -- ($(d) + (1,1,0)$) -- ($(d) + (0.67,1,0)$) -- ($(d) + (0.67,1,1)$) -- ($(d) + (0.67,0,1)$) -- ($(d) + (1,0,1)$);
%
\fill [blue!20,opacity=0.2] (1,0,0) -- (1,1,0) -- (0.67,1,0) -- (0.67,1,1) -- (0.67,0,1) -- (1,0,1);
%
\fill [red!20,opacity=0.7] (0.67, 0, 0) -- (Ys0) -- (0.67, 0.1, 0.5) -- (Ys1) -- (0.67, 0, 1);
\fill [red!70,opacity=0.7] (0.67, 0.95, 0.5)-- (Y0) -- (Ys0) -- (0.67, 0.1, 0.5) -- (Ys1) -- (Y1);
\fill [red!30,opacity=0.7] (Y1) -- (0.67, 1, 1) -- (0.67, 1, 0) -- (0.67, 0.2, 0) -- (Y0) -- (0.67, 0.95, 0.5);
\draw[color=green!80!black, ultra thick, rounded corners] (Y0) -- node[pos=0.4, color=blue!50!black, left] {$\footnotesize Y$} (0.67, 0.95, 0.5) -- (Y1);  
\draw[color=green!60!black, ultra thick, rounded corners] (Ys0) -- node[pos=0.4, color=blue!50!black, left] {$\footnotesize Y'$} (0.67, 0.1, 0.5) -- (Ys1);  
\draw[line width=1] (0.73, 0.99, 0.75) node[line width=0pt] (beta) {{\footnotesize $\alpha$}};
\draw[line width=1] (0.73, 0.5, 0.65) node[line width=0pt] (beta) {{\footnotesize $\alpha'$}};
\draw[line width=1] (0.73, 0.2, 0.25) node[line width=0pt] (beta) {{\footnotesize $\alpha''$}};
%
%
\draw[line width=1] (0.85, 0.5, 0) node[line width=0pt] (beta) {{\footnotesize $u$}};
\draw[line width=1] (0.45, 0.9, 0) node[line width=0pt] (beta) {{\footnotesize $v$}};
\draw[line width=1] (0.12, 0.9, 0) node[line width=0pt] (beta) {{\footnotesize $w$}};
%
\draw[
	 color=gray, 
	 opacity=0.4, 
	 semithick
	 ] 
	 (0,1,1) -- (0,1,0) -- (1,1,0) -- (1,1,1) -- (0,1,1) -- (0,0,1) -- (1,0,1) -- (1,0,0) -- (1,1,0)
	 (1,0,1) -- (1,1,1);
\end{tikzpicture}
$$
and analogously for the 
	second identity.

\subsection{Duals}
\label{subsec:duals}

In this section we recall duality and pivotal structures for 2-categories, and we explain how to enhance the free 2-category associated to a computad to a strictly pivotal 2-category. 
Then we define `Gray categories with duals' and discuss their 3-dimensional graphical calculus. 

\subsubsection{Bicategories}
\label{subsubsec:bicatsduals}

Given a 2-category~$\B$, 
	a 
\textsl{(right) dual} of a 1-morphism $X\in \B(\alpha,\beta)$ is a 1-morphism $X\dagg \in \B(\beta,\alpha)$ together with 2-morphisms $\ev_X : X\dagg \fus X \rightarrow 1_\alpha$ and $\coev_X: 1_\beta \rightarrow X \fus X\dagg$ such that the \textsl{Zorro moves} are satisfied. 
Our convention for the diagrammatic calculus is that upward oriented lines labelled by~$X$ represent $1_X$, so the (co)evaluation maps are
\be
\label{eq:evcoevin2d}
\ev_X = 
\begin{tikzpicture}[thick,scale=3.0,color=black, >=stealth, color=blue!50!black, baseline=1.4cm]
\fill [red!16,opacity=1.0] (0,0) -- (0,1) -- (1,1) -- (1,0);
\fill [magenta!30,opacity=1.0] (0.8, 0) .. controls +(0,0.5) and +(0,0.5) .. (0.2, 0);
\draw[line width=1] (0.5,0.7) node[line width=0pt] (alpha) {{\footnotesize $\alpha$}};
\draw[line width=1] (0.5,0.15) node[line width=0pt] (alpha) {{\footnotesize $\beta$}};
%
%
\draw[string=green!60!black, very thick] (0.8, 0) .. controls +(0,0.5) and +(0,0.5) .. (0.2, 0);
\draw[line width=1] (0.8,0.32) node[line width=0pt] (alpha) {{\footnotesize $X$}};
\end{tikzpicture}
\, , \quad
\coev_X = 
\begin{tikzpicture}[thick,scale=3.0,color=black, >=stealth, color=blue!50!black, baseline=1.4cm]
\fill [magenta!30,opacity=1.0] (0,0) -- (0,1) -- (1,1) -- (1,0);
\fill [red!16,opacity=1.0] (0.8, 1) .. controls +(0,-0.5) and +(0,-0.5) .. (0.2, 1);
\draw[line width=1] (0.5,0.85) node[line width=0pt] (alpha) {{\footnotesize $\alpha$}};
\draw[line width=1] (0.5,0.3) node[line width=0pt] (alpha) {{\footnotesize $\beta$}};
%
%
\draw[string=green!60!black, very thick] (0.8, 1) .. controls +(0,-0.5) and +(0,-0.5) .. (0.2, 1);
\draw[line width=1] (0.2,0.68) node[line width=0pt] (alpha) {{\footnotesize $X$}};
\end{tikzpicture}
\ee
and the Zorro moves are the identities
$$
\begin{tikzpicture}[thick,scale=3.0,color=black, >=stealth, color=green!60!black, baseline=1.4cm]
\fill [red!16,opacity=1.0] (0,0) -- (0,1) -- (1,1) -- (1,0);
\fill [magenta!30,opacity=1.0] (0.8, 1) -- (0.8, 0.5) .. controls +(0,-0.25) and +(0,-0.25) .. (0.5, 0.5) -- (0.5, 0.5) .. controls +(0,0.25) and +(0,0.25) .. (0.2, 0.5) -- (0.2, 0) -- (1,0) -- (1,1);
%
\draw[color=green!60!black, very thick] (0.8, 1) -- (0.8, 0.5);
\draw[string=green!60!black, very thick] (0.8, 0.5) .. controls +(0,-0.25) and +(0,-0.25) .. (0.5, 0.5);
\draw[string=green!60!black, very thick] (0.5, 0.5) .. controls +(0,0.25) and +(0,0.25) .. (0.2, 0.5);
\draw[color=green!60!black, very thick] (0.2, 0.5) -- (0.2, 0);
\end{tikzpicture}
=
\begin{tikzpicture}[thick,scale=3.0,color=black, >=stealth, color=green!60!black, baseline=1.4cm]
\fill [red!16,opacity=1.0] (0,0) -- (0,1) -- (1,1) -- (1,0);
\fill [magenta!30,opacity=1.0] (0.5, 1) -- (0.5,0) -- (1,0) -- (1,1);
%
\draw[color=green!60!black, postaction={decorate}, decoration={markings,mark=at position .51 with {\arrow[draw]{>}}}, very thick] (0.5, 1) -- (0.5,0); 
\end{tikzpicture}
\, , \quad
\begin{tikzpicture}[thick,scale=3.0,color=black, >=stealth, color=green!60!black, baseline=1.4cm]
\fill [magenta!30,opacity=1.0] (0,0) -- (0,1) -- (1,1) -- (1,0);
\fill [red!16,opacity=1.0] (0.8, 0) -- (0.8, 0.5) .. controls +(0,0.25) and +(0,0.25) .. (0.5, 0.5) -- (0.5, 0.5) .. controls +(0,-0.25) and +(0,-0.25) .. (0.2, 0.5) -- (0.2, 1) -- (1,1) -- (1,0);
%
\draw[color=green!60!black, very thick] (0.8, 0) -- (0.8, 0.5);
\draw[string=green!60!black, very thick] (0.8, 0.5) .. controls +(0,0.25) and +(0,0.25) .. (0.5, 0.5);
\draw[string=green!60!black, very thick] (0.5, 0.5) .. controls +(0,-0.25) and +(0,-0.25) .. (0.2, 0.5);
\draw[color=green!60!black, very thick] (0.2, 0.5) -- (0.2, 1);
\end{tikzpicture}
=
\begin{tikzpicture}[thick,scale=3.0,color=black, >=stealth, color=green!60!black, baseline=1.4cm]
\fill [magenta!30,opacity=1.0] (0,0) -- (0,1) -- (1,1) -- (1,0);
\fill [red!16,opacity=1.0] (0.5, 1) -- (0.5,0) -- (1,0) -- (1,1);
%
\draw[color=green!60!black, postaction={decorate}, decoration={markings,mark=at position .51 with {\arrow[draw]{>}}}, very thick] (0.5, 0) -- (0.5,1); 
\end{tikzpicture}
\, . 
$$

If every 1-morphism in~$\B$ has a right dual we say that~$\B$ \textsl{has duals}. 
	In this case choosing a right dual for every object makes $(-)\dagg: \B \rightarrow \B^{\textrm{op}}$ into a 2-functor with $\phi^\dagger = (\ev_Y \otimes 1_{X^\dagger}) \circ (1_{Y^\dagger} \otimes \phi \otimes 1_{X^\dagger}) \circ (1_{Y^\dagger} \otimes \coev_X)$ for $\phi \in \B(X,Y)$. Here, 
the \textsl{opposite} 2-category $\B^{\textrm{op}}$ is given by $\B^{\textrm{op}}(\alpha, \beta) = \B(\beta, \alpha)^{\textrm{op}}$, and the horizontal composite $X\fus Y$ in $\B^{\textrm{op}}$ is $Y\fus X$ in~$\B$, viewed as a 1-morphism in $\B^{\textrm{op}}$. 
It follows that $(\B^{\textrm{op}})^{\textrm{op}} = \B$. 

\begin{definition}
\begin{enumerate}
\item
A \textsl{strictly pivotal 2-category} is a 2-category~$\B$ which has duals such that
	(for a prescribed choice of duals) 
$(-)\dagg: \B \rightarrow \B^{\textrm{op}}$ is a strict 2-functor with 
	$(-)^{\dagger\dagger} = 1_\B$, where $(-)^{\dagger\dagger} = ((-)^{\dagger})^{\textrm{op}} \circ (-)^\dagger$. 
\item 
A \textsl{strictly pivotal 2-functor} is a 2-functor $F: \B \rightarrow \mathcal C$ between strictly pivotal 2-categories such that $F(X\dagg) = F(X)\dagg$ for all 1-morphism~$X$ in~$\B$. 
\end{enumerate}
\end{definition}

As observed e.\,g.~in \cite[Sect.\,2]{BarWes}, the evaluation maps of a strictly pivotal 2-category can be recovered from the coevaluation maps and the functor $(-)\dagg$ as $\ev_X = (\coev_{X\dagg})\dagg$, if certain conditions are satisfied. 
As we will encounter an analogue 
	of 
this fact in our discussion of duals in Gray categories we state this precisely: 
a strictly pivotal 2-category is the same as a 2-category~$\B$ with a 2-functor $(-)\dagg: \B \rightarrow \B^{\textrm{op}}$ which acts as $1_\B$ on objects, and for all $X \in \B(\alpha, \beta)$ a 2-morphism $\coev_X : 1_\beta \rightarrow X \fus X\dagg$ such that $(-)^{\dagger\dagger} = 1_\B$ and 
\begin{align}
& \big( \Phi \fus 1_{X\dagg} \big) \circ \coev_X = \big( 1_Y \fus \Phi\dagg \big) \circ \coev_Y
 \, , \nonumber \\
& \big( 1_X \fus ( \coev_{X\dagg})\dagg \big) \circ \big( \!\coev_X \fus 1_X \big) = 1_X \, , \label{eq:evfromcoev}\\
& \big( 1_X \fus \coev_Z \fus 1_{X\dagg} \big) \circ \coev_X = \coev_{X\fus Z} \nonumber
\end{align}
for all $\Phi: X \rightarrow Y$ and suitably composable 1-morphisms $X,Y,Z$. 

\medskip

The free bicategories associated to a computad~$\K$ can be made into strictly pivotal bicategories by `freely adding duals'. 
We explain this for the free 2-category $\mathcal F \K$ and its `pivotalisation' $\mathcal F_{\textrm{d}} \K$; the discussion for $\mathcal F^{\textrm{p}} \K$ and $\mathcal F^{\textrm{b}} \K$ is very similar. 
Paralleling the graphical presentation of $\mathcal F\K$ reviewed in Section~\ref{subsubsec:bicats}, the 2-category $\mathcal F_{\textrm{d}} \K$ has the following description:\footnote{As for the free 2-category $\mathcal F\K$ we denote horizontal composition vertically in $\mathcal F_{\textrm{d}} \K$, for compatibility with our constructions in Sections~\ref{subsec:tricatfromZ} and~\ref{subsec:GraycatwithdualsfromZ}.}
\begin{itemize}
\item
Objects of $\mathcal F_{\textrm{d}} \K$ are the same as objects of $\mathcal F\K$. 
\item 
1-morphisms~$X$ in $\mathcal F_{\textrm{d}} \K$ are 1-morphisms $(X_1, X_2, \dots, X_m)$ in $\mathcal F\K$ together with a choice of orientation of the 1-strata 
such that $\sigma_1(X_i)$ and $\tau_1(X_i)$ are the labels to the right and left, respectively, of the $X_i$-labelled line. 
The dual of~$X$ has the order of the $X_i$-lines and their orientations reversed, and we denote it $X^\#$. 
For example: 
\be\label{eq:Xhashfree}
X =  
\begin{tikzpicture}[thick,scale=3.0,color=black, baseline=1.4cm, rotate=90]
\fill [blue!40,opacity=0.4] (0,0) -- (0,1) -- (0.25,1) -- (0.25,0);
\fill [blue!50,opacity=0.5] (0.25,0) -- (0.25,1) -- (0.5,1) -- (0.5,0);
\fill [blue!30,opacity=0.4] (0.5,0) -- (0.5,1) -- (0.75,1) -- (0.75,0);
\fill [blue!50,opacity=0.4] (0.75,0) -- (0.75,1) -- (1,1) -- (1,0);
\draw[line width=1] (0.125,0.4) node[line width=0pt] (alpha) {{\tiny $\;\;\;\alpha = \tau_1(X_1)$}};
\draw[line width=1] (0.375,0.4) node[line width=0pt] (alpha) {{\tiny $\;\;\;\sigma_1(X_1) = \sigma_1(X_2)$}};
\draw[line width=1] (0.625,0.4) node[line width=0pt] (alpha) {{\tiny $\;\;\;\tau_1(X_2) = \sigma_1(X_3)$}};
\draw[line width=1] (0.875,0.4) node[line width=0pt] (alpha) {{\tiny $\;\;\;\beta = \tau_1(X_3)$}};
%
\draw[
	color=red!60, 
	very thick, 
	 >=stealth, 
	postaction={decorate}, decoration={markings,mark=at position .85 with {\arrow[draw]{>}}}
	] 
 (0.25,0) --  (0.25,1);
\draw[
	color=red!90!black, 
	very thick,
	 >=stealth, 
	postaction={decorate}, decoration={markings,mark=at position .85 with {\arrow[draw]{<}}}
	] 
 (0.5,0) --  (0.5,1);
\draw[
	color=red!90, 
	very thick,
	 >=stealth, 
	postaction={decorate}, decoration={markings,mark=at position .85 with {\arrow[draw]{<}}}
	] 
 (0.75,0) --  (0.75,1);
\draw[line width=1] (0.33,0.8) node[line width=0pt] (alpha) {{\footnotesize $X_1$}};
\draw[line width=1] (0.58,0.8) node[line width=0pt] (alpha) {{\footnotesize $X_2$}};
\draw[line width=1] (0.83,0.8) node[line width=0pt] (alpha) {{\footnotesize $X_3$}};
\end{tikzpicture}
\quad \implies \quad 
X^\#
= 
\begin{tikzpicture}[thick,scale=3.0,color=black, baseline=1.4cm, rotate=90]
\fill [blue!50,opacity=0.4] (0,0) -- (0,1) -- (0.25,1) -- (0.25,0);
\fill [blue!30,opacity=0.5] (0.25,0) -- (0.25,1) -- (0.5,1) -- (0.5,0);
\fill [blue!50,opacity=0.4] (0.5,0) -- (0.5,1) -- (0.75,1) -- (0.75,0);
\fill [blue!40,opacity=0.4] (0.75,0) -- (0.75,1) -- (1,1) -- (1,0);
\draw[line width=1] (0.125,0.4) node[line width=0pt] (alpha) {{\tiny $\;\;\;\beta = \tau_1(X_3)$}};
\draw[line width=1] (0.375,0.4) node[line width=0pt] (alpha) {{\tiny $\;\;\;\tau_1(X_2) = \sigma_1(X_3)$}};
\draw[line width=1] (0.625,0.4) node[line width=0pt] (alpha) {{\tiny $\;\;\;\sigma_1(X_1) = \sigma_1(X_2)$}};
\draw[line width=1] (0.875,0.4) node[line width=0pt] (alpha) {{\tiny $\;\;\;\alpha = \tau_1(X_1)$}};
%
\draw[
	color=red!90, 
	very thick, 
	 >=stealth, 
	postaction={decorate}, decoration={markings,mark=at position .85 with {\arrow[draw]{>}}}
	] 
 (0.25,0) --  (0.25,1);
\draw[
	color=red!90!black, 
	very thick,
	 >=stealth, 
	postaction={decorate}, decoration={markings,mark=at position .85 with {\arrow[draw]{>}}}
	] 
 (0.5,0) --  (0.5,1);
\draw[
	color=red!60, 
	very thick,
	 >=stealth, 
	postaction={decorate}, decoration={markings,mark=at position .85 with {\arrow[draw]{<}}}
	] 
 (0.75,0) --  (0.75,1);
\draw[line width=1] (0.33,0.8) node[line width=0pt] (alpha) {{\footnotesize $X_3$}};
\draw[line width=1] (0.58,0.8) node[line width=0pt] (alpha) {{\footnotesize $X_2$}};
\draw[line width=1] (0.83,0.8) node[line width=0pt] (alpha) {{\footnotesize $X_1$}};
\end{tikzpicture}
\ee
\item 
2-morphisms in $\mathcal F_{\textrm{d}}\K$ are isotopy classes of not-necessarily progressive string diagrams, i.\,e.~planar 2-category diagrams in the sense of \cite[Sect.\,3.2]{BMS}. 
For example, the right adjunction maps for the 1-morphism~$X$ in~\eqref{eq:Xhashfree} are 
\be\label{eq:evcoevfree}
\ev_X = 
\begin{tikzpicture}[thick,scale=3.0,color=black, baseline=1.4cm]
\fill [blue!40,opacity=0.4] (1, 0.6) .. controls +(-0.2,0) and +(-0.2,0) .. (1, 0.4);
\fill [blue!50,opacity=0.4] (1, 0.25) .. controls +(-0.4,0) and +(-0.4,0) .. (1, 0.75) -- (1, 0.6) .. controls +(-0.2,0) and +(-0.2,0) .. (1, 0.4);
\fill [blue!30,opacity=0.4] (1, 0.1) .. controls +(-0.6,0) and +(-0.6,0) .. (1, 0.9) -- (1, 0.75) .. controls +(-0.4,0) and +(-0.4,0) .. (1, 0.25) -- (1, 0.1);
\fill [blue!40,opacity=0.4] (1, 0.1) .. controls +(-0.6,0) and +(-0.6,0) .. (1, 0.9) -- (1,1) -- (0,1) -- (0,0) -- (1,0);
%
\draw[string=red!60, >=stealth, very thick] (1, 0.6) .. controls +(-0.2,0) and +(-0.2,0) .. (1, 0.4);
\draw[string=red!90, >=stealth, very thick] (1, 0.25) .. controls +(-0.4,0) and +(-0.4,0) .. (1, 0.75);
\draw[string=red!90, >=stealth, very thick] (1, 0.1) .. controls +(-0.6,0) and +(-0.6,0) .. (1, 0.9);
\end{tikzpicture}
\, , \quad 
\coev_X = 
\begin{tikzpicture}[thick,scale=3.0,color=black, baseline=-1.6cm, rotate=180]
\fill [blue!40,opacity=0.4] (1, 0.6) .. controls +(-0.2,0) and +(-0.2,0) .. (1, 0.4);
\fill [blue!30,opacity=0.4] (1, 0.25) .. controls +(-0.4,0) and +(-0.4,0) .. (1, 0.75) -- (1, 0.6) .. controls +(-0.2,0) and +(-0.2,0) .. (1, 0.4);
\fill [blue!50,opacity=0.4] (1, 0.1) .. controls +(-0.6,0) and +(-0.6,0) .. (1, 0.9) -- (1, 0.75) .. controls +(-0.4,0) and +(-0.4,0) .. (1, 0.25) -- (1, 0.1);
\fill [blue!40,opacity=0.4] (1, 0.1) .. controls +(-0.6,0) and +(-0.6,0) .. (1, 0.9) -- (1,1) -- (0,1) -- (0,0) -- (1,0);
%
\draw[string=red!60, >=stealth, very thick] (1, 0.6) .. controls +(-0.2,0) and +(-0.2,0) .. (1, 0.4);
\draw[string=red!90, >=stealth, very thick] (1, 0.75) .. controls +(-0.4,0) and +(-0.4,0) .. (1, 0.25);
\draw[string=red!90, >=stealth, very thick] (1, 0.1) .. controls +(-0.6,0) and +(-0.6,0) .. (1, 0.9);
\end{tikzpicture}
\, . 
\ee
The Zorro moves hold manifestly due to isotopy invariance. 
\item Composition and identities in $\mathcal F_{\textrm{d}}\K$ are the same as in $\mathcal F\K$. 
\end{itemize}

\begin{corollary}
Every set of defect data~$\D$ gives rise to a free strictly pivotal 2-category $\mathcal F_{\textrm{d}} \K^{\D}$, 
a free strictly pivotal pre-2-category $\mathcal F_{\textrm{d}}^{\textrm{p}} \K^{\D}$, 
and a free strictly pivotal bicategory $\mathcal F_{\textrm{d}}^{\textrm{b}} \K^{\D}$. 
\end{corollary}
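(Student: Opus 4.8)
The plan is to obtain all three strictly pivotal structures by applying the free-pivotalisation constructions $\mathcal F_{\textrm{d}}(-)$, $\mathcal F_{\textrm{d}}^{\textrm{p}}(-)$, $\mathcal F_{\textrm{d}}^{\textrm{b}}(-)$ described above to the computad $\K^{\D}$ associated to~$\D$. By the preceding Lemma $\K^{\D}$ is a computad, so the bullet-point description of $\mathcal F_{\textrm{d}}\K$ applies verbatim with $\K = \K^{\D}$, and likewise for the two variants, which differ from it only in the equivalence relation imposed on string diagrams (progressive isotopy for $\mathcal F_{\textrm{d}}^{\textrm{p}}$, none on 1-morphisms for $\mathcal F_{\textrm{d}}^{\textrm{b}}$). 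It therefore suffices to treat $\mathcal F_{\textrm{d}}\K^{\D}$ carefully and then note that the other two cases run along the same lines.

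First I would check that $\mathcal F_{\textrm{d}}\K^{\D}$ is a well-defined 2-category: composition and identities are inherited from the 2-category $\mathcal F\K^{\D}$ (which exists by the earlier Corollary), and one only has to verify that the extra orientation data carried by 1-morphisms, and the larger class of not-necessarily-progressive planar string diagrams allowed for 2-morphisms, are compatible with horizontal and vertical composition — this is routine bookkeeping, since gluing oriented stratified squares along a common boundary edge respects orientations of the 1-strata by construction. Next I would exhibit the duality: for a 1-morphism $X = (X_1, \dots, X_m)$ with its chosen orientations set $X^\#$ to be the reversed list with all orientations flipped, as in~\eqref{eq:Xhashfree}, and let $\ev_X$ and $\coev_X$ be the cup/cap diagrams~\eqref{eq:evcoevfree}. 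The Zorro moves then hold on the nose by isotopy invariance. It remains to see that the resulting $(-)^\#\colon \mathcal F_{\textrm{d}}\K^{\D} \to (\mathcal F_{\textrm{d}}\K^{\D})^{\textrm{op}}$ is a \emph{strict} 2-functor fixing objects with $(-)^{\#\#} = 1$: on objects and 1-morphisms this is immediate from the definition, as $(X^\#)^\# = X$ by double reversal of order and orientations, and on 2-morphisms one invokes the characterisation~\eqref{eq:evfromcoev} of strict pivotality in terms of $\coev$ and $(-)^\#$, each of whose three displayed identities is a manifest isotopy of planar diagrams; strictness with respect to $\fus$ and $\circ$, for instance $(X \fus Y)^\# = Y^\# \fus X^\#$, is again visible directly from the diagrammatic definitions, since reversing order and orientations is compatible with vertical stacking and with horizontal concatenation after the order reversal.

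For $\mathcal F_{\textrm{d}}^{\textrm{p}}\K^{\D}$ and $\mathcal F_{\textrm{d}}^{\textrm{b}}\K^{\D}$ one runs the same construction, observing that the straightening homotopies used in the Zorro moves and in~\eqref{eq:evfromcoev} can be realised as progressive isotopies, hence are already valid in $\mathcal F_{\textrm{d}}^{\textrm{p}}\K^{\D}$, and a fortiori after the further relaxation defining $\mathcal F_{\textrm{d}}^{\textrm{b}}\K^{\D}$. The hard part — really the only non-bookkeeping step — will be confirming that these straightening isotopies are genuinely admissible in the chosen flavour of free bicategory: for $\mathcal F_{\textrm{d}}\K^{\D}$ this is immediate, whereas for $\mathcal F_{\textrm{d}}^{\textrm{p}}\K^{\D}$ one must arrange, by a small perturbation if necessary, that no two distinct 0-strata are forced to share a horizontal coordinate during the homotopy, so that it qualifies as a progressive isotopy.
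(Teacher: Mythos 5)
Your treatment of $\mathcal F_{\textrm{d}}\K^{\D}$ is fine and matches what the paper intends: the corollary is immediate from the preceding lemma (that $\K^{\D}$ is a computad) together with the bullet-point construction of the pivotalisation, and the paper offers no further argument. The genuine problem is your handling of the pre-2-category case. You claim that the straightening homotopies underlying the Zorro moves and \eqref{eq:evfromcoev} ``can be realised as progressive isotopies, hence are already valid in $\mathcal F_{\textrm{d}}^{\textrm{p}}\K^{\D}$,'' needing only a small perturbation to keep horizontal coordinates of vertices distinct. This is false, and the paper says the opposite in the paragraph immediately following the corollary: in $\mathcal F_{\textrm{d}}^{\textrm{p}}\K^{\D}$ the apex of each cup and cap (the singular point of the projection to the $x$-axis) is \emph{treated as a vertex} in the equivalence relation, no allowed isotopy moves it past another vertex, and the Zorro moves are explicitly \emph{not} required to hold. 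The obstruction is not an accidental coincidence of horizontal coordinates that a perturbation could remove: straightening a zigzag annihilates the two apices, i.\,e.~it changes the number of 0-strata of the diagram, so it is not an isotopy of these stratified diagrams at all, let alone a progressive one. This is not a pedantic point for the paper --- the failure of the Zorro move at the level of $\mathcal F_{\textrm{d}}^{\textrm{p}}\K^{\D}$ is precisely what leaves room for the non-trivial triangulator $\tau_\alpha$ in the Gray category $\tz$ later on; if your claim were true, $\tau_\alpha$ would be forced to be an identity. So the term ``strictly pivotal'' in the corollary has to be read, for the pre-2-category, as ``equipped with the freely added duality data'' rather than ``satisfying the Zorro identities,'' and a correct proof must state this caveat rather than argue the identities hold.

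A smaller inaccuracy: you describe $\mathcal F_{\textrm{d}}^{\textrm{b}}\K^{\D}$ as a ``further relaxation'' of $\mathcal F_{\textrm{d}}^{\textrm{p}}\K^{\D}$. It is not: the free bicategory identifies 2-morphisms up to full (not merely progressive) isotopy, exactly as $\mathcal F_{\textrm{d}}\K^{\D}$ does, and differs from the latter only in not identifying isotopic diagrams for 1-morphisms. In particular the Zorro moves do hold in $\mathcal F_{\textrm{d}}^{\textrm{b}}\K^{\D}$, but for the same reason as in $\mathcal F_{\textrm{d}}\K^{\D}$, not ``a fortiori'' from the pre-2-category case.
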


In the pre-2-category $\mathcal F_{\textrm{d}}^{\textrm{p}} \K^{\D}$ we include adjunction maps for 1-morphisms as in~\eqref{eq:evcoevfree}, but the singular point (corresponding to the apex) of the projection to the $x$-axis is treated as a vertex in the equivalence relation for 2-morphisms. 
Hence, there is no allowed isotopy that moves it past another vertex. 
Moreover, we do not require the Zorro moves to hold in $\mathcal F_{\textrm{d}}^{\textrm{p}} \K^{\D}$.

\subsubsection{Tricategories}
\label{subsubsec:tricatsduals}

Roughly, a Gray category with duals~$\G$ has duals $X\dagg: \beta \rightarrow \alpha$ for 2-morphisms $X: \alpha \rightarrow \beta$, but also suitably compatible duals $\alpha\hash \in\G(v,u)$ for 1-morphisms $\alpha \in \G(u,v)$. 
The $\dagger$-dual is familiar, namely that of a strictly pivotal structure on the 2-categories $\G(u,v)$. 
However, the 3-dimensional analogue of the Zorro move for the $\#$-dual only holds up to a 3-isomorphism. 
The following definition is equivalent to \cite[Def.\,3.10]{BMS}, although reformulated with slightly different conventions:

\begin{definition}
\label{def:Graycatduals}
A \textsl{Gray category with duals} is a Gray category~$\G$ together with the following structure: 
\begin{enumerate}
\item
for all $u,v \in \G$, the 2-categories $\G(u,v)$ are strictly pivotal 2-categories with duals denoted $(-)\dagg$, and for all $\alpha \in \G(u,v)$ and $w\in \G$, the 2-functors
$$
\alpha \sta(-): \G(w,u) \lra \G(w,v)
\, , \quad 
(-) \sta \alpha : \G(v,w) \lra \G(u,w) 
$$
are strictly pivotal 2-functors; 
\item
	for every 1-morphism $\alpha \in \G(u,v)$ there is a 1-morphism $\alpha\hash \in \G(v,u)$ 
together with a 2-morphism $\coev_\alpha \colon 1_v \rightarrow \alpha \sta \alpha\hash$, called \textsl{fold}, and an invertible 3-morphism 
\be\label{eq:triangulator}
\tau_\alpha 
\colon  
\big( 1_\alpha \sta 
	\ev_\alpha 
 \big) \fus \big( \coev_\alpha \sta 1_\alpha \big) \lra 1_{\alpha} \, ,
\ee
called \textsl{triangulator} (or \textsl{Zorro movie}), 
	where $\ev_\alpha := ( \coev_{\alpha\hash} )\dagg \colon \alpha\hash \sta \alpha \to 1_u$, 
subject to the conditions
\begin{enumerate}
\item
$\alpha^{\#\#} = \alpha$ for all 1-morphisms~$\alpha$, 
\item 
$1_u\hash = 1_u$, 
$\coev_{1_u} = 1_{1_u}$, 
and $\tau_{1_u} = 1_{1_{1_u}}$ for all $u\in \G$, 
\item 
$( \alpha \sta \beta )\hash = \beta\hash \sta \alpha\hash$, 
$\coev_{\alpha\sta\beta} = (1_\alpha \sta \coev_\beta \sta 1_\alpha\hash ) \fus \coev_\alpha$, 
and 
\begin{align*}
\tau_{\alpha\sta\beta}
= \,
& \Big( \big( \tau_\alpha \sta 1_{1_\beta} \big) \fus \big( 1_{1_\alpha} \sta \tau_\beta \big) \Big) \\
& \quad \circ \Big( 1_{1_\alpha \sta 1_\beta \sta (\coev_{\beta\hash})\dagg} \otimes \sigma^{-1}_{1_\alpha \sta \coev_\beta, (\coev_{\alpha\hash})\dagg \sta 1_\beta} \fus 1_{\coev_\alpha \sta 1_\alpha \sta 1_\beta} \Big)
\end{align*}
for all $\alpha \in \G(u,v)$ and $\beta \in \G(w,u)$, where~$\sigma$ is the tensorator for~$\G$, 
\item
for all 1-morphisms $\alpha$: 
\begin{align*}
1_{\coev_\alpha} 
& = \Big(  \big( 
1_{1_\alpha} 
\Box (\tau_{\alpha^\#}^{-1})^\dag  
\big) \fus 1_{\coev_\alpha}
 \Big) 
\circ \Big( 1_{1_\alpha \sta (\coev_{\alpha\hash})\dagg \sta 1_{\alpha\hash}} \fus \sigma_{\coev_\alpha, \coev_\alpha} \Big) \\
& \quad \quad
\circ \Big(  (\tau_\alpha^\inv  \sta 1_{1_{\alpha^\#}})  \fus 1_{\coev_\alpha} \Big)
\, . 
\end{align*}
\end{enumerate}
\end{enumerate}
\end{definition}

We observe that the conditions in (ii) above are analogous to those of the 2-dimensional case in \eqref{eq:evfromcoev}. 
Expressing them graphically below will make them much more transparent. 

\begin{definition}
\label{def:strictFGraydual}
A \textsl{strict functor of Gray categories with duals} $F: \Cat{G} \rightarrow \Cat{G}'$ is a strict functor of Gray categories which is compatible with the duals in the sense that for all $u,v \in \Cat{G}$, the 2-functors $\Cat{G}(u,v)\rightarrow \Cat{G}'(F_{0}(u),F_{0}(v))$ are pivotal, for all 1-morphisms $\alpha$ in $\Cat{G}$ we have $F(\alpha^{\#})=F(\alpha)^{\#}$ and $F(\coev_{\alpha})=\coev_{F(\alpha)}$, and $F(\tau)=\tau'$ holds for the triangulators. 
$F$ is called an \textsl{equivalence of Gray categories with duals} if in addition it is an equivalence of Gray categories as in Definition~\ref{def:Grayequiv} .
\end{definition}

The graphical calculus for Gray categories with duals~$\G$ was developed in \cite[Sect.\,3.4]{BMS}. 
As in the 2-dimensional case one allows non-progressive diagrams in which surfaces may be folded. 
Hence locally, diagrams for~$\G$ either look like 3-dimensional progressive diagrams as in Section~\ref{subsubsec:tricats}, or like $\alpha$-folds 
$$
	1_{\coev_\alpha} = 
\begin{tikzpicture}[thick,scale=2.5,color=blue!50!black, baseline=0.0cm, >=stealth, 
				style={x={(-0.9cm,-0.4cm)},y={(0.8cm,-0.4cm)},z={(0cm,0.9cm)}}]
\draw[
	 color=gray, 
	 opacity=0.3, 
	 semithick,
	 dashed
	 ] 
	 (1,0,0) -- (0,0,0) -- (0,1,0)
	 (0,0,0) -- (0,0,1);
\coordinate (f0) at (0.5, 0.75, 0);
\coordinate (f1) at (0.5, 0.75, 1);
\coordinate (front0) at (0.75, 0, 0);
\coordinate (front1) at (0.75, 0, 1);
\coordinate (back0) at (0.25, 0, 0);
\coordinate (back1) at (0.25, 0, 1);
\fill [blue!70,opacity=0.2] (back0) -- (back1) -- (f1) -- (f0) -- (front0) -- (front1) -- (back1);
\fill [blue!20,opacity=0.2] (1,0,0) -- (1,1,0) -- (0,1,0) -- (0,1,1) -- (0,0,1) -- (1,0,1);
%
%
\fill [red!60, opacity=0.7] (f0) -- (f1) -- (back1) -- (back0);
\draw[line width=1] (0.3, 0.17, 0.85) node[line width=0pt] (beta) {{\footnotesize $\alpha$}};
%
\fill [pattern=dots, opacity=0.4] (f0) -- (f1) -- (front1) -- (front0);
\fill [red!60,opacity=0.7] (f0) -- (f1) -- (front1) -- (front0);
\draw[line width=1] (0.6, 0.1, 0.35) node[line width=0pt] (beta) {{\footnotesize $\alpha\hash$}};
%
\draw[line width=1] (0.85, 0.5, 0) node[line width=0pt] (beta) {{\footnotesize $v$}};
\draw[line width=1] (0.5, 0.1, 0.95) node[line width=0pt] (beta) {{\footnotesize $u$}};
%
\draw[
	 color=gray, 
	 opacity=0.4, 
	 semithick
	 ] 
	 (0,1,1) -- (0,1,0) -- (1,1,0) -- (1,1,1) -- (0,1,1) -- (0,0,1) -- (1,0,1) -- (1,0,0) -- (1,1,0)
	 (1,0,1) -- (1,1,1);
\end{tikzpicture}
: 1_v \lra \alpha \sta \alpha\hash
$$
or like the $\dagger$-dual of the $\alpha\hash$-fold, 
$$
	1_{\ev_\alpha} = 1_{( \coev_{\alpha\hash} )\dagg} = 
\begin{tikzpicture}[thick,scale=2.5,color=blue!50!black, baseline=0.0cm, >=stealth, 
				style={x={(-0.9cm,-0.4cm)},y={(0.8cm,-0.4cm)},z={(0cm,0.9cm)}}]
\draw[
	 color=gray, 
	 opacity=0.3, 
	 semithick,
	 dashed
	 ] 
	 (1,0,0) -- (0,0,0) -- (0,1,0)
	 (0,0,0) -- (0,0,1);
\coordinate (f0) at (0.5, 0.25, 0);
\coordinate (f1) at (0.5, 0.25, 1);
\coordinate (front0) at (0.75, 1, 0);
\coordinate (front1) at (0.75, 1, 1);
\coordinate (back0) at (0.25, 1, 0);
\coordinate (back1) at (0.25, 1, 1);
\fill [blue!70,opacity=0.2] (1,0,0) -- (1,1,0) -- (0,1,0) -- (0,1,1) -- (0,0,1) -- (1,0,1);
\fill [blue!5,opacity=0.9] (back0) -- (back1) -- (f1) -- (f0) -- (front0) -- (back0);
%
%
\fill [pattern=dots, opacity=0.4] (f0) -- (f1) -- (back1) -- (back0);
\fill [red!60,opacity=0.7] (f0) -- (f1) -- (back1) -- (back0);
\draw[line width=1] (0.3, 0.8, 0.45) node[line width=0pt] (beta) {{\footnotesize $\alpha\hash$}};
%
\fill [red!60, opacity=0.7] (f0) -- (f1) -- (front1) -- (front0);
\draw[line width=1] (0.3, 0.2, 0.55) node[line width=0pt] (beta) {{\footnotesize $\alpha$}};
%
\draw[line width=1] (0.47, 0.88, 0) node[line width=0pt] (beta) {{\footnotesize $v$}};
\draw[line width=1] (0.9, 0.5, 0.05) node[line width=0pt] (beta) {{\footnotesize $u$}};
%
\draw[
	 color=gray, 
	 opacity=0.4, 
	 semithick
	 ] 
	 (0,1,1) -- (0,1,0) -- (1,1,0) -- (1,1,1) -- (0,1,1) -- (0,0,1) -- (1,0,1) -- (1,0,0) -- (1,1,0)
	 (1,0,1) -- (1,1,1);
\end{tikzpicture}
: \alpha\hash \sta \alpha \lra 1_u
\, , 
$$
	or like the diagrams for~$\tau_\alpha$ and $\tau_\alpha^{-1}$ shown in~\eqref{eq:tautauinverse} below. 
As is implicit in these diagrams, the convention is that taking the $\dagger$-dual and the $\#$-dual corresponds to a rotation by~$\pi$ along the lines $y=z = \tfrac{1}{2}$ and $x=y = \tfrac{1}{2}$, respectively. 

Condition (i) in Definition~\ref{def:Graycatduals} states that $\sta$-composition is compatible with $\dagger$-duals. 
Analogously to the 2-dimensional case, this is built into the graphical calculus, e.\,g.
$$
\begin{tikzpicture}[thick,scale=2.5,color=blue!50!black, baseline=0.0cm, >=stealth, 
				style={x={(-0.9cm,-0.4cm)},y={(0.8cm,-0.4cm)},z={(0cm,0.9cm)}}]
\draw[
	 color=gray, 
	 opacity=0.3, 
	 semithick,
	 dashed
	 ] 
	 (1,0,0) -- (0,0,0) -- (0,1,0)
	 (0,0,0) -- (0,0,1);
\coordinate (d) at (-0.34, 0, 0);
\coordinate (d2) at (-0.67, 0, 0);
\fill [blue!40,opacity=0.1] ($(d2) + (1,0,0)$) -- ($(d2) + (1,1,0)$) -- ($(d2) + (0.67,1,0)$) -- ($(d2) + (0.67,1,1)$) -- ($(d2) + (0.67,0,1)$) -- ($(d2) + (1,0,1)$);
%
\coordinate (X0) at (0.67, 0.8, 0);
\coordinate (X1) at (0.67, 0.2, 0);
%
%
\fill [magenta!50,opacity=0.7] (0.33, 0, 0) -- (0.33, 1, 0) -- (0.33, 1, 1) -- (0.33, 0, 1) -- (0.33, 0, 0);
\draw[line width=1] (0.4, 0.99, 0.5) node[line width=0pt] (beta) {{\footnotesize $\gamma$}};
%
\fill [blue!20,opacity=0.1] ($(d) + (1,0,0)$) -- ($(d) + (1,1,0)$) -- ($(d) + (0.67,1,0)$) -- ($(d) + (0.67,1,1)$) -- ($(d) + (0.67,0,1)$) -- ($(d) + (1,0,1)$);
%
\fill [blue!20,opacity=0.2] (1,0,0) -- (1,1,0) -- (0.67,1,0) -- (0.67,1,1) -- (0.67,0,1) -- (1,0,1);
%
\fill [red!70,opacity=0.7] (0.67, 0, 0) -- (0.67, 1, 0) -- (0.67, 1, 1) -- (0.67, 0, 1);
\fill [red!30,opacity=0.7] (X0) .. controls +(0,0,0.75) and +(0,0,0.75) .. (X1);
\draw[string=green!60!black, ultra thick] (X0) .. controls +(0,0,0.75) and +(0,0,0.75) .. (X1);
\draw[line width=1] (0.67, 0.75, 0.6) node[line width=0pt] (beta) {{\footnotesize $X$}};
\draw[line width=1] (0.73, 0.24, 0.6) node[line width=0pt] (beta) {{\footnotesize $\alpha$}};
\draw[line width=1] (0.73, 0.5, 0.3) node[line width=0pt] (beta) {{\footnotesize $\beta$}};
%
\draw[line width=1] (0.85, 0.5, 0) node[line width=0pt] (beta) {{\footnotesize $u$}};
\draw[line width=1] (0.45, 0.9, 0) node[line width=0pt] (beta) {{\footnotesize $v$}};
\draw[line width=1] (0.12, 0.9, 0) node[line width=0pt] (beta) {{\footnotesize $w$}};
%
\draw[
	 color=gray, 
	 opacity=0.4, 
	 semithick
	 ] 
	 (0,1,1) -- (0,1,0) -- (1,1,0) -- (1,1,1) -- (0,1,1) -- (0,0,1) -- (1,0,1) -- (1,0,0) -- (1,1,0)
	 (1,0,1) -- (1,1,1);
\end{tikzpicture}
=
\begin{tikzpicture}[thick,scale=2.5,color=blue!50!black, baseline=0.0cm, >=stealth, 
				style={x={(-0.9cm,-0.4cm)},y={(0.8cm,-0.4cm)},z={(0cm,0.9cm)}}]
\draw[
	 color=gray, 
	 opacity=0.3, 
	 semithick,
	 dashed
	 ] 
	 (1,0,0) -- (0,0,0) -- (0,1,0)
	 (0,0,0) -- (0,0,1);
\coordinate (d2) at (-0.5, 0, 0);
\fill [blue!40,opacity=0.1] ($(d2) + (1,0,0)$) -- ($(d2) + (1,1,0)$) -- ($(d2) + (0.5,1,0)$) -- ($(d2) + (0.5,1,1)$) -- ($(d2) + (0.5,0,1)$) -- ($(d2) + (1,0,1)$);
%
\coordinate (X0) at (0.5, 0.8, 0);
\coordinate (X1) at (0.5, 0.2, 0);
%
%
\fill [magenta!50,opacity=0.7] (0.5, 0, 0) -- (0.5, 1, 0) -- (0.5, 1, 1) -- (0.5, 0, 1) -- (0.5, 0, 0);
%
%
\fill [blue!20,opacity=0.2] (1,0,0) -- (1,1,0) -- (0.5,1,0) -- (0.5,1,1) -- (0.5,0,1) -- (1,0,1);
%
\fill [red!70,opacity=0.7] (0.5, 0, 0) -- (0.5, 1, 0) -- (0.5, 1, 1) -- (0.5, 0, 1);
\fill [red!30,opacity=0.7] (X0) .. controls +(0,0,0.75) and +(0,0,0.75) .. (X1);
\draw[string=green!60!black, ultra thick] (X0) .. controls +(0,0,0.75) and +(0,0,0.75) .. (X1);
\draw[line width=1] (0.5, 0.7, 0.72) node[line width=0pt] (beta) {{\footnotesize $1_\gamma \sta X$}};
\draw[line width=1] (0.5, 0.24, 0.7) node[line width=0pt] (beta) {{\footnotesize $\gamma \sta \alpha$}};
\draw[line width=1] (0.5, 0.47, 0.27) node[line width=0pt] (beta) {{\footnotesize $\gamma \sta \beta$}};
%
%
\draw[line width=1] (0.75, 0.5, 0) node[line width=0pt] (beta) {{\footnotesize $u$}};
\draw[line width=1] (0.2, 0.9, 0) node[line width=0pt] (beta) {{\footnotesize $w$}};
%
\draw[
	 color=gray, 
	 opacity=0.4, 
	 semithick
	 ] 
	 (0,1,1) -- (0,1,0) -- (1,1,0) -- (1,1,1) -- (0,1,1) -- (0,0,1) -- (1,0,1) -- (1,0,0) -- (1,1,0)
	 (1,0,1) -- (1,1,1);
\end{tikzpicture}
\, . 
$$
	Here and below we are using the convention (consistent with the convention in~\eqref{eq:evcoevin2d}) that a line representing a 2-morphism $X\colon \alpha \to \beta$ is oriented such that the surface representing the 1-morphism~$\alpha$ is to the right of the line. Diagrams for Gray categories with duals are again evaluated by projecting to the back of the cube (i.\,e.~the face with $x=0$) as in Section~\ref{subsubsec:tricats}, inducing the orientations of lines in the projected 2-dimensional diagram. For further details, we refer to \cite[Sect.\,3.4]{BMS}. 

The triangulator~\eqref{eq:triangulator} is a 3-morphism that `morphs Zorro's~Z, lying at the bottom of the cube, into a straight line at the top'. 
In pictures, the triangulator and its inverse are given by:
\be
\label{eq:tautauinverse}
\tau_\alpha = 
\begin{tikzpicture}[thick,scale=2.5,color=blue!50!black, baseline=0.0cm, >=stealth, 
				style={x={(-0.9cm,-0.4cm)},y={(0.8cm,-0.4cm)},z={(0cm,0.9cm)}}]
\draw[
	 color=gray, 
	 opacity=0.3, 
	 semithick,
	 dashed
	 ] 
	 (1,0,0) -- (0,0,0) -- (0,1,0)
	 (0,0,0) -- (0,0,1);
\coordinate (b1) at (0.75, 0.3, 0);
\coordinate (b2) at (0.45, 0.5, 0);
\coordinate (cusp) at (0.45, 0.5, 0.5);
\coordinate (abovecusp) at (0.45, 0.3, 1);
\coordinate (left0) at (0.45, 0, 0);
\coordinate (leftmid) at (0.45, 0, 0.5);
\coordinate (left1) at (0.45, 0, 1);
\coordinate (right0) at (0.75, 1, 0);
\coordinate (rightmid) at (0.6, 1, 0.5);
\coordinate (right1) at (0.45, 1, 1);
%
\fill [blue!20,opacity=0.2] (1,0,0) -- (1,1,0) -- (0,1,0) -- (0,1,1) -- (0,0,1) -- (1,0,1);
%
\fill [red!60, opacity=0.7] (b2) -- (left0) -- (leftmid) -- (cusp);
\fill [red!60, opacity=0.7] (leftmid) -- (cusp) -- (abovecusp) -- (left1);
\fill [pattern=dots, opacity=0.4] (cusp) -- (b1) -- (b2);
\fill [red!60, opacity=0.7] (cusp) -- (b1) -- (b2);
\fill [red!60, opacity=0.7] (cusp) -- (b1) -- (right0) -- (rightmid);
\fill [red!60, opacity=0.7] (cusp) -- (rightmid) -- (right1) -- (abovecusp);
\draw[line width=1] (0.3, 0.2, 0.6) node[line width=0pt] (beta) {{\footnotesize $\alpha$}};
%
\draw[line width=1] (0.85, 0.5, 0) node[line width=0pt] (beta) {{\footnotesize $u$}};
\draw[line width=1] (0.2, 0.8, 0) node[line width=0pt] (beta) {{\footnotesize $v$}};
%
\draw[
	 color=gray, 
	 opacity=0.4, 
	 semithick
	 ] 
	 (0,1,1) -- (0,1,0) -- (1,1,0) -- (1,1,1) -- (0,1,1) -- (0,0,1) -- (1,0,1) -- (1,0,0) -- (1,1,0)
	 (1,0,1) -- (1,1,1);
\end{tikzpicture}
\qquad\qquad
\tau_\alpha^\inv = 
\begin{tikzpicture}[thick,scale=2.5,color=blue!50!black, baseline=0.0cm, >=stealth, 
				style={x={(-0.9cm,-0.4cm)},y={(0.8cm,-0.4cm)},z={(0cm,0.9cm)}}]
\draw[
	 color=gray, 
	 opacity=0.3, 
	 semithick,
	 dashed
	 ] 
	 (1,0,0) -- (0,0,0) -- (0,1,0)
	 (0,0,0) -- (0,0,1);
\coordinate (b1) at (0.75, 0.3, 1);
\coordinate (b2) at (0.45, 0.5, 1);
\coordinate (cusp) at (0.45, 0.5, 0.5);
\coordinate (belcusp) at (0.45, 0.3, 0);
\coordinate (left0) at (0.45, 0, 1);
\coordinate (leftmid) at (0.45, 0, 0.5);
\coordinate (left1) at (0.45, 0, 0);
\coordinate (right0) at (0.75, 1, 1);
\coordinate (rightmid) at (0.6, 1, 0.5);
\coordinate (right1) at (0.45, 1, 0);
%
\fill [blue!20,opacity=0.2] (1,0,0) -- (1,1,0) -- (0,1,0) -- (0,1,1) -- (0,0,1) -- (1,0,1);
%
\fill [red!60, opacity=0.7] (b2) -- (left0) -- (leftmid) -- (cusp);
\fill [red!60, opacity=0.7] (leftmid) -- (cusp) -- (belcusp) -- (left1);
\fill [pattern=dots, opacity=0.4] (cusp) -- (b1) -- (b2);
\fill [red!60, opacity=0.7] (cusp) -- (b1) -- (b2);
\fill [red!60, opacity=0.7] (cusp) -- (b1) -- (right0) -- (rightmid);
\fill [red!60, opacity=0.7] (cusp) -- (rightmid) -- (right1) -- (belcusp);
\draw[line width=1] (0.3, 0.2, 0.2) node[line width=0pt] (beta) {{\footnotesize $\alpha$}};
%
\draw[line width=1] (0.85, 0.5, 0) node[line width=0pt] (beta) {{\footnotesize $u$}};
\draw[line width=1] (0.2, 0.8, 0) node[line width=0pt] (beta) {{\footnotesize $v$}};
%
\draw[
	 color=gray, 
	 opacity=0.4, 
	 semithick
	 ] 
	 (0,1,1) -- (0,1,0) -- (1,1,0) -- (1,1,1) -- (0,1,1) -- (0,0,1) -- (1,0,1) -- (1,0,0) -- (1,1,0)
	 (1,0,1) -- (1,1,1);
\end{tikzpicture}
\ee
and the condition that  $\tau_\alpha^\inv$ is inverse to $\tau_\alpha$ reads
\begin{align}
&\tau_\alpha^\inv\circ\tau_\alpha =
\begin{tikzpicture}[thick,scale=2.5,color=blue!50!black, baseline=0.0cm, >=stealth, 
				style={x={(-0.9cm,-0.4cm)},y={(0.8cm,-0.4cm)},z={(0cm,0.9cm)}}]
\draw[
	 color=gray, 
	 opacity=0.3, 
	 semithick,
	 dashed
	 ] 
	 (1,0,0) -- (0,0,0) -- (0,1,0)
	 (0,0,0) -- (0,0,1);
\coordinate (b1) at (0.75, 0.3, 1);
\coordinate (b2) at (0.45, 0.5, 1);
\coordinate (c1) at (0.75, 0.3, 0);
\coordinate (c2) at (0.45, 0.5, 0);
\coordinate (cusp) at (0.45, 0.5, 0.55);
\coordinate (cusp2) at (0.45, 0.5, 0.45);
\coordinate (belcusp) at (0.45, 0.3, 0);
\coordinate (abcusp) at (0.45, 0.3, 1);
\coordinate (left0) at (0.45, 0, 1);
\coordinate (leftmid) at (0.45, 0, 0.5);
\coordinate (left1) at (0.45, 0, 0);
\coordinate (right0) at (0.75, 1, 1);
\coordinate (rightmid) at (0.6, 1, 0.5);
\coordinate (right1) at (0.75, 1, 0);
%
\fill [blue!20,opacity=0.2] (1,0,0) -- (1,1,0) -- (0,1,0) -- (0,1,1) -- (0,0,1) -- (1,0,1);
%
\fill [red!60, opacity=0.7] (b2) -- (left0) -- (left1) -- (c2);
\fill [red!60, opacity=0.7] (rightmid) -- (cusp2) -- (cusp);
\fill [pattern=dots, opacity=0.4] (cusp) -- (b1) -- (b2);
\fill [red!60, opacity=0.7] (cusp) -- (b1) -- (b2);
\fill [pattern=dots, opacity=0.4] (cusp2) -- (c1) -- (c2);
\fill [red!60, opacity=0.7] (cusp2) -- (c1) -- (c2);
\fill [red!60, opacity=0.7] (cusp) -- (b1) -- (right0) -- (rightmid);
\fill [red!60, opacity=0.7] (cusp2) -- (rightmid) -- (right1) -- (c1);
\draw[line width=1] (0.1, 0.2, 0.2) node[line width=0pt] (beta) {{\footnotesize $\alpha$}};
%
\draw[line width=1] (0.85, 0.5, 0) node[line width=0pt] (beta) {{\footnotesize $u$}};
\draw[line width=1] (0.2, 0.8, 0) node[line width=0pt] (beta) {{\footnotesize $v$}};
%
\draw[
	 color=gray, 
	 opacity=0.4, 
	 semithick
	 ] 
	 (0,1,1) -- (0,1,0) -- (1,1,0) -- (1,1,1) -- (0,1,1) -- (0,0,1) -- (1,0,1) -- (1,0,0) -- (1,1,0)
	 (1,0,1) -- (1,1,1);
\end{tikzpicture}
=
\begin{tikzpicture}[thick,scale=2.5,color=blue!50!black, baseline=0.0cm, >=stealth, 
				style={x={(-0.9cm,-0.4cm)},y={(0.8cm,-0.4cm)},z={(0cm,0.9cm)}}]
\draw[
	 color=gray, 
	 opacity=0.3, 
	 semithick,
	 dashed
	 ] 
	 (1,0,0) -- (0,0,0) -- (0,1,0)
	 (0,0,0) -- (0,0,1);
\coordinate (b1) at (0.75, 0.3, 1);
\coordinate (b2) at (0.45, 0.5, 1);
\coordinate (c1) at (0.75, 0.3, 0);
\coordinate (c2) at (0.45, 0.5, 0);
\coordinate (left0) at (0.45, 0, 1);
\coordinate (left1) at (0.45, 0, 0);
\coordinate (right0) at (0.75, 1, 1);
\coordinate (right1) at (0.75, 1, 0);
%
\fill [blue!20,opacity=0.2] (1,0,0) -- (1,1,0) -- (0,1,0) -- (0,1,1) -- (0,0,1) -- (1,0,1);
%
\fill [red!60, opacity=0.7] (b2) -- (left0) -- (left1) -- (c2);
\fill [pattern=dots, opacity=0.4]  (b1) -- (b2)--(c2)--(c1);
\fill [red!60, opacity=0.7]  (b1) -- (b2)--(c2)--(c1);
\fill [red!60, opacity=0.7] (b1) -- (right0) -- (right1) -- (c1);
\draw[line width=1] (0.1, 0.2, 0.2) node[line width=0pt] (beta) {{\footnotesize $\alpha$}};
%
\draw[line width=1] (0.85, 0.5, 0) node[line width=0pt] (beta) {{\footnotesize $u$}};
\draw[line width=1] (0.1, 0.85, 0) node[line width=0pt] (beta) {{\footnotesize $v$}};
%
\draw[
	 color=gray, 
	 opacity=0.4, 
	 semithick
	 ] 
	 (0,1,1) -- (0,1,0) -- (1,1,0) -- (1,1,1) -- (0,1,1) -- (0,0,1) -- (1,0,1) -- (1,0,0) -- (1,1,0)
	 (1,0,1) -- (1,1,1);
\end{tikzpicture}
= 1_{\big(1_\alpha\Box(\mathrm{coev}_{\alpha^\#})^\dag\big)\otimes \big(\mathrm{coev}_\alpha\Box 1_\alpha\big)}\nonumber\\
&\tau_\alpha\circ \tau_\alpha^\inv=
\begin{tikzpicture}[thick,scale=2.5,color=blue!50!black, baseline=0.0cm, >=stealth, 
				style={x={(-0.9cm,-0.4cm)},y={(0.8cm,-0.4cm)},z={(0cm,0.9cm)}}]
\draw[
	 color=gray, 
	 opacity=0.3, 
	 semithick,
	 dashed
	 ] 
	 (1,0,0) -- (0,0,0) -- (0,1,0)
	 (0,0,0) -- (0,0,1);
\coordinate (b1) at (0.65, 0.3, 0.5);
\coordinate (b2) at (0.45, 0.5, 0.5);
\coordinate (cusp) at (0.45, 0.5, 0.9);
\coordinate (cusp2) at (0.45, 0.5, 0.1);
\coordinate (abovecusp) at (0.45, 0.5, 1);
\coordinate (belcusp) at (0.45, 0.5, 0);
\coordinate (left0) at (0.45, 0, 0);
\coordinate (leftmid) at (0.45, 0, 0.5);
\coordinate (left1) at (0.45, 0, 1);
\coordinate (right0) at (0.65, 1, 0.5);
\coordinate (rightmid) at (0.5, 1, 0.8);
\coordinate (rightmid2) at (0.5, 1, 0.2);
\coordinate (right1) at (0.45, 1, 1);
\coordinate (right2) at (0.45, 1, 0);
%
\fill [blue!20,opacity=0.2] (1,0,0) -- (1,1,0) -- (0,1,0) -- (0,1,1) -- (0,0,1) -- (1,0,1);
\fill [red!60, opacity=0.7] (left0) -- (belcusp) -- (abovecusp) -- (left1);
\fill [pattern=dots, opacity=0.4] (cusp) --  (b1)--(b2);
\fill [red!60, opacity=0.7] (cusp) -- (b1) -- (b2);
\fill [pattern=dots, opacity=0.4] (cusp2) --  (b1)--(b2);
\fill [red!60, opacity=0.7] (cusp2) -- (b1) -- (b2);
\fill [red!60, opacity=0.7] (cusp) -- (b1) -- (right0) -- (rightmid);
\fill [red!60, opacity=0.7] (cusp2) -- (b1) -- (right0) -- (rightmid2);
\fill [red!60, opacity=0.7] (cusp) -- (rightmid) -- (right1) -- (abovecusp);
\fill [red!60, opacity=0.7] (cusp2) -- (rightmid2) -- (right2) -- (belcusp);
\draw[line width=1] (0, 0.2, 0.4) node[line width=0pt] (beta) {{\footnotesize $\alpha$}};
%
\draw[line width=1] (0.85, 0.5, 0) node[line width=0pt] (beta) {{\footnotesize $u$}};
\draw[line width=1] (0.2, 0.8, 0) node[line width=0pt] (beta) {{\footnotesize $v$}};
%
\draw[
	 color=gray, 
	 opacity=0.4, 
	 semithick
	 ] 
	 (0,1,1) -- (0,1,0) -- (1,1,0) -- (1,1,1) -- (0,1,1) -- (0,0,1) -- (1,0,1) -- (1,0,0) -- (1,1,0)
	 (1,0,1) -- (1,1,1);
\end{tikzpicture}
=
\begin{tikzpicture}[thick,scale=2.5,color=blue!50!black, baseline=0.0cm, >=stealth, 
				style={x={(-0.9cm,-0.4cm)},y={(0.8cm,-0.4cm)},z={(0cm,0.9cm)}}]
\draw[
	 color=gray, 
	 opacity=0.3, 
	 semithick,
	 dashed
	 ] 
	 (1,0,0) -- (0,0,0) -- (0,1,0)
	 (0,0,0) -- (0,0,1);
\coordinate (left0) at (0.45, 0, 0);
\coordinate (left1) at (0.45, 0, 1);
\coordinate (right0) at (0.45, 1, 0);
\coordinate (right1) at (0.45, 1, 1);
%
\fill [blue!20,opacity=0.2] (1,0,0) -- (1,1,0) -- (0,1,0) -- (0,1,1) -- (0,0,1) -- (1,0,1);
\fill [red!60, opacity=0.7] (left0) -- (left1) -- (right1) -- (right0);
\draw[line width=1] (0, 0.2, 0.4) node[line width=0pt] (beta) {{\footnotesize $\alpha$}};
%
\draw[line width=1] (0.85, 0.5, 0) node[line width=0pt] (beta) {{\footnotesize $u$}};
\draw[line width=1] (0.1, 0.85, 0) node[line width=0pt] (beta) {{\footnotesize $v$}};
%
\draw[
	 color=gray, 
	 opacity=0.4, 
	 semithick
	 ] 
	 (0,1,1) -- (0,1,0) -- (1,1,0) -- (1,1,1) -- (0,1,1) -- (0,0,1) -- (1,0,1) -- (1,0,0) -- (1,1,0)
	 (1,0,1) -- (1,1,1);
\end{tikzpicture}
=1_{1_\alpha}\label{eq:invbility}
\end{align}
The compatibility it has to satisfy with the $\dagger$-adjunction maps according to condition (ii.c) then reads: 
\be\label{eq:tautautau}
\begin{tikzpicture}[thick,scale=2.5,color=blue!50!black, baseline=0.0cm, >=stealth, 
				style={x={(-0.9cm,-0.4cm)},y={(0.8cm,-0.4cm)},z={(0cm,0.9cm)}}]
\draw[
	 color=gray, 
	 opacity=0.3, 
	 semithick,
	 dashed
	 ] 
	 (1,0,0) -- (0,0,0) -- (0,1,0)
	 (0,0,0) -- (0,0,1);
\coordinate (b1) at (0.75, 0.3, 0);
\coordinate (b2) at (0.45, 0.5, 0);
\coordinate (cusp) at (0.45, 0.5, 0.5);
\coordinate (abovecusp) at (0.45, 0.3, 1);
\coordinate (left0) at (0.45, 0, 0);
\coordinate (leftmid) at (0.45, 0, 0.5);
\coordinate (left1) at (0.45, 0, 1);
\coordinate (right0) at (0.75, 1, 0);
\coordinate (rightmid) at (0.6, 1, 0.5);
\coordinate (right1) at (0.45, 1, 1);
%
\fill [blue!20,opacity=0.2] (1,0,0) -- (1,1,0) -- (0,1,0) -- (0,1,1) -- (0,0,1) -- (1,0,1);
%
\fill [magenta!50!red, opacity=0.6] (b2) -- (left0) -- (leftmid) -- (cusp);
\fill [magenta!50!red, opacity=0.6] (leftmid) -- (cusp) -- (abovecusp) -- (left1);
\fill [pattern=dots, opacity=0.4] (cusp) -- (b1) -- (b2);
\fill [magenta!50!red, opacity=0.6] (cusp) -- (b1) -- (b2);
\fill [magenta!50!red, opacity=0.6] (cusp) -- (b1) -- (right0) -- (rightmid);
\fill [magenta!50!red, opacity=0.6] (cusp) -- (rightmid) -- (right1) -- (abovecusp);
\draw[line width=1] (0.3, 0.2, 0.6) node[line width=0pt] (beta) {{\footnotesize $\alpha \sta \beta$}};
%
\draw[line width=1] (0.85, 0.5, 0) node[line width=0pt] (beta) {{\footnotesize $w$}};
\draw[line width=1] (0.2, 0.8, 0) node[line width=0pt] (beta) {{\footnotesize $v$}};
%
\draw[
	 color=gray, 
	 opacity=0.4, 
	 semithick
	 ] 
	 (0,1,1) -- (0,1,0) -- (1,1,0) -- (1,1,1) -- (0,1,1) -- (0,0,1) -- (1,0,1) -- (1,0,0) -- (1,1,0)
	 (1,0,1) -- (1,1,1);
\end{tikzpicture}
=
\begin{tikzpicture}[thick,scale=2.5,color=blue!50!black, baseline=0.0cm, >=stealth, 
				style={x={(-0.9cm,-0.4cm)},y={(0.8cm,-0.4cm)},z={(0cm,0.9cm)}}]
\draw[
	 color=gray, 
	 opacity=0.3, 
	 semithick,
	 dashed
	 ] 
	 (1,0,0) -- (0,0,0) -- (0,1,0)
	 (0,0,0) -- (0,0,1);
%
\coordinate (b1) at (0.75, 0.3, 0);
\coordinate (b2) at (0.45, 0.5, 0);
\coordinate (cusp) at (0.45, 0.5, 0.5);
\coordinate (abovecusp) at (0.45, 0.3, 1);
\coordinate (left0) at (0.45, 0, 0);
\coordinate (leftmid) at (0.45, 0, 0.5);
\coordinate (left1) at (0.45, 0, 1);
\coordinate (right0) at (0.75, 1, 0);
\coordinate (rightmid) at (0.6, 1, 0.5);
\coordinate (right1) at (0.45, 1, 1);
%
\coordinate (shift) at (-0.2, 0.2, 0);
\coordinate (Yshift) at (-0.2, 0, 0);
\coordinate (ab1) at ($(0.75, 0.3, 0) + (shift)$);
\coordinate (ab2) at ($(0.45, 0.5, 0) + (shift)$);
\coordinate (acusp) at ($(0.45, 0.5, 0.5) + (shift)$);
\coordinate (aabovecusp) at ($(0.45, 0.3, 1) + (shift)$);
\coordinate (aleft0) at ($(0.45, 0, 0) + (Yshift)$);
\coordinate (aleftmid) at ($(0.45, 0, 0.5) + (Yshift)$);
\coordinate (aleft1) at ($(0.45, 0, 1) + (Yshift)$);
\coordinate (aright0) at ($(0.75, 1, 0) + (Yshift)$);
\coordinate (arightmid) at ($(0.6, 1, 0.5) + (Yshift)$);
\coordinate (aright1) at ($(0.45, 1, 1) + (Yshift)$);
%
\fill [blue!20,opacity=0.2] (1,0,0) -- (1,1,0) -- (0,1,0) -- (0,1,1) -- (0,0,1) -- (1,0,1);
%
\fill [red!60, opacity=0.7] (ab2) -- (aleft0) -- (aleftmid) -- (acusp);
\fill [red!60, opacity=0.7] (aleftmid) -- (acusp) -- (aabovecusp) -- (aleft1);
\fill [magenta!60, opacity=0.7] (b2) -- (left0) -- (leftmid) -- (cusp);
\fill [pattern=dots, opacity=0.4] (cusp) -- (b1) -- (b2);
\fill [magenta!60, opacity=0.7] (cusp) -- (b1) -- (b2);
\fill [pattern=dots, opacity=0.4] (acusp) -- (ab1) -- (ab2);
\fill [red!60, opacity=0.7] (acusp) -- (ab1) -- (ab2);
\fill [red!60, opacity=0.7] (acusp) -- (ab1) -- (aright0) -- (arightmid);
\fill [red!60, opacity=0.7] (acusp) -- (arightmid) -- (aright1) -- (aabovecusp);
\draw[line width=1] (0.15, 0.7, 0.6) node[line width=0pt] (beta) {{\footnotesize $\alpha$}};
%
\fill [magenta!60, opacity=0.7] (leftmid) -- (cusp) -- (abovecusp) -- (left1);
\fill [magenta!60, opacity=0.7] (cusp) -- (b1) -- (right0) -- (rightmid);
\fill [magenta!60, opacity=0.7] (cusp) -- (rightmid) -- (right1) -- (abovecusp);
\draw[line width=1] (0.35, 0.01, 0.6) node[line width=0pt] (beta) {{\footnotesize $\beta$}};
%
\draw[line width=1] (0.61, 0.93, 0) node[line width=0pt] (beta) {{\footnotesize $u$}};
\draw[line width=1] (0.85, 0.5, 0) node[line width=0pt] (beta) {{\footnotesize $w$}};
\draw[line width=1] (0.2, 0.8, 0) node[line width=0pt] (beta) {{\footnotesize $v$}};
%
\draw[
	 color=gray, 
	 opacity=0.4, 
	 semithick
	 ] 
	 (0,1,1) -- (0,1,0) -- (1,1,0) -- (1,1,1) -- (0,1,1) -- (0,0,1) -- (1,0,1) -- (1,0,0) -- (1,1,0)
	 (1,0,1) -- (1,1,1);
\end{tikzpicture}
\ee
Finally, condition (ii.d) states that the fold does not suffer from twisting it: 
\be\label{eq:twistcoev}
\begin{tikzpicture}[thick,scale=2.5,color=blue!50!black, baseline=0.0cm, >=stealth, 
				style={x={(-0.9cm,-0.4cm)},y={(0.8cm,-0.4cm)},z={(0cm,0.9cm)}}]
\draw[
	 color=gray, 
	 opacity=0.3, 
	 semithick,
	 dashed
	 ] 
	 (1,0,0) -- (0,0,0) -- (0,1,0)
	 (0,0,0) -- (0,0,1);
\coordinate (f0) at (0.5, 0.75, 0);
\coordinate (f1) at (0.5, 0.75, 1);
\coordinate (front0) at (0.75, 0, 0);
\coordinate (front1) at (0.75, 0, 1);
\coordinate (back0) at (0.25, 0, 0);
\coordinate (back1) at (0.25, 0, 1);
\fill [blue!20,opacity=0.2] (1,0,0) -- (1,1,0) -- (0,1,0) -- (0,1,1) -- (0,0,1) -- (1,0,1);
%
\fill [red!60, opacity=0.7] (f0) -- (f1) -- (back1) -- (back0);
\draw[line width=1] (0.3, 0.17, 0.85) node[line width=0pt] (beta) {{\footnotesize $\alpha$}};
%
\fill [pattern=dots, opacity=0.4] (f0) -- (f1) -- (front1) -- (front0);
\fill [red!60,opacity=0.7] (f0) -- (f1) -- (front1) -- (front0);
\draw[line width=1] (0.6, 0.1, 0.35) node[line width=0pt] (beta) {{\footnotesize $\alpha\hash$}};
%
\draw[line width=1] (0.85, 0.5, 0) node[line width=0pt] (beta) {{\footnotesize $v$}};
\draw[line width=1] (0.5, 0.1, 0.95) node[line width=0pt] (beta) {{\footnotesize $u$}};
%
\draw[
	 color=gray, 
	 opacity=0.4, 
	 semithick
	 ] 
	 (0,1,1) -- (0,1,0) -- (1,1,0) -- (1,1,1) -- (0,1,1) -- (0,0,1) -- (1,0,1) -- (1,0,0) -- (1,1,0)
	 (1,0,1) -- (1,1,1);
\end{tikzpicture}
=
\begin{tikzpicture}[thick,scale=2.5,color=blue!50!black, baseline=0.0cm, >=stealth, 
				style={x={(-0.9cm,-0.4cm)},y={(0.8cm,-0.4cm)},z={(0cm,0.9cm)}}]
\draw[
	 color=gray, 
	 opacity=0.3, 
	 semithick,
	 dashed
	 ] 
	 (1,0,0) -- (0,0,0) -- (0,1,0)
	 (0,0,0) -- (0,0,1);
\coordinate (b1) at (0.75, 0.3, 0);
\coordinate (b2) at (0.45, 0.5, 0);
\coordinate (cusp) at (0.45, 0.5, 0.5);
\coordinate (abovecusp) at (0.45, 0.3, 1);
\coordinate (left0) at (0.45, 0, 0);
\coordinate (leftmid) at (0.45, 0, 0.5);
\coordinate (left1) at (0.45, 0, 1);
\coordinate (right0) at (0.75, 1, 0);
\coordinate (rightmid) at (0.6, 1, 0.5);
\coordinate (right1) at (0.45, 1, 1);
\coordinate (left-front0) at (0.8, 0, 0);
\coordinate (left-front1) at (0.8, 0, 1);
\coordinate (left-back0) at (0.2, 0, 0);
\coordinate (left-back1) at (0.2, 0, 1);
\coordinate (fold-bottom0) at (0.7, 0.9, 0);
\coordinate (bottom-mid0) at (0.75, 0.45, 0); 
\coordinate (bottom-mid2) at (0.45, 0.45, 0); 
\coordinate (fold-bottom1) at (0.7, 0.9, 0.4);
\coordinate (fold-top0) at (0.3, 0.9, 0.6);
\coordinate (fold-top1) at (0.3, 0.9, 1);
\coordinate (top-mid1) at (0.55, 0.45, 1); 
\coordinate (top-mid2) at (0.25, 0.45, 1); 
\coordinate (cusp0) at (0.5, 0.3, 0.3);
\coordinate (cusp1) at (0.5, 0.3, 0.7);
%
\fill [blue!20,opacity=0.2] (1,0,0) -- (1,1,0) -- (0,1,0) -- (0,1,1) -- (0,0,1) -- (1,0,1);
%
\fill [red!60, opacity=0.7] (cusp1) -- (fold-bottom1) -- (bottom-mid2);
\fill [red!60, opacity=0.7] (fold-bottom1) -- (fold-bottom0) -- (bottom-mid2);
\fill [red!60, opacity=0.7] (bottom-mid2) -- (cusp1) -- (left-back0);
\fill [red!60, opacity=0.7] (cusp1) -- (fold-top0) -- (fold-top1);
\fill [red!60, opacity=0.7] (fold-top1) -- (cusp1) -- (top-mid2);
\fill [red!60, opacity=0.7] (0.464, 0.3, 0.715) -- (left-back1) -- (top-mid2);
\fill [red!60, opacity=0.7] (fold-top0) -- (cusp0) -- (cusp1);
\fill [pattern=dots, opacity=0.4] (cusp0) -- (cusp1) -- (fold-bottom1);
\fill [pattern=dots, opacity=0.4] (cusp0) -- (fold-bottom1) -- (fold-bottom0) -- (bottom-mid0);
\fill [pattern=dots, opacity=0.4] (bottom-mid0) -- (cusp0) -- (left-front0);
\fill [pattern=dots, opacity=0.4] (top-mid1) -- (fold-top1) -- (fold-top0) -- (cusp0);
\fill [pattern=dots, opacity=0.4] (top-mid1) -- (cusp0) -- (left-front0) -- (left-front1);
\fill [red!60, opacity=0.7] (cusp0) -- (cusp1) -- (fold-bottom1);
\fill [red!60, opacity=0.7] (cusp0) -- (fold-bottom1) -- (fold-bottom0) -- (bottom-mid0);
\fill [red!60, opacity=0.7] (bottom-mid0) -- (cusp0) -- (left-front0);
\fill [red!60, opacity=0.7] (top-mid1) -- (fold-top1) -- (fold-top0) -- (cusp0);
\fill [red!60, opacity=0.7] (top-mid1) -- (cusp0) -- (left-front0) -- (left-front1);
%
\draw[line width=1] (0.8, 0.2, 0.5) node[line width=0pt] (beta) {{\footnotesize $\alpha\hash$}};
\draw[line width=1] (0.23, 0.2, 0.88) node[line width=0pt] (beta) {{\footnotesize $\alpha$}};
%
\draw[line width=1] (0.5, 0.15, 1) node[line width=0pt] (beta) {{\footnotesize $u$}};
\draw[line width=1] (0.25, 0.7, 0) node[line width=0pt] (beta) {{\footnotesize $v$}};
%
\draw[
	 color=gray, 
	 opacity=0.4, 
	 semithick
	 ] 
	 (0,1,1) -- (0,1,0) -- (1,1,0) -- (1,1,1) -- (0,1,1) -- (0,0,1) -- (1,0,1) -- (1,0,0) -- (1,1,0)
	 (1,0,1) -- (1,1,1);
\end{tikzpicture}
\ee
A movie-slicing of this condition is given by:
$$
\begin{tikzpicture}[thick,scale=2.5,color=blue!50!black, baseline=0.0cm, >=stealth, 				style={x={(-0.9cm,-0.4cm)},y={(0.8cm,-0.4cm)},z={(0cm,0.9cm)}}]
%
\draw[color=gray, opacity=0.4, semithick] 
	 (0,1,1) -- (1,1,1) -- (1,0,1) -- (0,0,1)--(0,1,1);	
\fill [blue!20,opacity=0.2] (1,0,1) -- (1,1,1) -- (0,1,1)--(0,0,1);	
\draw[line width=1, red!60] (.25,0,1)--(.5,.75,1)--(.75,0,1);	
\node at (.75,.2,1) [anchor=east]  {{\footnotesize $\alpha^\#$}\;};
\node at (.1,.2,1) [anchor=east]  {{\footnotesize $\alpha$}\;};
\draw[line width=1] (0.5, 0.1, 1) node[line width=0pt] (beta) {{\footnotesize $u$}};
\draw[line width=1] (0.1, 0.85, 1) node[line width=0pt] (beta) {{\footnotesize $v$}};
%
\
\draw[color=gray, opacity=0.4, semithick] 
	 (0,1,0) -- (1,1,0) -- (1,0,0) -- (0,0,0)--(0,1,0);	
\fill [blue!20,opacity=0.2] (1,0,0) -- (1,1,0) -- (0,1,0)--(0,0,0);	
\draw[line width=1, red!60] (.25,0,0)--(.5,.75,0)--(.5,.3,0)--(.6,.5,0)--(.75,0,0);	
\node at (.75,.2,0) [anchor=east]  {{\footnotesize $\alpha^\#$}\;};
\node at (.1,.2,0) [anchor=east]  {{\footnotesize $\alpha$}\;};
\draw[line width=1] (0.5, 0.1, 0) node[line width=0pt] (beta) {{\footnotesize $u$}};
\draw[line width=1] (0.1, 0.85, 0) node[line width=0pt] (beta) {{\footnotesize $v$}};
%
\draw[color=gray, opacity=0.4, semithick] 
	 (0,1,-1) -- (1,1,-1) -- (1,0,-1) -- (0,0,-1)--(0,1,-1);	
\fill [blue!20,opacity=0.2] (1,0,-1) -- (1,1,-1) -- (0,1,-1)--(0,0,-1);	
\draw[line width=1, red!60] (.25,0,-1)--(.4,.5,-1)--(.5,.4,-1)--(.5,.75,-1)--(.75,0,-1);	
\node at (.75,.2,-1) [anchor=east]  {{\footnotesize $\alpha^\#$}\;};
\node at (.1,.2,-1) [anchor=east]  {{\footnotesize $\alpha$}\;};
\draw[line width=1] (0.5, 0.1, -1) node[line width=0pt] (beta) {{\footnotesize $u$}};
\draw[line width=1] (0.1, 0.85, -1) node[line width=0pt] (beta) {{\footnotesize $v$}};
%
\draw[color=gray, opacity=0.4, semithick] 
	 (0,1,-2) -- (1,1,-2) -- (1,0,-2) -- (0,0,-2)--(0,1,-2);	
\fill [blue!20,opacity=0.2] (1,0,-2) -- (1,1,-2) -- (0,1,-2)--(0,0,-2);	
\draw[line width=1, red!60] (.25,0,-2)--(.5,.75,-2)--(.75,0,-2);	
\node at (.75,.2,-2) [anchor=east]  {{\footnotesize $\alpha^\#$}\;};
\node at (.1,.2,-2) [anchor=east]  {{\footnotesize $\alpha$}\;};
\draw[line width=1] (0.5, 0.1, -2) node[line width=0pt] (beta) {{\footnotesize $u$}};
\draw[line width=1] (0.1, 0.85, -2) node[line width=0pt] (beta) {{\footnotesize $v$}};
\draw[line width=1,->] (-.5,1,-2.15)--(-.5, 1, -1.75) node[anchor=west] {$( \tau_\alpha^\inv\sta 1_{1_{\alpha^\#}})\otimes 1_{\mathrm{coev}_\alpha}$} -- (-.5,1,-1.35) ;
\draw[line width=1,->] (-.5,1,-1.15)--(-.5, 1, -.75) node[anchor=west] {$1_{1_{\alpha} \sta (\coev_{\alpha^\#})\dagg \sta 1_{\alpha^\#}} \fus \sigma_{\coev_\alpha, \coev_\alpha}$} -- (-.5,1,-.35) ;
\draw[line width=1,->] (-.5,1,-.15)--(-.5, 1, .25) node[anchor=west] {$(1_{1_\alpha}\sta (\tau_{\alpha^\#}^\inv)\dagg )\otimes 1_{\mathrm{coev}_\alpha}$} -- (-.5,1,.65) ;
\end{tikzpicture}
$$

Note that there exists a (non-progressive) isotopy relative to the boundary between the two diagrams 
in~\eqref{eq:twistcoev} and the two pairs of diagrams in \eqref{eq:invbility}.
In this sense, the invertibility of the triangulator and condition (ii.d) on the triangulator in Definition~\ref{def:Graycatduals} are topologically motivated.

\subsection{Gray categories from defect TQFTs}
\label{subsec:tricatfromZ}

From now on we fix a set of defect data $\D = (D_3, D_2, D_1, s,t,f)$, and a defect TQFT
$$
\zz: \Bordd \lra \Vect_\Bbbk \, . 
$$
In this section we shall construct the tricategory naturally associated to~$\zz$. 
We proceed in two steps: objects, 1- and 2-morphisms are basically those of the free pre-2-category $\mathcal F^{\textrm{p}} \K^\D$ of Section~\ref{subsubsec:bicats} (independent of the functor~$\zz$), while 3-morphisms are obtained by evaluating~$\zz$ on appropriate decorated spheres. 

\medskip

	We will build a Gray category $\tz'$ 
from~$\zz$. 
	Roughly, objects, 1- and 2-morphisms are cylinders over the objects, 1- and 2-morphisms of $\mathcal F^{\textrm{p}} \K^\D$ viewed as $\D$-decorated stratified manifolds. In detail: An object in~$\tz'$ is a unit cube $[0,1]^3$ (equipped with the standard orientation) labelled with an element in~$D_3$. A 1-morphism $\alpha \in \tz'(u,v)$ is an equivalence class of cylinders $\alpha_{\textrm{2d}} \times [0,1]$, where $\alpha_{\textrm{2d}}$ is a representative of a 1-morphism in $\mathcal F^{\textrm{p}} \K^\D(u,v)$ as in~\eqref{eq:example1minFK}, and the (standard-oriented) 3-strata and the 2-strata (with a prescribed orientation) of $\alpha_{\textrm{2d}} \times [0,1]$ carry the labels of their boundaries in~$\alpha_{\textrm{2d}}$; two cylinders $\alpha_{\textrm{2d}} \times [0,1]$ and $\alpha'_{\textrm{2d}} \times [0,1]$ are equivalent if $\alpha_{\textrm{2d}}$ and $\alpha'_{\textrm{2d}}$ represent the same 1-morphism in $\mathcal F^{\textrm{p}} \K^\D$. Analogously, a 2-morphism in~$\tz'(\alpha,\beta)$ is an equivalence class of decorated cylinders~$X$ over representatives~$X_{\textrm{2d}}$ of 2-morphisms in $\mathcal F^{\textrm{p}} \K^\D$, together with a choice of orientation for each line $p\times [0,1]$ and each surface $l\times [0,1]$ over labelled points~$p$ and lines~$l$ in~$X_{\textrm{2d}}$, such that the orientations of 2-strata in~$X$ ending on the right or left face agree with those of the corresponding strata in~$\alpha$ and~$\beta$, respectively. The identity $1_u$ on an object $u\in\tz'$ is represented by (the diagram for)~$u$ (viewed as a cube with no labelled planes), and the identity $1_\alpha$ on a 1-morphism~$\alpha$ in~$\tz'$ is represented by a representative of~$\alpha$ (viewed as a cube with no labelled lines). It follows that for a representative~$r$ of an object, 1- or 2-morphisms in~$\tz'$, $((0,1)^2 \times [0,1]) \cap r$ is a $\mathds D$-decorated 3-manifold. 

	Usually we will use the same symbol for a 1- or 2-morphism~$Y$ and the cube diagrams representing it. However, when we want to highlight a choice of representing diagram for~$Y$, we will denote the representative as~$D_Y$ or similarly. 

	Examples of (representatives of) objects, 1- and 2-morphisms in~$\tz'$ are 
$$
\begin{tikzpicture}[thick,scale=2.5,color=blue!50!black, baseline=0.0cm, >=stealth, 
				style={x={(-0.9cm,-0.4cm)},y={(0.8cm,-0.4cm)},z={(0cm,0.9cm)}}]
\fill [blue!20,opacity=0.2] (1,0,0) -- (1,1,0) -- (0,1,0) -- (0,1,1) -- (0,0,1) -- (1,0,1);
\draw[
	 color=gray, 
	 opacity=0.3, 
	 semithick,
	 dashed
	 ] 
	 (1,0,0) -- (0,0,0) -- (0,1,0)
	 (0,0,0) -- (0,0,1);
\draw[line width=1] (0.4, 0.6, 0) node[line width=0pt] (beta) {{\footnotesize $u$}};
\draw[
	 color=gray, 
	 opacity=0.4, 
	 semithick
	 ] 
	 (0,1,1) -- (0,1,0) -- (1,1,0) -- (1,1,1) -- (0,1,1) -- (0,0,1) -- (1,0,1) -- (1,0,0) -- (1,1,0)
	 (1,0,1) -- (1,1,1);
\end{tikzpicture}
\quad 
\begin{tikzpicture}[thick,scale=2.5,color=blue!50!black, baseline=0.0cm, >=stealth, 
				style={x={(-0.9cm,-0.4cm)},y={(0.8cm,-0.4cm)},z={(0cm,0.9cm)}}]
\draw[
	 color=gray, 
	 opacity=0.4, 
	 semithick
	 ] 
	 (0,1,1) -- (0,1,0) -- (1,1,0) -- (1,1,1) -- (0,1,1) -- (0,0,1) -- (1,0,1) -- (1,0,0) -- (1,1,0)
	 (1,0,1) -- (1,1,1);
\clip (1,0,0) -- (1,1,0) -- (0,1,0) -- (0,1,1) -- (0,0,1) -- (1,0,1);
\fill [blue!20,opacity=0.2] (1,0,0) -- (1,1,0) -- (0,1,0) -- (0,1,1) -- (0,0,1) -- (1,0,1);
\draw[
	 color=gray, 
	 opacity=0.3, 
	 semithick,
	 dashed
	 ] 
	 (1,0,0) -- (0,0,0) -- (0,1,0)
	 (0,0,0) -- (0,0,1);
\coordinate (d) at (-0.34, 0, 0);
\coordinate (d2) at (-0.67, 0, 0);
\fill [blue!40,opacity=0.1] ($(d2) + (1,0,0)$) -- ($(d2) + (1,1,0)$) -- ($(d2) + (0.67,1,0)$) -- ($(d2) + (0.67,1,1)$) -- ($(d2) + (0.67,0,1)$) -- ($(d2) + (1,0,1)$);
%
\coordinate (X0) at (0.33, 0.1, 0);
\coordinate (X1) at (0.33, 0.9, 1);
\coordinate (Y0) at (0.67, 0.9, 0);
\coordinate (Y1) at (0.67, 0.1, 1);
%
\fill [magenta!50,opacity=0.7] (0.33, 0, 0) -- (0.33, 1, 0) -- (0.33, 1, 1) -- (0.33, 0, 1);
\draw[string=magenta!80!black, very thick] (0.33, 0, 0) -- (0.33, 1, 0);
\draw[string=magenta!80!black, very thick] (0.33, 1, 0) -- (0.33, 1, 1);
\draw[string=magenta!80!black, very thick] (0.33, 1, 1) -- (0.33, 0, 1);
\draw[string=magenta!80!black, very thick] (0.33, 0, 1) -- (0.33, 0, 0);
\draw[color=magenta!80!black, very thick] (0.33, 0, 0) -- (0.33, 1, 0) -- (0.33, 1, 1) -- (0.33, 0, 1) -- (0.33, 0, 0) -- (0.33, 1, 0);
\draw[line width=1] (0.33, 0.8, 0.5) node[line width=0pt] (beta) {{\footnotesize $\alpha_2$}};
\fill [red!50,opacity=0.7] (0.67, 0, 0) -- (0.67, 1, 0) -- (0.67, 1, 1) -- (0.67, 0, 1);
\draw[string=red!80!black, very thick] (0.67, 1, 0) -- (0.67, 0, 0);
\draw[string=red!80!black, very thick] (0.67, 0, 0) -- (0.67, 0, 1);
\draw[string=red!80!black, very thick] (0.67, 0, 1) -- (0.67, 1, 1);
\draw[string=red!80!black, very thick] (0.67, 1, 1) -- (0.67, 1, 0);
\draw[color=red!80!black, very thick] (0.67, 0, 0) -- (0.67, 1, 0) -- (0.67, 1, 1) -- (0.67, 0, 1) -- (0.67, 0, 0) -- (0.67, 1, 0);
\draw[line width=1] (0.67, 0.2, 0.5) node[line width=0pt] (beta) {{\footnotesize $\alpha_1$}};
%
\draw[
	 color=gray, 
	 opacity=0.4, 
	 semithick
	 ] 
	 (0,1,1) -- (0,1,0) -- (1,1,0) -- (1,1,1) -- (0,1,1) -- (0,0,1) -- (1,0,1) -- (1,0,0) -- (1,1,0)
	 (1,0,1) -- (1,1,1);
\end{tikzpicture}
\quad 
\begin{tikzpicture}[thick,scale=2.5,color=blue!50!black, baseline=0.0cm, >=stealth, 
				style={x={(-0.9cm,-0.4cm)},y={(0.8cm,-0.4cm)},z={(0cm,0.9cm)}}]
\draw[
	 color=gray, 
	 opacity=0.4, 
	 semithick
	 ] 
	 (0,1,1) -- (0,1,0) -- (1,1,0) -- (1,1,1) -- (0,1,1) -- (0,0,1) -- (1,0,1) -- (1,0,0) -- (1,1,0)
	 (1,0,1) -- (1,1,1);
\clip (1,0,0) -- (1,1,0) -- (0,1,0) -- (0,1,1) -- (0,0,1) -- (1,0,1);
\fill [blue!20,opacity=0.2] (1,0,0) -- (1,1,0) -- (0,1,0) -- (0,1,1) -- (0,0,1) -- (1,0,1);
\draw[
	 color=gray, 
	 opacity=0.3, 
	 semithick,
	 dashed
	 ] 
	 (1,0,0) -- (0,0,0) -- (0,1,0)
	 (0,0,0) -- (0,0,1);
\coordinate (b1) at (0.1, 0, 0);
\coordinate (b2) at (0.5, 0, 0);
\coordinate (b3) at (0.7, 0, 0);
\coordinate (b4) at (0.5, 0.3, 0);
\coordinate (b5) at (0.6, 0.7, 0);
\coordinate (b6) at (0.3, 1, 0);
\coordinate (b7) at (0.7, 1, 0);
\coordinate (t1) at (0.1, 0, 1);
\coordinate (t2) at (0.5, 0, 1);
\coordinate (t3) at (0.7, 0, 1);
\coordinate (t4) at (0.5, 0.3, 1);
\coordinate (t5) at (0.6, 0.7, 1);
\coordinate (t6) at (0.3, 1, 1);
\coordinate (t7) at (0.7, 1, 1);
%
\draw[color=red!80!black, very thick] (b4) -- (b1) -- (t1) -- (t4);
\draw[color=red!80!black, very thick] (b4) -- (b2) -- (t2) -- (t4);
\draw[color=red!80!black, very thick] (b4) -- (b3) -- (t3) -- (t4);
\draw[color=red!80!black, very thick] (b5) -- (b7) -- (t7) -- (t5);
\draw[color=red!80!black, very thick] (t6) -- (b6) .. controls +(0, -0.15, 0) and +(-0.15, 0.15, 0) .. (b5);
\draw[color=red!80!black, very thick] (b6) -- (t6) .. controls +(0, -0.15, 0) and +(-0.15, 0.15, 0) .. (t5);
\fill [magenta!70,opacity=0.7] (b1) -- (b4) -- (t4) -- (t1);
\draw[string=red!80!black, very thick] (b4) -- (b1);
\draw[string=red!80!black, very thick] (b1) -- (t1);
\fill [magenta!50,opacity=0.7] (b2) -- (b4) -- (t4) -- (t2);
\draw[string=red!80!black, very thick] (b2) -- (b4);
\draw[string=red!80!black, very thick] (t2) -- (b2);
\fill [red!30,opacity=0.7] (b3) -- (b4) -- (t4) -- (t3);
\draw[string=red!80!black, very thick] (t3) -- (b3);
\fill [red!40,opacity=0.7] (b4) -- (t4) -- (t5) -- (b5);
\draw[string=red!80!black, very thick] (t5) -- (t4);
\draw[string=red!80!black, very thick] (b4) -- (b5);
\fill [red!50,opacity=0.7] (b6) .. controls +(0, -0.15, 0) and +(-0.15, 0.15, 0) .. (b5) -- (t5) .. controls +(-0.15, 0.15, 0)  and +(0, -0.15, 0) .. (t6);
\draw[draw=red!80!black, postaction={decorate}, decoration={markings,mark=at position .51 with {\arrow[color=red!80!black]{>}}}, very thick] (b6) .. controls +(0, -0.15, 0) and +(-0.15, 0.15, 0) .. (b5);
\draw[string=red!80!black, very thick] (t6) -- (b6);
\fill [magenta!60,opacity=0.7] (b7) -- (b5) -- (t5) -- (t7);
\draw[string=red!80!black, very thick] (b7) -- (t7);
\draw[string=red!80!black, very thick] (b5) -- (b7);
\draw[string=red!80!black, very thick] (t7) -- (t5);
%
\draw[color=red!80!black, very thick] (b7) -- (b5);
\draw[color=red!80!black, very thick] (b5) -- (b4);
\draw[string=red!80!black, very thick] (b3) -- (b4);
\draw[color=red!80!black, very thick] (t7) -- (t5);
\draw[draw=red!80!black, postaction={decorate}, decoration={markings,mark=at position .51 with {\arrow[color=red!80!black]{<}}}, very thick] (t6) .. controls +(0, -0.15, 0) and +(-0.15, 0.15, 0) .. (t5);
\draw[color=red!80!black, very thick] (t5) -- (t4);
\draw[string=red!80!black, very thick] (t1) -- (t4);
\draw[string=red!80!black, very thick] (t4) -- (t2);
\draw[string=red!80!black, very thick] (t4) -- (t3);
\draw[string=green!60!black, very thick] (b4) -- (t4);
\draw[string=green!50!black, very thick] (t5) -- (b5);
%
\fill[color=green!60!black] (b4) circle (0.8pt) node[left] (0up) {};
\fill[color=green!50!black] (b5) circle (0.8pt) node[left] (0up) {};
\fill[color=green!60!black] (t4) circle (0.8pt) node[left] (0up) {};
\fill[color=green!50!black] (t5) circle (0.8pt) node[left] (0up) {};
\draw[line width=1] (0.5, 0.17, 0.5) node[line width=0pt] (beta) {{\footnotesize $X_2$}};
\draw[line width=1] (0.6, 0.57, 0.5) node[line width=0pt] (beta) {{\footnotesize $X_1$}};
%
\draw[
	 color=gray, 
	 opacity=0.4, 
	 semithick
	 ] 
	 (0,1,1) -- (0,1,0) -- (1,1,0) -- (1,1,1) -- (0,1,1) -- (0,0,1) -- (1,0,1) -- (1,0,0) -- (1,1,0)
	 (1,0,1) -- (1,1,1);
\end{tikzpicture}
$$
	where here 
and also in~\eqref{eq:XXs} below we indicate the orientation of a 2-stratum $\alpha_i$ by arrows on the intersection of its boundary with the surface of the cube; the orientation of 1-strata $X_i$ can be chosen independently of the orientation of their adjacent 2-strata. 
To avoid clutter we suppress all non-essential decorations. 

	The 3-morphisms of $\tz'$ 
are constructed with the help of~$\zz$. 
Roughly, we obtain the set of 3-morphisms between two \textsl{parallel} 2-morphisms~$X$ and~$X'$, i.\,e.~two 2-morphisms between the same source and target, by 
	choosing representing diagrams $D_{X}$ and $D_{X'}$ for~$X$ and~$X'$, respectively, and 
gluing the bottom and top of 
	$D_{X}$ and~$D_{X'}$ 
to a sphere
	$S_{D_{X},D_{X'}}$, and then evaluating~$\zz$ on 
	$S_{D_{X},D_{X'}}$.
	In the following, we will make this idea precise. 

	For 
two parallel 2-morphisms $X,X': \alpha \rightarrow \beta$ 
	we choose representing diagrams $D_{X}$ and $D_{X'}$, respectively, whose non-horizontal faces coincide. Then 
we 
consider the 
cube 
	$C_{D_{X},D_{X'}}$ 
	which has no strata in the interior and 
whose bottom and top coincide with the bottom and top of 
	$D_{X}$ and $D_{X'}$,
respectively, and whose remaining faces are 
	those of~$D_{X}$ or~$D_{X'}$ (which are identical). 
For instance, in the example 
\be\label{eq:XXs}
	D_{X}= 
\begin{tikzpicture}[thick,scale=3.0,color=blue!50!black, baseline=0.0cm, >=stealth, 
				style={x={(-0.9cm,-0.4cm)},y={(0.8cm,-0.4cm)},z={(0cm,0.9cm)}}]
\draw[
	 color=gray, 
	 opacity=0.4, 
	 semithick
	 ] 
	 (0,1,1) -- (0,1,0) -- (1,1,0) -- (1,1,1) -- (0,1,1) -- (0,0,1) -- (1,0,1) -- (1,0,0) -- (1,1,0)
	 (1,0,1) -- (1,1,1);
\clip (1,0,0) -- (1,1,0) -- (0,1,0) -- (0,1,1) -- (0,0,1) -- (1,0,1);
\fill [blue!20,opacity=0.2] (1,0,0) -- (1,1,0) -- (0,1,0) -- (0,1,1) -- (0,0,1) -- (1,0,1);
\draw[
	 color=gray, 
	 opacity=0.3, 
	 semithick,
	 dashed
	 ] 
	 (1,0,0) -- (0,0,0) -- (0,1,0)
	 (0,0,0) -- (0,0,1);
\coordinate (b1) at (0.2, 0, 0);
\coordinate (b2) at (0.8, 0, 0);
\coordinate (b3) at (0.5, 0.5, 0);
\coordinate (b4) at (0.5, 1, 0);
\coordinate (t1) at (0.2, 0, 1);
\coordinate (t2) at (0.8, 0, 1);
\coordinate (t3) at (0.5, 0.5, 1);
\coordinate (t4) at (0.5, 1, 1);
%
\fill [red!70,opacity=0.7] (b1) -- (b3) -- (t3) -- (t1);
\fill [red!50,opacity=0.7] (b2) -- (b3) -- (t3) -- (t2);
\fill [magenta!60,opacity=0.7] (b4) -- (b3) -- (t3) -- (t4);
\draw[draw=red!80!black, 
	 postaction={decorate}, 
	 decoration={markings,mark=at position .61 with {\arrow[color=red!80!black]{>}}}, 
	 very thick] 
	(t1) -- (b1);
\draw[draw=red!80!black, 
	 postaction={decorate}, 
	 decoration={markings,mark=at position .61 with {\arrow[color=red!80!black]{>}}}, 
	 very thick] 
	(t2) -- (b2);
\draw[draw=red!80!black, 
	 postaction={decorate}, 
	 decoration={markings,mark=at position .51 with {\arrow[color=red!80!black]{>}}}, 
	 very thick] 
	(b4) -- (t4);
\draw[color=red!80!black, very thick] (t3) -- (t1) -- (b1) -- (b3);
\draw[color=red!80!black, very thick] (t3) -- (t2) -- (b2) -- (b3);
\draw[color=red!80!black, very thick] (t3) -- (t4) -- (b4) -- (b3);
\draw[string=green!60!black, very thick] (t3) -- (b3);
%
\fill[color=green!60!black] (b3) circle (0.8pt) node[left] (0up) {};
\fill[color=green!60!black] (t3) circle (0.8pt) node[left] (0up) {};
%
\draw[
	 color=gray, 
	 opacity=0.4, 
	 semithick
	 ] 
	 (0,1,1) -- (0,1,0) -- (1,1,0) -- (1,1,1) -- (0,1,1) -- (0,0,1) -- (1,0,1) -- (1,0,0) -- (1,1,0)
	 (1,0,1) -- (1,1,1);
	\draw[line width=1] (0.4, 0.7, 0.4) node[line width=0pt] (beta) {{\footnotesize $\alpha$}};
	\draw[line width=1] (0.7, 0.2, 0.5) node[line width=0pt] (beta) {{\footnotesize $\gamma$}};
	\draw[line width=1] (0.3, 0.2, 0.5) node[line width=0pt] (beta) {{\footnotesize $\beta$}};
	\draw[line width=1] (0.3, 0.4, 0.4) node[line width=0pt] (beta) {{\footnotesize $X$}};
	\draw[line width=1] (0.8, 0.6, 0) node[line width=0pt] (beta) {{\footnotesize $u$}};
	\draw[line width=1] (0.2, 0.85, 0) node[line width=0pt] (beta) {{\footnotesize $u$}};
	\draw[line width=1] (0.5, 0.15, 1) node[line width=0pt] (beta) {{\footnotesize $u$}};
\end{tikzpicture}
\, , \quad 
	D_{X'}= 
\begin{tikzpicture}[thick,scale=3.0,color=blue!50!black, baseline=0.0cm, >=stealth, 
				style={x={(-0.9cm,-0.4cm)},y={(0.8cm,-0.4cm)},z={(0cm,0.9cm)}}]
\draw[
	 color=gray, 
	 opacity=0.4, 
	 semithick
	 ] 
	 (0,1,1) -- (0,1,0) -- (1,1,0) -- (1,1,1) -- (0,1,1) -- (0,0,1) -- (1,0,1) -- (1,0,0) -- (1,1,0)
	 (1,0,1) -- (1,1,1);
\clip (1,0,0) -- (1,1,0) -- (0,1,0) -- (0,1,1) -- (0,0,1) -- (1,0,1);
\fill [blue!20,opacity=0.2] (1,0,0) -- (1,1,0) -- (0,1,0) -- (0,1,1) -- (0,0,1) -- (1,0,1);
\draw[
	 color=gray, 
	 opacity=0.3, 
	 semithick,
	 dashed
	 ] 
	 (1,0,0) -- (0,0,0) -- (0,1,0)
	 (0,0,0) -- (0,0,1);
\tdplotsetcoord{t3}{0.2}{90}{240}
\tdplotsetcoord{t4}{0.2}{90}{300}
%
\coordinate (b1) at (0.2, 0, 0);
\coordinate (b2) at (0.8, 0, 0);
\coordinate (b31) at ($(t3) + (0.5,0.5,0)$);
\coordinate (b32) at ($(t4) + (0.5,0.5,0)$);
\coordinate (b33) at (0.5, 0.7, 0);
\coordinate (b4) at (0.5, 1, 0);
\coordinate (t1) at (0.2, 0, 1);
\coordinate (t2) at (0.8, 0, 1);
\coordinate (t31) at ($(t3) + (0.5,0.5,1)$);
\coordinate (t32) at ($(t4) + (0.5,0.5,1)$);
\coordinate (t33) at (0.5, 0.7, 1);
\coordinate (t4) at (0.5, 1, 1);
\fill [red!70,opacity=0.7] (b1) -- (b31) -- (t31) -- (t1);
\fill [red!50,opacity=0.7] (b2) -- (b32) -- (t32) -- (t2);
\fill [magenta!60,opacity=0.7] (b4) -- (b33) -- (t33) -- (t4);
%
\draw[color=red!80!black, very thick] (b31) -- (b1) -- (t1) -- (t31);
\draw[color=red!80!black, very thick] (b32) -- (b2) -- (t2) -- (t32);
\draw[color=red!80!black, very thick] (b33) -- (b4) -- (t4) -- (t33);
\draw[draw=red!80!black, 
	 postaction={decorate}, 
	 decoration={markings,mark=at position .61 with {\arrow[color=red!80!black]{>}}}, 
	 very thick] 
	(t1) -- (b1);
\draw[draw=red!80!black, 
	 postaction={decorate}, 
	 decoration={markings,mark=at position .61 with {\arrow[color=red!80!black]{>}}}, 
	 very thick] 
	(t2) -- (b2);
\draw[draw=red!80!black, 
	 postaction={decorate}, 
	 decoration={markings,mark=at position .51 with {\arrow[color=red!80!black]{>}}}, 
	 very thick] 
	(b4) -- (t4);
\draw[string=green!60!black, very thick] (t31) -- (b31);
\draw[string=green!50!black, very thick] (b32) -- (t32);
\draw[string=green!70!black, very thick] (t33) -- (b33);
%
\fill[color=green!60!black] (b31) circle (0.8pt) node[left] (0up) {};
\fill[color=green!50!black] (b32) circle (0.8pt) node[left] (0up) {};
\fill[color=green!70!black] (b33) circle (0.8pt) node[left] (0up) {};
\fill[color=green!60!black] (t31) circle (0.8pt) node[left] (0up) {};
\fill[color=green!50!black] (t32) circle (0.8pt) node[left] (0up) {};
\fill[color=green!70!black] (t33) circle (0.8pt) node[left] (0up) {};
%
\draw[
	 color=gray, 
	 opacity=0.4, 
	 semithick
	 ] 
	 (0,1,1) -- (0,1,0) -- (1,1,0) -- (1,1,1) -- (0,1,1) -- (0,0,1) -- (1,0,1) -- (1,0,0) -- (1,1,0)
	 (1,0,1) -- (1,1,1);
	\draw[line width=1] (0.4, 0.75, 0.4) node[line width=0pt] (beta) {{\footnotesize $\alpha$}};
	\draw[line width=1] (0.75, 0.2, 0.6) node[line width=0pt] (beta) {{\footnotesize $\gamma$}};
	\draw[line width=1] (0.33, 0.2, 0.25) node[line width=0pt] (beta) {{\footnotesize $\beta$}};
	\draw[line width=1] (0.3, 0.4, 0.3) node[line width=0pt] (beta) {{\footnotesize $Y_1$}};
	\draw[line width=1] (0.3, 0.31, 0.48) node[line width=0pt] (beta) {{\footnotesize $Y_2$}};
	\draw[line width=1] (0.75, 0.4, 0.55) node[line width=0pt] (beta) {{\footnotesize $Y_3$}};
	\draw[line width=1] (0.8, 0.6, 0) node[line width=0pt] (beta) {{\footnotesize $u$}};
\end{tikzpicture}
\, , 
\ee
we thus have 
\be\label{eq:CXXcube}
	C_{D_{X},D_{X'}} = 
\begin{tikzpicture}[thick,scale=2.5,color=blue!50!black, baseline=0.0cm, >=stealth, 
				style={x={(-0.9cm,-0.4cm)},y={(0.8cm,-0.4cm)},z={(0cm,0.9cm)}}]
\draw[
	 color=gray, 
	 opacity=0.4, 
	 semithick
	 ] 
	 (0,1,1) -- (0,1,0) -- (1,1,0) -- (1,1,1) -- (0,1,1) -- (0,0,1) -- (1,0,1) -- (1,0,0) -- (1,1,0)
	 (1,0,1) -- (1,1,1);
\clip (1,0,0) -- (1,1,0) -- (0,1,0) -- (0,1,1) -- (0,0,1) -- (1,0,1);
\fill [blue!20,opacity=0.2] (1,0,0) -- (1,1,0) -- (0,1,0) -- (0,1,1) -- (0,0,1) -- (1,0,1);
\draw[
	 color=gray, 
	 opacity=0.3, 
	 semithick,
	 dashed
	 ] 
	 (1,0,0) -- (0,0,0) -- (0,1,0)
	 (0,0,0) -- (0,0,1);
\coordinate (b1) at (0.2, 0, 0);
\coordinate (b2) at (0.8, 0, 0);
\coordinate (b3) at (0.5, 0.5, 0);
\coordinate (b4) at (0.5, 1, 0);
\coordinate (t1) at (0.2, 0, 1);
\coordinate (t2) at (0.8, 0, 1);
\tdplotsetcoord{t3}{0.2}{90}{240}
\tdplotsetcoord{t4}{0.2}{90}{300}
\tdplotsetcoord{t5}{0.2}{90}{90}
\coordinate (t6) at (0.5, 1, 1);
%
\draw[color=red!80!black, very thick] ($(t3) + (0.5,0.5,1)$) -- (t1) -- (b1) -- (b3);
\draw[color=red!80!black, very thick] ($(t4) + (0.5,0.5,1)$) -- (t2) -- (b2) -- (b3);
\draw[color=red!80!black, very thick] ($(t5) + (0.5,0.5,1)$) -- (t6) -- (b4) -- (b3);
\draw[draw=red!80!black, 
	 postaction={decorate}, 
	 decoration={markings,mark=at position .61 with {\arrow[color=red!80!black]{>}}}, 
	 very thick] 
	(t1) -- (b1);
\draw[draw=red!80!black, 
	 postaction={decorate}, 
	 decoration={markings,mark=at position .61 with {\arrow[color=red!80!black]{>}}}, 
	 very thick] 
	(t2) -- (b2);
\draw[draw=red!80!black, 
	 postaction={decorate}, 
	 decoration={markings,mark=at position .51 with {\arrow[color=red!80!black]{>}}}, 
	 very thick] 
	(b4) -- (t6);
%
\fill[color=green!60!black] (b3) circle (0.8pt) node[left] (0up) {};
\fill[color=green!60!black] ($(t3) + (0.5,0.5,1)$) circle (0.8pt) node[left] (0up) {};
\fill[color=green!60!black] ($(t4) + (0.5,0.5,1)$) circle (0.8pt) node[left] (0up) {};
\fill[color=green!60!black] ($(t5) + (0.5,0.5,1)$) circle (0.8pt) node[left] (0up) {};
%
	\draw[line width=1] (0.4, 1, 0.4) node[line width=0pt] (beta) {{\footnotesize $\alpha$}};
	\draw[line width=1] (0.45, 0.2, 0.6) node[line width=0pt] (beta) {{\footnotesize $\beta$}};
	\draw[line width=1] (1, 0.12, 0.55) node[line width=0pt] (beta) {{\footnotesize $\gamma$}};
	\draw[line width=1] (0.62, 0.62, 0) node[line width=0pt] (beta) {{\footnotesize $X$}};
	\draw[line width=1] (0.37, 0.6, 1) node[line width=0pt] (beta) {{\footnotesize $Y_1$}};
	\draw[line width=1] (0.28, 0.31, 1) node[line width=0pt] (beta) {{\footnotesize $Y_2$}};
	\draw[line width=1] (0.5, 0.15, 1) node[line width=0pt] (beta) {{\footnotesize $Y_3$}};
	\draw[line width=1] (0.1, 0.85, 0.65) node[line width=0pt] (beta) {{\footnotesize $u$}};
%
\draw[
	 color=gray, 
	 opacity=0.4, 
	 semithick
	 ] 
	 (0,1,1) -- (0,1,0) -- (1,1,0) -- (1,1,1) -- (0,1,1) -- (0,0,1) -- (1,0,1) -- (1,0,0) -- (1,1,0)
	 (1,0,1) -- (1,1,1);
\end{tikzpicture}
\, . 
\ee
We then enclose this decorated cube 
	$C_{D_{X},D_{X'}}$ 
in the smallest possible sphere, i.\,e.~the sphere of radius $\frac{\sqrt{3}}{2}$ centred at $(\frac{1}{2},\frac{1}{2},\frac{1}{2})$. 
This sphere is made into a decorated stratified manifold
	$S_{D_{X},D_{X'}}$ 
by radially projecting the strata of 
	$C_{D_{X},D_{X'}}$ 
from the mutual centre $(\frac{1}{2},\frac{1}{2},\frac{1}{2})$. 
For the example in~\eqref{eq:XXs}, \eqref{eq:CXXcube} this yields
\be\label{eq:SXXprimesphere}
	S_{D_{X},D_{X'}}
= 
 \tdplotsetmaincoords{50}{135}
 \begin{tikzpicture}[scale=2.5,opacity=0.3,tdplot_main_coords,fill opacity=0.4, baseline=-0.2cm, >=stealth]
 %
 \pgfsetlinewidth{.1pt}
 \tdplotsetpolarplotrange{0}{180}{0}{360}
 \tdplotsphericalsurfaceplot{60}{60}
 {1}
 {gray}
 {blue!14}
     {}%
     {}%
     {}%
 %
 \coordinate (O) at (0 ,0 ,0);
 %
 	\foreach \angle in { 280 } 
 	{
 		\tdplotsinandcos{\sintheta}{\costheta}{\angle}%
%
 		\tdplotsetthetaplanecoords{\angle}
%
 		\tdplotdrawarc[->, 
 					red!80!black, 
 					ultra thick, 
 					opacity=0.5, 
 					tdplot_rotated_coords]%
 			{(O)}{1}{180}{290}{}{}%
 		\tdplotdrawarc[%
 					red!80!black, 
 					ultra thick, 
 					opacity=0.5, 
 					tdplot_rotated_coords]%
 			{(O)}{1}{286.5}{345}{}{}%
 	}
 %
 	\foreach \angle in { 245, 315 } 
 	{
 		\tdplotsinandcos{\sintheta}{\costheta}{\angle}%

 		\tdplotsetthetaplanecoords{\angle}
		
 		\tdplotdrawarc[->, red!80!black,ultra thick,opacity=0.5,tdplot_rotated_coords]%
 			{(O)}{1}{15}{123.5}{}{}
 		\tdplotdrawarc[red!80!black,ultra thick,opacity=0.5,tdplot_rotated_coords]%
 			{(O)}{1}{120}{180}{}{}
 	}
 \tdplotsetcoord{P1}{1}{15}{245}
 \tdplotsetcoord{P2}{1}{15}{315}
 \tdplotsetcoord{P3}{1}{15}{100}
 \tdplotsetcoord{P4}{1}{180}{245}
 \tdplotsetcoord{Q1}{1}{75}{92}
 \tdplotsetcoord{Q3}{1}{116}{305}
 %
 \fill[color=green!50!black, opacity=1] (P1) circle (0.8pt) node[right] (0up) {{\footnotesize $Y_2$}};
 \fill[color=green!50!black, opacity=1] (P2) circle (0.8pt) node[right] (0up) {{\footnotesize $Y_3$}};
 \fill[color=green!50!black, opacity=1] (P3) circle (0.8pt) node[right] (0up) {{\footnotesize $Y_1$}};
 \fill[color=green!50!black, opacity=1] (P4) circle (0.8pt) node[below] (0up) {{\footnotesize $X$}};
	\draw[color=red!80!black, line width=1] (Q1) node[line width=0pt] (beta) {{\footnotesize $\alpha$}};
	\draw[color=red!80!black, line width=1] (0.45, 0.2, 0.6) node[line width=0pt] (beta) {{\footnotesize $\beta$}};
	\draw[color=red!80!black, line width=1] (Q3) node[line width=0pt] (beta) {{\footnotesize $\gamma$}};
 \end{tikzpicture}
\ee
	In the following, especially in the definition of composition of 3-morphism below, for typographical reasons we will sometimes depict a defect sphere 
	$S_{D_{X},D_{X'}}$ 
as the boundary of the decorated stratified cube (with corners) 
	$C_{D_{X},D_{X'}}$. 
Similarly, if a decorated stratified cube (with corners) is a representative of an object, 1- or 2-morphism in $\tz'$, we can identify it with a defect ball (i.\,e.\ a defect bordism whose underlying manifold is a 3-ball) by projecting as above. 

	We finally turn to the definition of 3-morphisms in $\tz'$. Given two parallel 2-morphisms~$X$ and~$X'$, we would like to set the vector space of 3-morphisms froms~$X$ to~$X'$ to be
\be\label{eq:3HomTZalmost}
\zz \big( S_{D_{X},D_{X'}} \big) \, , 
\ee
for a choice of representing diagrams $D_{X}$ and $D_{X'}$ for $X$ and $X'$, respectively.%
\footnote{The idea behind this definition is the following. 
In an $n$-dimensional defect TQFT $\zz_n$, there are `local operators' which are to be thought of as inserted at intersection points of 1-dimensional `line operators'. 
In the associated $n$-category these local operators are precisely the $n$-morphisms. 
To determine which operators can be inserted at a given intersection point~$\nu$, one removes a tiny $n$-ball around~$\nu$ from the defect bordism and evaluates $\zz_n$ on its decorated surface to obtain the `state space'  of $n$-morphisms. 
This was first explained for $n=2$ in \cite[Sect.\,2.4]{dkr1107.0495} (see also \cite[Eq.\,(3.7)]{cr1210.6363}); \eqref{eq:3HomTZalmost} is the 3-dimensional case.}
%
	To obtain the actual space of 3-morphisms that is independent of choices, we note that for different representing diagrams $D'_{X}$ and $D'_{X'}$ for~$X$ and~$X'$, there are isotopies~$f$ from $D_{X}$ to $D'_{X}$ and~$f'$ from $D_{X'}$ to $D'_{X'}$, respectively. From these we obtain a cylinder $Z_{f,f'}$ which gives an isomorphism from $ S_{D_{X},D_{X'}}$ to $ S_{D'_{X},D'_{X'}}$ in $\Bordd$. Evaluating with~$\zz$ produces an isomorphism $\zz(Z_{f,f'}) \colon\zz ( S_{D_{X},D_{X'}} ) \rightarrow \zz (S_{D'_{X},D'_{X'}} ) $ which does not depend on the choice of isotopies~$f$ and~$f'$, and hence we may denote it $\zz(Z_{f,f'})=:F_{D,D'}$. Moreover, for yet another choice of representing diagrams $D''_{X}$ and $D''_{X'}$ for~$X$ and~$X'$, respectively, we have $F_{D',D''} \circ F_{D,D'}= F_{D,D''}$. In short, the isomorphisms~$F$ form a direct system relating all choices of representing diagrams for the 2-morphisms. We finally set
\be\label{eq:3HomTZ}
\Hom_{\tz'} \!\big(X,X'\big) = \lim_{F}  \zz \big( S_{D_{X},D_{X'}} \big) \, .
\ee
	Note that since all morphisms in the direct system are isomorphisms, for every choice of representing diagrams there is an isomorphism $ \zz \big( S_{D_{X},D_{X'}} \big)  \cong \Hom_{\tz'} \!\big(X,X'\big)$. Moreover, we can invert the cone isomorphisms to find that the limit also serves as a colimit of the direct system. 
In the following, since all constructions will be compatible with the direct system above and thus factor through the limit, we will abuse notation and write $\Hom_{\tz'} \!(X,X') = \zz (S_{X,X'})$. 

\medskip

	With the objects, 1-, 2- and 3-morphisms of $\tz'$ at hand, we will now describe its the composition maps. 
Firstly, the Gray product of 1-morphisms, denoted~$\sta$, simply amounts to stacking planes in cubes together (up to linear isotopy). 
If $\alpha: u \rightarrow v$ and $\beta: v \rightarrow w$ are 1-morphisms, then their Gray product $\beta \sta \alpha$ is obtained by concatenating the two cubes in negative $x$-direction, followed by rescaling the thusly obtained cuboid to a cube: 
$$
\begin{tikzpicture}[thick,scale=2.5,color=blue!50!black, baseline=0.0cm, >=stealth, 
				style={x={(-0.9cm,-0.4cm)},y={(0.8cm,-0.4cm)},z={(0cm,0.9cm)}}]
\fill [blue!20,opacity=0.2] (1,0,0) -- (1,1,0) -- (0,1,0) -- (0,1,1) -- (0,0,1) -- (1,0,1);
\draw[
	 color=gray, 
	 opacity=0.3, 
	 semithick,
	 dashed
	 ] 
	 (1,0,0) -- (0,0,0) -- (0,1,0)
	 (0,0,0) -- (0,0,1);
\fill [blue!20,opacity=0.2] (1,0,0) -- (1,1,0) -- (0,1,0) -- (0,1,1) -- (0,0,1) -- (1,0,1);
%
\coordinate (X0) at (0.33, 0.1, 0);
\coordinate (X1) at (0.33, 0.9, 1);
\coordinate (Y0) at (0.67, 0.9, 0);
\coordinate (Y1) at (0.67, 0.1, 1);
%
%
\fill [magenta!50,opacity=0.7] (0.33, 0, 0) -- (0.33, 1, 0) -- (0.33, 1, 1) -- (0.33, 0, 1);
\fill [red!50,opacity=0.7] (0.67, 0, 0) -- (0.67, 1, 0) -- (0.67, 1, 1) -- (0.67, 0, 1);
%
\draw[line width=1] (0.85, 0.5, 0) node[line width=0pt] (beta) {{\footnotesize $v$}};
\draw[line width=1] (0.1, 0.85, 0) node[line width=0pt] (beta) {{\footnotesize $w$}};
%
\draw[
	 color=gray, 
	 opacity=0.4, 
	 semithick
	 ] 
	 (0,1,1) -- (0,1,0) -- (1,1,0) -- (1,1,1) -- (0,1,1) -- (0,0,1) -- (1,0,1) -- (1,0,0) -- (1,1,0)
	 (1,0,1) -- (1,1,1);
\end{tikzpicture}
\sta
\begin{tikzpicture}[thick,scale=2.5,color=blue!50!black, baseline=0.0cm, >=stealth, 
				style={x={(-0.9cm,-0.4cm)},y={(0.8cm,-0.4cm)},z={(0cm,0.9cm)}}]
\fill [blue!20,opacity=0.2] (1,0,0) -- (1,1,0) -- (0,1,0) -- (0,1,1) -- (0,0,1) -- (1,0,1);
\draw[
	 color=gray, 
	 opacity=0.3, 
	 semithick,
	 dashed
	 ] 
	 (1,0,0) -- (0,0,0) -- (0,1,0)
	 (0,0,0) -- (0,0,1);
\fill [blue!20,opacity=0.2] (1,0,0) -- (1,1,0) -- (0,1,0) -- (0,1,1) -- (0,0,1) -- (1,0,1);
%
\fill [red!70,opacity=0.7] (0.5, 0, 0) -- (0.5, 1, 0) -- (0.5, 1, 1) -- (0.5, 0, 1);
%
\draw[line width=1] (0.75, 0.5, 0) node[line width=0pt] (beta) {{\footnotesize $u$}};
\draw[line width=1] (0.2, 0.85, 0) node[line width=0pt] (beta) {{\footnotesize $v$}};
%
\draw[
	 color=gray, 
	 opacity=0.4, 
	 semithick
	 ] 
	 (0,1,1) -- (0,1,0) -- (1,1,0) -- (1,1,1) -- (0,1,1) -- (0,0,1) -- (1,0,1) -- (1,0,0) -- (1,1,0)
	 (1,0,1) -- (1,1,1);
\end{tikzpicture}
= 
\begin{tikzpicture}[thick,scale=2.5,color=blue!50!black, baseline=0.0cm, >=stealth, 
				style={x={(-0.9cm,-0.4cm)},y={(0.8cm,-0.4cm)},z={(0cm,0.9cm)}}]
\fill [blue!20,opacity=0.2] (1,0,0) -- (1,1,0) -- (0,1,0) -- (0,1,1) -- (0,0,1) -- (1,0,1);
\draw[
	 color=gray, 
	 opacity=0.3, 
	 semithick,
	 dashed
	 ] 
	 (1,0,0) -- (0,0,0) -- (0,1,0)
	 (0,0,0) -- (0,0,1);
\fill [blue!20,opacity=0.2] (1,0,0) -- (1,1,0) -- (0,1,0) -- (0,1,1) -- (0,0,1) -- (1,0,1);
%
\coordinate (X0) at (0.33, 0.1, 0);
\coordinate (X1) at (0.33, 0.9, 1);
\coordinate (Y0) at (0.67, 0.9, 0);
\coordinate (Y1) at (0.67, 0.1, 1);
%
%
\fill [magenta!50,opacity=0.7] (0.1667, 0, 0) -- (0.1667, 1, 0) -- (0.1667, 1, 1) -- (0.1667, 0, 1);
\fill [red!50,opacity=0.7] (0.334, 0, 0) -- (0.334, 1, 0) -- (0.334, 1, 1) -- (0.334, 0, 1);
\fill [red!70,opacity=0.7] (0.75, 0, 0) -- (0.75, 1, 0) -- (0.75, 1, 1) -- (0.75, 0, 1);
%
\draw[line width=1] (0.85, 0.5, 0) node[line width=0pt] (beta) {{\footnotesize $u$}};
\draw[line width=1] (0.5, 0.85, 0) node[line width=0pt] (beta) {{\footnotesize $v$}};
\draw[line width=1] (0.05, 0.97, 0.2) node[line width=0pt] (beta) {{\footnotesize $w$}};
%
\draw[
	 color=gray, 
	 opacity=0.4, 
	 semithick
	 ] 
	 (0,1,1) -- (0,1,0) -- (1,1,0) -- (1,1,1) -- (0,1,1) -- (0,0,1) -- (1,0,1) -- (1,0,0) -- (1,1,0)
	 (1,0,1) -- (1,1,1);
\end{tikzpicture}
$$

Secondly, horizontal composition of 2-morphisms, denoted $\fus$, is concatenation in negative $y$-direction and rescaling: 
$$
\begin{tikzpicture}[thick,scale=2.5,color=blue!50!black, baseline=0.0cm, >=stealth, 
				style={x={(-0.9cm,-0.4cm)},y={(0.8cm,-0.4cm)},z={(0cm,0.9cm)}}]
\fill [blue!20,opacity=0.2] (1,0,0) -- (1,1,0) -- (0,1,0) -- (0,1,1) -- (0,0,1) -- (1,0,1);
\draw[
	 color=gray, 
	 opacity=0.3, 
	 semithick,
	 dashed
	 ] 
	 (1,0,0) -- (0,0,0) -- (0,1,0)
	 (0,0,0) -- (0,0,1);
\coordinate (b1) at (0.3, 0, 0);
\coordinate (b2) at (0.7, 0, 0);
\coordinate (b4) at (0.5, 0.5, 0);
\coordinate (b7) at (0.5, 1, 0);
\coordinate (t1) at (0.3, 0, 1);
\coordinate (t2) at (0.7, 0, 1);
\coordinate (t4) at (0.5, 0.5, 1);
\coordinate (t7) at (0.5, 1, 1);
%
\fill [magenta!70,opacity=0.7] (b1) -- (b4) -- (t4) -- (t1);
\fill [magenta!50,opacity=0.7] (b2) -- (b4) -- (t4) -- (t2);
\fill [red!70,opacity=0.7] (b7) -- (b4) -- (t4) -- (t7);
%
\draw[string=green!60!black, very thick] (b4) -- (t4);
%
\fill[color=green!60!black] (b4) circle (0.8pt) node[left] (0up) {};
\fill[color=green!60!black] (t4) circle (0.8pt) node[left] (0up) {};
%
\draw[line width=1] (0.5, 0.75, 0.5) node[line width=0pt] (beta) {{\footnotesize $\beta$}};
%
\draw[
	 color=gray, 
	 opacity=0.4, 
	 semithick
	 ] 
	 (0,1,1) -- (0,1,0) -- (1,1,0) -- (1,1,1) -- (0,1,1) -- (0,0,1) -- (1,0,1) -- (1,0,0) -- (1,1,0)
	 (1,0,1) -- (1,1,1);
\end{tikzpicture}
\fus
\begin{tikzpicture}[thick,scale=2.5,color=blue!50!black, baseline=0.0cm, >=stealth, 
				style={x={(-0.9cm,-0.4cm)},y={(0.8cm,-0.4cm)},z={(0cm,0.9cm)}}]
\fill [blue!20,opacity=0.2] (1,0,0) -- (1,1,0) -- (0,1,0) -- (0,1,1) -- (0,0,1) -- (1,0,1);
\draw[
	 color=gray, 
	 opacity=0.3, 
	 semithick,
	 dashed
	 ] 
	 (1,0,0) -- (0,0,0) -- (0,1,0)
	 (0,0,0) -- (0,0,1);
\coordinate (b1) at (0.3, 1, 0);
\coordinate (b2) at (0.7, 1, 0);
\coordinate (b4) at (0.5, 0.5, 0);
\coordinate (b7) at (0.5, 0, 0);
\coordinate (t1) at (0.3, 1, 1);
\coordinate (t2) at (0.7, 1, 1);
\coordinate (t4) at (0.5, 0.5, 1);
\coordinate (t7) at (0.5, 0, 1);
%
\fill [magenta!70,opacity=0.7] (b1) -- (b4) -- (t4) -- (t1);
\fill [magenta!80,opacity=0.7] (b2) -- (b4) -- (t4) -- (t2);
\fill [red!70,opacity=0.7] (b7) -- (b4) -- (t4) -- (t7);
%
\draw[string=green!60!black, very thick] (t4) -- (b4);
%
\fill[color=green!60!black] (b4) circle (0.8pt) node[left] (0up) {};
\fill[color=green!60!black] (t4) circle (0.8pt) node[left] (0up) {};
%
\draw[line width=1] (0.5, 0.25, 0.5) node[line width=0pt] (beta) {{\footnotesize $\beta$}};
%
\draw[
	 color=gray, 
	 opacity=0.4, 
	 semithick
	 ] 
	 (0,1,1) -- (0,1,0) -- (1,1,0) -- (1,1,1) -- (0,1,1) -- (0,0,1) -- (1,0,1) -- (1,0,0) -- (1,1,0)
	 (1,0,1) -- (1,1,1);
\end{tikzpicture}
= 
\begin{tikzpicture}[thick,scale=2.5,color=blue!50!black, baseline=0.0cm, >=stealth, 
				style={x={(-0.9cm,-0.4cm)},y={(0.8cm,-0.4cm)},z={(0cm,0.9cm)}}]
\fill [blue!20,opacity=0.2] (1,0,0) -- (1,1,0) -- (0,1,0) -- (0,1,1) -- (0,0,1) -- (1,0,1);
\draw[
	 color=gray, 
	 opacity=0.3, 
	 semithick,
	 dashed
	 ] 
	 (1,0,0) -- (0,0,0) -- (0,1,0)
	 (0,0,0) -- (0,0,1);
\coordinate (b1) at (0.3, 1, 0);
\coordinate (b2) at (0.7, 1, 0);
\coordinate (b4) at (0.5, 0.67, 0);
\coordinate (n1) at (0.3, 0, 0);
\coordinate (n2) at (0.7, 0, 0);
\coordinate (n3) at (0.5, 0.33, 0);
\coordinate (t1) at (0.3, 1, 1);
\coordinate (t2) at (0.7, 1, 1);
\coordinate (t4) at (0.5, 0.67, 1);
\coordinate (m1) at (0.3, 0, 1);
\coordinate (m2) at (0.7, 0, 1);
\coordinate (m3) at (0.5, 0.33, 1);
%
\fill [magenta!70,opacity=0.7] (b1) -- (b4) -- (t4) -- (t1);
\fill [magenta!80,opacity=0.7] (b2) -- (b4) -- (t4) -- (t2);
\fill [red!70,opacity=0.7] (n3) -- (b4) -- (t4) -- (m3);
\fill [magenta!70,opacity=0.7] (n1) -- (n3) -- (m3) -- (m1);
\fill [magenta!50,opacity=0.7] (n2) -- (n3) -- (m3) -- (m2);
%
\draw[string=green!60!black, very thick] (t4) -- (b4);
\draw[string=green!60!black, very thick] (n3) -- (m3);
%
\fill[color=green!60!black] (b4) circle (0.8pt) node[left] (0up) {};
\fill[color=green!60!black] (m3) circle (0.8pt) node[left] (0up) {};
\fill[color=green!60!black] (t4) circle (0.8pt) node[left] (0up) {};
\fill[color=green!60!black] (n3) circle (0.8pt) node[left] (0up) {};
%
\draw[line width=1] (0.5, 0.5, 0.5) node[line width=0pt] (beta) {{\footnotesize $\beta$}};
%
\draw[
	 color=gray, 
	 opacity=0.4, 
	 semithick
	 ] 
	 (0,1,1) -- (0,1,0) -- (1,1,0) -- (1,1,1) -- (0,1,1) -- (0,0,1) -- (1,0,1) -- (1,0,0) -- (1,1,0)
	 (1,0,1) -- (1,1,1);
\end{tikzpicture}
$$
2-morphisms also have an induced $\sta$-composition. 
Here we have to make a choice (which determines whether $\tz'$ will turn out to be cubical or opcubical). 
Indeed, given 1-morphisms $\alpha, \alpha': u \rightarrow v$ and $\beta, \beta': v \rightarrow w$ as well as 2-morphisms $X: \alpha \rightarrow \alpha'$ and $Y: \beta \rightarrow \beta'$, we define $Y \sta X$ as follows. 
We compress the contents of the cube~$X$ (respectively~$Y$) into its right (respectively left) half,
	i.\,e.\ the subset with $x$-coordinate larger (respectively smaller) than $1/2$, 
concatenate the resulting cubes in negative $y$-direction, and finally rescale to end up with a cube again: 
$$
\begin{tikzpicture}[thick,scale=2.5,color=blue!50!black, baseline=0.0cm, >=stealth, 
				style={x={(-0.9cm,-0.4cm)},y={(0.8cm,-0.4cm)},z={(0cm,0.9cm)}}]
\fill [blue!20,opacity=0.2] (1,0,0) -- (1,1,0) -- (0,1,0) -- (0,1,1) -- (0,0,1) -- (1,0,1);
\draw[
	 color=gray, 
	 opacity=0.3, 
	 semithick,
	 dashed
	 ] 
	 (1,0,0) -- (0,0,0) -- (0,1,0)
	 (0,0,0) -- (0,0,1);
\coordinate (b1) at (0.25, 0, 0);
\coordinate (b2) at (0.5, 0, 0);
\coordinate (b3) at (0.75, 0, 0);
\coordinate (b4) at (0.5, 0.5, 0);
\coordinate (b5) at (0.5, 1, 0);
\coordinate (t1) at (0.25, 0, 1);
\coordinate (t2) at (0.5, 0, 1);
\coordinate (t3) at (0.75, 0, 1);
\coordinate (t4) at (0.5, 0.5, 1);
\coordinate (t5) at (0.5, 1, 1);
\fill [magenta!70,opacity=0.7] (b1) -- (b4) -- (t4) -- (t1);
\fill [magenta!90,opacity=0.7] (b2) -- (b4) -- (t4) -- (t2);
\fill [magenta!60,opacity=0.7] (b3) -- (b4) -- (t4) -- (t3);
\fill [magenta!80,opacity=0.7] (b5) -- (b4) -- (t4) -- (t5);
\draw[string=green!60!black, very thick] (t4) -- (b4);
%
\fill[color=green!60!black] (b4) circle (0.8pt) node[left] (0up) {};
\fill[color=green!60!black] (t4) circle (0.8pt) node[left] (0up) {};
%
\draw[line width=1] (0.8, 0.6, 0) node[line width=0pt] (beta) {{\footnotesize $v$}};
\draw[line width=1] (0.18, 0.8, 0) node[line width=0pt] (beta) {{\footnotesize $w$}};
\draw[line width=1] (0.5, 0.75, 0.5) node[line width=0pt] (beta) {{\footnotesize $\beta$}};
\draw[line width=1] (0.6, 0.5, 0.6) node[line width=0pt] (beta) {{\footnotesize $Y$}};
%
\draw[
	 color=gray, 
	 opacity=0.4, 
	 semithick
	 ] 
	 (0,1,1) -- (0,1,0) -- (1,1,0) -- (1,1,1) -- (0,1,1) -- (0,0,1) -- (1,0,1) -- (1,0,0) -- (1,1,0)
	 (1,0,1) -- (1,1,1);
\end{tikzpicture}
\sta
\begin{tikzpicture}[thick,scale=2.5,color=blue!50!black, baseline=0.0cm, >=stealth, 
				style={x={(-0.9cm,-0.4cm)},y={(0.8cm,-0.4cm)},z={(0cm,0.9cm)}}]
\fill [blue!20,opacity=0.2] (1,0,0) -- (1,1,0) -- (0,1,0) -- (0,1,1) -- (0,0,1) -- (1,0,1);
\draw[
	 color=gray, 
	 opacity=0.3, 
	 semithick,
	 dashed
	 ] 
	 (1,0,0) -- (0,0,0) -- (0,1,0)
	 (0,0,0) -- (0,0,1);
\coordinate (a1) at (0.33, 1, 0);
\coordinate (a2) at (0.67, 1, 0);
\coordinate (a4) at (0.5, 0.5, 0);
\coordinate (a5) at (0.5, 0, 0);
\coordinate (c1) at (0.33, 1, 1);
\coordinate (c2) at (0.67, 1, 1);
\coordinate (c4) at (0.5, 0.5, 1);
\coordinate (c5) at (0.5, 0, 1);
\fill [red!70,opacity=0.7] (a1) -- (a4) -- (c4) -- (c1);
\fill [red!80,opacity=0.7] (a2) -- (a4) -- (c4) -- (c2);
\fill [red!60,opacity=0.7] (a5) -- (a4) -- (c4) -- (c5);
\draw[string=green!60!black, very thick] (a4) -- (c4);
%
\fill[color=green!60!black] (a4) circle (0.8pt) node[left] (0up) {};
\fill[color=green!60!black] (c4) circle (0.8pt) node[left] (0up) {};
%
\draw[line width=1] (0.85, 0.5, 0) node[line width=0pt] (beta) {{\footnotesize $u$}};
\draw[line width=1] (0.1, 0.85, 0) node[line width=0pt] (beta) {{\footnotesize $v$}};
\draw[line width=1] (0.5, 0.25, 0.5) node[line width=0pt] (beta) {{\footnotesize $\alpha'$}};
\draw[line width=1] (0.6, 0.75, 0.6) node[line width=0pt] (beta) {{\footnotesize $X$}};
%
\draw[
	 color=gray, 
	 opacity=0.4, 
	 semithick
	 ] 
	 (0,1,1) -- (0,1,0) -- (1,1,0) -- (1,1,1) -- (0,1,1) -- (0,0,1) -- (1,0,1) -- (1,0,0) -- (1,1,0)
	 (1,0,1) -- (1,1,1);
\end{tikzpicture}
= 
\begin{tikzpicture}[thick,scale=2.5,color=blue!50!black, baseline=0.0cm, >=stealth, 
				style={x={(-0.9cm,-0.4cm)},y={(0.8cm,-0.4cm)},z={(0cm,0.9cm)}}]
\fill [blue!20,opacity=0.2] (1,0,0) -- (1,1,0) -- (0,1,0) -- (0,1,1) -- (0,0,1) -- (1,0,1);
\draw[
	 color=gray, 
	 opacity=0.3, 
	 semithick,
	 dashed
	 ] 
	 (1,0,0) -- (0,0,0) -- (0,1,0)
	 (0,0,0) -- (0,0,1);
%
\coordinate (b1) at (0.125, 0, 0);
\coordinate (b2) at (0.25, 0, 0);
\coordinate (b3) at (0.375, 0, 0);
\coordinate (b4) at (0.25, 0.25, 0);
\coordinate (b5) at (0.25, 1, 0);
\coordinate (t1) at (0.125, 0, 1);
\coordinate (t2) at (0.25, 0, 1);
\coordinate (t3) at (0.375, 0, 1);
\coordinate (t4) at (0.25, 0.25, 1);
\coordinate (t5) at (0.25, 1, 1);
\fill [magenta!70,opacity=0.7] (b1) -- (b4) -- (t4) -- (t1);
\fill [magenta!90,opacity=0.7] (b2) -- (b4) -- (t4) -- (t2);
\fill [magenta!60,opacity=0.7] (b3) -- (b4) -- (t4) -- (t3);
\fill [magenta!80,opacity=0.7] (b5) -- (b4) -- (t4) -- (t5);
%
\draw[string=green!60!black, very thick] (t4) -- (b4);
\coordinate (a1) at (0.6667, 1, 0);
\coordinate (a2) at (0.833, 1, 0);
\coordinate (a4) at (0.75, 0.75, 0);
\coordinate (a5) at (0.75, 0, 0);
\coordinate (c1) at (0.6667, 1, 1);
\coordinate (c2) at (0.833, 1, 1);
\coordinate (c4) at (0.75, 0.75, 1);
\coordinate (c5) at (0.75, 0, 1);
\fill [red!70,opacity=0.7] (a1) -- (a4) -- (c4) -- (c1);
\fill [red!80,opacity=0.7] (a2) -- (a4) -- (c4) -- (c2);
\fill [red!60,opacity=0.7] (a5) -- (a4) -- (c4) -- (c5);
%
%
\draw[string=green!60!black, very thick] (a4) -- (c4);
%
\fill[color=green!60!black] (a4) circle (0.8pt) node[left] (0up) {};
\fill[color=green!60!black] (c4) circle (0.8pt) node[left] (0up) {};
%
\draw[line width=1] (0.85, 0.5, 0) node[line width=0pt] (beta) {{\footnotesize $u$}};
\draw[line width=1] (0.42, 0.9, 0) node[line width=0pt] (beta) {{\footnotesize $v$}};
\draw[line width=1] (0.07, 0.9, 0) node[line width=0pt] (beta) {{\footnotesize $w$}};
\draw[line width=1] (0.75, 0.25, 0.5) node[line width=0pt] (beta) {{\footnotesize $\alpha'$}};
\draw[line width=1] (0.25, 0.75, 0.5) node[line width=0pt] (beta) {{\footnotesize $\beta$}};
\draw[line width=1] (0.65, 0.5, 0.4) node[line width=0pt] (beta) {{\footnotesize $X$}};
\draw[line width=1] (0.25, 0.35, 0.7) node[line width=0pt] (beta) {{\footnotesize $Y$}};
%
\draw[
	 color=gray, 
	 opacity=0.4, 
	 semithick
	 ] 
	 (0,1,1) -- (0,1,0) -- (1,1,0) -- (1,1,1) -- (0,1,1) -- (0,0,1) -- (1,0,1) -- (1,0,0) -- (1,1,0)
	 (1,0,1) -- (1,1,1);
\end{tikzpicture}
$$

Thirdly, we have to provide compositions of 3-morphisms. 
These will be defined in terms of certain decorated 3d diagrams that we call `3d diagrams for~$\zz$'. 
Once we have established the nature of $\tz'$, we will identify such diagrams as Gray category diagrams for $\tz'$. 

As preparation for the precise definition, we note that in any progressive 3d diagram~$\Gamma$ as in Section~\ref{subsubsec:tricats}, each vertex~$\nu$ has a source $s_\nu$ and target $t_\nu$. 
	These are defined by choosing a small cube around~$\nu$ whose faces are parallel to the faces of~$\Gamma$ and that does not contain any other vertex. The source and target of $\nu$ are by definition the bottom and top pre-2-category string diagrams of this cube, respectively.
Similarly, the source $s_\Gamma$ and target $t_\Gamma$ of the 3d diagram itself are given by the bottom and top of~$\Gamma$, respectively. 
We usually identify $s_\Gamma$ and $t_\Gamma$ with their cylinders $s_\Gamma \times [0,1]$ and $t_\Gamma \times [0,1]$; hence we can view them as 2-morphisms in $\tz'$ if~$\Gamma$ is decorated by~$\D$. 

\begin{definition}
A \textsl{3d diagram for $\zz: \Bordd \rightarrow \Vect_\Bbbk$} is a (progressive) 3d diagram with $j$-strata decorated by elements of $D_j$ as allowed by the maps $s,t,f$ for $j \in \{ 1,2,3 \}$, and vertices~$\nu$ decorated by 
	compatible 3-morphisms in $\tz'$, i.\,e.\ by elements of $\zz(S_{s_\nu,t_\nu})$. 
\end{definition}

Next we define the notion of \textsl{evaluation $\zz(\Gamma)$ of a 3d diagram~$\Gamma$ for~$\zz$}, which will be crucial in the following. 
	For this we view~$\Gamma$ as a defect ball, and we write $\Gamma^{\textrm{cut}}$ for the defect bordism obtained by removing a small ball (or cube) $B_\nu$ around every vertex~$\nu$ in~$\Gamma$. 
Viewing $\Gamma^{\textrm{cut}}$ as a bordism 
	$\bigsqcup_\nu \partial B_\nu \to S_{s_\Gamma,t_\Gamma}$, 
we obtain a linear map 
	$\zz(\Gamma^{\textrm{cut}}): \bigotimes_\nu \zz(\partial B_\nu) \to \zz(S_{s_\Gamma,t_\Gamma})$, 
which we can in turn apply to the vectors 
	$\Phi_\nu \in \Hom_{\tz'}(s_\nu, t_\nu) = \zz(S_{s_\nu,t_\nu})$ 
decorating the vertices~$\nu$.  

\begin{definition} 
\label{def:eval}
Let $\Gamma$ be a 3d diagram for $\zz$ with source $s_\Gamma$ and target $t_\Gamma$ and with vertices decorated by 3-morphisms $\Phi_\nu$. Then the 
 \textsl{evaluation} of~$\Gamma$ is the 3-morphism 
$$
\zz(\Gamma) = \zz (\Gamma^{\textrm{cut}}) \big( \bigotimes_\nu \Phi_\nu \big) \in \Hom_{\tz'}( s_\Gamma, t_\Gamma) \, . 
$$
 \end{definition}

\medskip
In particular, the evaluation of 3d diagrams for $\zz$ allows us to define the identity 3-morphism  $1_X: X\to X$ for each 2-morphism $X$ in $\tz'$. 
This corresponds to a 3d diagram for $\zz$ with no vertices in the interior. More precisely,
the 3d diagram for the 2-morphism $X$ defines a bordism $\Sigma_X: \emptyset\to S_{X,X}$. Its evaluation $\zz(\Sigma_X):\zz(\emptyset)\to \zz(S_{X,X})$ is a linear map from~$\Bbbk$ to $\zz(S_{X,X})=\Hom_{\tz'}(X,X)$, 
hence it is determined by a vector in this space of 3-morphisms.

\medskip

Now we are in a position to complete the discussion of compositions in $\tz'$. 
In a nutshell, composing 3-morphisms in $\tz'$ amounts to suitably arranging small cubes (or balls) in a unit cube and then evaluating with~$\zz$. 
Given two vertically composable 3-morphisms $\Phi: X\to Y$ and $\Psi: Y\to Z$, we consider the associated  3d diagrams whose bottom faces describe the 2-morphisms $X$ and $Y$, respectively, and whose top faces correspond to~$Y$ and~$Z$. 
Hence, the diagram for $\Psi$ can be stacked on top of the diagram for $\Phi$, and after a rescaling this yields a new 3d diagram $\Gamma$. 
The composite $\Psi \circ \Phi$ is 
	defined\footnote{It is straightforward to verify that this definition in terms of representatives is compatible with the definition of 3-morphisms in terms of limits in the defining equation \eqref{eq:3HomTZ}: Since the limit and colimit of the relevant diagram coincide, and since all cone morphisms are isomorphisms, the composition is implicitly defined by picking representatives, evaluating with~$\zz$, and then mapping back to the limit. It is straightforward to see that the result is independent of the choice of representatives. Analogous remarks apply to horizontal and Gray composition of 3-morphisms.} 
as the evaluation $\Psi\circ\Phi=\zz(\Gamma)$.
 
Similarly, if $X,Y$ and $X',Y'$ are pairs of $\otimes$-composable 2-morphisms, the horizontal composite of  3-morphisms $\Phi: X\to X'$ and $\Psi: Y\to Y'$ is obtained by stacking the 3d diagrams for $\Phi$ and $\Psi$ next to each other along the $y$-axis. After a rescaling, this yields a new 3d diagram $\Gamma'$, and the 3-morphism  $\Psi\otimes \Phi: Y\otimes X\to Y'\otimes X'$ is defined to be $\zz(\Gamma')$. 
 
Finally, for two pairs $X,Y$ and $X',Y'$  of $\sta$-composable 2-morphisms, the $\sta$-composite of two  3-morphisms $\Phi: X\to X'$ and $\Psi: Y\to Y'$ is defined as the evaluation of the 3d diagram obtained by stacking the diagrams for $\Phi$ and $\Psi$ along the $x$-axis, rescaling the resulting diagram and then evaluating it with $\zz$.

We illustrate the definition of $\circ$-composition with an example.  
For the 2-morphisms
$$
X = Y = 
\begin{tikzpicture}[thick,scale=2.5,color=blue!50!black, baseline=0.0cm, >=stealth, 
				style={x={(-0.9cm,-0.4cm)},y={(0.8cm,-0.4cm)},z={(0cm,0.9cm)}}]
\fill [blue!20,opacity=0.2] (1,0,0) -- (1,1,0) -- (0,1,0) -- (0,1,1) -- (0,0,1) -- (1,0,1);
\draw[
	 color=gray, 
	 opacity=0.3, 
	 semithick,
	 dashed
	 ] 
	 (1,0,0) -- (0,0,0) -- (0,1,0)
	 (0,0,0) -- (0,0,1);
\coordinate (b1) at (0.2, 0, 0);
\coordinate (b2) at (0.8, 0, 0);
\coordinate (b3) at (0.5, 0.5, 0);
\coordinate (b4) at (0.5, 1, 0);
\coordinate (t1) at (0.2, 0, 1);
\coordinate (t2) at (0.8, 0, 1);
\coordinate (t3) at (0.5, 0.5, 1);
\coordinate (t4) at (0.5, 1, 1);
\fill [red!70,opacity=0.7] (b1) -- (b3) -- (t3) -- (t1);
\fill [red!50,opacity=0.7] (b2) -- (b3) -- (t3) -- (t2);
\fill [magenta!60,opacity=0.7] (b4) -- (b3) -- (t3) -- (t4);
\draw[string=green!60!black, very thick] (t3) -- (b3);
%
\fill[color=green!60!black] (b3) circle (0.8pt) node[left] (0up) {};
\fill[color=green!60!black] (t3) circle (0.8pt) node[left] (0up) {};
%
\draw[
	 color=gray, 
	 opacity=0.4, 
	 semithick
	 ] 
	 (0,1,1) -- (0,1,0) -- (1,1,0) -- (1,1,1) -- (0,1,1) -- (0,0,1) -- (1,0,1) -- (1,0,0) -- (1,1,0)
	 (1,0,1) -- (1,1,1);
\end{tikzpicture}
\, , \quad
Z = 
\begin{tikzpicture}[thick,scale=2.5,color=blue!50!black, baseline=0.0cm, >=stealth, 
				style={x={(-0.9cm,-0.4cm)},y={(0.8cm,-0.4cm)},z={(0cm,0.9cm)}}]
\fill [blue!20,opacity=0.2] (1,0,0) -- (1,1,0) -- (0,1,0) -- (0,1,1) -- (0,0,1) -- (1,0,1);
\draw[
	 color=gray, 
	 opacity=0.3, 
	 semithick,
	 dashed
	 ] 
	 (1,0,0) -- (0,0,0) -- (0,1,0)
	 (0,0,0) -- (0,0,1);
\coordinate (b1) at (0.2, 0, 0);
\coordinate (b2) at (0.8, 0, 0);
\coordinate (b31) at (0.2, 0.3, 0);
\coordinate (b32) at (0.8, 0.3, 0);
\coordinate (b33) at (0.5, 0.7, 0);
\coordinate (b4) at (0.5, 1, 0);
\coordinate (t1) at (0.2, 0, 1);
\coordinate (t2) at (0.8, 0, 1);
\coordinate (t31) at (0.2, 0.3, 1);
\coordinate (t32) at (0.8, 0.3, 1);
\coordinate (t33) at (0.5, 0.7, 1);
\coordinate (t4) at (0.5, 1, 1);
\fill [red!70,opacity=0.7] (b1) -- (b31) -- (t31) -- (t1);
\fill [red!50,opacity=0.7] (b2) -- (b32) -- (t32) -- (t2);
\fill [magenta!60,opacity=0.7] (b4) -- (b33) -- (t33) -- (t4);
\draw[string=green!60!black, very thick] (t31) -- (b31);
\draw[string=green!50!black, very thick] (b32) -- (t32);
\draw[string=green!70!black, very thick] (t33) -- (b33);
%
\fill[color=green!60!black] (b31) circle (0.8pt) node[left] (0up) {};
\fill[color=green!50!black] (b32) circle (0.8pt) node[left] (0up) {};
\fill[color=green!70!black] (b33) circle (0.8pt) node[left] (0up) {};
\fill[color=green!60!black] (t31) circle (0.8pt) node[left] (0up) {};
\fill[color=green!50!black] (t32) circle (0.8pt) node[left] (0up) {};
\fill[color=green!70!black] (t33) circle (0.8pt) node[left] (0up) {};
%
\draw[
	 color=gray, 
	 opacity=0.4, 
	 semithick
	 ] 
	 (0,1,1) -- (0,1,0) -- (1,1,0) -- (1,1,1) -- (0,1,1) -- (0,0,1) -- (1,0,1) -- (1,0,0) -- (1,1,0)
	 (1,0,1) -- (1,1,1);
\end{tikzpicture}
\, ,
$$
	all with the same source and target 1-morphisms as depicted on the $x=1$ and $x=0$ faces of the cubes, respectively,
and the 3-morphisms $\Phi: X\to Y$, $\Psi: Y\to Z$ given by 
$$
\zz \left( 
\begin{tikzpicture}[thick,scale=2.5,color=blue!50!black, baseline=0.0cm, >=stealth, 
				style={x={(-0.9cm,-0.4cm)},y={(0.8cm,-0.4cm)},z={(0cm,0.9cm)}}]
\fill [blue!20,opacity=0.2] (1,0,0) -- (1,1,0) -- (0,1,0) -- (0,1,1) -- (0,0,1) -- (1,0,1);
\draw[
	 color=gray, 
	 opacity=0.3, 
	 semithick,
	 dashed
	 ] 
	 (1,0,0) -- (0,0,0) -- (0,1,0)
	 (0,0,0) -- (0,0,1);
\coordinate (b1) at (0.2, 0, 0);
\coordinate (b2) at (0.8, 0, 0);
\coordinate (b3) at (0.5, 0.5, 0);
\coordinate (b4) at (0.5, 1, 0);
\coordinate (t1) at (0.2, 0, 1);
\coordinate (t2) at (0.8, 0, 1);
\coordinate (t31) at (0.5, 0.5, 1);
\coordinate (t32) at (0.5, 0.5, 1);
\coordinate (t33) at (0.5, 0.5, 1);
\coordinate (t4) at (0.5, 1, 1);
\coordinate (Phi) at (0.5, 0.5, 0.5);
\fill [red!70,opacity=0.7] (b1) -- (b3) -- (Phi) -- (t31) -- (t1);
\fill [red!50,opacity=0.7] (b2) -- (b3) -- (Phi) -- (t32) -- (t2);
\fill [magenta!60,opacity=0.7] (b4) -- (b3) -- (Phi) -- (t33) -- (t4);
\draw[string=green!60!black, very thick] (t31) -- (Phi);
\draw[string=green!50!black, very thick] (Phi) -- (b3);
%
\fill[color=green!60!black] (b3) circle (0.8pt) node[left] (0up) {};
\fill[color=green!60!black] (t31) circle (0.8pt) node[left] (0up) {};
\fill[color=black!80] (Phi) circle (0.9pt) node[right] (0up) {$\footnotesize \Phi$};
%
\draw[
	 color=gray, 
	 opacity=0.4, 
	 semithick
	 ] 
	 (0,1,1) -- (0,1,0) -- (1,1,0) -- (1,1,1) -- (0,1,1) -- (0,0,1) -- (1,0,1) -- (1,0,0) -- (1,1,0)
	 (1,0,1) -- (1,1,1);
\end{tikzpicture}
\right)
\, , \quad 
\zz \left(
\begin{tikzpicture}[thick,scale=2.5,color=blue!50!black, baseline=0.0cm, >=stealth, 
				style={x={(-0.9cm,-0.4cm)},y={(0.8cm,-0.4cm)},z={(0cm,0.9cm)}}]
\fill [blue!20,opacity=0.2] (1,0,0) -- (1,1,0) -- (0,1,0) -- (0,1,1) -- (0,0,1) -- (1,0,1);
\draw[
	 color=gray, 
	 opacity=0.3, 
	 semithick,
	 dashed
	 ] 
	 (1,0,0) -- (0,0,0) -- (0,1,0)
	 (0,0,0) -- (0,0,1);
\coordinate (b1) at (0.2, 0, 0);
\coordinate (b2) at (0.8, 0, 0);
\coordinate (b3) at (0.5, 0.5, 0);
\coordinate (b4) at (0.5, 1, 0);
\coordinate (t1) at (0.2, 0, 1);
\coordinate (t2) at (0.8, 0, 1);
\coordinate (t31) at (0.2, 0.3, 1);
\coordinate (t32) at (0.8, 0.3, 1);
\coordinate (t33) at (0.5, 0.7, 1);
\coordinate (t4) at (0.5, 1, 1);
\coordinate (Psi) at (0.5, 0.5, 0.5);
\fill [red!70,opacity=0.7] (b1) -- (b3) -- (Psi) -- (t31) -- (t1);
\fill [red!50,opacity=0.7] (b2) -- (b3) -- (Psi) -- (t32) -- (t2);
\fill [magenta!60,opacity=0.7] (b4) -- (b3) -- (Psi) -- (t33) -- (t4);
\draw[string=green!60!black, very thick] (t31) -- (Psi);
\draw[string=green!50!black, very thick] (Psi) -- (t32);
\draw[string=green!70!black, very thick] (t33) -- (Psi);
\draw[string=green!60!black, very thick] (Psi) -- (b3);
%
\fill[color=green!60!black] (b3) circle (0.8pt) node[left] (0up) {};
\fill[color=green!60!black] (t31) circle (0.8pt) node[left] (0up) {};
\fill[color=green!50!black] (t32) circle (0.8pt) node[left] (0up) {};
\fill[color=green!70!black] (t33) circle (0.8pt) node[left] (0up) {};
\fill[color=black!80] (Psi) circle (0.9pt) node[right] (0up) {$\footnotesize \Psi$};
%
\draw[
	 color=gray, 
	 opacity=0.4, 
	 semithick
	 ] 
	 (0,1,1) -- (0,1,0) -- (1,1,0) -- (1,1,1) -- (0,1,1) -- (0,0,1) -- (1,0,1) -- (1,0,0) -- (1,1,0)
	 (1,0,1) -- (1,1,1);
\end{tikzpicture}
\right)
\, ,
$$
the 3d diagram for the 3-morphism  $\Psi\circ\Phi: X\to Z$ 
is given by
$$
\Psi \circ \Phi 
= 
\zz 
\left( 
\begin{tikzpicture}[thick,scale=2.5,color=blue!50!black, baseline=0.0cm, >=stealth, 
				style={x={(-0.9cm,-0.4cm)},y={(0.8cm,-0.4cm)},z={(0cm,0.9cm)}}]
\fill [blue!20,opacity=0.2] (1,0,0) -- (1,1,0) -- (0,1,0) -- (0,1,1) -- (0,0,1) -- (1,0,1);
\draw[
	 color=gray, 
	 opacity=0.3, 
	 semithick,
	 dashed
	 ] 
	 (1,0,0) -- (0,0,0) -- (0,1,0)
	 (0,0,0) -- (0,0,1);
\coordinate (b1) at (0.2, 0, 0);
\coordinate (b2) at (0.8, 0, 0);
\coordinate (b3) at (0.5, 0.5, 0);
\coordinate (b4) at (0.5, 1, 0);
\coordinate (t1) at (0.2, 0, 1);
\coordinate (t2) at (0.8, 0, 1);
\coordinate (t31) at (0.2, 0.3, 1);
\coordinate (t32) at (0.8, 0.3, 1);
\coordinate (t33) at (0.5, 0.7, 1);
\coordinate (t4) at (0.5, 1, 1);
\coordinate (Phi) at (0.5, 0.5, 0.33);
\coordinate (Psi) at (0.5, 0.5, 0.67);
\fill [red!70,opacity=0.7] (b1) -- (b3) -- (Psi) -- (t31) -- (t1);
\fill [red!50,opacity=0.7] (b2) -- (b3) -- (Psi) -- (t32) -- (t2);
\fill [magenta!60,opacity=0.7] (b4) -- (b3) -- (Psi) -- (t33) -- (t4);
\draw[string=green!60!black, very thick] (t31) -- (Psi);
\draw[string=green!50!black, very thick] (Psi) -- (t32);
\draw[string=green!70!black, very thick] (t33) -- (Psi);
\draw[string=green!60!black, very thick] (Psi) -- (Phi);
\draw[string=green!60!black, very thick] (Phi) -- (b3);
%
\fill[color=green!60!black] (b3) circle (0.8pt) node[left] (0up) {};
\fill[color=green!60!black] (t31) circle (0.8pt) node[left] (0up) {};
\fill[color=green!50!black] (t32) circle (0.8pt) node[left] (0up) {};
\fill[color=green!70!black] (t33) circle (0.8pt) node[left] (0up) {};
\fill[color=black!80] (Psi) circle (0.9pt) node[right] (0up) {$\footnotesize \Psi$};
\fill[color=black!80] (Phi) circle (0.9pt) node[right] (0up) {$\footnotesize \Phi$};
%
\draw[
	 color=gray, 
	 opacity=0.4, 
	 semithick
	 ] 
	 (0,1,1) -- (0,1,0) -- (1,1,0) -- (1,1,1) -- (0,1,1) -- (0,0,1) -- (1,0,1) -- (1,0,0) -- (1,1,0)
	 (1,0,1) -- (1,1,1);
\end{tikzpicture}
\right) 
=
\zz \left( 
\begin{tikzpicture}[thick,scale=2.5,color=blue!50!black, baseline=0.0cm, >=stealth, 
				style={x={(-0.9cm,-0.4cm)},y={(0.8cm,-0.4cm)},z={(0cm,0.9cm)}}]
\fill [blue!20,opacity=0.2] (1,0,0) -- (1,1,0) -- (0,1,0) -- (0,1,1) -- (0,0,1) -- (1,0,1);
\draw[
	 color=gray, 
	 opacity=0.3, 
	 semithick,
	 dashed
	 ] 
	 (1,0,0) -- (0,0,0) -- (0,1,0)
	 (0,0,0) -- (0,0,1);
\coordinate (b1) at (0.2, 0, 0);
\coordinate (b2) at (0.8, 0, 0);
\coordinate (b3) at (0.5, 0.5, 0);
\coordinate (b4) at (0.5, 1, 0);
\coordinate (t1) at (0.2, 0, 1);
\coordinate (t2) at (0.8, 0, 1);
\coordinate (t31) at (0.4, 0.4, 1);
\coordinate (t32) at (0.6, 0.4, 1);
\coordinate (t33) at (0.5, 0.6, 1);
\coordinate (t4) at (0.5, 1, 1);
\coordinate (LCb) at (0.5, 0.5, 0.205);
\coordinate (LCt) at (0.5, 0.5, 0.455);
\coordinate (LCt1TL) at (0.5, 0.375, 0.455);
\coordinate (LCt1BL) at (0.5, 0.375, 0.205);
\coordinate (LCt1TR) at (0.5, 0.625, 0.455);
\coordinate (LCt1BR) at (0.5, 0.625, 0.205);
\coordinate (UCb) at (0.5, 0.5, 0.545);
\coordinate (UCt1) at (0.4, 0.4, 0.795);
\coordinate (UCt2) at (0.6, 0.4, 0.795);
\coordinate (UCt3) at (0.5, 0.6, 0.795);
\coordinate (UCt1TL) at (0.4, 0.375, 0.795);
\coordinate (UCt1BL) at (0.4, 0.375, 0.545);
\coordinate (UCt2TL) at (0.6, 0.375, 0.795);
\coordinate (UCt2BL) at (0.6, 0.375, 0.545);
\coordinate (UCt3TL) at (0.5, 0.625, 0.795);
\coordinate (UCt3BL) at (0.5, 0.625, 0.545);
\fill [red!70,opacity=0.7] 
(b1) -- (b3) -- (LCb) -- (LCt1BL) -- (LCt1TL) -- (LCt) -- (UCb) -- (UCt1BL) -- (UCt1TL) -- (UCt1) -- (t31) -- (t1);
\fill [red!50,opacity=0.7] 
(b2) -- (b3) -- (LCb) -- (LCt1BL) -- (LCt1TL) -- (LCt) -- (UCb) -- (UCt2BL) -- (UCt2TL) -- (UCt2) -- (t32) -- (t2);
\fill [magenta!60,opacity=0.7] (b4) -- (b3) -- (LCb) -- (LCt1BR) -- (LCt1TR) -- (LCt) -- (UCb) -- (UCt3BL) -- (UCt3TL) -- (UCt3) -- (t33) -- (t4);
\draw[color=green!60!black, postaction={decorate}, decoration={markings,mark=at position .71 with {\arrow[color=green!60!black]{>}}}, very thick] (t31) -- (UCt1);
\draw[color=green!50!black, postaction={decorate}, decoration={markings,mark=at position .71 with {\arrow[color=green!50!black]{>}}}, very thick] (UCt2) -- (t32);
\draw[color=green!60!black, very thick] (UCb) -- (LCt);
\draw[color=green!60!black, postaction={decorate}, decoration={markings,mark=at position .71 with {\arrow[color=green!60!black]{>}}}, very thick] (LCb) -- (b3);
%
\fill[color=green!60!black] (LCb) circle (0.8pt) node[left] (0up) {};
\fill [white,opacity=0.7] ($(0.25,0,0) + (0.375, 0.375, 0.205)$) -- ($(0.25,0.25,0) + (0.375, 0.375, 0.205)$) -- ($(0,0.25,0) + (0.375, 0.375, 0.205)$) -- ($(0,0.25,0.25) + (0.375, 0.375, 0.205)$) -- ($(0,0,0.25) + (0.375, 0.375, 0.205)$) -- ($(0.25,0,0.25) + (0.375, 0.375, 0.205)$);
\fill[color=green!60!black] (UCb) circle (0.8pt) node[left] (0up) {};
\fill[color=green!60!black] (LCt) circle (0.8pt) node[left] (0up) {};
\fill [white,opacity=0.7] ($(0.25,0,0) + (0.375, 0.375, 0.545)$) -- ($(0.25,0.25,0) + (0.375, 0.375, 0.545)$) -- ($(0,0.25,0) + (0.375, 0.375, 0.545)$) -- ($(0,0.25,0.25) + (0.375, 0.375, 0.545)$) -- ($(0,0,0.25) + (0.375, 0.375, 0.545)$) -- ($(0.25,0,0.25) + (0.375, 0.375, 0.545)$);
\fill[color=green!60!black] (b3) circle (0.8pt) node[left] (0up) {};
\fill[color=green!60!black] (t31) circle (0.8pt) node[left] (0up) {};
\fill[color=green!50!black] (t32) circle (0.8pt) node[left] (0up) {};
\fill[color=green!70!black] (t33) circle (0.8pt) node[left] (0up) {};
\fill[color=green!60!black] (UCt1) circle (0.8pt) node[left] (0up) {};
\fill[color=green!50!black] (UCt2) circle (0.8pt) node[left] (0up) {};
\fill[color=green!70!black] (UCt3) circle (0.8pt) node[left] (0up) {};
%
\draw[color=green!70!black, postaction={decorate}, decoration={markings,mark=at position .71 with {\arrow[color=green!70!black]{>}}}, very thick] (t33) -- (UCt3);
%
\draw[
	 color=gray, 
	 opacity=0.4, 
	 semithick
	 ] 
	 (0,1,1) -- (0,1,0) -- (1,1,0) -- (1,1,1) -- (0,1,1) -- (0,0,1) -- (1,0,1) -- (1,0,0) -- (1,1,0)
	 (1,0,1) -- (1,1,1);
\end{tikzpicture}
\right) 
\!(\Psi \otimes_\Bbbk \Phi) \, . 
$$
Here, the first cube is viewed as a 3d diagram for~$\zz$, while the second cube is 
	viewed as a defect bordism, namely a defect ball with two defect balls (corresponding to~$\Phi$ and~$\Psi$) removed, cf.\ the discussion after \eqref{eq:SXXprimesphere}. 
Note also that the definitions of vertical and horizontal composition of 3-morphisms in $\tz'$ have an analogue in 2-dimensional defect TQFT (cf.~\cite[Fig.\,6\,b\,\&\,c]{dkr1107.0495}), and the construction should generalise to higher-dimensional TQFTs as well. 

The only ingredient missing to state our first main result is the tensorator. 
This is straightforward: for 2-morphisms $Y: \alpha \rightarrow \alpha'$ in $\tz'(u,v)$ and $X: \beta \rightarrow \beta'$ in $\tz'(v,w)$, we define the 3-morphism  
$$
\sigma_{X,Y}: 
\begin{tikzpicture}[thick,scale=2.5,color=blue!50!black, baseline=0.0cm, >=stealth, 
				style={x={(-0.9cm,-0.4cm)},y={(0.8cm,-0.4cm)},z={(0cm,0.9cm)}}]
\draw[
	 color=gray, 
	 opacity=0.3, 
	 semithick,
	 dashed
	 ] 
	 (1,0,0) -- (0,0,0) -- (0,1,0)
	 (0,0,0) -- (0,0,1);
\coordinate (d) at (-0.34, 0, 0);
\coordinate (d2) at (-0.67, 0, 0);
\fill [blue!40,opacity=0.1] ($(d2) + (1,0,0)$) -- ($(d2) + (1,1,0)$) -- ($(d2) + (0.67,1,0)$) -- ($(d2) + (0.67,1,1)$) -- ($(d2) + (0.67,0,1)$) -- ($(d2) + (1,0,1)$);
%
\coordinate (X0) at (0.33, 0.25, 0);
\coordinate (X1) at (0.33, 0.25, 1);
\coordinate (Y0) at (0.67, 0.75, 0);
\coordinate (Y1) at (0.67, 0.75, 1);
%
%
\fill [magenta!50,opacity=0.7] (0.33, 0, 0) -- (X0) -- (X1) -- (0.33, 1, 1) -- (0.33, 0, 1);
\fill [magenta!70,opacity=0.7] (X0) -- (X1) -- (0.33, 1, 1) -- (0.33, 1, 0) -- (0.33, 0.2, 0);
\draw[string=green!50!black, ultra thick] (X0) -- node[pos=0.82, color=blue!50!black, right] {$\footnotesize X$} (X1);  
\draw[line width=1] (0.4, 0.99, 0.5) node[line width=0pt] (beta) {{\footnotesize $\beta$}};
\draw[line width=1] (0.38, 0.18, 0.88) node[line width=0pt] (beta) {{\footnotesize $\beta'$}};
%
\fill [blue!20,opacity=0.1] ($(d) + (1,0,0)$) -- ($(d) + (1,1,0)$) -- ($(d) + (0.67,1,0)$) -- ($(d) + (0.67,1,1)$) -- ($(d) + (0.67,0,1)$) -- ($(d) + (1,0,1)$);
%
\fill [blue!20,opacity=0.2] (1,0,0) -- (1,1,0) -- (0.67,1,0) -- (0.67,1,1) -- (0.67,0,1) -- (1,0,1);
%
\fill [red!70,opacity=0.7] (0.67, 0, 0) -- (Y0) -- (Y1) -- (0.67, 0, 1);
\fill [red!30,opacity=0.7] (Y1) -- (0.67, 1, 1) -- (0.67, 1, 0) -- (0.67, 0.2, 0) -- (Y0);
\draw[string=green!80!black, ultra thick] (Y0) -- node[pos=0.25, color=blue!50!black, left] {$\footnotesize Y$} (Y1);  
\draw[line width=1] (0.73, 0.99, 0.5) node[line width=0pt] (beta) {{\footnotesize $\alpha$}};
\draw[line width=1] (0.73, 0.2, 0.5) node[line width=0pt] (beta) {{\footnotesize $\alpha'$}};
%
%
\draw[line width=1] (0.85, 0.5, 0) node[line width=0pt] (beta) {{\footnotesize $u$}};
\draw[line width=1] (0.45, 0.9, 0) node[line width=0pt] (beta) {{\footnotesize $v$}};
\draw[line width=1] (0.12, 0.9, 0) node[line width=0pt] (beta) {{\footnotesize $w$}};
%
\draw[
	 color=gray, 
	 opacity=0.4, 
	 semithick
	 ] 
	 (0,1,1) -- (0,1,0) -- (1,1,0) -- (1,1,1) -- (0,1,1) -- (0,0,1) -- (1,0,1) -- (1,0,0) -- (1,1,0)
	 (1,0,1) -- (1,1,1);
\end{tikzpicture}
\lra
\begin{tikzpicture}[thick,scale=2.5,color=blue!50!black, baseline=0.0cm, >=stealth, 
				style={x={(-0.9cm,-0.4cm)},y={(0.8cm,-0.4cm)},z={(0cm,0.9cm)}}]
\draw[
	 color=gray, 
	 opacity=0.3, 
	 semithick,
	 dashed
	 ] 
	 (1,0,0) -- (0,0,0) -- (0,1,0)
	 (0,0,0) -- (0,0,1);
\coordinate (d) at (-0.34, 0, 0);
\coordinate (d2) at (-0.67, 0, 0);
\fill [blue!40,opacity=0.1] ($(d2) + (1,0,0)$) -- ($(d2) + (1,1,0)$) -- ($(d2) + (0.67,1,0)$) -- ($(d2) + (0.67,1,1)$) -- ($(d2) + (0.67,0,1)$) -- ($(d2) + (1,0,1)$);
%
\coordinate (X0) at (0.33, 0.75, 0);
\coordinate (X1) at (0.33, 0.75, 1);
\coordinate (Y0) at (0.67, 0.25, 0);
\coordinate (Y1) at (0.67, 0.25, 1);
%
%
\fill [magenta!50,opacity=0.7] (0.33, 0, 0) -- (X0) -- (X1) -- (0.33, 1, 1) -- (0.33, 0, 1);
\fill [magenta!70,opacity=0.7] (X0) -- (X1) -- (0.33, 1, 1) -- (0.33, 1, 0) -- (0.33, 0.2, 0);
\draw[string=green!50!black, ultra thick] (X0) -- node[pos=0.82, color=blue!50!black, right] {$\footnotesize X$} (X1);  
\draw[line width=1] (0.4, 0.99, 0.5) node[line width=0pt] (beta) {{\footnotesize $\beta$}};
\draw[line width=1] (0.38, 0.18, 0.88) node[line width=0pt] (beta) {{\footnotesize $\beta'$}};
%
\fill [blue!20,opacity=0.1] ($(d) + (1,0,0)$) -- ($(d) + (1,1,0)$) -- ($(d) + (0.67,1,0)$) -- ($(d) + (0.67,1,1)$) -- ($(d) + (0.67,0,1)$) -- ($(d) + (1,0,1)$);
%
\fill [blue!20,opacity=0.2] (1,0,0) -- (1,1,0) -- (0.67,1,0) -- (0.67,1,1) -- (0.67,0,1) -- (1,0,1);
%
\fill [red!70,opacity=0.7] (0.67, 0, 0) -- (Y0) -- (Y1) -- (0.67, 0, 1);
\fill [red!30,opacity=0.7] (Y1) -- (0.67, 1, 1) -- (0.67, 1, 0) -- (0.67, 0.2, 0) -- (Y0);
\draw[string=green!80!black, ultra thick] (Y0) -- node[pos=0.2, color=blue!50!black, right] {$\footnotesize Y$} (Y1);  
\draw[line width=1] (0.73, 0.99, 0.5) node[line width=0pt] (beta) {{\footnotesize $\alpha$}};
\draw[line width=1] (0.73, 0.2, 0.5) node[line width=0pt] (beta) {{\footnotesize $\alpha'$}};
%
%
\draw[line width=1] (0.85, 0.5, 0) node[line width=0pt] (beta) {{\footnotesize $u$}};
\draw[line width=1] (0.45, 0.9, 0) node[line width=0pt] (beta) {{\footnotesize $v$}};
\draw[line width=1] (0.12, 0.9, 0) node[line width=0pt] (beta) {{\footnotesize $w$}};
%
\draw[
	 color=gray, 
	 opacity=0.4, 
	 semithick
	 ] 
	 (0,1,1) -- (0,1,0) -- (1,1,0) -- (1,1,1) -- (0,1,1) -- (0,0,1) -- (1,0,1) -- (1,0,0) -- (1,1,0)
	 (1,0,1) -- (1,1,1);
\end{tikzpicture}
$$
as the evaluation of a 3-cube with an $X$-$Y$-crossing: 
\be\label{eq:Ztensorator}
\sigma_{X,Y} = 
\zz \left(
\begin{tikzpicture}[thick,scale=2.5,color=blue!50!black, baseline=0.0cm, >=stealth, 
				style={x={(-0.9cm,-0.4cm)},y={(0.8cm,-0.4cm)},z={(0cm,0.9cm)}}]
\draw[
	 color=gray, 
	 opacity=0.3, 
	 semithick,
	 dashed
	 ] 
	 (1,0,0) -- (0,0,0) -- (0,1,0)
	 (0,0,0) -- (0,0,1);
\coordinate (d) at (-0.34, 0, 0);
\coordinate (d2) at (-0.67, 0, 0);
\fill [blue!40,opacity=0.1] ($(d2) + (1,0,0)$) -- ($(d2) + (1,1,0)$) -- ($(d2) + (0.67,1,0)$) -- ($(d2) + (0.67,1,1)$) -- ($(d2) + (0.67,0,1)$) -- ($(d2) + (1,0,1)$);
%
\coordinate (X0) at (0.33, 0.1, 0);
\coordinate (X1) at (0.33, 0.9, 1);
\coordinate (Y0) at (0.67, 0.9, 0);
\coordinate (Y1) at (0.67, 0.1, 1);
%
%
\fill [magenta!50,opacity=0.7] (0.33, 0, 0) -- (X0) -- (X1) -- (0.33, 1, 1) -- (0.33, 0, 1);
\fill [magenta!70,opacity=0.7] (X0) -- (X1) -- (0.33, 1, 1) -- (0.33, 1, 0) -- (0.33, 0.2, 0);
\draw[string=green!50!black, ultra thick] (X0) -- node[pos=0.9, color=blue!50!black, below] {$\footnotesize X$} (X1);  
\draw[line width=1] (0.4, 0.99, 0.5) node[line width=0pt] (beta) {{\footnotesize $\beta$}};
\draw[line width=1] (0.4, 0.2, 0.92) node[line width=0pt] (beta) {{\footnotesize $\beta'$}};
%
\fill [blue!20,opacity=0.1] ($(d) + (1,0,0)$) -- ($(d) + (1,1,0)$) -- ($(d) + (0.67,1,0)$) -- ($(d) + (0.67,1,1)$) -- ($(d) + (0.67,0,1)$) -- ($(d) + (1,0,1)$);
%
\fill [blue!20,opacity=0.2] (1,0,0) -- (1,1,0) -- (0.67,1,0) -- (0.67,1,1) -- (0.67,0,1) -- (1,0,1);
%
\fill [red!70,opacity=0.7] (0.67, 0, 0) -- (Y0) -- (Y1) -- (0.67, 0, 1);
\fill [red!30,opacity=0.7] (Y1) -- (0.67, 1, 1) -- (0.67, 1, 0) -- (0.67, 0.2, 0) -- (Y0);
\draw[string=green!80!black, ultra thick] (Y0) -- node[pos=0.2, color=blue!50!black, right] {$\footnotesize Y$} (Y1);  
\draw[line width=1] (0.73, 0.99, 0.5) node[line width=0pt] (beta) {{\footnotesize $\alpha$}};
\draw[line width=1] (0.73, 0.2, 0.5) node[line width=0pt] (beta) {{\footnotesize $\alpha'$}};
%
%
\draw[line width=1] (0.85, 0.5, 0) node[line width=0pt] (beta) {{\footnotesize $u$}};
\draw[line width=1] (0.45, 0.9, 0) node[line width=0pt] (beta) {{\footnotesize $v$}};
\draw[line width=1] (0.12, 0.9, 0) node[line width=0pt] (beta) {{\footnotesize $w$}};
%
\draw[
	 color=gray, 
	 opacity=0.4, 
	 semithick
	 ] 
	 (0,1,1) -- (0,1,0) -- (1,1,0) -- (1,1,1) -- (0,1,1) -- (0,0,1) -- (1,0,1) -- (1,0,0) -- (1,1,0)
	 (1,0,1) -- (1,1,1);
\end{tikzpicture}
\right)
(1) \, , 
\ee
where the argument of~$\zz$ is viewed as a bordism from the empty set to a decorated sphere. 

\medskip
\begin{theorem}
\label{thm:TZGray}
$\tz'$ as defined above is a $\Bbbk$-linear Gray category 
for every 3-dimensional defect TQFT~$\zz$. 
\end{theorem}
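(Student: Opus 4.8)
The plan is to verify the axioms of Definition~\ref{def:Graycat} together with $\Bbbk$-linearity one layer at a time, using throughout that every composition in $\tz'$ is produced by assembling small cubes (or balls) inside a unit cube, reading off a decorated defect bordism, and applying~$\zz$. Two structural facts drive essentially every verification: first, $\zz$ is a symmetric monoidal functor on $\Bordd$, so it sends isotopic (equivalent) defect bordisms to \emph{equal} linear maps and turns gluing of bordisms into composition of linear maps; second, by the footnote after the definition of $\circ$-composition the $3$-morphism spaces~\eqref{eq:3HomTZ} are compatible with the direct system of representing diagrams, so one may compute with any convenient representatives and the outcome lands unambiguously in the limit. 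Linearity is then immediate wherever needed, since $\Hom_{\tz'}(X,X')=\zz(S_{X,X'})$ is a $\Bbbk$-vector space and each $\zz(\Gamma^{\textrm{cut}})$ is $\Bbbk$-linear while $\otimes_\Bbbk$ is $\Bbbk$-bilinear.

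\medskip

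\emph{The $2$-categories $\tz'(u,v)$.} For fixed $u,v$ I would first check that $\tz'(u,v)$ is a strict $2$-category. Its objects, $1$-cells, and the $\fus$-composition of $1$-cells are, by construction, those of the free pre-$2$-category $\mathcal F^{\textrm{p}}\K^\D(u,v)$ (cylinders over $2$-morphisms of $\mathcal F^{\textrm{p}}\K^\D$, stacked in negative $y$-direction), whose vertical composition and units are already strictly associative and unital; this yields strict associativity and unitality of $\fus$ on $2$-morphisms with units $1_\alpha$. For the $3$-morphisms, associativity and unitality of $\circ$, functoriality of $\fus$ in the $3$-morphism variables (the interchange law $\big(\Phi'\circ\Phi\big)\fus\big(\Psi'\circ\Psi\big)=\big(\Phi'\fus\Psi'\big)\circ\big(\Phi\fus\Psi\big)$ and $1_X\fus1_Y=1_{X\fus Y}$), and $\Bbbk$-bilinearity all follow from Definition~\ref{def:eval}: each side of such an identity is the evaluation $\zz(\Gamma^{\textrm{cut}})\big(\bigotimes_\nu\Phi_\nu\big)$ of one and the same $3$d diagram up to a rescaling and a reordering of non-overlapping small balls, hence up to an isotopy of defect bordisms, so $\zz$-functoriality gives equality. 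Thus the hom-categories are $\Bbbk$-linear and $\tz'(u,v)$ is a strict $2$-category.

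\medskip

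\emph{The Gray product.} Next I would show that $\alpha\sta(-)\colon\tz'(w,u)\to\tz'(w,v)$ and $(-)\sta\alpha\colon\tz'(v,w)\to\tz'(u,w)$ are strict $2$-functors, that $\sta$ is strictly associative and unital on $1$-morphisms, and that $1_u\sta(-)=\mathrm{id}=(-)\sta1_u$. On $1$-morphisms $\sta$ is concatenation of cubes in the negative $x$-direction followed by rescaling, so strictness here is purely kinematic and uses only $\mathcal F^{\textrm{p}}\K^\D$ and the irrelevance of the rescaling up to isotopy. That $\alpha\sta(-)$ preserves $\fus$, $\circ$, identities, and $\fus$-composition of $2$-morphisms strictly is visible from the cube picture, since prepending an $\alpha$-labelled slab in the $x$-direction commutes with stacking in the $y$- and $z$-directions and sends identity-type cubes to identity-type cubes; the $\circ$-part again goes through Definition~\ref{def:eval} and functoriality of~$\zz$.

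\medskip

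\emph{The tensorator.} Finally I would treat $\sigma_{X,Y}$ from~\eqref{eq:Ztensorator}. One must first confirm that the braiding cube, read as a bordism $\emptyset\to S$, is a $3$-morphism with source $\big(X\sta1_{\alpha'}\big)\fus\big(1_\beta\sta Y\big)$ and target $\big(1_{\beta'}\sta Y\big)\fus\big(X\sta1_\alpha\big)$, i.e.\ that its bottom and top faces are the claimed $2$-morphisms of $\tz'$; this is the content of~\eqref{eq:tensoratordiagram} transported into $\tz'$. Invertibility follows because the $\circ$-composite of the braiding cube with the oppositely-braided cube is the evaluation of a $3$d diagram whose two strands can be pulled straight by an isotopy rel boundary of defect bordisms, so it equals $1_{(X\sta1_{\alpha'})\fus(1_\beta\sta Y)}$ by Definition~\ref{def:eval}, and similarly in the other order. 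Naturality in $X$ and $Y$, condition~(vi) (a braiding in which one strand sits on a line-free identity slab is literally a straight strand, hence the relevant identity $3$-morphism), and conditions~(vii) and~(viii) all reduce to isotopies rel boundary between the $3$d diagrams for the two sides --- precisely the isotopies which, as explained after Definition~\ref{def:Graycat}, encode the Gray-category axioms in the diagrammatic calculus --- combined with functoriality of~$\zz$. I expect the main obstacle to lie entirely in this last step: pinning down the dictionary between ``$3$d diagram for~$\zz$'' and ``Gray category diagram for~$\tz'$'', matching the formal $\sta$-$\fus$-$\circ$ expressions in~(vii)--(viii) with concrete decorated cubes, and checking that the asymmetric ``compress to one half'' convention for $\sta$ of $2$-morphisms is used consistently so that the two cubes in each identity are genuinely isotopic rel boundary; once that dictionary is fixed the individual identities become routine.
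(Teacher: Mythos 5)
Your proposal is correct and follows essentially the same route as the paper's proof: the paper likewise disposes of the kinematic layers by construction, derives strictness and the Gray axioms (vi)--(viii) from isotopy invariance of defect bordisms together with functoriality of~$\zz$, and exhibits the inverse of the tensorator as the evaluation of the oppositely braided cube. Your version merely spells out details (the direct-system compatibility, the strict 2-functoriality of $\alpha\sta(-)$, the dictionary between 3d diagrams for~$\zz$ and Gray category diagrams) that the paper compresses into a few lines.
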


\begin{proof}
We verify that $\tz'$ has all the data and satisfies the axioms of a Gray category as in Definition~\ref{def:Graycat}: 
\begin{enumerate}
\item
The set of objects is $D_3$. 
\item 
$\tz'(u,v)$ is a 2-category for all $u,v \in D_3$ as by construction horizontal composition is strictly associative and unital. 
\item The identity 3-morphisms on a 2-morphism were introduced after Definition~\ref{def:eval}. 
	The identity 2-morphisms on a 1-morphism and the identity 1-morphisms on an object were described in the second paragraph of Section~\ref{subsec:tricatfromZ}. 
\item 
$\sta$-composition is strictly associative and unital due to isotopy invariance and functoriality of~$\zz$. 
\item 
The tensorator \eqref{eq:Ztensorator} is a 3-morphism with inverse
$$
\sigma^{-1}_{X,Y} = 
\zz \left( %
\begin{tikzpicture}[thick,scale=2.5,color=blue!50!black, baseline=0.0cm, >=stealth, 
				style={x={(-0.9cm,-0.4cm)},y={(0.8cm,-0.4cm)},z={(0cm,0.9cm)}}]
\draw[
	 color=gray, 
	 opacity=0.3, 
	 semithick,
	 dashed
	 ] 
	 (1,0,0) -- (0,0,0) -- (0,1,0)
	 (0,0,0) -- (0,0,1);
\coordinate (d) at (-0.34, 0, 0);
\coordinate (d2) at (-0.67, 0, 0);
\fill [blue!40,opacity=0.1] ($(d2) + (1,0,0)$) -- ($(d2) + (1,1,0)$) -- ($(d2) + (0.67,1,0)$) -- ($(d2) + (0.67,1,1)$) -- ($(d2) + (0.67,0,1)$) -- ($(d2) + (1,0,1)$);
%
\coordinate (X0) at (0.33, 0.9, 0);
\coordinate (X1) at (0.33, 0.1, 1);
\coordinate (Y0) at (0.67, 0.1, 0);
\coordinate (Y1) at (0.67, 0.9, 1);
%
%
\fill [magenta!50,opacity=0.7] (0.33, 0, 0) -- (X0) -- (X1) -- (0.33, 1, 1) -- (0.33, 0, 1);
\fill [magenta!70,opacity=0.7] (X0) -- (X1) -- (0.33, 1, 1) -- (0.33, 1, 0) -- (0.33, 0.2, 0);
\draw[string=green!50!black, ultra thick] (X0) -- node[pos=0.82, color=blue!50!black, right] {$\footnotesize X$} (X1);  
\draw[line width=1] (0.4, 0.99, 0.5) node[line width=0pt] (beta) {{\footnotesize $\beta$}};
\draw[line width=1] (0.4, 0.16, 0.86) node[line width=0pt] (beta) {{\footnotesize $\beta'$}};
%
\fill [blue!20,opacity=0.1] ($(d) + (1,0,0)$) -- ($(d) + (1,1,0)$) -- ($(d) + (0.67,1,0)$) -- ($(d) + (0.67,1,1)$) -- ($(d) + (0.67,0,1)$) -- ($(d) + (1,0,1)$);
%
\fill [blue!20,opacity=0.2] (1,0,0) -- (1,1,0) -- (0.67,1,0) -- (0.67,1,1) -- (0.67,0,1) -- (1,0,1);
%
\fill [red!70,opacity=0.7] (0.67, 0, 0) -- (Y0) -- (Y1) -- (0.67, 0, 1);
\fill [red!30,opacity=0.7] (Y1) -- (0.67, 1, 1) -- (0.67, 1, 0) -- (0.67, 0.2, 0) -- (Y0);
\draw[string=green!80!black, ultra thick] (Y0) -- node[pos=0.2, color=blue!50!black, right] {$\footnotesize Y$} (Y1);  
\draw[line width=1] (0.73, 0.99, 0.5) node[line width=0pt] (beta) {{\footnotesize $\alpha$}};
\draw[line width=1] (0.73, 0.2, 0.5) node[line width=0pt] (beta) {{\footnotesize $\alpha'$}};
%
%
\draw[line width=1] (0.85, 0.5, 0) node[line width=0pt] (beta) {{\footnotesize $u$}};
\draw[line width=1] (0.45, 0.9, 0) node[line width=0pt] (beta) {{\footnotesize $v$}};
\draw[line width=1] (0.12, 0.9, 0) node[line width=0pt] (beta) {{\footnotesize $w$}};
%
\draw[
	 color=gray, 
	 opacity=0.4, 
	 semithick
	 ] 
	 (0,1,1) -- (0,1,0) -- (1,1,0) -- (1,1,1) -- (0,1,1) -- (0,0,1) -- (1,0,1) -- (1,0,0) -- (1,1,0)
	 (1,0,1) -- (1,1,1);
\end{tikzpicture}
\right)
(1) \, . 
$$
\end{enumerate}
The axioms (vi)--(viii) follow immediately from the diagrammatic presentation. 
\end{proof}

\subsection{Gray categories with duals from defect TQFTs}
\label{subsec:GraycatwithdualsfromZ}

	In this section we construct a Gray category with duals~$\tz$ for every defect TQFT $\zz: \Bordd \rightarrow \Vect_\Bbbk$, as a variant of the Gray category $\tz'$ of the previous section. To establish this result we in particular have to (i) provide for all 2-categories $\tz(u,v)$ pivotal structures which are compatible with the Gray product; 
(ii) for every 1-morphism~$\alpha$ provide its dual $\alpha\hash$ together with the fold $\coev_\alpha$ and the triangulator $\tau_\alpha$. 
That these data satisfy the axioms of Definition~\ref{def:Graycatduals} will be a direct consequence of functoriality for~$\zz$ and our diagrammatic presentation of~$\tz$.

\medskip

In order to have duals in the Gray category associated to~$\zz$, we need to enlarge the class of diagrams we considered in the previous section: 
we also have to allow 
	non-progressive diagrams to represent 2-morphisms in $\tz$, 
in which surfaces may bend or fold. 
	Thus as a Gray category we define~$\tz$ as~$\tz'$ in Section~\ref{subsec:tricatfromZ}, but with ``$\mathcal F^{\textrm{p}} \K^\D$'' replaced by ``$\mathcal F^{\textrm{p}}_{\textrm{d}} \K^\D$''. More precisely, the objects of~$\tz$ and~$\tz'$ are the same; 1-morphisms in~$\tz$ are equivalence classes of cylinders over representatives~$\alpha$ of 1-morphisms in $\mathcal F^{\textrm{p}}_{\textrm{d}} \K^\D$, with prescribed orientations for 1-strata in $\alpha \times [0,1]$ and such that the orientations of 2-strata in $\alpha \times [0,1]$ induce the corresponding orientations of 1-strata in $\alpha \times \{1\}$; similarly, 2-morphisms in $\tz$ are equivalence classes of cylinders over representatives of 2-morphisms in $\mathcal F^{\textrm{p}}_{\textrm{d}} \K^\D$; 3-morphisms and the rest of the structure in~$\tz$ are then defined analogously to the case of~$\tz'$ in Section~\ref{subsec:tricatfromZ}. 
As a consequence the graphical presentation of~$\tz$ will be compatible with the graphical calculus for Gray categories with duals. 

\medskip

We begin with part (ii) of the above outline as most of the structure for $\#$-duals in $\tz$ is already contained in the free pre-2-category $\mathcal F^{\textrm{p}}_{\textrm{d}} \K^\D$ of Section~\ref{subsubsec:bicatsduals}. 
Indeed, taking the dual $\alpha\hash \in \tz(v,u)$ of a 1-morphism $\alpha \in \tz(u,v)$ simply amounts to `turning the stack of planes~$\alpha$ around'. 
More precisely, if~$\alpha$ is a cube with~$m$ oriented planes decorated by $\alpha_1,\alpha_2,\dots,\alpha_m$ from front to back, then $\alpha\hash$ is the cube whose~$m$ planes are labelled $\alpha_m,\alpha_{m-1},\dots,\alpha_1$ 
	from front to back, 
and every plane has reversed orientation. 
In other words, the $\#$-dual is obtained by a rotation by~$\pi$ along the line through the centre of the cube and parallel to the $y$-axis: 
\be\label{eq:alphahash}
\alpha = 
\begin{tikzpicture}[thick,scale=2.5,color=blue!50!black, baseline=0.0cm, >=stealth, 
				style={x={(-0.9cm,-0.4cm)},y={(0.8cm,-0.4cm)},z={(0cm,0.9cm)}}]
\fill [blue!20,opacity=0.2] (1,0,0) -- (1,1,0) -- (0,1,0) -- (0,1,1) -- (0,0,1) -- (1,0,1);
\draw[
	 color=gray, 
	 opacity=0.3, 
	 semithick,
	 dashed
	 ] 
	 (1,0,0) -- (0,0,0) -- (0,1,0)
	 (0,0,0) -- (0,0,1);
\fill [blue!20,opacity=0.2] (1,0,0) -- (1,1,0) -- (0,1,0) -- (0,1,1) -- (0,0,1) -- (1,0,1);
\coordinate (b1) at (0.25, 0, 0);
\coordinate (b2) at (0.5, 0, 0);
\coordinate (b3) at (0.75, 0, 0);
\coordinate (b4) at (0.25, 1, 0);
\coordinate (b5) at (0.5, 1, 0);
\coordinate (b6) at (0.75, 1, 0);
\coordinate (t1) at (0.25, 0, 1);
\coordinate (t2) at (0.5, 0, 1);
\coordinate (t3) at (0.75, 0, 1);
\coordinate (t4) at (0.25, 1, 1);
\coordinate (t5) at (0.5, 1, 1);
\coordinate (t6) at (0.75, 1, 1);
%
\fill [magenta!50,opacity=0.7] (b1) -- (b4) -- (t4) -- (t1);
\fill [red!50,opacity=0.7] (b2) -- (b5) -- (t5) -- (t2);
\fill [red!30,opacity=0.7] (b3) -- (b6) -- (t6) -- (t3);
%
\draw[line width=1] (0.88, 0.5, 0) node[line width=0pt] (beta) {{\footnotesize $u$}};
\draw[line width=1] (0.09, 0.92, 0) node[line width=0pt] (beta) {{\footnotesize $v$}};
\draw[line width=1] (0.75, 0.85, 0.5) node[line width=0pt] (beta) {{\footnotesize $\alpha_1$}};
\draw[line width=1] (0.5, 0.85, 0.5) node[line width=0pt] (beta) {{\footnotesize $\alpha_2$}};
\draw[line width=1] (0.25, 0.85, 0.5) node[line width=0pt] (beta) {{\footnotesize $\alpha_3$}};
%
\draw[
	 color=gray, 
	 opacity=0.4, 
	 semithick
	 ] 
	 (0,1,1) -- (0,1,0) -- (1,1,0) -- (1,1,1) -- (0,1,1) -- (0,0,1) -- (1,0,1) -- (1,0,0) -- (1,1,0)
	 (1,0,1) -- (1,1,1);
\end{tikzpicture}
\quad\implies
\quad
\alpha\hash = 
\begin{tikzpicture}[thick,scale=2.5,color=blue!50!black, baseline=0.0cm, >=stealth, 
				style={x={(-0.9cm,-0.4cm)},y={(0.8cm,-0.4cm)},z={(0cm,0.9cm)}}]
\fill [blue!20,opacity=0.2] (1,0,0) -- (1,1,0) -- (0,1,0) -- (0,1,1) -- (0,0,1) -- (1,0,1);
\draw[
	 color=gray, 
	 opacity=0.3, 
	 semithick,
	 dashed
	 ] 
	 (1,0,0) -- (0,0,0) -- (0,1,0)
	 (0,0,0) -- (0,0,1);
\fill [blue!20,opacity=0.2] (1,0,0) -- (1,1,0) -- (0,1,0) -- (0,1,1) -- (0,0,1) -- (1,0,1);
\coordinate (b1) at (0.25, 0, 0);
\coordinate (b2) at (0.5, 0, 0);
\coordinate (b3) at (0.75, 0, 0);
\coordinate (b4) at (0.25, 1, 0);
\coordinate (b5) at (0.5, 1, 0);
\coordinate (b6) at (0.75, 1, 0);
\coordinate (t1) at (0.25, 0, 1);
\coordinate (t2) at (0.5, 0, 1);
\coordinate (t3) at (0.75, 0, 1);
\coordinate (t4) at (0.25, 1, 1);
\coordinate (t5) at (0.5, 1, 1);
\coordinate (t6) at (0.75, 1, 1);
%
\fill [pattern=dots, opacity=0.4] (b1) -- (b4) -- (t4) -- (t1);
\fill [red!30,opacity=0.7] (b1) -- (b4) -- (t4) -- (t1);
\fill [pattern=dots, opacity=0.4] (b2) -- (b5) -- (t5) -- (t2);
\fill [red!50,opacity=0.7] (b2) -- (b5) -- (t5) -- (t2);
\fill [pattern=dots, opacity=0.4] (b3) -- (b6) -- (t6) -- (t3);
\fill [magenta!50,opacity=0.7] (b3) -- (b6) -- (t6) -- (t3);
%
\draw[line width=1] (0.88, 0.5, 0) node[line width=0pt] (beta) {{\footnotesize $v$}};
\draw[line width=1] (0.09, 0.92, 0) node[line width=0pt] (beta) {{\footnotesize $u$}};
\draw[line width=1] (0.75, 0.85, 0.5) node[line width=0pt] (beta) {{\footnotesize $\alpha_3$}};
\draw[line width=1] (0.5, 0.85, 0.5) node[line width=0pt] (beta) {{\footnotesize $\alpha_2$}};
\draw[line width=1] (0.25, 0.85, 0.5) node[line width=0pt] (beta) {{\footnotesize $\alpha_1$}};
%
\draw[
	 color=gray, 
	 opacity=0.4, 
	 semithick
	 ] 
	 (0,1,1) -- (0,1,0) -- (1,1,0) -- (1,1,1) -- (0,1,1) -- (0,0,1) -- (1,0,1) -- (1,0,0) -- (1,1,0)
	 (1,0,1) -- (1,1,1);
\end{tikzpicture}
\, . 
\ee
This is exactly the cylinder over the string diagrams in $\mathcal F^{\textrm{p}}_{\textrm{d}} \K^\D$ (compare~\eqref{eq:Xhashfree}, where however we use different notation, suitable for 2-categories). 

Similarly, the fold $\coev_\alpha$ in $\tz$ is the cylinder over the string diagram representing the coevaluation in the free pre-2-category $\mathcal F^{\textrm{p}}_{\textrm{d}} \K^\D$. 
Thus for~$\alpha$ as in \eqref{eq:alphahash} we have 
\be\label{eq:coevalpha}
\coev_\alpha = 
\begin{tikzpicture}[thick,scale=2.5,color=blue!50!black, baseline=0.0cm, >=stealth, 
				style={x={(-0.9cm,-0.4cm)},y={(0.8cm,-0.4cm)},z={(0cm,0.9cm)}}]
\draw[
	 color=gray, 
	 opacity=0.4, 
	 semithick
	 ] 
	 (0,1,1) -- (0,1,0) -- (1,1,0) -- (1,1,1) -- (0,1,1) -- (0,0,1) -- (1,0,1) -- (1,0,0) -- (1,1,0)
	 (1,0,1) -- (1,1,1);
\clip (1,0,0) -- (1,1,0) -- (0,1,0) -- (0,1,1) -- (0,0,1) -- (1,0,1);
\fill [blue!20,opacity=0.2] (1,0,0) -- (1,1,0) -- (0,1,0) -- (0,1,1) -- (0,0,1) -- (1,0,1);
\draw[
	 color=gray, 
	 opacity=0.3, 
	 semithick,
	 dashed
	 ] 
	 (1,0,0) -- (0,0,0) -- (0,1,0)
	 (0,0,0) -- (0,0,1);
\coordinate (p1) at (0.1, 0, 0);
\coordinate (p2) at (0.9, 0, 0);
\coordinate (q1) at (0.1, 0, 1);
\coordinate (q2) at (0.9, 0, 1);
\coordinate (p12) at (0.2, 0, 0);
\coordinate (p22) at (0.8, 0, 0);
\coordinate (q12) at (0.2, 0, 1);
\coordinate (q22) at (0.8, 0, 1);
\coordinate (p13) at (0.3, 0, 0);
\coordinate (p23) at (0.7, 0, 0);
\coordinate (q13) at (0.3, 0, 1);
\coordinate (q23) at (0.7, 0, 1);
\coordinate (S1) at (0.5, 0.9, 0);
\coordinate (S2) at (0.5, 0.7, 0);
\coordinate (S3) at (0.5, 0.5, 0);
%
\foreach \z in {0, 0.0194, ..., 1.0}
{
	\draw[color=red!40, very thick, opacity=0.4]%
	($(S1) + (0,0,\z)$) .. controls +(-0.02,0,0) and +(0,0.9,0) .. ($(p1) + (0,0,\z)$);
}
\foreach \z in {0, 0.0194, ..., 1.0}
{
	\draw[color=red!70, very thick, opacity=0.4]%
	($(S2) + (0,0,\z)$) .. controls +(-0.02,0,0) and +(0,0.7,0) .. ($(p12) + (0,0,\z)$);
}
\foreach \z in {0, 0.0194, ..., 1.0}
{
	\draw[color=magenta!80, very thick, opacity=0.4]%
	($(S3) + (0,0,\z)$) .. controls +(-0.01,0,0) and +(0,0.5,0) .. ($(p13) + (0,0,\z)$);
	\draw[color=magenta!80, very thick, opacity=0.4]%
	($(p23) + (0,0,\z)$) .. controls +(0,0.5,0) and +(0.01,0,0) .. ($(S3) + (0,0,\z)$);
}
%
\foreach \z in {0, 0.0194, ..., 1.0}
{
	\draw[color=red!70, very thick, opacity=0.4]%
	($(p22) + (0,0,\z)$) .. controls +(0,0.7,0) and +(0.02,0,0) .. ($(S2) + (0,0,\z)$);
}
%
\foreach \z in {0, 0.0194, ..., 1.0}
{
	\draw[color=red!40, very thick, opacity=0.4]%
	($(p2) + (0,0,\z)$) .. controls +(0,0.9,0) and +(0.02,0,0) .. ($(S1) + (0,0,\z)$);
}
%
\draw[line width=1] (0.89, 0.75, 0) node[line width=0pt] (beta) {{\footnotesize $v$}};
\draw[line width=1] (0.5, 0.1, 0.95) node[line width=0pt] (beta) {{\footnotesize $u$}};
%
\draw[
	 color=gray, 
	 opacity=0.4, 
	 semithick
	 ] 
	 (0,1,1) -- (0,1,0) -- (1,1,0) -- (1,1,1) -- (0,1,1) -- (0,0,1) -- (1,0,1) -- (1,0,0) -- (1,1,0)
	 (1,0,1) -- (1,1,1);
\end{tikzpicture}
\ee
which we will also write as 
\be\label{eq:coevalpha2}
\coev_\alpha = 
\begin{tikzpicture}[thick,scale=2.5,color=blue!50!black, baseline=0.0cm, >=stealth, 
				style={x={(-0.9cm,-0.4cm)},y={(0.8cm,-0.4cm)},z={(0cm,0.9cm)}}]
\draw[
	 color=gray, 
	 opacity=0.3, 
	 semithick,
	 dashed
	 ] 
	 (1,0,0) -- (0,0,0) -- (0,1,0)
	 (0,0,0) -- (0,0,1);
\coordinate (f0) at (0.5, 0.5, 0);
\coordinate (f1) at (0.5, 0.5, 1);
\coordinate (front0) at (0.7, 0, 0);
\coordinate (front1) at (0.7, 0, 1);
\coordinate (back0) at (0.3, 0, 0);
\coordinate (back1) at (0.3, 0, 1);
\coordinate (f02) at (0.5, 0.7, 0);
\coordinate (f12) at (0.5, 0.7, 1);
\coordinate (front02) at (0.8, 0, 0);
\coordinate (front12) at (0.8, 0, 1);
\coordinate (back02) at (0.2, 0, 0);
\coordinate (back12) at (0.2, 0, 1);
\coordinate (f01) at (0.5, 0.9, 0);
\coordinate (f11) at (0.5, 0.9, 1);
\coordinate (front01) at (0.9, 0, 0);
\coordinate (front11) at (0.9, 0, 1);
\coordinate (back01) at (0.1, 0, 0);
\coordinate (back11) at (0.1, 0, 1);

\fill [blue!20,opacity=0.2] (1,0,0) -- (1,1,0) -- (0,1,0) -- (0,1,1) -- (0,0,1) -- (1,0,1);
%
%
\fill [pattern=dots, opacity=0.4] (f01) -- (f11) -- (back11) -- (back01);
\fill [red!30, opacity=0.7] (f01) -- (f11) -- (back11) -- (back01);
%
\fill [pattern=dots, opacity=0.4] (f02) -- (f12) -- (back12) -- (back02);
\fill [red!50, opacity=0.7] (f02) -- (f12) -- (back12) -- (back02);
%
\fill [pattern=dots, opacity=0.4] (f0) -- (f1) -- (back1) -- (back0);
\fill [magenta!50, opacity=0.7] (f0) -- (f1) -- (back1) -- (back0);
%
\fill [magenta!50,opacity=0.7] (f0) -- (f1) -- (front1) -- (front0);
%
\fill [red!50,opacity=0.7] (f02) -- (f12) -- (front12) -- (front02);
%
\fill [red!30,opacity=0.7] (f01) -- (f11) -- (front11) -- (front01);
%
\draw[line width=1] (0.85, 0.5, 0) node[line width=0pt] (beta) {{\footnotesize $v$}};
\draw[line width=1] (0.5, 0.1, 0.98) node[line width=0pt] (beta) {{\footnotesize $u$}};
%
\draw[
	 color=gray, 
	 opacity=0.4, 
	 semithick
	 ] 
	 (0,1,1) -- (0,1,0) -- (1,1,0) -- (1,1,1) -- (0,1,1) -- (0,0,1) -- (1,0,1) -- (1,0,0) -- (1,1,0)
	 (1,0,1) -- (1,1,1);
\end{tikzpicture}
=
\begin{tikzpicture}[thick,scale=2.5,color=blue!50!black, baseline=0.0cm, >=stealth, 
				style={x={(-0.9cm,-0.4cm)},y={(0.8cm,-0.4cm)},z={(0cm,0.9cm)}}]
\draw[
	 color=gray, 
	 opacity=0.3, 
	 semithick,
	 dashed
	 ] 
	 (1,0,0) -- (0,0,0) -- (0,1,0)
	 (0,0,0) -- (0,0,1);
\coordinate (f0) at (0.5, 0.75, 0);
\coordinate (f1) at (0.5, 0.75, 1);
\coordinate (front0) at (0.75, 0, 0);
\coordinate (front1) at (0.75, 0, 1);
\coordinate (back0) at (0.25, 0, 0);
\coordinate (back1) at (0.25, 0, 1);
\fill [blue!20,opacity=0.2] (1,0,0) -- (1,1,0) -- (0,1,0) -- (0,1,1) -- (0,0,1) -- (1,0,1);
%
%
\fill [pattern=dots, opacity=0.4] (f0) -- (f1) -- (back1) -- (back0);
\fill [red!60, opacity=0.7] (f0) -- (f1) -- (back1) -- (back0);
%
\fill [red!60,opacity=0.7] (f0) -- (f1) -- (front1) -- (front0);
\draw[line width=1] (0.6, 0.1, 0.35) node[line width=0pt] (beta) {{\footnotesize $\alpha$}};
%
\draw[line width=1] (0.85, 0.5, 0) node[line width=0pt] (beta) {{\footnotesize $v$}};
\draw[line width=1] (0.5, 0.1, 0.95) node[line width=0pt] (beta) {{\footnotesize $u$}};
%
\draw[
	 color=gray, 
	 opacity=0.4, 
	 semithick
	 ] 
	 (0,1,1) -- (0,1,0) -- (1,1,0) -- (1,1,1) -- (0,1,1) -- (0,0,1) -- (1,0,1) -- (1,0,0) -- (1,1,0)
	 (1,0,1) -- (1,1,1);
\end{tikzpicture}
\ee

The only datum for the $\#$-dual in Definition~\ref{def:Graycatduals} which for $\tz$ does not originate in $\mathcal F^{\textrm{p}}_{\textrm{d}} \K^\D$ is the triangulator $\tau_\alpha$ in~\eqref{eq:triangulator}. 
Since $\tau_\alpha$ is a 3-morphism in $\tz$ it must be an element in the vector space which~$\zz$ associates to the `sphere with Zorro's~Z at the south pole and a straight line at the north pole'. 
More precisely, $\tau_\alpha$ is defined to be $\zz(-)(1)$ applied to the stratified 3-ball whose only 2-stratum~$\alpha$ (or list of 2-strata $\alpha_1,\alpha_2,\dots,\alpha_m$) intersects its southern hemisphere in Z-shape, while the remaining components of the intersection is a geodesic. 
	Hence by identifying cubes with balls (as explained after \eqref{eq:SXXprimesphere})
we set
$$
\tau_\alpha = 
\zz\left(
\begin{tikzpicture}[thick,scale=2.5,color=blue!50!black, baseline=0.0cm, >=stealth, 
				style={x={(-0.9cm,-0.4cm)},y={(0.8cm,-0.4cm)},z={(0cm,0.9cm)}}]
\draw[
	 color=gray, 
	 opacity=0.3, 
	 semithick,
	 dashed
	 ] 
	 (1,0,0) -- (0,0,0) -- (0,1,0)
	 (0,0,0) -- (0,0,1);
\coordinate (b1) at (0.75, 0.3, 0);
\coordinate (b2) at (0.45, 0.5, 0);
\coordinate (cusp) at (0.45, 0.5, 0.5);
\coordinate (abovecusp) at (0.45, 0.3, 1);
\coordinate (left0) at (0.45, 0, 0);
\coordinate (leftmid) at (0.45, 0, 0.5);
\coordinate (left1) at (0.45, 0, 1);
\coordinate (right0) at (0.75, 1, 0);
\coordinate (rightmid) at (0.6, 1, 0.5);
\coordinate (right1) at (0.45, 1, 1);
%
\fill [blue!20,opacity=0.2] (1,0,0) -- (1,1,0) -- (0,1,0) -- (0,1,1) -- (0,0,1) -- (1,0,1);
%
\fill [red!60, opacity=0.7] (b2) -- (left0) -- (leftmid) -- (cusp);
\fill [red!60, opacity=0.7] (leftmid) -- (cusp) -- (abovecusp) -- (left1);
\fill [pattern=dots, opacity=0.4] (cusp) -- (b1) -- (b2);
\fill [red!60, opacity=0.7] (cusp) -- (b1) -- (b2);
\fill [red!60, opacity=0.7] (cusp) -- (b1) -- (right0) -- (rightmid);
\fill [red!60, opacity=0.7] (cusp) -- (rightmid) -- (right1) -- (abovecusp);
\draw[line width=1] (0.3, 0.2, 0.6) node[line width=0pt] (beta) {{\footnotesize $\alpha$}};
%
\draw[line width=1] (0.85, 0.5, 0) node[line width=0pt] (beta) {{\footnotesize $u$}};
\draw[line width=1] (0.2, 0.8, 0) node[line width=0pt] (beta) {{\footnotesize $v$}};
%
\draw[
	 color=gray, 
	 opacity=0.4, 
	 semithick
	 ] 
	 (0,1,1) -- (0,1,0) -- (1,1,0) -- (1,1,1) -- (0,1,1) -- (0,0,1) -- (1,0,1) -- (1,0,0) -- (1,1,0)
	 (1,0,1) -- (1,1,1);
\end{tikzpicture}
\right)(1)
$$
for every 1-morphism $\alpha: u \rightarrow v$ in $\tz$. 
We note that here in the bordism on which~$\zz$ is evaluated, the cusp point is not singular; its appearance is only a matter of the graphical presentation.

\medskip

To endow the 2-categories $\tz(u,v)$ with a pivotal structure we need another kind of dual $X^\dagger: \beta \rightarrow \alpha$ for every 2-morphism $X: \alpha \rightarrow \beta$. 
By definition, $X^\dagger$ is obtained by rotating the cube~$X$ by~$\pi$ about the line through the centre of the cube and parallel to the $x$-axis. 
For example 
\be\label{eq:Xhash}
X = 
\begin{tikzpicture}[thick,scale=2.5,color=blue!50!black, baseline=0.0cm, >=stealth, 
				style={x={(-0.9cm,-0.4cm)},y={(0.8cm,-0.4cm)},z={(0cm,0.9cm)}}]
\draw[
	 color=gray, 
	 opacity=0.4, 
	 semithick
	 ] 
	 (0,1,1) -- (0,1,0) -- (1,1,0) -- (1,1,1) -- (0,1,1) -- (0,0,1) -- (1,0,1) -- (1,0,0) -- (1,1,0)
	 (1,0,1) -- (1,1,1);
\clip (1,0,0) -- (1,1,0) -- (0,1,0) -- (0,1,1) -- (0,0,1) -- (1,0,1);
\fill [blue!20,opacity=0.2] (1,0,0) -- (1,1,0) -- (0,1,0) -- (0,1,1) -- (0,0,1) -- (1,0,1);
\draw[
	 color=gray, 
	 opacity=0.3, 
	 semithick,
	 dashed
	 ] 
	 (1,0,0) -- (0,0,0) -- (0,1,0)
	 (0,0,0) -- (0,0,1);
\coordinate (topleft2) at (0.5, 1, 1);
\coordinate (bottomleft2) at (0.5, 1, 0);
\coordinate (topright1) at (0.2, 0, 1);
\coordinate (topright2) at (0.5, 0, 1);
\coordinate (topright3) at (0.7, 0, 1);
\coordinate (bottomright1) at (0.2, 0, 0);
\coordinate (bottomright2) at (0.5, 0, 0);
\coordinate (bottomright3) at (0.7, 0, 0);
\coordinate (b1s) at (0.2, 0.4, 0);
\coordinate (b2s) at (0.5, 0.7, 0);
\coordinate (b3s) at (0.7, 0.4, 0);
\coordinate (t1s) at (0.2, 0.4, 1);
\coordinate (t2s) at (0.5, 0.7, 1);
\coordinate (t3s) at (0.7, 0.4, 1);
%
\fill [magenta!30,opacity=0.7] 
(topright1) -- (bottomright1) -- (b1s) -- (t1s);
%
\fill [red!60,opacity=0.7] 
(t1s) -- (b1s) -- (b2s) -- (t2s);
%
\draw[color=green!60!black, 
	postaction={decorate}, 
	decoration={markings,mark=at position .51 with {\arrow[color=green!60!black]{>}}}, 
	very thick]%
(b1s) -- (t1s);
\fill[color=green!60!black] (b1s) circle (0.8pt) node[left] (0up) {};
%
\fill [magenta!45,opacity=0.7] 
(topright2) -- (bottomright2) -- (b2s) -- (t2s);
%
\fill [magenta!55,opacity=0.7] 
(t3s) -- (b3s) -- (b2s) -- (t2s);
%
\fill [red!15,opacity=0.7] 
(topleft2) -- (bottomleft2) -- (b2s) -- (t2s);
%
\draw[color=green!50!black, 
	postaction={decorate}, 
	decoration={markings,mark=at position .51 with {\arrow[color=green!50!black]{>}}}, 
	very thick]%
(b2s) -- (t2s);
\fill[color=green!50!black] (b2s) circle (0.8pt) node[left] (0up) {};
%
\fill [red!25,opacity=0.7] 
(topright3) -- (bottomright3) -- (b3s) -- (t3s);
%
\draw[color=green!60!black, 
	postaction={decorate}, 
	decoration={markings,mark=at position .51 with {\arrow[color=green!60!black]{>}}}, 
	very thick]%
(b3s) -- (t3s);
\fill[color=green!60!black] (t1s) circle (0.8pt) node[left] (0up) {};
\fill[color=green!50!black] (t2s) circle (0.8pt) node[left] (0up) {};
\fill[color=green!60!black] (t3s) circle (0.8pt) node[left] (0up) {};
\fill[color=green!60!black] (b3s) circle (0.8pt) node[left] (0up) {};
%
\draw[
	 color=gray, 
	 opacity=0.4, 
	 semithick
	 ] 
	 (0,1,1) -- (0,1,0) -- (1,1,0) -- (1,1,1) -- (0,1,1) -- (0,0,1) -- (1,0,1) -- (1,0,0) -- (1,1,0)
	 (1,0,1) -- (1,1,1);
\end{tikzpicture}
\quad\implies\quad
X^\dagger = 
\begin{tikzpicture}[thick,scale=2.5,color=blue!50!black, baseline=0.0cm, >=stealth, 
				style={x={(-0.9cm,-0.4cm)},y={(0.8cm,-0.4cm)},z={(0cm,0.9cm)}}]
\draw[
	 color=gray, 
	 opacity=0.4, 
	 semithick
	 ] 
	 (0,1,1) -- (0,1,0) -- (1,1,0) -- (1,1,1) -- (0,1,1) -- (0,0,1) -- (1,0,1) -- (1,0,0) -- (1,1,0)
	 (1,0,1) -- (1,1,1);
\clip (1,0,0) -- (1,1,0) -- (0,1,0) -- (0,1,1) -- (0,0,1) -- (1,0,1);
\fill [blue!20,opacity=0.2] (1,0,0) -- (1,1,0) -- (0,1,0) -- (0,1,1) -- (0,0,1) -- (1,0,1);
\draw[
	 color=gray, 
	 opacity=0.3, 
	 semithick,
	 dashed
	 ] 
	 (1,0,0) -- (0,0,0) -- (0,1,0)
	 (0,0,0) -- (0,0,1);
\coordinate (topleft1) at (0.2, 0, 1);
\coordinate (topleft2) at (0.5, 0, 1);
\coordinate (topleft3) at (0.7, 0, 1);
\coordinate (bottomleft1) at (0.2, 0, 0);
\coordinate (bottomleft2) at (0.5, 0, 0);
\coordinate (bottomleft3) at (0.7, 0, 0);
\coordinate (topright1) at (0.2, 1, 1);
\coordinate (topright2) at (0.5, 1, 1);
\coordinate (topright3) at (0.7, 1, 1);
\coordinate (bottomright1) at (0.2, 1, 0);
\coordinate (bottomright2) at (0.5, 1, 0);
\coordinate (bottomright3) at (0.7, 1, 0);
\coordinate (b1s) at (0.2, 0.6, 0);
\coordinate (b2s) at (0.5, 0.3, 0);
\coordinate (b3s) at (0.7, 0.6, 0);
\coordinate (t1s) at (0.2, 0.6, 1);
\coordinate (t2s) at (0.5, 0.3, 1);
\coordinate (t3s) at (0.7, 0.6, 1);
%
\fill [magenta!30,opacity=0.7] 
(topright1) -- (bottomright1) -- (b1s) -- (t1s);
%
\fill [red!60,opacity=0.7] 
(t1s) -- (b1s) -- (b2s) -- (t2s);
%
\draw[color=green!60!black, 
	postaction={decorate}, 
	decoration={markings,mark=at position .51 with {\arrow[color=green!60!black]{<}}}, 
	very thick]%
(b1s) -- (t1s);
\fill[color=green!60!black] (b1s) circle (0.8pt) node[left] (0up) {};
%
\fill [magenta!45,opacity=0.7] 
(topright2) -- (bottomright2) -- (b2s) -- (t2s);
%
\fill [magenta!55,opacity=0.7] 
(t3s) -- (b3s) -- (b2s) -- (t2s);
%
\fill [red!15,opacity=0.7] 
(topleft2) -- (bottomleft2) -- (b2s) -- (t2s);
%
\draw[color=green!50!black, 
	postaction={decorate}, 
	decoration={markings,mark=at position .51 with {\arrow[color=green!50!black]{<}}}, 
	very thick]%
(b2s) -- (t2s);
\fill[color=green!50!black] (b2s) circle (0.8pt) node[left] (0up) {};
%
\fill [red!25,opacity=0.7] 
(topright3) -- (bottomright3) -- (b3s) -- (t3s);
%
\draw[color=green!60!black, 
	postaction={decorate}, 
	decoration={markings,mark=at position .51 with {\arrow[color=green!60!black]{<}}}, 
	very thick]%
(b3s) -- (t3s);
\fill[color=green!60!black] (t1s) circle (0.8pt) node[left] (0up) {};
\fill[color=green!50!black] (t2s) circle (0.8pt) node[left] (0up) {};
\fill[color=green!60!black] (t3s) circle (0.8pt) node[left] (0up) {};
\fill[color=green!60!black] (b3s) circle (0.8pt) node[left] (0up) {};
%
\draw[
	 color=gray, 
	 opacity=0.4, 
	 semithick
	 ] 
	 (0,1,1) -- (0,1,0) -- (1,1,0) -- (1,1,1) -- (0,1,1) -- (0,0,1) -- (1,0,1) -- (1,0,0) -- (1,1,0)
	 (1,0,1) -- (1,1,1);
\end{tikzpicture}
\, . 
\ee
In particular, all 1-strata in~$X$ have opposite orientation in~$X^\dagger$. 

The 3-morphisms $\ev_X: X^\dagger \fus X \rightarrow 1_\alpha$ and $\coev_X: 1_\beta \rightarrow X \fus X^\dagger$ which exhibit~$X^\dagger$ as the dual of~$X$ are again obtained by applying~$\zz$  to an appropriately decorated ball. 
In the case of $\coev_X$ this is done by stretching the contents of the cube~$X$ in vertical direction, then bending the lower end back up to achieve a U-shape, and then viewing the result as a decorated 3-ball. 
Finally we define $\coev_X$ to be~$\zz$ applied to this bordism. 
For~$X$ as in \eqref{eq:Xhash} this means 
$$
\coev_X = 
\zz\left(
\begin{tikzpicture}[thick,scale=2.5,color=blue!50!black, baseline=0.0cm, >=stealth, 
				style={x={(-0.9cm,-0.4cm)},y={(0.8cm,-0.4cm)},z={(0cm,0.9cm)}}]
\draw[
	 color=gray, 
	 opacity=0.4, 
	 semithick
	 ] 
	 (0,1,1) -- (0,1,0) -- (1,1,0) -- (1,1,1) -- (0,1,1) -- (0,0,1) -- (1,0,1) -- (1,0,0) -- (1,1,0)
	 (1,0,1) -- (1,1,1);
\clip (1,0,0) -- (1,1,0) -- (0,1,0) -- (0,1,1) -- (0,0,1) -- (1,0,1);
\fill [blue!20,opacity=0.2] (1,0,0) -- (1,1,0) -- (0,1,0) -- (0,1,1) -- (0,0,1) -- (1,0,1);
\draw[
	 color=gray, 
	 opacity=0.3, 
	 semithick,
	 dashed
	 ] 
	 (1,0,0) -- (0,0,0) -- (0,1,0)
	 (0,0,0) -- (0,0,1);
\coordinate (topleft1) at (0.2, 0, 1);
\coordinate (topleft2) at (0.5, 0, 1);
\coordinate (topleft3) at (0.7, 0, 1);
\coordinate (bottomleft1) at (0.2, 0, 0);
\coordinate (bottomleft2) at (0.5, 0, 0);
\coordinate (bottomleft3) at (0.7, 0, 0);
\coordinate (topright1) at (0.2, 1, 1);
\coordinate (topright2) at (0.5, 1, 1);
\coordinate (topright3) at (0.7, 1, 1);
\coordinate (bottomright1) at (0.2, 1, 0);
\coordinate (bottomright2) at (0.5, 1, 0);
\coordinate (bottomright3) at (0.7, 1, 0);
\coordinate (t1) at (0.2, 0.2, 1);
\coordinate (t2) at (0.5, 0.3, 1);
\coordinate (t3) at (0.7, 0.2, 1);
\coordinate (t1s) at (0.2, 0.8, 1);
\coordinate (t2s) at (0.5, 0.7, 1);
\coordinate (t3s) at (0.7, 0.8, 1);
%
\fill [magenta!30,opacity=0.7] 
(topleft1) -- (t1) .. controls +(0,0,-0.95) and +(0,0,-0.95) .. (t1s) -- (topright1) -- (bottomright1) -- (bottomleft1);
\draw[color=green!60!black, 
	postaction={decorate}, 
	decoration={markings,mark=at position .51 with {\arrow[color=green!60!black]{<}}}, 
	very thick]%
(t1) .. controls +(0,0,-0.95) and +(0,0,-0.95) .. (t1s);
%
\foreach \x in {0, 0.02, ..., 0.98}
{
	\draw[color=red!50, very thick, opacity=0.4]%
	($(t2) + \x*(t1) - \x*(t2)$) .. controls +(0,0,-0.95) and +(0,0,-0.95) .. ($(t2s) + \x*(t1s) - \x*(t2s)$);
}
%
\fill [magenta!45,opacity=0.7] 
(topleft2) -- (t2) .. controls +(0,0,-0.95) and +(0,0,-0.95) .. (t2s) -- (topright2) -- (bottomright2) -- (bottomleft2);
\fill [red!15,opacity=0.7] 
(t2) .. controls +(0,0,-0.95) and +(0,0,-0.95) .. (t2s);
\draw[color=green!50!black, 
	postaction={decorate}, 
	decoration={markings,mark=at position .51 with {\arrow[color=green!50!black]{<}}}, 
	very thick]%
(t2) .. controls +(0,0,-0.95) and +(0,0,-0.95) .. (t2s);
%
\foreach \x in {0.06, 0.08, ..., 1.0}
{
	\draw[color=magenta!50, very thick, opacity=0.4]%
	($(t2) + \x*(t3) - \x*(t2)$) .. controls +(0,0,-0.95) and +(0,0,-0.95) .. ($(t2s) + \x*(t3s) - \x*(t2s)$);
}
%
\fill [red!25,opacity=0.7] 
(topleft3) -- (t3) .. controls +(0,0,-0.95) and +(0,0,-0.95) .. (t3s) -- (topright3) -- (bottomright3) -- (bottomleft3);
\draw[color=green!60!black, 
	postaction={decorate}, 
	decoration={markings,mark=at position .51 with {\arrow[color=green!60!black]{<}}}, 
	very thick]%
(t3) .. controls +(0,0,-0.95) and +(0,0,-0.95) .. (t3s);
%
\fill[color=green!60!black] (t1) circle (0.8pt) node[left] (0up) {};
\fill[color=green!60!black] (t2) circle (0.8pt) node[left] (0up) {};
\fill[color=green!60!black] (t3) circle (0.8pt) node[left] (0up) {};
\fill[color=green!60!black] (t1s) circle (0.8pt) node[left] (0up) {};
\fill[color=green!60!black] (t2s) circle (0.8pt) node[left] (0up) {};
\fill[color=green!60!black] (t3s) circle (0.8pt) node[left] (0up) {};
%
\draw[
	 color=gray, 
	 opacity=0.4, 
	 semithick
	 ] 
	 (0,1,1) -- (0,1,0) -- (1,1,0) -- (1,1,1) -- (0,1,1) -- (0,0,1) -- (1,0,1) -- (1,0,0) -- (1,1,0)
	 (1,0,1) -- (1,1,1);
\end{tikzpicture}
\right)(1)
$$
and analogously 
$$
\ev_X = 
\zz\left(
\begin{tikzpicture}[thick,scale=2.5,color=blue!50!black, baseline=0.0cm, >=stealth, 
				style={x={(-0.9cm,-0.4cm)},y={(0.8cm,-0.4cm)},z={(0cm,0.9cm)}}]
\draw[
	 color=gray, 
	 opacity=0.4, 
	 semithick
	 ] 
	 (0,1,1) -- (0,1,0) -- (1,1,0) -- (1,1,1) -- (0,1,1) -- (0,0,1) -- (1,0,1) -- (1,0,0) -- (1,1,0)
	 (1,0,1) -- (1,1,1);
\clip (1,0,0) -- (1,1,0) -- (0,1,0) -- (0,1,1) -- (0,0,1) -- (1,0,1);
\fill [blue!20,opacity=0.2] (1,0,0) -- (1,1,0) -- (0,1,0) -- (0,1,1) -- (0,0,1) -- (1,0,1);
\draw[
	 color=gray, 
	 opacity=0.3, 
	 semithick,
	 dashed
	 ] 
	 (1,0,0) -- (0,0,0) -- (0,1,0)
	 (0,0,0) -- (0,0,1);
\coordinate (topleft1) at (0.2, 0, 1);
\coordinate (topleft2) at (0.5, 0, 1);
\coordinate (topleft3) at (0.7, 0, 1);
\coordinate (bottomleft1) at (0.2, 0, 0);
\coordinate (bottomleft2) at (0.5, 0, 0);
\coordinate (bottomleft3) at (0.7, 0, 0);
\coordinate (topright1) at (0.2, 1, 1);
\coordinate (topright2) at (0.5, 1, 1);
\coordinate (topright3) at (0.7, 1, 1);
\coordinate (bottomright1) at (0.2, 1, 0);
\coordinate (bottomright2) at (0.5, 1, 0);
\coordinate (bottomright3) at (0.7, 1, 0);
\coordinate (t1) at (0.2, 0.2, 0);
\coordinate (t2) at (0.5, 0.3, 0);
\coordinate (t3) at (0.7, 0.2, 0);
\coordinate (t1s) at (0.2, 0.8, 0);
\coordinate (t2s) at (0.5, 0.7, 0);
\coordinate (t3s) at (0.7, 0.8, 0);
%
\fill [magenta!30,opacity=0.7] 
(bottomleft1) -- (t1) .. controls +(0,0,0.95) and +(0,0,0.95) .. (t1s) -- (bottomright1) -- (topright1) -- (topleft1);
\draw[color=green!60!black, 
	postaction={decorate}, 
	decoration={markings,mark=at position .51 with {\arrow[color=green!60!black]{<}}}, 
	very thick]%
(t1) .. controls +(0,0,0.95) and +(0,0,0.95) .. (t1s);
\fill[color=green!60!black] (t1) circle (0.8pt) node[left] (0up) {};
%
\foreach \x in {0, 0.02, ..., 0.98}
{
	\draw[color=red!50, very thick, opacity=0.4]%
	($(t2) + \x*(t1) - \x*(t2)$) .. controls +(0,0,0.95) and +(0,0,0.95) .. ($(t2s) + \x*(t1s) - \x*(t2s)$);
}
%
\fill [magenta!45,opacity=0.7] 
(bottomleft2) -- (t2) .. controls +(0,0,0.95) and +(0,0,0.95) .. (t2s) -- (bottomright2) -- (topright2) -- (topleft2);
\fill [red!15,opacity=0.7] 
(t2) .. controls +(0,0,0.95) and +(0,0,0.95) .. (t2s);
\draw[color=green!50!black, 
	postaction={decorate}, 
	decoration={markings,mark=at position .51 with {\arrow[color=green!50!black]{<}}}, 
	very thick]%
(t2) .. controls +(0,0,0.95) and +(0,0,0.95) .. (t2s);
%
\foreach \x in {0.06, 0.08, ..., 1.0}
{
	\draw[color=magenta!50, very thick, opacity=0.4]%
	($(t2) + \x*(t3) - \x*(t2)$) .. controls +(0,0,0.95) and +(0,0,0.95) .. ($(t2s) + \x*(t3s) - \x*(t2s)$);
}
\fill[color=green!60!black] (t2s) circle (0.8pt) node[left] (0up) {};
%
\fill [red!25,opacity=0.7] 
(bottomleft3) -- (t3) .. controls +(0,0,0.95) and +(0,0,0.95) .. (t3s) -- (bottomright3) -- (topright3) -- (topleft3);
\draw[color=green!60!black, 
	postaction={decorate}, 
	decoration={markings,mark=at position .51 with {\arrow[color=green!60!black]{<}}}, 
	very thick]%
(t3) .. controls +(0,0,0.95) and +(0,0,0.95) .. (t3s);
%
\fill[color=green!60!black] (t2) circle (0.8pt) node[left] (0up) {};
\fill[color=green!60!black] (t3) circle (0.8pt) node[left] (0up) {};
\fill[color=green!60!black] (t1s) circle (0.8pt) node[left] (0up) {};
\fill[color=green!60!black] (t3s) circle (0.8pt) node[left] (0up) {};
%
\draw[
	 color=gray, 
	 opacity=0.4, 
	 semithick
	 ] 
	 (0,1,1) -- (0,1,0) -- (1,1,0) -- (1,1,1) -- (0,1,1) -- (0,0,1) -- (1,0,1) -- (1,0,0) -- (1,1,0)
	 (1,0,1) -- (1,1,1);
\end{tikzpicture}
\right)(1) \, . 
$$


\begin{theorem}
\label{thm:TZGrayduals}
$\tz$ as defined above is a $\Bbbk$-linear Gray category with duals for every 3-dimensional defect TQFT~$\zz$. 
\end{theorem}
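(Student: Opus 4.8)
The plan is to bootstrap from Theorem~\ref{thm:TZGray}. Since $\tz$ is defined exactly as $\tz'$ of Section~\ref{subsec:tricatfromZ}, except that the free pre-2-category $\mathcal F^{\textrm{p}} \K^\D$ is replaced by the free strictly pivotal pre-2-category $\mathcal F^{\textrm{p}}_{\textrm{d}} \K^\D$, the proof of Theorem~\ref{thm:TZGray} applies verbatim to show that $\tz$ is a $\Bbbk$-linear Gray category (the only new feature, non-progressive diagrams with folds, does not affect the verification of the Gray-category axioms, which rested solely on isotopy invariance and functoriality of~$\zz$). So it remains to exhibit the two kinds of duals and to check the axioms of Definition~\ref{def:Graycatduals}. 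The guiding principle throughout is the one announced at the start of the section: every required identity is reduced either to an isotopy (possibly non-progressive, rel boundary) of decorated 3-balls or spheres, or to functoriality and monoidality of~$\zz$ applied to a gluing of such balls; $\Bbbk$-linearity of all structure maps is inherited from that of~$\zz$, and well-definedness on the level of the limits~\eqref{eq:3HomTZ} follows as in Section~\ref{subsec:tricatfromZ} from the fact that all morphisms in the relevant direct system are isomorphisms.

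First I would treat part (i): for all $u,v$ the 2-category $\tz(u,v)$ carries a strictly pivotal structure. The $\dagger$-dual of a 2-morphism $X\colon\alpha\to\beta$ is the cube obtained from~$X$ by rotation by~$\pi$ about the line through its centre parallel to the $x$-axis, as in~\eqref{eq:Xhash}, and $\coev_X$ and $\ev_X$ are $\zz(-)(1)$ applied to the U-shaped decorated 3-balls displayed after~\eqref{eq:Xhash}. That $(-)\dagg$ is a strict 2-functor to the opposite 2-category with $(-)^{\dagger\dagger}=1_{\tz(u,v)}$ is immediate, since the $x$-rotation squares to the identity and commutes with $\fus$- and $\circ$-composition up to linear isotopy; and the Zorro moves, i.e.\ the analogues of~\eqref{eq:evfromcoev}, follow because the glued U-shaped bordisms are isotopic rel boundary to trivially decorated cylinders, so $\zz$ sends them to identity 3-morphisms. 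Compatibility of the Gray product with $\dagger$-duals, i.e.\ that $\alpha\sta(-)$ and $(-)\sta\alpha$ are strictly pivotal 2-functors, is manifest from the diagrammatics: stacking an unstratified $\alpha$-plane commutes with the $x$-rotation defining $(-)\dagg$.

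Next, part (ii). The dual $\alpha\hash$ of a 1-morphism~$\alpha$ is the $y$-axis rotation of the plane-stack, as in~\eqref{eq:alphahash}; the fold $\coev_\alpha$ is the cylinder over the coevaluation string diagram of $\mathcal F^{\textrm{p}}_{\textrm{d}} \K^\D$, as in~\eqref{eq:coevalpha}; one sets $\ev_\alpha := (\coev_{\alpha\hash})\dagg$; and the triangulator $\tau_\alpha$ of~\eqref{eq:triangulator} is $\zz(-)(1)$ applied to the decorated 3-ball with a Z-shaped $\alpha$-stratum on the southern hemisphere and a geodesic on the northern one, as displayed just before the theorem. Invertibility of $\tau_\alpha$ is the assertion~\eqref{eq:invbility}: the inverse is the mirror-image ball, and both composites, being $\zz$ applied to the corresponding glued balls, are isotopic rel boundary to the identity bordisms appearing on the right-hand sides of~\eqref{eq:invbility}, hence evaluate to the stated identity 3-morphisms. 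Conditions (ii.a) and (ii.b) are immediate: $\alpha^{\#\#}=\alpha$ because the $y$-rotation is an involution, and for $u\in\tz$ the relevant cubes carry no strata, so $1_u\hash=1_u$, $\coev_{1_u}=1_{1_u}$, $\tau_{1_u}=1_{1_{1_u}}$. Condition (ii.c) splits into the two monoidality statements $(\alpha\sta\beta)\hash=\beta\hash\sta\alpha\hash$ and $\coev_{\alpha\sta\beta}=(1_\alpha\sta\coev_\beta\sta 1_{\alpha\hash})\fus\coev_\alpha$, which hold on the nose by the definition of $\sta$-composition as stacking of cubes, together with the compatibility formula for $\tau_{\alpha\sta\beta}$; writing both sides of the latter as $\zz$ applied to a single glued decorated 3-ball reduces it to the isotopy rel boundary depicted in~\eqref{eq:tautautau}, in which the tensorator~$\sigma$ appears precisely because sliding the two Z-shaped strata past each other is a braiding, cf.~\eqref{eq:Ztensorator}. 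Finally condition (ii.d) is the untwisting identity~\eqref{eq:twistcoev}: by the isotopy rel boundary whose existence is noted after that equation (and whose movie-slicing is given there), the two decorated balls involved have equal $\zz$-value.

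I expect the main obstacle to be the bookkeeping in condition (ii.c) for $\tau_{\alpha\sta\beta}$, and to a lesser extent condition (ii.d): one must match the conventions of Definition~\ref{def:Graycatduals} (imported from~\cite{BMS}) — in particular the exact placement of the tensorator and of the $\dagger$-adjunction maps in the composite 3-morphisms on the right-hand sides — with the concrete decorated 3-balls constructed above, and then write down the explicit (non-progressive) isotopy rel boundary that is only indicated diagrammatically. Once that isotopy is pinned down, functoriality of~$\zz$ turns it into the required equality of 3-morphisms, and all remaining checks (naturality of the various 3-morphisms in their 2-morphism arguments, and $\Bbbk$-linearity of $\fus$, $\circ$, $\sta$, $(-)\dagg$) are inherited directly from Theorem~\ref{thm:TZGray} and the linearity of~$\zz$. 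This completes the verification that $\tz$ is a $\Bbbk$-linear Gray category with duals.
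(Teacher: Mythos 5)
Your proposal is correct and follows essentially the same route as the paper: the duals are the rotated cubes, the (co)evaluations and triangulator are $\zz$ applied to the displayed decorated balls, and every axiom of Definition~\ref{def:Graycatduals} is reduced to isotopy invariance in $\Bordd$ and functoriality of~$\zz$ applied to the diagrams~\eqref{eq:invbility}, \eqref{eq:tautautau} and~\eqref{eq:twistcoev}. The paper's own proof is just a terser version of your argument, so no further comparison is needed.
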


\begin{proof}
The axioms of Definition~\ref{def:Graycatduals} can be straightforwardly verified: 
\begin{enumerate}
\item For every 2-morphism~$X$ in $\tz$, the adjunction maps $\ev_X$, $\coev_X$ satisfy the Zorro moves by isotopy invariance in $\Bordd$ and by functoriality of~$\zz$. 
Furthermore $(-)^{\dagger\dagger}$ is obviously the identity, hence $\tz(u,v)$ is strictly pivotal for all $u,v \in \tz$, and pre- and post-$\sta$-composition is strictly pivotal as well. 
\item 
The axioms $\alpha^{\#\#} = \alpha$, 
$1_u\hash = 1_u$, 
$\coev_{1_u} = 1_{1_u}$, 
and $\tau_{1_u} = 1_{1_{1_u}}$ in Definition~\ref{def:Graycatduals}\,(ii.a) and (ii.b) hold manifestly in $\tz$, and the same is true for $( \alpha \sta \beta )\hash = \beta\hash \sta \alpha\hash$ and 
$\coev_{\alpha\sta\beta} = (1_\alpha \sta \coev_\beta \sta 1_\alpha\hash ) \fus \coev_\alpha$ in part (ii.c), compare \eqref{eq:coevalpha} and \eqref{eq:coevalpha2}. 
The remaining axioms of (ii.c) and (ii.d) again follow from isotopy invariance in $\Bordd$ and from functoriality of~$\zz$, by applying~$\zz$ to~\eqref{eq:invbility}, \eqref{eq:tautautau} and~\eqref{eq:twistcoev}. 
\end{enumerate} 
\end{proof}

In the special case when there are no surface defects, i.\,e.~$D_2 = \emptyset$, then by construction the tricategory $\tz$ reduces to a product of braided monoidal categories $\End_{\tz}(1_\alpha)$ for all $\alpha \in D_3$. 
On the other hand, in the presence of surface defects, a defect TQFT `glues together' closed TQFTs $\zz_{\alpha}: \Bord_{3} \rightarrow \Vect_\Bbbk$ non-trivially; see also Definition~\ref{def:defectextension}. 

\begin{remark}
It is natural to ask whether we can determine functorial properties of the assignment 
\begin{align} 
	\Big\{ \text{defect TQFTs } \Bordd \rightarrow \Vect_{\Bbbk} \Big\} 
	& \lra 
	\Big\{ \text{linear Gray categories with duals} \Big\} \nonumber \\
	\zz
	& \longmapsto
	\tz \, . \label{eq:assignmentT}  
\end{align}
A first observation is that this assignment is functorial in the most basic sense: a monoidal natural isomorphism $\eta: \zz \rightarrow \zz'$ canonically induces an 
equivalence of Gray categories with duals $\mathcal T_{\eta}: \tz \rightarrow  \tzp$ by applying $\eta$ to 
the vector spaces of 3-morphisms. Furthermore for composable  isomorphisms~$\eta$ and~$\eta'$ 
there are identities of functors $\mathcal T_{\eta \circ \eta'}= \mathcal T_{\eta} \circ \mathcal T_{\eta'}$ and 
$\mathcal T_{\id}=1$.

We end this section by sketching a universal property for $\tz$ which shows that functors to $\tz$ are classified by the `cubical part of $\zz$'. 
To a linear Gray category with duals~$\Cat{G}$, we associate a symmetric monoidal category  $\Cube(\Catpre{G})$ of `stratified $\Cat{G}$-decorated cubes', which covertly was key to our construction of $\tz$ all along. 
If we denote by $\Catpre{G}$ the pre-2-category that consists of the  objects, 1- and 2-morphisms of~$\Cat{G}$, then   $\Cube(\Catpre{G})$ is a version of the stratified little-disc operad:
its objects are disjoint unions of decorated boundaries of cubes as in \eqref{eq:CXXcube}, i.\,e.~the cubes' bottom and top faces are pre-2-category diagrams for $\Catpre{G}$. 
Basic morphisms in $\Cube(\Catpre{G})$ are 
Gray category diagrams $\Gamma$ for $\Cat{G}$ with little cubes removed around all inner vertices. Such a diagram $\Gamma$ is regarded as a morphism from the disjoint union of all inner cubes to the outer cube boundary. 
General morphisms are disjoint unions of basic morphisms. Composition is insertion of diagrams in inner boundary components, and the monoidal structure is given by disjoint union. 

There is a symmetric monoidal functor 
$$
\eval_{\Cat{G}}: \Cube(\Catpre{G}) \lra \Vect_\Bbbk
$$ 
which sends a cube $X \in \Cube(\Catpre{G})$ to the vector space of 3-morphisms in $\Cat{G}$ which is 
	given by $\Cat{G}(s_X, t_X)$, where $s_X$ and $t_X$ are the 2-morphisms on the bottom and top of the cube~$X$, respectively. 
On the disjoint union of cubes, $\eval_{\Cat{G}}$ is defined by the tensor product, and on morphisms it acts as the evaluation of Gray category diagrams.

If we restrict to the case where $\Cat{G}=\tz$ for some defect TQFT $\zz: \Bordd \rightarrow \Vect_{\Bbbk}$, then there is a canonical functor $\iota: \Cube(\tz^{\mathrm{pre}}) \rightarrow \Bordd$ such that the diagram 
\begin{equation}
\label{eq:TZtriangle}
\begin{tikzpicture}[
			     baseline=(current bounding box.base), 
			     >=stealth,
			     descr/.style={fill=white,inner sep=3.5pt}, 
			     normal line/.style={->}
			     ] 
\matrix (m) [matrix of math nodes, row sep=3.5em, column sep=2.0em, text height=1.5ex, text depth=1ex] {%
\Cube(\tz^{\mathrm{pre}}) && 
\\
\Bordd && \Vect_\Bbbk
\\
};
\path[font=\footnotesize] (m-1-1) edge[->] node[auto] {$ \eval_{\tz} $} (m-2-3);
\path[font=\footnotesize] (m-1-1) edge[->] node[left] {$ \iota $} (m-2-1);
\path[font=\footnotesize] (m-2-1) edge[->] node[below] {$ \zz $} (m-2-3);
\end{tikzpicture}
\end{equation}
commutes. 
The functor $\iota$ can be described as `gluing the diagrams for the morphisms in $\tz$ in the strata of the cube'. 
	We expect that this can be made precise 
by using a canonical framing for the strata in the diagrams. 
Note that by definition of $\tz$, the diagram \eqref{eq:TZtriangle} commutes in the strict sense. 

The diagram~\eqref{eq:TZtriangle} may suggest that $\tz$ is a Kan extension of $\eval_{\tz}$ along~$\iota$, thus providing an inverse to the assignment~\eqref{eq:assignmentT}.
However, this is not true in general, and Turaev-Viro theory viewed as a defect TQFT~$\zz$ with trivial defect data is a counter-example: 
here, the tricategory $\tz$ simply consists of (multiples of) identities (cf.~Section~\ref{sec:examples}) and $\eval_{\tz}$ is the trivial functor, so for non-trivial $\zz$ the triangle \eqref{eq:TZtriangle} is not a Kan diagram. 

We expect instead that the universal property of $\tz$ is as follows:
For every linear Gray category with duals $\Cat{G}$ and every functor of pre-2-categories with duals $F: \Catpre{G} \rightarrow \Fdp \K^{\D}$, we have a functor $\iota \circ F: \Cube(\Catpre{G}) \rightarrow 
\Bordd$.  
There is a canonical bijection between natural transfomations $\eta: \eval_{\Cat{G}} \rightarrow \zz \circ \iota \circ F$ and extensions $\widehat{F}: \Cat{G} \rightarrow \tz$ of 
$F$ to a functor of Gray categories with duals ($\widehat{F}$ agrees with $F$ on objects, 1- and 2-morphisms).
However, proving a statement like this requires concepts beyond the scope of the present article and will be left for future work. 
\end{remark}

\section{Examples}
\label{sec:examples}

After the abstract structural analysis of 3-dimensional defect TQFTs, in this section we present three classes of examples.
As the first example we will find that in our setting even the trivial TQFT leads to an interesting tricategory: 
it is illustrative to see how the trivial defect functor adds relations which produce a 3-groupoid from the free pre-2-category $\Fdp \K^{\D}$ associated to given defect data~$\D$.

In the second example we consider the first known 3-dimensional TQFT, the Reshetikhin-Turaev theory \cite{ResTur} associated to a modular tensor category~$\mathcal C$.
It is expected from the original construction that this is automatically a defect TQFT with only 1-dimensional defects. 
We show that it indeed fits into our framework, and that the corresponding Gray category reproduces~$\mathcal C$ regarded as a tricategory. 

As a third source of examples we interpret the homotopy quantum field theories (HQFTs) of Turaev-Virelizier \cite{Hqft1} as defect TQFTs. 
The version of HQFT that we are interested in takes as input datum a spherical fusion category~$\mathcal A$ which is graded by a finite group~$G$ and produces a TQFT for 3-manifolds equipped with a fixed homotopy class of 
a map to $K(G,1)$.
We shall show that these HQFTs give rise to two different defect TQFTs: when restricted to bordisms without stratifications, one of our TQFTs reproduces the trivial TQFT, and the other one recovers the Turaev-Viro TQFT 
\cite{TurVir,BarWes} associated to the neutrally graded component $\mathcal A_{1}$. 
Furthermore we explicitly describe the tricategories $\tz$ associated to the above two types of defect TQFTs~$\zz$. 

\medskip

Recovering ordinary TQFTs when restricting a defect TQFT to trivially stratified bordisms is of course a more general phenomenon, which we conceptualise in the following

\begin{definition}
\label{def:defectextension}
A defect TQFT $\zzd: \Bordd \rightarrow \Vect_\Bbbk$ is called a \textsl{defect extension} of a set of TQFTs $\{ \zz_{\alpha}: \Bord_{3} \rightarrow \Vect_\Bbbk \}_{\alpha \in D_3}$, if for all $\alpha \in D_3$, $\zz_{\alpha}$ is isomorphic to the restriction of $\zzd$ to bordisms with only one stratum labelled by $\alpha \in D_3$. 
\end{definition}

\subsection{Tricategories from the trivial defect functor}
\label{subsec:case-trivial-defect}

If we start with arbitrary defect data $\D$, there is always a trivial defect TQFT 
\begin{equation}
\label{eq:triv}
  \zztriv_{\D}: \Bordd \lra \Vect_{\Bbbk}
\end{equation}
that associates to every object $\Sigma$ in $\Bordd$ the vector space $\zztriv_{\D}(\Sigma)=\Bbbk$ and to every bordism the identity map on~$\Bbbk$.
It is however illuminating to see how the 3-morphisms in $\mathcal T_{\zztriv_{\D}}$ relate the 1- and 2-morphisms of $\Fdp \K^{\D}$. 
We analyse the structure of the resulting tricategory $\mathcal T_{\zztriv_{\D}}$ from top to bottom in the degree of the morphisms with the following general fact:

\begin{lemma}
\label{lemma:structure-Gray-Invertible}
Let $\Cat{G}$ be a linear Gray category with duals with the property that between any two 2-morphisms~$X$ and~$Y$, the space of 3-morphisms 
$\Cat{G}(X,Y)$ is 1-dimensional and the vertical composition of 3-morphisms is never the zero map. 
Then all 
	1-morphisms 
in $\Cat{G}$ are (weakly) invertible, and all 2-morphisms are isomorphic. 
\end{lemma}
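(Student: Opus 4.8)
The plan is to exploit the hypothesis that every 3-morphism space is one-dimensional with non-degenerate vertical composition, so that the 2-categories $\Cat{G}(u,v)$ are forced to be as rigid as possible. First I would record the elementary consequence that in any $\Cat{G}(u,v)$, every 2-morphism $X\colon \alpha \to \beta$ is an isomorphism: pick a nonzero $\Phi \in \Cat{G}(X,Y)$ for suitable $Y$; more to the point, take any nonzero $\Phi \in \Cat{G}(X, X')$ between parallel 2-morphisms and any nonzero $\Psi \in \Cat{G}(X',X)$; since $\Cat{G}(X,X)$ is one-dimensional and contains the nonzero identity $1_X$, and since $\Psi \circ \Phi \in \Cat{G}(X,X)$ is nonzero by hypothesis, after rescaling $\Phi$ we get $\Psi \circ \Phi = 1_X$, and symmetrically $\Phi \circ \Psi = 1_{X'}$. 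Hence any two parallel 2-morphisms are isomorphic via a 3-isomorphism; in particular, taking $X' = 1_\beta \fus X$ or using the evaluation/coevaluation data, the existence of $\ev_X$ and $\coev_X$ together with the Zorro moves (which hold up to the unique-up-to-scalar 3-isomorphisms) shows $X$ itself is a (weak) equivalence in the bicategory $\Cat{G}(u,v)$. This gives the second assertion.

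Next I would upgrade this to (weak) invertibility of 1-morphisms. Given $\alpha\colon u \to v$, the natural candidate for a weak inverse is $\alpha\hash \colon v \to u$. The fold $\coev_\alpha \colon 1_v \to \alpha \sta \alpha\hash$ and its partner $\ev_\alpha = (\coev_{\alpha\hash})\dagg \colon \alpha\hash \sta \alpha \to 1_u$ are 2-morphisms in $\Cat{G}(v,v)$ and $\Cat{G}(u,u)$ respectively. By the previous paragraph, \emph{every} 2-morphism in these 2-categories is an equivalence; in particular $\coev_\alpha$ exhibits $\alpha \sta \alpha\hash$ as equivalent to $1_v$ inside $\Cat{G}(v,v)$, and $\ev_\alpha$ exhibits $\alpha\hash \sta \alpha$ as equivalent to $1_u$ inside $\Cat{G}(u,u)$. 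This is precisely the definition of $\alpha$ being a biequivalence, i.e.\ weakly invertible as a 1-morphism, as recalled before Definition~\ref{def:Grayequiv}. I would also note that here one does not even need the triangulator $\tau_\alpha$ to be anything special: it is automatically the unique-up-to-scalar 3-isomorphism, and rescaling makes the triangle identity hold, but for the \emph{equivalence} statement it suffices that $\coev_\alpha$ and $\ev_\alpha$ are equivalences, which is free from the hypothesis on 3-morphism spaces.

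The main obstacle, such as it is, is bookkeeping rather than conceptual: one must check that the rescalings used to turn the unique 3-morphisms into genuine units are mutually consistent, i.e.\ that choosing $\Phi$ and $\Psi$ with $\Psi\circ\Phi = 1_X$ is compatible with the other structure maps one needs (for instance that the chosen 3-isomorphisms can be taken coherently so that $\coev_\alpha$, $\ev_\alpha$ really do assemble into an adjoint equivalence in $\Cat{G}(v,v)$, resp.\ $\Cat{G}(u,u)$). Since in a bicategory every equivalence can be promoted to an adjoint equivalence, and here the relevant 3-morphism spaces are one-dimensional so all coherence 3-morphisms are forced, this compatibility is automatic; I would spell this out using the standard "equivalence $\Rightarrow$ adjoint equivalence" lemma applied inside each $\Cat{G}(u,v)$, invoking non-degeneracy of $\circ$ to rule out the degenerate case where a would-be unit is zero. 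No further input is needed, so the lemma follows.
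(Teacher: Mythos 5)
Your proposal is correct and follows essentially the same route as the paper's proof: first use the one-dimensionality of the 3-morphism spaces and non-degeneracy of $\circ$ to show every non-zero 3-morphism is invertible, then observe that the adjunction 3-morphisms for $X^\dagger$ make every 2-morphism an equivalence, and finally that the fold/evaluation 2-morphisms for $\alpha^\#$ (now known to be equivalences) exhibit every 1-morphism as a biequivalence. The additional remarks on promoting equivalences to adjoint equivalences are harmless but not needed for the statement.
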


\begin{proof}
First we show that any non-zero 3-morphism in 
	$\Cat{G}$ 
is an isomorphism: 
For any such $\Phi: X \rightarrow Y$ we can choose a non-zero 3-morphism $\Psi:Y \rightarrow X$ in the 1-dimensional vector space $\Cat{G}(Y,X)$. 
Since $\Cat{G}$ is linear and the composition is non-zero by assumption,  the composition $\Phi \circ \Psi$ is a non-zero multiple of the identity on $Y$. 
Similarly, 
$\Psi \circ \Phi$ is  the identity on $X$ up to a non-zero scalar.
 
We conclude that every 2-morphism in 
	$\Cat{G}$ 
is an equivalence, since the duality morphisms relating a 2-morphism $X$ with its dual $X^{\dagger}$ are isomorphisms by the previous statement. 
Also all 1-morphisms in 
	$\Cat{G}$ 
are equivalences, again because the duality morphisms relating a 1-morphism $\alpha$ with its dual $\alpha^{\#}$ are isomorphisms by the conclusion in the previous sentence. 
\end{proof}

In the case of~$\zztriv_{\D}$ we have $\zztriv_{\D}(\Sigma)=\Bbbk$ for every stratified 2-manifold $\Sigma$ 
with scalar multiplication as composition, hence the above lemma applies.  

\medskip

Next we describe the tricategory $\mathcal T_{\zztriv_{\D}}$ in terms of a Gray category $\mathcal T_{\widehat{\D}}$ associated more directly with $\D$. For this we first consider the following equivalence relations on $\Fdp \K^{\D}$. 
We call two parallel 1-morphisms in $\Fdp \K^{\D}$ equivalent if there exists some 2-morphism in $\Fdp \K^{\D}$ between them. 
In terms of this we define a Gray category with duals $\mathcal T_{\widehat{\D}}$ whose objects are the objects in $\Fdp \K^{\D}$, whose 1-morphisms are equivalences classes of 1-morphisms in $\Fdp \K^{\D}$, 
2-morphisms are only the identities, and 3-morphisms in $\mathcal T_{\widehat{\D}}$ are multiplies of identities. 

\begin{proposition}
There is a strict equivalence of Gray categories with duals 
$$
\mathcal T_{\zztriv_{\D}} \stackrel{\cong}{\lra} \mathcal T_{\widehat{\D}} \, . 
$$
\end{proposition}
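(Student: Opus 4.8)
The plan is to construct an explicit strict functor of Gray categories with duals $F \colon \mathcal T_{\zztriv_{\D}} \to \mathcal T_{\widehat{\D}}$ and verify that it satisfies the three conditions of Definition~\ref{def:Grayequiv} (plus the compatibility with duals of Definition~\ref{def:strictFGraydual}). Since by construction the objects of $\mathcal T_{\zztriv_{\D}}$, the objects of $\Fdp \K^{\D}$, and the objects of $\mathcal T_{\widehat{\D}}$ all coincide, $F_0$ is the identity on objects and condition~(iii) of Definition~\ref{def:Grayequiv} is immediate. The content is therefore in defining the 2-functors $F_{u,v}\colon \mathcal T_{\zztriv_{\D}}(u,v) \to \mathcal T_{\widehat{\D}}(u,v)$ and checking they are equivalences of 2-categories.

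\textbf{Key steps.} First I would define $F_{u,v}$ on 1-morphisms: a 1-morphism in $\mathcal T_{\zztriv_{\D}}(u,v)$ is (a cylinder over a representative of) a 1-morphism in $\Fdp \K^{\D}$, and we send it to its equivalence class, which by definition is a 1-morphism of $\mathcal T_{\widehat{\D}}$. On 2-morphisms and 3-morphisms the target 2-category has only identities and scalar multiples of identities, so $F_{u,v}$ must send every 2-morphism to an identity and every 3-morphism to the scalar by which it acts (recall $\zztriv_{\D}(S_{X,X'}) = \Bbbk$ for all parallel $X,X'$, so each hom-space of 3-morphisms is canonically $\Bbbk$). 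The well-definedness of $F_{u,v}$ on 2-morphisms uses precisely the defining equivalence relation of $\mathcal T_{\widehat{\D}}$: two parallel 1-morphisms of $\Fdp\K^{\D}$ are identified exactly when a 2-morphism between them exists, so a 2-morphism $X\colon\alpha\to\beta$ in $\mathcal T_{\zztriv_{\D}}$ has $F_{u,v}(\alpha)=F_{u,v}(\beta)$ and may be sent to the identity. Next I would check that $F_{u,v}$ is a strict 2-functor — compatibility with $\circ$ and $\fus$ is straightforward since $\zztriv_{\D}$ sends every bordism to $\id_\Bbbk$, so all the evaluations of 3d diagrams for $\zztriv_{\D}$ in Definition~\ref{def:eval} collapse to multiplication of the decorating scalars. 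Then I would verify the equivalence conditions: $F_{u,v}$ is biessentially surjective on the nose (every 1-morphism of $\mathcal T_{\widehat\D}$ literally has a representative in $\mathcal T_{\zztriv_\D}$), it is essentially surjective on 2-morphisms because by Lemma~\ref{lemma:structure-Gray-Invertible} any two parallel 2-morphisms in $\mathcal T_{\zztriv_{\D}}$ are isomorphic (hence both map, up to isomorphism, to the unique identity), and it is fully faithful on 3-morphisms because both $\Hom_{\mathcal T_{\zztriv_\D}}(X,X') = \zztriv_\D(S_{X,X'}) = \Bbbk$ and $\Hom_{\mathcal T_{\widehat\D}}(\id,\id) = \Bbbk$ and $F$ acts as the identity of $\Bbbk$ under these identifications. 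Finally I would record that $F$ respects the Gray product, the tensorator (the tensorator of $\mathcal T_{\zztriv_\D}$ is $\zztriv_\D$ applied to the crossing cube~\eqref{eq:Ztensorator}, hence $1\in\Bbbk$, matching the trivial tensorator of $\mathcal T_{\widehat\D}$), the $\dagger$- and $\#$-duals, the folds $\coev_\alpha$, and the triangulators $\tau_\alpha$ — all of which become (scalars attached to) identities under $F$, using that $\mathcal T_{\widehat\D}$ was \emph{defined} to have the $\#$-duals and folds inherited from $\Fdp\K^\D$.

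\textbf{Main obstacle.} The only genuinely delicate point is checking that $F_{u,v}$ is well-defined and functorial on 2-morphisms in a way compatible with \emph{all} three compositions simultaneously: one must confirm that the equivalence relation ``there exists a 2-morphism between them'' on parallel 1-morphisms of $\Fdp\K^{\D}$ is compatible with $\sta$-, $\fus$-, and $\circ$-composition (so that the quotient is actually a Gray category with duals — this is implicitly claimed in the definition of $\mathcal T_{\widehat\D}$ but should be spelled out), and that collapsing 2-morphisms to identities is consistent with the interchange/tensorator structure. Concretely, one needs that if $X\colon\alpha\to\alpha'$ and $Y\colon\beta\to\beta'$ are 2-morphisms then $\alpha\sta\beta$ and $\alpha'\sta\beta'$ are related (obtained by Gray-composing the 2-morphisms, or their whiskerings), and similarly for $\fus$; this follows from closure of $\Fdp\K^\D$ under these operations, but it is the step where some care is required. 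Everything else reduces, via functoriality of $\zztriv_\D$ and the fact that it is constantly $\id_\Bbbk$, to formal bookkeeping.
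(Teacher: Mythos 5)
Your proposal follows the same route as the paper: the paper's proof defines exactly this projection functor (identity on objects, $\alpha\mapsto[\alpha]$ on 1-morphisms, 2-morphisms to identities, a 3-morphism $\lambda\in\Bbbk$ to $\lambda$ times the identity) and then observes it is fully faithful on 3-morphisms, essentially surjective on 2-morphisms, biessentially surjective on 1-morphisms and triessentially surjective on objects. Your additional remarks on well-definedness and compatibility of the equivalence relation with the three compositions only spell out what the paper leaves implicit, so the argument is correct and essentially identical.
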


\begin{proof}
We show that the projection to equivalence classes gives a strict functor of Gray categories  $\pi: \mathcal T_{\zztriv_{\D}} \rightarrow \mathcal T_{\widehat{\D}}$ which is an equivalence. 
On objects, $\pi$ is the identity. 
On 1-morphism $\alpha$, $\pi(\alpha)=[\alpha]$ is the projection to the corresponding class. 2-morphisms get mapped to the identity, and 3-morphisms $\lambda \in \Bbbk= \Hom(X,Y)$ get mapped to~$\lambda$ times the corresponding identity. 
By definition of the equivalence classes of 1-morphisms, this is a well-defined strict equivalence of Gray categories
as in Definition~\ref{def:Grayequiv}: 
$\pi$ is clearly fully faithful on 3-morphisms, essentially surjective on 2-morphisms, biessentially surjective on 1-morphisms, and triessentially surjective on objects. 
\end{proof}

\subsection{Reshetikhin-Turaev theory as 3-dimensional defect TQFT}
\label{subsec:resh-tura-as}

Let $\mathcal C$ be a modular tensor category, i.\,e.~a non-degenerate spherical braided fusion category, see e.\,g.~\cite{BakalovKirillov}. 
We can furthermore assume that $\mathcal C$ is strictly associative 
	and for simplicity we assume that $\mathcal C$ is anomaly free, otherwise one would need to work with an extension of the bordism category.\footnote{We refer to \cite[Sect.\,IV.6]{tur} for the notion of anomaly and the relevant extension of the bordism category.} 
In this section we show that the associated Reshetikhin-Turaev (RT) TQFT $\zz^{\Cat{C}}_{\mathrm{RT}}$ in the formulation of \cite{BakalovKirillov} 
	admits a formulation as a 
defect TQFT in our sense. 

\medskip

To fix our conventions, we write $\unit \in \Cat{C}$ for the monoidal unit, and we denote by $x^{\dagger}$ the right dual of an object $x \in \Cat{C}$. 
There are adjunction maps $\ev_{x}: x^{\dagger} \otimes x \rightarrow \unit$ and $\coev_{x}: \unit \rightarrow x \otimes x^{\dagger}$ satisfying the Zorro moves, and with the spherical structure $x^{\dagger}$ is also left dual to $x$. 

We first  recall  the construction of the value of  $\zz^{\Cat{C}}_{\textrm{RT}}$ on bordisms with embedded coloured ribbon graphs. The set of colours is  the set of objects of $\mathcal C$.
On the boundary of such a bordism, there are  \textsl{$\Cat{C}$-marked surfaces $\Sigma$}, i.\,e.~closed surfaces with marked points labelled by objects in $\Cat{C}$ together with a sign (to distinguish 
incoming and outgoing ribbons) and a choice of a 
	non-zero tangent vector 
(to determine a ribbon). 
The state space $\zz^{\Cat{C}}_{\textrm{RT}}(\Sigma)$ of such a $\Cat{C}$-marked surface $\Sigma$ is defined by choosing a `standard marked surface' $\Sigma_{t}$ with an equivalence $\phi: \Sigma\rightarrow \Sigma_{t}$
of $\Cat{C}$-marked surfaces, i.\,e.~$\phi$ is a homeomorphism mapping marked points to marked points and preserving the labels.

In the following we will need the precise definition of $\Sigma_{t}$ only in the case where~$\Sigma$ is a sphere -- see e.\,g.~\cite[Sect.\,4.4]{BakalovKirillov} for the general definition. A \textsl{standard marked sphere $S_{t}$} is defined for a $\Cat{C}$-tuple $t=((c_{1},\eps_{1}), (c_{2},\eps_{2}) \ldots, (c_{m},\eps_{m}))$ for some $m \in \N$ with objects $c_{i} \in \Cat{C}$ and $\eps_{i} \in \{ \pm \}$. 
The sphere $S_{t}$ is constructed from a unit cube $C_t$ whose top boundary is decorated with~$m$ points $p_{i}$ along the line from $(\tfrac{1}{2}, 1, 1)$ to $(\tfrac{1}{2}, 0, 1)$ 
	together with unit tangent vectors pointing in the positive $y$-axis. 
Each point $p_{i}$ is labelled with $(c_{i},\eps_{i})$ while the remaining faces of $C_t$ are undecorated. 
$S_{t}$ is the corresponding sphere obtained by projecting as in Section~\ref{subsec:tricatfromZ}.  
For example, if $t =((c_{1},\eps_{1}), (c_{2},\eps_{2}),(c_{3},\eps_{3}))$ we have
\be\label{eq:ExplRT}
C_t = 
\begin{tikzpicture}[thick,scale=2.5,color=blue!50!black, baseline=0.0cm, >=stealth, 
				style={x={(-0.9cm,-0.4cm)},y={(0.8cm,-0.4cm)},z={(0cm,0.9cm)}}]
\draw[
	 color=gray, 
	 opacity=0.4, 
	 semithick
	 ] 
	 (0,1,1) -- (0,1,0) -- (1,1,0) -- (1,1,1) -- (0,1,1) -- (0,0,1) -- (1,0,1) -- (1,0,0) -- (1,1,0)
	 (1,0,1) -- (1,1,1);
\clip (1,0,0) -- (1,1,0) -- (0,1,0) -- (0,1,1) -- (0,0,1) -- (1,0,1);
\fill [blue!20,opacity=0.2] (1,0,0) -- (1,1,0) -- (0,1,0) -- (0,1,1) -- (0,0,1) -- (1,0,1);
\draw[
	 color=gray, 
	 opacity=0.3, 
	 semithick,
	 dashed
	 ] 
	 (1,0,0) -- (0,0,0) -- (0,1,0)
	 (0,0,0) -- (0,0,1);
\coordinate (t1) at (0.5, 0.2, 1);
\coordinate (t2) at (0.5, 0.5, 1);
\coordinate (t3) at (0.5, 0.8, 1);
%
\fill[color=green!60!black] (t1) circle (0.8pt) node[right,black] (0up) {$\text{\scriptsize{$(c_{1},\eps_{1})$}}$};
\fill[color=green!60!black] (t2) circle (0.8pt) node[right,black] (0up) {$\text{\scriptsize{$(c_{2},\eps_{2})$}}$};
\fill[color=green!60!black] (t3) circle (0.8pt) node[left,black] (0up) {$\text{\scriptsize{$(c_{3},\eps_{3})$}}$};
%
\draw[
	 color=gray, 
	 opacity=0.4, 
	 semithick
	 ] 
	 (0,1,1) -- (0,1,0) -- (1,1,0) -- (1,1,1) -- (0,1,1) -- (0,0,1) -- (1,0,1) -- (1,0,0) -- (1,1,0)
	 (1,0,1) -- (1,1,1);
\end{tikzpicture}
\, , \quad
S_t
=
 \tdplotsetmaincoords{50}{135}
 \begin{tikzpicture}[scale=2.0,opacity=0.3,tdplot_main_coords,fill opacity=0.4, baseline=-0.2cm, >=stealth]
 %
 \pgfsetlinewidth{.1pt}
 \tdplotsetpolarplotrange{0}{180}{0}{360}
 \tdplotsphericalsurfaceplot{60}{60}
 {1}
 {gray}
 {blue!14}
     {}%
     {}%
     {}%
 %
 \tdplotsetcoord{P1}{1}{20}{280}
 \tdplotsetcoord{P2}{1}{0}{315}
 \tdplotsetcoord{P3}{1}{20}{100}
 \tdplotsetcoord{Q1}{1}{25}{270}
 \tdplotsetcoord{Q2}{0.95}{1}{150}
 \tdplotsetcoord{Q3}{1}{21}{92}
 \fill[color=green!60!black, opacity=1] (P1) circle (0.8pt) node[left] (0up) {};
 \fill[color=green!60!black, opacity=1] (P2) circle (0.8pt) node[left] (0up) {};
 \fill[color=green!60!black, opacity=1] (P3) circle (0.8pt) node[left] (0up) {};
 \fill[color=green!60!black, opacity=1] (Q1) circle (0pt) node[right,black] (0up) {$\text{\scriptsize{$(c_{1},\eps_{1})$}}$};
 \fill[color=green!60!black, opacity=1] (Q2) circle (0pt) node[right,black] (0up) {$\text{\scriptsize{$(c_{2},\eps_{2})$}}$};
 \fill[color=green!60!black, opacity=1] (Q3) circle (0pt) node[right,black] (0up) {$\text{\scriptsize{$(c_{3},\eps_{3})$}}$};
 \end{tikzpicture}
\, . 
\ee

To a standard marked sphere $S_{t}$ with $t=((c_{1},\eps_{1}), (c_{2},\eps_{2}) \ldots, (c_{m},\eps_{m}))$, the RT construction associates the vector space $F(S_{t} )=\Hom_{\Cat{C}}(\unit, c_{t})$ with 
$$
c_{t}=c_{1}^{\eps_{1}} \otimes \dots \otimes c_{m}^{\eps_{m}}
\, , \quad \text{where} \quad
c^{+}=c \quad \text{and} \quad c^{-}=c^{\dagger} \, . 
$$
Moreover if we write $\cc{t}=((c_{m},-\eps_{m}), (c_{m-1},-\eps_{m-1}), \dots, (c_{1},-\eps_{1}))$ for the reversed tuple, then the evaluation maps in $\Cat{C}$ define a non-degenerate pairing $F(S_{t}) \otimes F(S_{\cc{t}}) \rightarrow \C$, hence an isomorphism $F(S_{\cc{t}})^{*} \cong F(S_{t})$. 

To an equivalence $f: S_{t} \rightarrow S_{t'}$ of $\Cat{C}$-marked spheres, 
	i.\,e.~$f$ is a diffeomorphism that preserves the marked points and the tangent vectors up to a scalar, 
the RT construction assigns a linear isomorphism $F(f): F(S_{t}) \rightarrow F(S_{t'})$. 
The assignment is compatible with composition of equivalences and the identities, hence the set of equivalences forms a projective system. 
By definition, the RT state space of a $\Cat{C}$-marked sphere that is homeomorphic to $S_{t}$ is the projective limit of this projective system.

Finally, let $M$ be a ribbon tangle in $S^{2} \times [0,1]$ from 
$S_{t_{1}}$ to $S_{t_{2}}$ 
for some $\Cat{C}$-tuples $t_{1}$ and $t_{2}$. The RT invariant of ribbon tangles then 
defines a morphism $F(M): c_{t_{1}} \rightarrow c_{t_{2}}$ in $\Cat{C}$, and the TQFT construction provides an element $\zz_{\textrm{RT}}^{\Cat{C}}(M) \in \Hom_{\C}(F(S_{t_{1}}), F(S_{t_{2}}))$ which is given by 
\begin{equation}
  \label{eq:RT-Ribbon}
  \zz_{\textrm{RT}}^{\Cat{C}}(M)(f)=F(M) \circ f \quad \text{for} \quad f \in F(S_{t_{1}}) \, ,
\end{equation}
see e.\,g.~\cite[Ex.\,4.5.2]{BakalovKirillov}.

\medskip

To understand Reshetikhin-Turaev theory as a defect TQFT, 
we consider the following defect data $\D^{\mathcal C}$ associated to the modular tensor category~$\mathcal C$.
For $D^{\mathcal C}_{3}$ and $D^{\mathcal C}_{2}$ we take the singelton sets $\{\ast\}$. 
For any choice of ordered tuples of elements in $D^{\mathcal C}_{2} $, we want to allow all objects in~$\Cat{C}$ as preimages of the folding map. 
	To single out the ribbon direction, we allow only for configurations with precisely one adjacent 2-stratum around the 1-stratum that is ordered positively relative to the orientation of the 1-stratum. Additionally there can be an arbitrary finite number of adjacent 2-strata with negative relative orientation. 
Thus we define 
$$
	D^{\mathcal C}_{1} = \Obj(\mathcal C) \times  \bigsqcup_{m\in\Z_{+}} \Big( (\{\ast\} \times \{+ \}) \times  (\{\ast\} \times \{- \}) \times \dots \times (\{\ast\} \times \{ - \}) \Big)/C_m \, , 
$$
and the folding map 
	$f$ 
	simply projects to the second factor, i.\,e.\ $f(x,\Sigma) = \Sigma$ for all $(x,\Sigma) \in D^{\mathcal C}_{1}$. 
	
We emphasise that there is no label in $D_{1}^{\Cat{C}}$ for a single 1-stratum that is detached from any 2-stratum. 
	%
	Since every 1-stratum lies in the boundary of precisely one 2-stratum in $M_{2}$ with positive relative orientation, thickening the 1-stratum in the direction of this 2-stratum we obtain an embedded coloured ribbon graph in~$M$ by keeping the colours and orientations from $M_{1}$.

We have thus arrived at our first non-trivial class of examples for defect TQFTs: 

\begin{proposition}
By evaluating the Reshetikhin-Turaev TQFT on bordisms with ribbon graphs constructed from stratified bordisms as above, one obtains for every modular tensor category $\mathcal C$ a 3-dimensional defect TQFT
  \begin{equation}
    \label{eq:RT}
    \zz^{\mathcal C}: \Bord_{3}(\D^{\mathcal C}) \lra \Vect_\C .
  \end{equation}
\end{proposition}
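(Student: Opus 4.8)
The plan is to verify that the assignment $M \mapsto \zz^{\mathcal C}(M)$, obtained by first thickening the stratified bordism $M$ into an embedded coloured ribbon graph $\widehat M$ in the underlying (unstratified) $3$-manifold and then applying the Reshetikhin-Turaev functor $\zz_{\textrm{RT}}^{\mathcal C}$, defines a symmetric monoidal functor $\Bord_3(\D^{\mathcal C}) \to \Vect_\C$. Since $\zz_{\textrm{RT}}^{\mathcal C}$ is already known to be such a functor on the category of bordisms with embedded coloured ribbon graphs (from the formulation of \cite{BakalovKirillov}), the essential content is to check that the thickening procedure $M \mapsto \widehat M$ is itself a well-defined symmetric monoidal functor $\Bord_3(\D^{\mathcal C}) \to \Bord_3^{\mathrm{ribbon}}$, and that $\zz^{\mathcal C}$ is the composite; functoriality of $\zz^{\mathcal C}$ is then automatic.

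First I would make the thickening construction precise on objects. An object of $\Bord_3(\D^{\mathcal C})$ is a $\D^{\mathcal C}$-decorated surface, i.e.\ a stratified surface $\Sigma$ together with label maps; because $D^{\mathcal C}_3 = D^{\mathcal C}_2 = \{\ast\}$ and every $1$-stratum in $\Sigma$ is, by the constraint built into $f$, adjacent to exactly one positively-oriented $2$-stratum, each $0$-stratum of $\Sigma$ carries an object of $\mathcal C$, a sign, and (from the germ of the adjacent positively-oriented surface stratum, using the standard neighbourhoods of Section~\ref{subsec:defectbord}) a canonical tangent direction. This is exactly the data of a $\mathcal C$-marked point, so $\widehat\Sigma$ is a $\mathcal C$-marked surface, and one checks that the standard-form neighbourhoods \eqref{eq:starlikedisc} of $0$-strata match the local model of marked points. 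On morphisms, a defect bordism $M$ (no $0$-strata in the interior, $1$-strata with standard cylinder neighbourhoods \eqref{eq:starlikecylinder}) is thickened by replacing a tubular neighbourhood of each $1$-stratum by a ribbon whose core is the $1$-stratum, whose framing comes from the unique positively-oriented adjacent $2$-stratum, and whose colour and orientation are inherited from $\ell_1$; the result $\widehat M$ is an embedded coloured ribbon graph in the underlying $3$-manifold of $M$, with boundary $\widehat{\partial M}$ compatible with the thickenings of the boundary surfaces.

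Next I would check the functoriality and monoidality of $M \mapsto \widehat M$. Well-definedness on equivalence classes: an isomorphism of $\D^{\mathcal C}$-decorated bordisms preserves strata, orientations and labels, hence induces a homeomorphism of the underlying $3$-manifolds carrying one ribbon graph to the other, so equivalent defect bordisms thicken to equivalent (isotopic) ribbon tangles; similarly the choice of tubular neighbourhood/thickening radius is unique up to isotopy. Identities: the identity defect bordism on $\Sigma$ is a cylinder $\Sigma\times[0,1]$, whose thickening is the product ribbon graph over $\widehat\Sigma$, i.e.\ the identity in $\Bord_3^{\mathrm{ribbon}}$. Composition: gluing two defect bordisms along a common boundary surface $\Sigma$ glues the thickened ribbon graphs along $\widehat\Sigma$, because the standard cylinder neighbourhoods of the $1$-strata meeting the boundary are compatible with the collars used in the composition of $\Bords$ (and hence of $\Bordd$); this is where one uses condition (iii) of Section~\ref{subsec:defectbord}. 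Monoidality is immediate since disjoint union of defect bordisms thickens to disjoint union of ribbon graphs, and the symmetry is inherited. Finally $\zz^{\mathcal C} := \zz_{\textrm{RT}}^{\mathcal C} \circ (\,\widehat{\phantom{M}}\,)$ is a composite of symmetric monoidal functors, hence a $3$-dimensional defect TQFT in the sense of Definition~\ref{def:defectTQFT}, with target $\Vect_\C$; on a standard marked sphere $S_t$ it returns $\Hom_{\mathcal C}(\unit, c_t)$ and on a ribbon tangle $M$ it acts by \eqref{eq:RT-Ribbon}.

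The main obstacle is the bookkeeping in the composition (gluing) step: one must verify that the thickening of a $1$-stratum near a boundary surface $\Sigma$ — in particular its framing, which is dictated by the positively-oriented adjacent $2$-stratum — is exactly compatible, under the germ-of-collar identifications defining composition in $\Bordd$, with the thickening performed on the other side of $\Sigma$, so that no twist or mismatch of framings is introduced when gluing. This rests on the fact (Section~\ref{subsec:defectbord}) that every $1$-stratum has a neighbourhood of the standard cylindrical form $D_2^{p,\varepsilon}\times I$ and that the decoration data $\D^{\mathcal C}$ singles out a unique positive $2$-stratum, so the framing is canonically determined and is transported correctly across the gluing; making this last point watertight — rather than the rest, which is routine — is where the real work lies.
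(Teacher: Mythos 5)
Your proposal is correct and follows essentially the same route as the paper, which itself only sketches the construction (the defect data $\D^{\mathcal C}$ forces a unique positively-oriented adjacent $2$-stratum for each $1$-stratum, this determines the framing of the thickened ribbon, and one then evaluates $\zz_{\textrm{RT}}^{\Cat{C}}$ on the resulting ribbon bordism) and states the proposition without a separate proof. Your elaboration — packaging the thickening as a symmetric monoidal functor to the ribbon bordism category and isolating the framing-compatibility under gluing, guaranteed by the standard cylindrical neighbourhoods of Section~\ref{subsec:defectbord}, as the one point needing care — is exactly the content the paper leaves implicit.
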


\medskip

Next we show that the Gray category with duals $\mathcal T_{\zz^{\mathcal C}}$ associated to $\zz^{\mathcal C}$ is equivalent  to 
$\mathcal C$ regarded as a Gray category with duals $\underline{\underline{\Cat{C}}}$.
Indeed, for any modular tensor category~$\Cat{C}$, the Gray category $\underline{\underline{\Cat{C}}}$ by definition has only one object and a single 1-morphism, while the 2- and 3-morphisms of $\underline{\underline{\Cat{C}}}$ are the objects and morphisms of $\Cat{C}$, respectively. 
Both horizontal composition~$\fus$ and the Gray product~$\sta$ in $\underline{\underline{\Cat{C}}}$ are the tensor product in~$\Cat{C}$ (recall that we assumed $\Cat{C}$ to be strictly associative), and the tensorator is given by the braiding of $\Cat{C}$. 

\begin{proposition}
There is a canonical strict equivalence of Gray categories with duals  
$$
\mathcal T_{\zz^{\mathcal C}} \stackrel{\cong}{\lra} \underline{\underline{\Cat{C}}} \, . 
$$
\end{proposition}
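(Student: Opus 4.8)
The plan is to construct an explicit strict functor of Gray categories with duals $\Phi \colon \mathcal T_{\zz^{\Cat C}} \to \underline{\underline{\Cat C}}$ and verify it is a strict equivalence in the sense of Definition~\ref{def:Grayequiv} (enhanced with compatibility with duals as in Definition~\ref{def:strictFGraydual}). On objects and on $1$-morphisms there is no choice: since $D_3^{\Cat C} = D_2^{\Cat C} = \{\ast\}$, the pre-$2$-category $\mathcal F^{\textrm p}_{\textrm d}\K^{\D^{\Cat C}}$ has a single object and the $1$-morphisms of $\mathcal T_{\zz^{\Cat C}}$ are stacks of $\{\ast\}$-labelled planes, which all become the unique $1$-morphism of $\underline{\underline{\Cat C}}$; the key point is that all parallel $1$-morphisms of $\mathcal T_{\zz^{\Cat C}}$ are biequivalent (via the folds $\coev_\alpha$ and their duals), so $\Phi$ will be biessentially surjective on each hom-$2$-category and triessentially surjective on objects. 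On $2$-morphisms, a $2$-morphism $X$ in $\mathcal T_{\zz^{\Cat C}}(1_\ast, 1_\ast)$ is (a cylinder over) a pre-$2$-category string diagram whose vertices, after the thickening procedure described before Proposition~\ref{eq:RT}, produce an embedded coloured ribbon graph in a ball; I would send $X$ to the object $c_X \in \Cat C$ obtained by reading off the colours and $\pm$-signs on the lines met along the right face $x=1$, i.e.\ $c_X = c_1^{\eps_1}\otimes\cdots\otimes c_m^{\eps_m}$. The constraint in the definition of $D_1^{\Cat C}$ (exactly one positively oriented adjacent $2$-stratum per $1$-stratum) is precisely what makes this ribbon direction well defined.

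\textbf{The core identification of $3$-morphisms.} The substance of the proof is that $\Phi$ is fully faithful on $3$-morphisms, i.e.\ that $\zz^{\Cat C}(S_{X,X'}) \cong \Hom_{\Cat C}(c_X, c_{X'})$ naturally. By construction $\zz^{\Cat C}$ is the Reshetikhin--Turaev functor evaluated on the bordism obtained from the cube $C_{D_X,D_{X'}}$ by the thickening-into-ribbons recipe, and the sphere $S_{X,X'}$ carries the $\Cat C$-marked structure of a standard marked sphere $S_t$ with $t = \cc{t_X}\sta t_{X'}$ (the bottom face of $C_{D_X,D_{X'}}$ contributing $t_{X'}$ and the top face, with reversed orientation, contributing $\cc{t_X}$). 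Hence $\zz^{\Cat C}(S_{X,X'}) = F(S_t) = \Hom_{\Cat C}(\unit, c_{X'}\otimes c_X^\dagger)$, and the duality adjunction in the spherical category $\Cat C$ gives the canonical isomorphism $\Hom_{\Cat C}(\unit, c_{X'}\otimes c_X^\dagger) \cong \Hom_{\Cat C}(c_X, c_{X'})$. I would then check that the evaluation of a $3$d diagram for $\zz^{\Cat C}$ with a single vertex $\nu$ decorated by $\Phi_\nu$ (Definition~\ref{def:eval}), when translated through this recipe, becomes exactly the Reshetikhin--Turaev invariant of the corresponding ribbon tangle precomposed with $\Phi_\nu$, by formula~\eqref{eq:RT-Ribbon}; this is what identifies vertical composition of $3$-morphisms in $\mathcal T_{\zz^{\Cat C}}$ with composition of morphisms in $\Cat C$.

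\textbf{Matching the algebraic structure.} Next I would verify that $\Phi$ intertwines all the composition operations. The Gray product $\sta$ of $2$-morphisms is, by the construction in Section~\ref{subsec:tricatfromZ}, stacking cubes along the negative $x$-direction; under the ribbon recipe this places the two ribbon graphs side by side without braiding, so $c_{Y\sta X} = c_Y \otimes c_X$, matching $\sta$ in $\underline{\underline{\Cat C}}$. Horizontal composition $\fus$ of $2$-morphisms is stacking along the $y$-axis, which concatenates ribbon graphs along the ribbon direction and hence also corresponds to $\otimes$ in $\Cat C$ on the ``detached'' $2$-endomorphism level; one checks that on the unique $1$-morphism both $\fus$ and $\sta$ reduce to $\otimes$, consistent with the Eckmann--Hilton phenomenon already built into Definition~\ref{def:Graycat}. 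The tensorator~\eqref{eq:Ztensorator}, being $\zz^{\Cat C}$ applied to a cube with an $X$--$Y$ crossing, is sent by $\Phi$ to the Reshetikhin--Turaev invariant of a single ribbon crossing, which is the braiding $c_{X,Y}$ of $\Cat C$; this is exactly the tensorator of $\underline{\underline{\Cat C}}$. For the duality structure: the $\dagger$-dual of a $2$-morphism (rotation by $\pi$ about the $x$-axis, \eqref{eq:Xhash}) reverses all line orientations, hence $\Phi(X^\dagger) = (c_X)^\dagger = \Phi(X)^\dagger$, and $\ev_X, \coev_X$ go to the spherical $\ev, \coev$ of $\Cat C$ since $\zz^{\Cat C}$ of the relevant U-shaped ribbon is the coevaluation morphism; the $\#$-dual $\alpha\hash$ and the fold $\coev_\alpha$ land among the (essentially unique) structures of $\underline{\underline{\Cat C}}$ and the triangulator $\tau_\alpha$, being $\zz^{\Cat C}$ of a Zorro-$Z$ ribbon configuration, evaluates to an identity in $\Cat C$, matching $\tau$ in $\underline{\underline{\Cat C}}$.

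\textbf{Main obstacle.} The routine parts are the compatibility with $\sta$, $\fus$ and identities, which follow from isotopy invariance of $\zz^{\Cat C}$ and functoriality exactly as in the proofs of Theorems~\ref{thm:TZGray} and~\ref{thm:TZGrayduals}. The delicate point I expect to cost the most work is showing that the identification $\zz^{\Cat C}(S_{X,X'}) \cong \Hom_{\Cat C}(c_X, c_{X'})$ is \emph{canonical}, i.e.\ independent of the choices of representing diagrams $D_X, D_{X'}$ and compatible with the direct-limit description~\eqref{eq:3HomTZ}: one must check that the cylinder isomorphisms $\zz^{\Cat C}(Z_{f,f'})$ correspond under the ribbon recipe to the reindexing isomorphisms $F(f)$ of the Reshetikhin--Turaev projective system of standard marked spheres, and that the framing/tangent-vector data implicit in the marked-surface formalism is produced consistently by the thickening of $1$-strata into ribbons. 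Getting this bookkeeping exactly right --- in particular verifying that the orientation and tangent-vector conventions of Section~\ref{subsec:tricatfromZ} match those of $S_t$ in~\eqref{eq:ExplRT} --- is where the real care is needed; once it is in place, strictness of the equivalence and compatibility with duals are immediate from the definitions.
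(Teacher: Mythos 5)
Your proposal is correct and follows essentially the same route as the paper: you construct the same functor (reading the $\Cat{C}$-tuple off a 2-morphism to get $c_X$, and identifying the 3-morphism space $\zz^{\Cat{C}}(S_{X,X'})$ with $\Hom_{\Cat{C}}(\unit, c_{X'}\otimes c_X^\dagger)\cong\Hom_{\Cat{C}}(c_X,c_{X'})$ via the Reshetikhin--Turaev state space of a standard marked sphere and duality in $\Cat{C}$), you resolve independence of representing diagrams by the same projective-limit/cone-isomorphism argument, and you get functoriality on 3-morphisms from~\eqref{eq:RT-Ribbon} exactly as the paper does. The only difference is that you spell out more of the routine compatibility checks (tensorator mapping to the braiding, duals, the two compositions both becoming $\otimes$) that the paper's proof leaves implicit.
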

\begin{proof}
The equivalence $F:\mathcal T_{\zz^{\mathcal C}} \rightarrow \underline{\underline{\Cat{C}}}$ is defined as follows: it sends the object of $\mathcal T_{\zz^{\mathcal C}}$ to the object of $\underline{\underline{\Cat{C}}}$ and every 1-morphism of $\mathcal T_{\zz^{\mathcal C}}$ to the single 1-morphism in~$\underline{\underline{\Cat{C}}}$. 
Given a 2-morphism $X$ in $\mathcal T_{\zz^{\mathcal C}}$, which we identify with the string diagram on the bottom of the cube~$X$ as in Section \ref{subsec:tricatfromZ}, we forget the lines and project the string diagram to the $y$-axis. 
This gives a tuple of $\Cat{C}$-decorated points together with a 
sign, hence a $\Cat{C}$-tuple $X_{t}$. By definition the functor $F$ assigns the object $c_{X_{t}} \in \Cat{C}$ to~$X$. 

Consider next the space of 3-morphisms $\Hom_{\mathcal T_{\zz^{\mathcal C}}}(X,Y)$ between 2-morphisms $X,Y$ as defined in~\eqref{eq:3HomTZ}. 
After forgetting the 1-strata on the sphere $S_{X,Y}$ which is obtained from gluing $X$ and $Y$ together as in Section \ref{subsec:tricatfromZ}, the resulting $\Cat{C}$-marked sphere is homeomorphic to the standard $\Cat{C}$-marked sphere 
$S_{Y_{t} \cup \cc{X_{t}}}$ as  defined at the beginning of the present section. Here we use the symbol $\cup$ to denote the concatenation of tuples. 
Pick an arbitrary homeomorphism $\phi: S_{X,Y} \rightarrow
 S_{Y_{t} \cup \cc{X_{t}}}$. By definition, we have the associated vector space $F( S_{Y_{t} \cup \cc{X_{t}}})= \Hom_{\Cat{C}}(\unit, c_{Y} \otimes c_{X}^{\dagger})$, and the construction of the RT state spaces gives a chain of isomorphisms
\begin{equation}
  \label{eq:3-morphRT}
  \mathcal T_{\zz^{\mathcal C}}(\alpha,\beta)=\mathcal \zz^{\Cat{C}}(S_{X,Y}) \stackrel{\phi_{*}}{\longrightarrow} \Hom_{\Cat{C}}(\unit, c_{Y} \otimes c_{X}^{\dagger}) 
\cong \Hom_{\Cat{C}}(c_{X},c_{Y})
\end{equation}
where $\phi_{*}$ denotes the cone isomorphism from the projective limit, while the last isomorphism is obtained from duality in~$\Cat{C}$. 

The value of $F$ on 3-morphisms is defined by composing the isomorphisms in~\eqref{eq:3-morphRT}. By definition of 
$\zz^{\Cat{C}}_{\textrm{RT}}(S_{X,Y})$ as a projective limit, this is independent of the choice of the homeomorphism $\phi$. $F$ is functorial on the level of 3-morphisms due to \eqref{eq:RT-Ribbon}. By construction, it is functorial with respect to 2-morphisms and 1-morphisms. Hence it is a functor of Gray categories. It follows directly that it is strictly compatible with 
the duals on the level of 1- and 2-morphisms. By the isomorphism~\eqref{eq:3-morphRT}, it is fully faithful. 
The essential surjectivity for objects, 1- and 2- morphisms is also clear. Hence, $F$ is an equivalence of Gray categories with duals as in Definition~\ref{def:strictFGraydual}. 
\end{proof}

\subsection{HQFTs as defect TQFTs}
\label{subsec:htqfts-as-defect}

In this section we construct two defect extensions for every $G$-graded spherical fusion category $\Cat{A}$. 
On the one hand, $\Cat{A}$ allows us to define a non-trivial defect extension $\zzAtriv$ of the trivial TQFT~\eqref{eq:triv}. 
On the other hand, 
it yields a defect extension $\mathcal{Z}^{\Cat{A}}$ of the Turaev-Viro TQFT \cite{TurVir} associated with the neutrally graded component $\Cat{A}_{1}$. 
Both constructions rely on the HQFT construction of \cite{Hqft1}.

\medskip

Let $G$ be a finite group. A \textsl{spherical $G$-graded fusion category} $\Cat{A}$ is a spherical fusion category over $\C$ that is $G$-graded, $\Cat{A}= \bigoplus_{g \in G}\Cat{A}_{g}$, such that the tensor product is $G$-linear. 
Our conventions for duals  $x^*$ of objects $x \in \Cat{A}$ are as in Section~\ref{subsec:resh-tura-as}. 
Additionally we choose representatives $x_i$ for the isomorphism classes of simple objects, labelled  by a finite set $ I \ni i$. 
The $G$-grading induces a decomposition $I=\bigcup_{g \in G}I_g$ with finite index sets $I_g$ for simple objects in $\Cat{A}_g$.

For our construction of a defect extension of Turaev-Viro theory below, we will need to consider stratifications of 3-manifolds for which every 3-stratum is an open 3-ball. 
However, this is in general not possible without the presence of 0-strata in the interior -- contrary to our setup with stratified bordisms of standard form in Section~\ref{subsec:defectbord}. 
Hence we specify the type of \textsl{allowed stratification} for this section in case of  a closed 3-manifold $M$:
\begin{enumerate}
\item 
$M$ is a stratified manifold as in Definition~\ref{def:stratman}.
\item 
Every 1-stratum in~$M$ has a neigbourhood of standard form, cf.~\eqref{eq:starlikecylinder}.
\item 
Every 0-stratum in~$M$ has a neigbourhood whose image $N$ in $\R^{3}$ is a `linearly filled sphere': $N$ is a stratified ball whose stratification is induced by the stratification of the boundary $\partial N$ in the sense that each interior stratum is obtained by connecting the points in each boundary stratum with the centre of~$N$ by straight lines. 
For example
$$
N = 
 \tdplotsetmaincoords{50}{135}
 \begin{tikzpicture}[scale=2.0,opacity=0.3,tdplot_main_coords,fill opacity=0.4, baseline=-0.2cm, >=stealth]
 %
 \pgfsetlinewidth{.1pt}
 \tdplotsetpolarplotrange{0}{180}{0}{360}
 \tdplotsphericalsurfaceplot{60}{60}
 {1}
 {gray}
 {blue!14}
     {}%
     {}%
     {}%
 %
 \coordinate (O) at (0 ,0 ,0);
 \tdplotsetcoord{P1}{1}{25}{265}
 \tdplotsetcoord{P3}{1}{25}{80}
 \tdplotsetcoord{P4}{1}{180}{245}
 %
 \draw[green!60!black, very thick, opacity=0.8] (O) -- (P4); 
 %
 \draw[green!60!black, very thick, opacity=0.8] (O) -- (P1); 
 %
 \foreach \q in { 1, 2.5, ..., 360 } 
 {
 	\tdplotsetcoord{Q}{1}{25}{\q}
 	\draw[red!80, semithick, opacity=0.3] (O) -- (Q); 
 }
 %
 	\foreach \angle in { 65 } 
 	{
 		\tdplotsinandcos{\sintheta}{\costheta}{\angle}%
 		\coordinate (P) at (0,0,\sintheta);
 		\tdplotdrawarc[red!80!black, ultra thick, opacity=0.8]{(P)}{\costheta}{0}{360}{}{}
 	}
 %
 \draw[green!60!black, very thick, opacity=0.8] (O) -- (P3); 
 %
 \fill[color=green!60!black, opacity=1] (P1) circle (0.8pt) node[left] (0up) {};
 \fill[color=green!60!black, opacity=1] (P3) circle (0.8pt) node[left] (0up) {};
 \fill[color=green!60!black, opacity=1] (P4) circle (0.8pt) node[left] (0up) {};
 \end{tikzpicture}
\, . 
$$
\end{enumerate}

Similary, if $M$ has a boundary, for~$M$ to have an \textsl{allowed stratification} we require that $(M, \partial M)$ is a stratified manifold according to Definition \ref{def:stratmanbdry} such that the conditions above are satisfied for the 1- and 0-strata in the interior of~$M$. 
Later we will show that in this case the stratification of~$M$ is of the type that is used in the construction of the HQFT in \cite{Hqft1}. 

\medskip

Next we describe a set of defect data $\D^{G}$ for every finite group $G$. 
From $\D^{G}$ we will construct the two defect extensions $\zzAtriv$ and $\mathcal{Z}^{\Cat{A}}$ mentioned at the beginning of this section. 

We define the three sets in $\D^G$ to be 
$$
D_3^G = \{\ast \} \, , \quad
D_2^G = G 
$$
and
\begin{align}
\label{eq:D1-group}
D_1^G = \bigsqcup_{m\in \Z_+} \Big\{   \big[   (g_{1}, \eps_{1}) , (g_{2}, \eps_{2}), \dots, (g_{m}, \eps_{m}) \big] \; \Big| \;  \prod_{i =1}^{m} g_{i}^{\eps_{i}}=1 \Big\}
\end{align}
where the equivalence classes $[ - ]$ of tuples in $G \times \{\pm \}$ are taken with respect to cyclic permutations. 
The source and target maps $s,t: D_2^G \to D_3^G$ are trivial, and the folding map~$f$ is defined to be the identity on the underlying cyclic product of group elements and signs.  
By construction, $f$ never maps to the set $D_{3}^G$.

The labelling of the strata of a stratified bordism~$M$ by $\D^{G}$ is as in the definition of a $\D^{G}$-labelled defect bordism in Section~\ref{subsec:decoratedbordisms} with no further labelling of the vertices in the interior. 
Concretely, a labelling consists of a decoration of all 2-strata~$r$ by elements $\ell(r) \in G$ such that for every 1-stratum~$e$ we have
  \begin{equation}
    \label{eq:condition-edge}
\ell(r_{1})^{\eps_{1}} \cdot \ell(r_{2})^{\eps_{2}} \cdot \ldots \cdot \ell(r_{m})^{\eps_{m}}=1 \, ,    
  \end{equation}
where $r_{i}$ denote the 2-strata that meet a small positive loop around $e$ starting with $r_{1}$, and $\eps_{i}=+$ if the orientation of the loop together with the orientation of $r_{i}$ is positive in~$M$, and $\eps_{i}=-$ otherwise. 
If condition \eqref{eq:condition-edge} is satisfied around each 1-stratum~$e$, then there exists a unique label map~$\ell$ of the 1-strata, otherwise~$\ell$ is not well-defined. 

In the following it will be convenient to consider  an oriented 1-stratum $e$ also with opposite orientation $\cc{e}$ and we denote by $E$ the set of 1-strata with both possible orientations. To avoid 
confusion, we call the elements  $e \in E$ \textsl{edges} (of which there are hence twice as many as 1-strata). 
As before~$M_2$  denotes the set of 2-strata of $M$.

The last ingredient we will need for our constructions of defect TQFTs is a type of `state sum labelling' \cite{TurVir}. 
For this, recall the index set $I=  \bigcup_{g \in G}I_g$ for the simple objects of the spherical $G$-graded fusion category~$\Cat{A}$. 
A \textsl{colouring}~$c$ of a decorated bordism $M$ consists of a function $c: M_2 \rightarrow I$, where for each 2-stratum $r \in M_2$ with label $\ell(r) \in G$ we  require that $c(r) \in I_{\ell(r)}$. 
Thus each 2-stratum gets assigned a simple object $x_{c(r)} \in \Cat{A}_{\ell(r)}$.
Such a colouring $c$ yields the following additional assignments for edges and vertices:

\begin{enumerate}
\item A small positive loop around an edge $e \in E$ determines 
the object $x_{c(r_{1})}^{\eps_{1}} \otimes \dots \otimes x_{c(r_{m})}^{\eps_{m}}$, 
where $r_{i}$ are the 2-strata adjacent to~$e$, and the signs $\eps_{i}$ are determined as above. 
As in Section~\ref{subsec:resh-tura-as}, we write~$\unit$ for the tensor unit of~$\Cat{A}$, and for $x \in \Cat{A}$ we set $x^{+}=x$ and $x^{-}=x^{\dagger}$.
For every cyclic permutation 
 $\sigma \in C_m$, 
there are isomorphisms 
\be\label{eq:Homisocyclic}
\Hom_{\Cat{A}} \big(\unit, x_{c(r_{1})}^{\eps_{1}} \otimes \dots \otimes  x_{c(r_{m})}^{\eps_{m}}\big)
\cong
\Hom_{\Cat{A}} \big(\unit, x_{c(r_{\sigma(1)})}^{\eps_{\sigma(1)}} \otimes \dots \otimes  x_{c(r_{\sigma(m)})}^{\eps_{\sigma(m)}}\big)
\ee
which follow from the spherical structure of $\Cat{A}$. 
The isomorphisms~\eqref{eq:Homisocyclic} define a projective system, and we assign its projective limit $H_{c}(e)$ to the edge~$e$. 

Furthermore, by the calculus for fusion categories, see e.\,g.~\cite{BakalovKirillov}, there is a non-degenerate pairing
\begin{equation}
  \label{eq:pairing-edg-spaces}
  \ev(e): H_{c}(e) \otimes H_{c}(\cc{e}) \lra \C
\end{equation}
which yields an isomorphism $H_{c}(\cc{e}) \cong H_{c}(e)^{*}$. 
We write $\coev(e)$ for the copairing $\C \rightarrow H_{c}(\cc{e}) \otimes H_{c}(e)$ associated to $\ev(e)$. 
\item 
For a vertex $\nu$ in a boundary component~$\Sigma$ of a stratified bordism~$M$ we define a vector space $H_{c}(\Sigma,\nu)$ analogously to step (i), using a small positive loop around $\nu$ in~$\Sigma$. 
Then we set 
\be\label{eq:HcSigma}
H_{c}(\Sigma)= \bigotimes_{\nu}H_{\nu}(\Sigma,\nu)
\ee
where the tensor product is over all vertices in~$\Sigma$. 
\item 
A vertex $\nu$ in the interior of $M$ defines a vector as follows. 
Consider the induced stratified manifold on a small ball $B_{\nu}$ around~$\nu$. 
The 1- and 0-strata on the boundary $\partial B_{\nu} \cong S^{2}$ determine an oriented coloured graph $\Gamma_{\nu}$ on the 2-sphere. 
The Reshetikhin-Turaev evaluation of graphs on 2-spheres \cite{ReshTurEval} for spherical tensor categories gives an element
 \begin{equation}
   \label{eq:RT-eval-sphere-graph}
   F_{\Cat{A}}(\Gamma_{\nu}) \in H_{c}(\partial B_{\nu})^{*} \, . 
 \end{equation}
\end{enumerate}

After these preliminaries we can move on to construct the two defect TQFTs $\zzAtriv$ and $\mathcal{Z}^{\Cat{A}}$.

\subsubsection[The defect TQFT $\zzAtriv$]{The defect TQFT $\boldsymbol{\zzAtriv}$}

In TQFTs of Turaev-Viro type one typically  considers `fine stratifications', where each 3-stratum is a 3-ball. 
We do not require this in our case, but using the construction of \cite{Hqft1} still leads to a defect TQFT 
$$
\zzAtriv: \Borddef(\D^G) \lra \Vect_\C
$$ 
which we describe in the following. 
It is particularly simple as we do not need to consider vertices in the interior of a bordism. 

We begin by defining the state spaces that $\zzAtriv$ assigns to objects in $\Borddef(\D^G)$. 
Note that each colouring~$c$ of the 2-strata of a defect bordism~$M$ induces a colouring of the 
1-strata of the boundary $\partial M$, since they are required to be boundaries of 2-strata.  
Hence we can set $H_{c}(\partial M)= \bigotimes_{\Sigma} H_{c}(\Sigma)$, where the tensor 
product is over all components $\Sigma$ of $\partial M$, taken with the induced orientations, and $H_{c}(\Sigma)$ as in~\eqref{eq:HcSigma}. 
With this we define 
\begin{equation}
  \label{eq:triv-state}
  \zzAtriv(\partial M)= \bigoplus_{c} H_{c}(\partial M) \, , 
\end{equation}
with the direct sum over all colourings $c$. 

Next we define a vector $\zzAtriv(M) \in \zzAtriv(\partial M)$. Note that the set of 1-strata is the union of the sets of open 1-strata which have boundary points in $\partial M$, and of closed 1-strata which form a loop. 
It follows from the definition of $H_{c}(\partial M)$ that there is a canonical isomorphism $H_{c}(\partial M) \cong \bigoplus_{\text{open} \; e}H_{c}(e) \otimes H_{c}(\cc{e})$, with the sum over all open 1-strata $e$. 
We use the copairing of  \eqref{eq:pairing-edg-spaces} to define 
\begin{equation}
  \label{eq:triv-cob}
  \zzAtriv(M)= \sum_{c} \Big(\prod_{\text{closed}\;  f}\dim_{\C} H_{c}(f)\Big) \bigotimes_{\text{open} \; e} \coev(e) 
  \; \in \; \zzAtriv(\partial M) \, .
\end{equation}
For   a bordism $M: \Sigma \rightarrow \Sigma'$, we can use that there is a canonical isomorphism 
 $H_{c}(\partial M) \cong H_c(\Sigma)^* \otimes H_c(\Sigma')$ to extract from~\eqref{eq:triv-cob} a linear map  $H_c(\Sigma) \to H_c(\Sigma')$. 

The factor of the dimensions of the spaces associated to loops in the interior is needed for the functoriality of the construction: 
If two bordisms are glued, two open 1-strata might be glued to a closed 1-stratum, which  yields a factor 
of the dimension of  the corresponding $\Hom$ space. 
It is clear that this construction assigns the identity to the cylinder over a decorated 2-manifold.
In total, it follows that we have defined a functor $\zzAtriv:\Borddef(\D^{G})\rightarrow \Vect_\C$ which is a defect extension of $\zztriv_\emptyset$.
By construction, the disjoint union of two bordisms is mapped to the tensor product of the corresponding linear maps. Hence, the functor is symmetric monoidal and thus a defect TQFT.

\paragraph{The associated tricategory $\boldsymbol{\mathcal T_{\zzAtriv}}$.}

We define a linear Gray category with duals $\mathcal G^{\Cat{A}}$ that is canonically associated to any $G$-graded fusion category $\Cat{A}$.
Then we argue that $\mathcal G^{\Cat{A}}$ is equivalent to $\mathcal T_{\zzAtriv}$.

 As objects, 1- and 2-morphisms of $\mathcal G^{\Cat{A}}$ we take the same as in  $\mathcal T_{\zzAtriv}$, i.\,e.~cylinders over the corresponding  diagrams in the free pre-2-category $\Fdp \K^{\D^{G}}$ of Section~\ref{subsubsec:bicatsduals}. 
Concretely, $\mathcal G^{\Cat{A}}$ has a single object, 1-morphisms are 3d diagrams with no lines and with surfaces labelled by elements of~$G$ (the surfaces are allowed to bend). 
2-morphisms are Gray category diagrams without vertices such that condition~\eqref{eq:condition-edge} is satisfied around every line. 

The space of 3-morphisms $\Hom_{\mathcal G^{\Cat{A}}}(X,Y)$ between parallel 2-morphisms~$X$ and~$Y$ is as follows. 
First we note that the notion of colouring carries over to the 2-strata of a 2-morphism in $\mathcal G^{\Cat{A}}$. 
Hence for every 1-stratum $X_j$ in the 2-morphism~$X$, and for every colouring $c$ of the 2-strata in~$X$, we have the vector space
$$
H_{c}(X_j) = \Hom_{\Cat{A}} \big(\unit, c(r_{1})^{\eps_{1}} \otimes \dots \otimes c(r_{m})^{\eps_{m}} \big)
$$
where $r_{i}$ are the 2-strata around $X_j$, and the signs $\eps_{i}$ are as in the previous section. 
Similarly, there are vector spaces $H_{c}(Y_k)$ for all 1-strata $Y_k$ in~$Y$, and we can define 
\begin{equation}
\label{eq:3-morph}
\Hom_{\mathcal G^{\Cat{A}}}(X,Y) := 
\bigoplus_{c} \Hom_{\C} \Big(\bigotimes_{j} H_{c}( X_{j}), \, \bigotimes_{k}H_{c}( Y_{k})\Big) \, . 
\end{equation}

Vertical composition of 3-morphisms in $\mathcal G^{\Cat{A}}$ is simply concatenation of linear maps, and horizontal composition is the tensor product $\otimes_\C$. 
The 2-functors $\alpha \sta (-)$ and $(-) \sta \alpha$ for 1-morphisms $\alpha$ are taken to act as the identity on 3-morphisms, and the tensorator is given by the symmetric flip in the tensor product. 

To complete the definition of $\mathcal G^{\Cat{A}}$, we have to specify its duals.
The duals of 1- and 2-morphisms of $\mathcal G^{\Cat{A}}$ are cylinders over the duals of $\Fdp \K^{\D^{G}}$, i.\,e.~taking duals corresponds to rotating diagrams. 
The evaluation map $\ev_X$ for a 2-morphism~$X$ is given by the canonical evaluation map in the space of 3-morphisms
\begin{equation}
\label{eq:3-morphi-eval}
\Hom_{\mathcal G^{\Cat{A}}} \big(X \otimes X^{\dagger},1\big) = 
\bigoplus_{c} \Hom_{\C} \Big(\bigotimes_{i} H_{c}( X_{i}) \otimes H_{c}( X_{i})^*  , \C \Big) \, . 
\end{equation}
Finally, the coevaluation $\coev_X$ is likewise obtained from the corresponding coevaluation map for vector spaces. 

\begin{proposition}
There is an equivalence of Gray categories with duals 
$$
\mathcal T_{\zzAtriv} \stackrel{\cong}{\lra} \mathcal G^{\Cat{A}} \, . 
$$
\end{proposition}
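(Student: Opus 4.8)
The plan is to construct an explicit strict functor of Gray categories with duals $F : \mathcal T_{\zzAtriv} \to \mathcal G^{\Cat{A}}$ and verify that it satisfies the three conditions of Definition~\ref{def:Grayequiv} (resp.~\ref{def:strictFGraydual}). On objects, 1-morphisms, and 2-morphisms, both Gray categories have \emph{exactly the same} underlying cube diagrams (cylinders over $\Fdp \K^{\D^G}$), so $F$ is defined to be the identity on these three layers; this makes conditions (i)–(ii) on the non-3-morphism data, including compatibility with $\sta$, $\coev_\alpha$, $\tau_\alpha$, and the $\dagger$-duals, automatic, and it trivially makes $F$ biessentially surjective on 1-morphisms, essentially surjective on 2-morphisms, and triessentially surjective on objects (there is nothing to check beyond equality). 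The entire content of the proof is therefore the definition of $F$ on 3-morphisms together with checking it is a linear isomorphism on each $\Hom$ space and is compatible with $\circ$, $\fus$, $\sta$, the tensorator, $\ev_X$, and $\coev_X$.

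The key step is to identify $\zz^{\Cat{A}}_{\mathrm{triv}}(S_{D_X,D_{X'}})$ with the right-hand side of~\eqref{eq:3-morph}. First I would observe that for parallel 2-morphisms $X,X'$ with chosen representing diagrams $D_X, D_{X'}$ whose non-horizontal faces agree, the defect sphere $S_{D_X,D_{X'}}$ is obtained by gluing the bottom of $D_X$ to the bottom of $D_{X'}$; its 1-strata are the (reversed copies of the) $X_j$ together with the $X'_k$, and its 0-strata are the vertices appearing in $D_X$ and $D_{X'}$. Applying the definition~\eqref{eq:triv-state} of $\zz^{\Cat{A}}_{\mathrm{triv}}$ on objects to this sphere, one gets $\bigoplus_c H_c(S_{D_X,D_{X'}})$, and by the canonical isomorphism $H_c(\partial M)\cong \bigoplus_{\text{open }e} H_c(e)\otimes H_c(\bar e)$ used in the construction of~\eqref{eq:triv-cob} — here applied to the cylinder interpolating between $X$ and $X'$, whose open 1-strata are precisely the $X_j$ (equivalently the $X'_k$ since $X,X'$ are parallel) — this space is naturally $\bigoplus_c \Hom_\C(\bigotimes_j H_c(X_j), \bigotimes_k H_c(X'_k))$, which is exactly $\Hom_{\mathcal G^{\Cat{A}}}(X,X')$. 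One must check that this identification is compatible with the direct system of isotopies defining~\eqref{eq:3HomTZ}, which follows because isotopies induce the identity on colourings and, via~\eqref{eq:Homisocyclic} and the definition of the limit $H_c(e)$, the identity (up to the coherent sphericity isomorphisms) on the edge spaces. I would then \emph{define} $F$ on 3-morphisms to be this isomorphism; linearity and bijectivity are then immediate.

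Next I would verify functoriality of $F$ on 3-morphisms, i.e.~that it intertwines the three compositions. For $\circ$-composition, the relevant 3d diagram for $\zz^{\Cat{A}}_{\mathrm{triv}}$ has no interior vertices, so by the evaluation procedure $\zz^{\Cat{A}}_{\mathrm{triv}}(\Gamma^{\mathrm{cut}})$ is built purely out of the copairings $\coev(e)$ of~\eqref{eq:pairing-edg-spaces} on the 1-strata being merged; tracing through~\eqref{eq:triv-cob}, this is exactly composition of linear maps on the summands of~\eqref{eq:3-morph}, matching the definition of $\circ$ in $\mathcal G^{\Cat{A}}$. For $\fus$- and $\sta$-composition one argues similarly: stacking cubes in the $-y$ or $-x$ direction contributes no new vertices, and functoriality of $\zz^{\Cat{A}}_{\mathrm{triv}}$ under disjoint union turns these into $\otimes_\C$ of linear maps, which is how $\fus$ is defined in $\mathcal G^{\Cat{A}}$ and how the $\sta$-action is defined (acting as the identity on 3-morphisms). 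The tensorator $\sigma_{X,Y}$ of $\mathcal T_{\zzAtriv}$ is $\zz^{\Cat{A}}_{\mathrm{triv}}$ applied to the crossing cube~\eqref{eq:Ztensorator}; since the trivial-type functor evaluates this braiding on a diagram with no interior vertex and with colourings simply transported across, one gets precisely the symmetric flip in $\Vect_\C$, which is the chosen tensorator of $\mathcal G^{\Cat{A}}$. Finally, $F(\ev_X)$ and $F(\coev_X)$ agree with~\eqref{eq:3-morphi-eval} and its coevaluation counterpart because $\ev_X$ in $\mathcal T_{\zzAtriv}$ is $\zz^{\Cat{A}}_{\mathrm{triv}}$ applied to the U-shaped 3-ball, whose evaluation by~\eqref{eq:triv-cob} is, again, just the vector-space copairing on the $H_c(X_j)$.

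The main obstacle I anticipate is purely bookkeeping rather than conceptual: making the identification of the edge spaces $H_c(e)$ genuinely canonical and checking the cocycle-type compatibility with \emph{all} cyclic-reordering isomorphisms~\eqref{eq:Homisocyclic} and with the isotopies of the direct system of~\eqref{eq:3HomTZ}, so that $F$ is well-defined independently of the choices of representing diagrams $D_X, D_{X'}$ and of homeomorphisms to standard spheres. This is where sphericity of $\Cat{A}$ is essential, exactly as in the construction of~\cite{Hqft1}, and one should cite that the relevant coherence (invariance of the projective limit $H_c(e)$ under the loop-based reorderings) is already established there. Once that coherence is granted, every other check reduces to unwinding definitions, and I would present those as routine verifications rather than spelling them out in full.
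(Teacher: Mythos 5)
Your proposal is correct and follows essentially the same route as the paper: the paper's proof also takes $F$ to be the identity on objects, 1- and 2-morphisms and identifies the 3-morphism spaces via the canonical isomorphism $V^{*}\otimes W \cong \Hom_{\C}(V,W)$ applied to the edge spaces of source and target, declaring the remaining checks immediate. You simply spell out in more detail the verifications (compatibility with the direct system of representing diagrams, the three compositions, the tensorator, and the adjunction data) that the paper leaves as ``directly is an equivalence''.
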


\begin{proof}
Objects, 1- and 2-morphisms agree for both Gray categories by definition. 
Thus we can define a functor $F: \mathcal T_{\zzAtriv} \rightarrow \mathcal G^{\Cat{A}}$ which is the identity on objects, 1- and 2-morphisms, and which relates the sets of 3-morphisms by the canonical isomorphism $V^{*}\otimes W \cong \Hom_{\C}(V,W)$, applied to all vector spaces~$V$ on the source and~$W$ on the target 2-morphism of a 3-morphism. 
With these isomorphisms on the 3-morphisms, we obtain a strict functor of Gray categories~$F$, which directly is an equivalence of Gray categories with duals. 
\end{proof}

\subsubsection[The defect TQFT $\zz^{\mathcal A}$]{The defect TQFT $\boldsymbol{\zz^{\mathcal A}}$}

	In this section we formulate the HQFT construction in \cite{Hqft1} as a defect TQFT. 
The crucial differences to the defect extension $\zzAtriv$ above are as follows. 
Now we need to consider sufficiently fine stratifications of the bordisms. 
Generically this requires vertices~$\nu$ in the interior, to which we will assign the canonical vectors $F_{\Cat{A}}(\Gamma_{\nu})$ of~\eqref{eq:RT-eval-sphere-graph}. 
Finally, to obtain the vector spaces associated to stratified 2-manifolds, one performs a projection as is typical of state sum constructions. 

A stratification of a bordism $M$ is called \textsl{fine} if each 3-stratum of $M$ is diffeomorphic to an open 3-ball and each 2-stratum of $\partial M$ is diffeomorphic to an open $2$-disc. 
We shall need to  pass from a given bordism to a finer one: A \textsl{refinement} of a stratified bordism~$M$ is a stratification of~$M$ that is obtained 
by adding finitely many new strata to $(M,\partial M)$ such that~$M$ is still a stratified manifold with boundary with an allowed stratification as defined above. 
	In particular, vertices are allowed in the interior of the refinement of~$M$. Hence the refinement of a bordism in $\Bords$ may no longer be in this category, but it will be a bordism in $\Bordstrat_3$: 
	
\begin{lemma}
For every bordism in $\Bords$ there exists a fine 
	refinement in $\Bordstrat_3$. 
\end{lemma}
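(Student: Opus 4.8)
The plan is to prove the lemma by first producing \emph{some} refinement of the given bordism $M \in \Bords$ whose $3$-strata are open $3$-balls, and then refining further to make the $2$-strata of the boundary into open $2$-discs, while all along checking that the stratifications we produce remain allowed in the sense specified in Section~\ref{subsec:htqfts-as-defect} (so that, in particular, inner $0$-strata only occur with linearly-filled-sphere neighbourhoods) and hence land in $\Bordstrat_3$. Since $M$ is compact, all the finiteness conditions will be automatic once we work with enough care, so the content is entirely in the local models for the new strata.

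\textbf{Step 1: triangulate compatibly with the existing stratification.} First I would invoke the fact that a compact (topological, but in our situation smooth on strata) $3$-manifold admits a triangulation, and moreover that one may choose a triangulation subordinate to a given finite stratification, i.e.\ such that every existing $i$-stratum of $M$ is a subcomplex. This is a standard fact about triangulating stratified spaces (one may cite the general position / transversality machinery behind \cite{Hirsch}, or argue by hand using the fact that near every $1$-stratum and every boundary $0$-stratum the stratification is already of standard form~\eqref{eq:starlikecylinder}, \eqref{eq:starlikedisc}, so a local PL model is manifest, and then extend over the complement, which is an ordinary manifold with corners). The barycentric subdivision of this triangulation then provides a new stratification of $M$ whose $3$-strata are the open top-dimensional simplices — in particular open $3$-balls — and whose lower strata refine the old ones. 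One must check that the new $1$- and $0$-strata in the interior have the required neighbourhood models: a $1$-stratum is an open edge of the triangulation, whose star is a linearly filled cylinder over a star-like disc, matching~\eqref{eq:starlikecylinder}; an interior vertex has as its neighbourhood the star, which by definition of a simplicial complex is a linearly filled sphere in the sense of condition (iii) of Section~\ref{subsec:htqfts-as-defect}. At this stage $M$ together with its barycentric stratification is a bordism in $\Bordstrat_3$ (no longer in $\Bords$, since interior $0$-strata have appeared), it is a refinement of the original $M$, and all its $3$-strata are open $3$-balls.

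\textbf{Step 2: make the boundary $2$-strata into discs.} The boundary $\partial M$ inherits from Step~1 a stratification whose $2$-strata are the open top-dimensional simplices of the triangulated surface $\partial M$, hence already open $2$-discs — provided we triangulated $\partial M$ as a subcomplex, which we arranged. So after Step~1 the refinement is already fine in the sense of the definition: each interior $3$-stratum is an open $3$-ball and each boundary $2$-stratum is an open $2$-disc. One last check is compatibility with the collar/germ data used to define morphisms in $\Bordstrat_3$: the triangulation should be chosen to respect a collar of $\partial M$ (triangulate the collar $\partial M \times [0,1]$ as a product-like complex, which is always possible after subdivision), so that the boundary parametrisation of the original $M$ extends to the refinement. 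The main obstacle in the whole argument is precisely this Step~1 compatibility claim — that one can triangulate a stratified $3$-manifold so that every stratum is a subcomplex \emph{and} the resulting local models around new low-dimensional strata are of the allowed linearly-filled type — rather than any genuinely $3$-dimensional difficulty; it is a standard but slightly technical piece of PL topology, and in dimension $3$ it can if desired be made completely explicit using the normal forms~\eqref{eq:starlikecylinder} and~\eqref{eq:starlikedisc} already built into $\Bords$.

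\textbf{Step 3: conclude.} Putting Steps~1 and~2 together: the barycentric stratification of a subordinate triangulation of $M$ (adapted to a collar of $\partial M$) is a refinement of $M$ in the sense of Section~\ref{subsec:htqfts-as-defect}, it is a fine stratification, and it defines a bordism in $\Bordstrat_3$ with the same boundary parametrisation as $M$. This is exactly the assertion of the lemma.
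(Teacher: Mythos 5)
Your proposal is correct and follows essentially the same route as the paper: both produce a triangulation of $M$ subordinate to the given stratification and observe that such a triangulation is automatically a fine, allowed refinement in $\Bordstrat_3$. The only difference is that the ``standard but slightly technical'' compatibility claim you defer in Step 1 is precisely what the paper's proof supplies --- noting that the standard forms make the closure of each stratum a manifold with corners and then inducting over the dimension of the strata (triangulate the closures of the 1-strata with the 0-strata as vertices, then the 2-strata, then the 3-strata), using the fact that a manifold of dimension $\leqslant 3$ with triangulated boundary admits a compatible triangulation \cite[Thm.\,10.6]{Munkres} --- while your barycentric subdivision is unnecessary, since the open simplices of the triangulation itself are already open balls and discs.
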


\begin{proof}
We show that for any stratification of a bordism $M$, there is a  triangulation which is a refinement of the given stratification. 
It is clear that a triangulation is a fine stratification.
First  note that the standard form of the stratification of bordism in $\Bords$ implies that the closure of each stratum is a manifold (possibly with corners). 
Next we proceed inductively using the result that given an $n$-manifold~$M$ with $n \leqslant 3$ and a triangulation of its boundary, there exists a triangulation of~$M$ which restricts to the given triangulation on $\partial M$ \cite[Thm.\,10.6]{Munkres}.

To make use of this fact, we first pick a triangulation of the closure of all 1-strata of $M$ and $\partial M$ such that the $0$-strata are vertices of the triangulation. This defines a triangulation of 
the boundary of the closure of all 2-strata of $M$ and $\partial M$. Hence we can choose a compatible triangulation of the 2-strata. Finally, we can also triangulate the 3-strata accordingly with respect to the triangulations of their boundaries.   
\end{proof}

The above proof shows in particular that a bordism~$M$ with a fine stratification can be triangulated. 
It thus follows that the stratification satisfies all the requirements of a `skeleton' of~$M$ as defined in \cite{Hqft1}, so we can apply the HQFT construction from \cite{Hqft1} to such an~$M$.

Next we take~$M$ to be in $\Borddef(\D^{G})$, i.\,e.~$M$ is decorated with defect data in $\D^{G}$. 
	For any given refinement of~$M$ we label every new 2-stratum by the identity $1 \in G$, so the condition~\eqref{eq:condition-edge} is still satisfied for all edges in the refined stratification.
For such a fixed refinement of~$M$ we apply the construction of \cite{Hqft1} as follows: 

\begin{enumerate}
\item 
For any colouring $c$ of~$M$ we set $H_{c}(\partial M)= \bigotimes_{\nu}H_{c}(\nu)$ as in~\eqref{eq:HcSigma}, and we define $H(\partial M)= \bigoplus_{c} H_{c}(\partial M)$.  This is not yet the state space $\zz^{\mathcal A}(\partial M)$ of the our TQFT to-be. 
\item 
For any colouring~$c$ of $\partial M$, consider an arbitrary extension  $\widetilde{c}$ to a colouring of the 2-strata in the interior of $M$.
Hence $\widetilde{c}$ is fixed on all 2-strata which intersect $\partial M$, and it labels all remaining 2-strata~$r$ with simple objects in $\Cat{A}_{\ell(r)}$. 
We define a vector $|M,c| \in H_{c}(\partial M)^{*}$ as
\begin{equation}
  \label{eq:HQFT}
  |M,c|= (\dim \Cat{A}_{1})^{-|M_{3}|} \sum_{\widetilde{c}} \Big( \prod_{r \in M_2}\dim (\widetilde{c}(r))^{\chi(r)} \Big) \ev \Big(\bigotimes_{\nu}F_{\Cat{A}}(\Gamma_{\nu})\Big) \, ,
\end{equation}
where $|M_{3}|$ is the number of 3-strata in~$M$, 
	$\dim \Cat{A}_{1}= \sum_{i \in I_1} \dim(x_{i})^{2}$ is the 
sum of squared dimensions of simple objects in $\Cat{A}_{1}$, $\chi(r)$ is the Euler characteristic of a 2-stratum $r \in M_2$, and $\ev:= \bigotimes _{e } \ev(e): \bigotimes_{e}(H_{c}(e) \otimes H_{c}(\cc{e}))\rightarrow \C$ is the tensor product over the evaluation maps associated to all 1-strata $e$ in the interior of $M$. 
The sum in~\eqref{eq:HQFT} is over all possible extensions $\widetilde{c}$ of the given colouring $c$ on the boundary, and $F_{\Cat{A}}$ was defined in \eqref{eq:RT-eval-sphere-graph}.
\item 
In case the boundary of~$M$ splits as $\partial M \cong \Sigma^{\textrm{rev}} \cup \widetilde{\Sigma}$, we want to assign to $M$ a 
linear map $ \zz^{\Cat{A}}_{c}(M)$ between the vector spaces associated to its boundary components. 
Writing $\Upsilon: H_{c}(\Sigma)^{*} \otimes H_{c}(\widetilde{\Sigma}) 
\rightarrow \Hom_{\C}(H_{c}(\Sigma), H_{c}(\widetilde{\Sigma}))$ for the canonical isomorphism, we define 
\begin{equation}
  \label{eq:inv-color}
  \zz^{\Cat{A}}_{c}(M)=\frac{\dim(\Cat{A}_{1})^{|\pi_0(\widetilde{\Sigma}_2)|}}{\prod_{e \in \pi_0(\widetilde{\Sigma}_1)}\dim c(e)} 
\Upsilon(|M,c|) \, ,
\end{equation}
where the product is over connected components of the  1-strata in $\pi_0(\widetilde{\Sigma}_1)$, and $|\pi_0(\widetilde{\Sigma}_2)|$ denotes the number of 2-strata of 
$\widetilde{\Sigma}$. 
Finally, we set 
$$
\zz^{\Cat{A}}(M)= \sum_{c}\zz_{c}^{\Cat{A}}(M) \, . 
$$
\item 
For a cylinder over $\Sigma$, the linear map $\zz^{\Cat{A}}(\Sigma \times [0,1])$ 
is a projector. 
We define $\zz^{\mathcal A}(\Sigma)$ to be the image under this projector. 
\end{enumerate}

\begin{theorem}
The construction above gives rise to a defect TQFT  $\zz^{\mathcal A}: \Borddef(\D^{G}) \rightarrow \Vect_\C$ which is 
a defect extension of the Turaev-Viro TQFT associated to $\Cat{A}_{1}$. 
\end{theorem}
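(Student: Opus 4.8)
The plan is to verify the two independent claims in the theorem: (1) that $\zz^{\Cat{A}}$ as constructed above is a well-defined symmetric monoidal functor $\Borddef(\D^G) \to \Vect_\C$, i.e.\ a defect TQFT; and (2) that its restriction to trivially stratified bordisms (one 3-stratum, no 2-, 1-, or 0-strata) is isomorphic to the Turaev-Viro TQFT for $\Cat{A}_1$, so that $\zz^{\Cat{A}}$ is a defect extension in the sense of Definition~\ref{def:defectextension}. Throughout I would lean heavily on the HQFT construction of~\cite{Hqft1}: the preceding lemma shows every bordism in $\Bords$ admits a fine refinement that qualifies as a skeleton in the sense of~\cite{Hqft1}, so the invariants $|M,c|$ and the projectors of step~(iv) are precisely instances of the HQFT machinery, and most of the hard analytic work (triangulation independence, gluing, projector properties) can be imported rather than redone.

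For claim~(1) I would proceed in the following order. \emph{Well-definedness of the state spaces:} $\zz^{\Cat{A}}(\Sigma)$ is the image of the projector $\zz^{\Cat{A}}(\Sigma\times[0,1])$, so I must first check this map is indeed an idempotent; this follows from the gluing axiom applied to $\Sigma\times[0,1] \cup_\Sigma \Sigma\times[0,1] \cong \Sigma\times[0,1]$ once gluing is established, so logically I would set up gluing first. \emph{Independence of refinement:} the vector $|M,c| \in H_c(\partial M)^*$ must not depend on the chosen fine refinement. Since any two fine refinements have a common refinement (again by the triangulation results of~\cite[Thm.\,10.6]{Munkres}), it suffices to check invariance under a single elementary subdivision; this is exactly the invariance of the HQFT state sum of~\cite{Hqft1} under Pachner-type moves, combined with the observation that new 2-strata are labelled by $1\in G$ and coloured by simple objects of $\Cat{A}_1$, so the normalisation factors $(\dim\Cat{A}_1)^{-|M_3|}$ and $\dim(\widetilde c(r))^{\chi(r)}$ conspire as in the classical Turaev-Viro argument. \emph{Functoriality (gluing):} given $M:\Sigma\to\Sigma'$ and $N:\Sigma'\to\Sigma''$, choose fine refinements restricting to a common fine stratification of $\Sigma'$; then $\zz^{\Cat{A}}(N)\circ\zz^{\Cat{A}}(M) = \zz^{\Cat{A}}(N\circ M)$ reduces to the gluing formula of~\cite{Hqft1}, where the $\ev(e)/\coev(e)$ pairings of~\eqref{eq:pairing-edg-spaces} on edges that become interior after gluing produce exactly the contractions, and the correction factors $\dim(\Cat{A}_1)^{|\pi_0(\widetilde\Sigma_2)|}/\prod_{e}\dim c(e)$ in~\eqref{eq:inv-color} are designed so that the bookkeeping of dimensions along the glued surface cancels. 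The identity axiom (cylinder over $\Sigma$ acts as identity on $\zz^{\Cat{A}}(\Sigma)$) is immediate from the definition of $\zz^{\Cat{A}}(\Sigma)$ as the image of the projector. \emph{Symmetric monoidality:} disjoint union of bordisms gives a disjoint union of skeleta, the colourings factor, and~\eqref{eq:HQFT} is multiplicative over connected components, so $\zz^{\Cat{A}}(M\sqcup M') = \zz^{\Cat{A}}(M)\otimes\zz^{\Cat{A}}(M')$; compatibility with the symmetry is similarly inherited.

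For claim~(2), restrict to bordisms in $\Bords$ whose only stratum is a single 3-stratum (so $D_2^G$-labels play no role beyond the trivial one). A fine refinement of such a bordism is just a triangulation, every 2-stratum is labelled by $1\in G$ and hence coloured by a simple object of $\Cat{A}_1$, every edge-space $H_c(e)$ is a multiplicity space in $\Cat{A}_1$, and the sphere-graph evaluations $F_{\Cat{A}}(\Gamma_\nu)$ of~\eqref{eq:RT-eval-sphere-graph} reduce to the $6j$-symbols of $\Cat{A}_1$. In this situation formula~\eqref{eq:HQFT} literally becomes the Turaev-Viro state sum for $\Cat{A}_1$ with normalisation $(\dim\Cat{A}_1)^{-|M_3|}$, and the projector of step~(iv) is the standard Turaev-Viro projector onto the state space of a surface. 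I would make the isomorphism $\zz^{\Cat{A}}|_{\text{trivial}} \cong \zz^{\mathrm{TV}}_{\Cat{A}_1}$ explicit by matching triangulations and checking it intertwines the respective gluing maps, invoking~\cite{TurVir,BarWes} for the identification with the standard normalisation.

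\emph{The main obstacle.} The genuinely delicate point is the gluing/functoriality axiom together with the interlocking normalisation factors: one must track precisely how the dimension factors $\dim\Cat{A}_1$, the Euler-characteristic weights $\dim(\widetilde c(r))^{\chi(r)}$, and the edge-pairings interact when a refinement of $M\circ N$ is cut along a refined copy of $\Sigma'$, and verify that no spurious factors survive. This is exactly where the HQFT construction of~\cite{Hqft1} does the real work, so the proof strategy is to phrase our decorated bordisms as $\cite{Hqft1}$-skeleta and quote their gluing theorem; the only original content on our side is checking that the standard-form stratifications of $\Bords$ (Section~\ref{subsec:defectbord}) always refine to admissible skeleta --- which is the content of the preceding lemma --- and that the $G$-labelling of surface defects is carried along consistently, labelling refinement strata by $1\in G$.
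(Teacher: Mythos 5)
Your proposal is correct in substance and, like the paper, ultimately delegates all the hard analytic work to the HQFT construction of Turaev--Virelizier, but it is organised rather differently. You plan to re-verify the TQFT axioms one by one (idempotency of the cylinder projector, refinement-independence via common subdivisions and Pachner-type moves, gluing with careful tracking of the normalisation factors, monoidality), importing the relevant lemmas from \cite{Hqft1} at each step. The paper's proof is far more economical: it simply quotes \cite[Thm.\,8.4]{Hqft1} to the effect that the construction \emph{is} an HQFT, and then observes that the value of an HQFT depends only on the topology of $M$ and, up to conjugation, on the group homomorphism $\phi_\ell\colon\pi_1(M)\to G$ determined by the $G$-labelled 2-strata (a path $\gamma$ picks up the product $\ell(r_1)^{\eps_1}\cdots\ell(r_m)^{\eps_m}$ over the 2-strata it crosses). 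Since a refinement labels all new 2-strata by $1\in G$ and hence leaves $\phi_\ell$ unchanged, independence of the chosen fine stratification is immediate, and that is the only point the paper actually checks. The conceptual bridge you leave implicit --- that the defect decoration is precisely the combinatorial incarnation of the homotopy class of a map $M\to K(G,1)$, i.e.\ of $\phi_\ell$ up to conjugation --- is the one piece of genuine content in the paper's argument, and making it explicit would let you collapse most of your axiom-by-axiom verification into a single citation. Your treatment of the defect-extension claim (restriction to trivially stratified bordisms reduces the state sum to the Turaev--Viro theory of $\Cat{A}_1$) matches what the paper relies on via \cite[Sect.\,7.1]{Hqft1}.
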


\begin{proof}
According to \cite[Thm.\,8.4]{Hqft1}, $\zz^{\mathcal A}$ is an HQFT, and this implies that $\zz^{\mathcal A}$ is independent of the choice of a fine stratification in the following sense. It depends only on the topology of the underlying manifold $M$ and up to conjugation on the group homomorphism $\phi_{\ell}:\pi_{1}(M)\rightarrow G$ that is induced by the labelling~$\ell$: For a given path $\gamma$ in~$M$, denote by 
$\ell(r_{1})^{\eps_{1}} \cdot \ldots \cdot \ell(r_{m})^{\eps_{m}}$ the group element corresponding to the 2-strata $r_{i}$ that intersect the path, with $\eps_{i}=+1$ if the direction of the path agrees with the orientation of $r_i$, and $\eps_{i}=-1$ otherwise. This group element depends only on the homotopy class of $\gamma$ and on the starting point of the path up to conjugation, so it defines the homomorphism $\phi_{\ell}$ up to conjugation. 
It is clear that a refinement does not change $\phi_{\ell}$ and thus the statement follows from \cite[Thm.\,8.4]{Hqft1}.
 \end{proof}

\paragraph{The associated tricategory $\boldsymbol{\mathcal T_{\zz^{\mathcal A}}}$.}

We show that the tricategory associated to the   defect TQFT $\zz^{\mathcal A}$  is equivalent to the linear Gray category with duals $\underline{G}$ which is the group~$G$ considered as 1-morphisms in a monoidal bicategory with only identity 2-morphisms, and whose 3-morphisms are scalar multiples of the identity.  

\begin{lemma}
  \label{lemma:HTFT-spheres}
Let $S$ be a decorated stratified sphere in $\Borddef(\D^{G})$. Then the vector space $\zz^{\mathcal A}(S)$ is 1-dimensional. Moreover, $\zz^{\mathcal A}(S)$ has a distinguished element~$x$, so there is a canonical isomorphism $\zz^{\mathcal A}(S) \cong \C$ sending~$x$ to $1 \in \C$.
\end{lemma}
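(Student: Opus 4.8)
The plan is to reduce the claim about a general decorated stratified sphere $S$ to a statement about the HQFT invariant of a closed $3$-manifold obtained by filling in $S$, and then to invoke the known properties of the HQFT construction of~\cite{Hqft1}. First I would observe that $S$, being an object in $\Borddef(\D^{G})$, is the boundary of the decorated $3$-ball $B$ it encloses (projecting a decorated stratified cube to a ball as explained after~\eqref{eq:SXXprimesphere}), and also the boundary of the complementary outer region; gluing these two balls along $S$ produces a closed decorated bordism whose underlying manifold is $S^{3}$. Functoriality of $\zz^{\mathcal A}$ and the fact that $\zz^{\mathcal A}(B)\in\zz^{\mathcal A}(S)$ together with the pairing coming from the complementary ball then give $\zz^{\mathcal A}(S)$ the structure of a space on which a canonical vector lives.

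Next I would use the key input that $\zz^{\mathcal A}$ is an HQFT in the sense of~\cite{Hqft1} and that, restricted to $S^{2}$ with its induced decoration, the HQFT state space depends only on the topology of $S^{2}$ and on the conjugacy class of the induced homomorphism $\phi_{\ell}\colon\pi_{1}(S^{2})\to G$. Since $\pi_{1}(S^{2})$ is trivial, this homomorphism is trivial, so $\zz^{\mathcal A}(S)$ is isomorphic to what the HQFT assigns to the trivially decorated $S^{2}$. For a fine stratification of $S^{2}$, the Turaev--Viro/HQFT state-sum projector on the cylinder $S^{2}\times[0,1]$ has image canonically isomorphic to $\C$ — this is the standard computation that the Turaev--Viro theory of a spherical fusion category assigns a one-dimensional space to the $2$-sphere, carried out here for the neutrally graded component $\Cat{A}_{1}$ and more generally for the $G$-graded version via~\cite{Hqft1}. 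Combining the two reductions yields $\dim_{\C}\zz^{\mathcal A}(S)=1$.

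For the distinguished element, I would take $x\coloneqq\zz^{\mathcal A}(B)$, i.e.\ the image under $\zz^{\mathcal A}$ of the decorated $3$-ball $B$ that $S$ bounds, regarded as a bordism $\emptyset\to S$ and evaluated on $1\in\Bbbk=\zz^{\mathcal A}(\emptyset)$. One must check that $x\neq 0$: this follows because the $3$-ball, being filled in from $S$ with an allowed (hence refinable to fine) stratification, contributes a nonzero state-sum, which can be seen by pairing $B$ with the complementary outer ball to recover $\zz^{\mathcal A}(S^{3})\neq 0$ for the HQFT (again by~\cite[Thm.\,8.4]{Hqft1} and the normalisation by $(\dim\Cat{A}_{1})^{-|M_{3}|}$). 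Since $\zz^{\mathcal A}(S)$ is one-dimensional and $x$ is a nonzero element of it, there is a unique linear isomorphism $\zz^{\mathcal A}(S)\cong\C$ sending $x\mapsto 1$.

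\textbf{Main obstacle.} The delicate point is independence of choices: $S$ is a boundary in many ways (different fine refinements of the stratification, different choices of the filling ball $B$, and the identification of $S$ with a standard decorated sphere), and one must argue that both the one-dimensionality and, crucially, the distinguished vector $x$ do not depend on these. The cleanest route is to phrase everything through the projective/direct limit over refinements already used in~\cite{Hqft1} and in~\eqref{eq:3HomTZ}, so that $x$ is manifestly defined on the limit; the verification that $\zz^{\mathcal A}(B)$ is refinement-independent and nonzero is exactly where the normalisation factors $(\dim\Cat{A}_{1})^{-|M_{3}|}$, the Euler-characteristic weights $\dim(\widetilde c(r))^{\chi(r)}$, and the gluing anomaly of the Turaev--Viro construction must be tracked carefully — I expect this bookkeeping, rather than any conceptual difficulty, to be the bulk of the work.
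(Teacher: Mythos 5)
Your proposal is correct and follows essentially the same route as the paper: one-dimensionality is obtained by using that the HQFT state space depends only on the (here trivial) homomorphism $\pi_1(S^2)\to G$, reducing to the Turaev--Viro state space of $\Cat{A}_1$ on $S^2$, and the distinguished element is $\zz^{\mathcal A}(B)(1)$ for the linearly filled ball $B$. Your extra check that $x\neq 0$ (via pairing with the complementary ball) and your remarks on refinement-independence are points the paper leaves implicit, but they do not change the argument.
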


\begin{proof}
The HQFT invariants on surfaces depend only on the induced homomorphism from the fundamental group to $G$. Since on $S^{2}$ the fundamental group is trivial, the state space 
is the same as the one associated to the decoration for which all 1-strata are labelled with $1 \in G$. Evaluated on this  labelling,  the value of the HQFT of the stratified sphere is equal to 
the Turaev-Viro invariant for $\Cat{A}_{1}$, which is a 1-dimensional complex vector space, cf.~\cite[Sect.\,7.1]{Hqft1}.

To obtain the canonical element $x$ in this vector space, consider the stratification of the closed 3-ball $B$ that is obtained by linearly filling the stratified sphere~$S$. 
Then we can evaluate  $\zz^{\mathcal A}$ on~$B$ to obtain a linear map  $ \zz^{\mathcal A}(B): \C \rightarrow \zz^{\mathcal A}(S)$ which gives the distinguished element   $ x \in \zz^{\mathcal A}(S)$
when applied to $1 \in \C$.
\end{proof}

It follows that the space of 3-morphisms $\mathcal T_{\zz^{\mathcal A}}(X,Y)$ is always 1-dimensional for all parallel 2-morphisms $X$ and $Y$. 
Moreover, by indepence of the stratification, the composite of two distinguished elements is a
distinguished element, in particular non-zero. Hence Lemma~\ref{lemma:structure-Gray-Invertible} applies to $\mathcal T_{\zz^{\mathcal A}}$.
To analyse the structure of $ T_{\zz^{\mathcal A}}$ further, we note that 1-morphisms  $\alpha$ in $T_{\zz^{\mathcal A}}$ are in bijection with tuples of group elements with signs, $\alpha \equiv (g_{1}^{\eps_{1}}, \ldots, g_{m}^{\eps_{m}})$, by recording the labels and signs on the 2-strata in increasing $y$-direction. 
Hence the dual of~$\alpha$ is given by $\alpha\hash=
(g_{m}^{-\eps_{m}}, \ldots, g_{1}^{-\eps_{1}})$.

Let $x_{1}: X\rightarrow Y$ and $x_{2}:Y\rightarrow Z$ be two distinguished 3-morphisms as in  Lemma~\ref{lemma:HTFT-spheres}. According to the construction 
of $\mathcal T_{\zz^{\mathcal A}}$, the vertical  composition
of $x_{1}$ and $x_{2}$ is the value of the defect functor $\zz^{\mathcal A}$ on a stratified ball after inserting $x_{1}$ and $x_{2}$ as stratified balls. 
This yields again a distinguished 3-morphisms, since $\zz^{\mathcal A}$ is independent of the chosen stratification in the interior of the ball. 
Analogously if follows that the $\fus$- and $\sta$-compositions of distinguished 3-morphisms give the unique respective distinguished 3-morphism.

\begin{proposition}
There is an equivalence of Gray categories with duals 
$$
\mathcal T_{\zz^{\mathcal A}} \stackrel{\cong}{\lra} \underline{G} \, . 
$$
\end{proposition}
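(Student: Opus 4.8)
The plan is to construct an explicit strict functor $F\colon \mathcal T_{\zz^{\mathcal A}} \to \underline{G}$ and verify it satisfies the three conditions of Definition~\ref{def:Grayequiv} together with the compatibility with duals of Definition~\ref{def:strictFGraydual}. First I would specify $F$ on each layer. On objects there is nothing to do, since both Gray categories have a single object (for $\mathcal T_{\zz^{\mathcal A}}$ this is because $D_3^G = \{\ast\}$). On 1-morphisms: a 1-morphism $\alpha$ in $\mathcal T_{\zz^{\mathcal A}}$ is a cylinder over a diagram in $\Fdp \K^{\D^G}$, which as noted in the text above is recorded by a tuple $(g_1^{\eps_1},\dots,g_m^{\eps_m})$ of $G$-labelled, signed 2-strata read off in increasing $y$-direction; define $F(\alpha) = \prod_{i=1}^m g_i^{\eps_i} \in G$. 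This is well-defined on equivalence classes because the defining relations of $\Fdp \K^{\D^G}$ (linear isotopy of planes, plus the pivotal relations) never change this ordered product up to the sign conventions, and it is multiplicative under $\sta$-composition by inspection of how $\sta$ stacks planes. On 2-morphisms: every 2-morphism of $\mathcal T_{\zz^{\mathcal A}}$ is sent to the unique identity 2-morphism of $\underline{G}$; this is forced and is consistent with source/target because condition~\eqref{eq:condition-edge} around each line guarantees that the ordered product is unchanged when a line is crossed, so source and target 1-morphisms of any 2-morphism have the same image in $G$. On 3-morphisms: by Lemma~\ref{lemma:HTFT-spheres} the space $\mathcal T_{\zz^{\mathcal A}}(X,Y)$ is canonically $\cong \C$ via its distinguished element $x_{X,Y}$, and I would send the distinguished element to the identity 3-morphism of $\underline{G}$, extending linearly.

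Next I would check that $F$ is a \emph{strict functor of Gray categories with duals} in the sense of Definitions~\ref{def:Grayequiv}(i) and~\ref{def:strictFGraydual}. Strict functoriality on 3-morphisms amounts to: the distinguished element is preserved under all three compositions ($\circ$, $\fus$, $\sta$) and under the tensorator. This is exactly the content of the paragraph in the excerpt immediately preceding this proposition — the $\circ$-, $\fus$- and $\sta$-composites of distinguished 3-morphisms are again distinguished, because $\zz^{\mathcal A}$ is independent of the chosen interior stratification (\cite[Thm.\,8.4]{Hqft1}), so when one glues two balls carrying distinguished vectors the result is the value of $\zz^{\mathcal A}$ on a ball and hence again distinguished. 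For the tensorator: $F(\sigma_{X,Y})$ must equal $\sigma'_{F(X),F(Y)}$, and since $F(X),F(Y)$ are identity 2-morphisms, $\sigma'$ on the right is an identity 3-morphism; on the left $\sigma_{X,Y}$ is $\zz^{\mathcal A}$ evaluated on the $X$-$Y$-crossing ball of~\eqref{eq:Ztensorator}, which is again a ball and hence carries the distinguished vector, so $F(\sigma_{X,Y})$ is the identity as required. Compatibility with duals reduces to three checks: the 2-functors $\mathcal T_{\zz^{\mathcal A}}(\ast,\ast) \to \underline G(\ast,\ast)$ are pivotal (automatic, since the target has only identity 2- and 3-morphisms and $F$ kills everything onto identities), $F(\alpha\hash) = F(\alpha)\hash$, and $F(\coev_\alpha) = \coev_{F(\alpha)}$, $F(\tau_\alpha)=\tau'$. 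The first follows from $\alpha\hash = (g_m^{-\eps_m},\dots,g_1^{-\eps_1})$ so that $F(\alpha\hash) = \prod_i g_i^{-\eps_i} = (\prod_i g_i^{\eps_i})^{-1} = F(\alpha)^{-1} = F(\alpha)\hash$ (the dual in $\underline G$ being group inverse, consistent with the constraint $\prod_i g_i^{\eps_i}=1$ that makes both sides well-defined as folds exist). The fold $\coev_\alpha$ is a 2-morphism, hence sent to the identity, which is $\coev_{F(\alpha)}$ in $\underline G$; and $\tau_\alpha$ is a 3-morphism obtained as $\zz^{\mathcal A}$ of a ball, hence distinguished, hence sent to the identity $= \tau'$.

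Finally I would verify the three surjectivity conditions of Definition~\ref{def:Grayequiv}. Triessential surjectivity on objects is trivial. Biessential surjectivity on 1-morphisms: every $g \in G$ is in the image, being $F$ of the single-plane cylinder labelled $g$ (with $m=1$, $\eps_1=+$); more precisely one needs that every object of $\underline G(\ast,\ast)$, i.e.\ every $g\in G$ viewed as a 1-morphism, is equivalent to some $F(\alpha)$, and in fact it equals one on the nose. That $F$ induces an equivalence on the categories of 2- and 3-morphisms: on 3-morphisms $F$ is an isomorphism $\C \xrightarrow{\sim} \C$ by Lemma~\ref{lemma:HTFT-spheres} (here one uses that distinct distinguished elements compose to distinguished, hence are all nonzero, so the map on each hom is a linear isomorphism, not merely surjective); on 2-morphisms $F$ is essentially surjective since the target category has a single object in each hom-category and $F$ hits it, and it is full-and-faithful relative to the 3-morphism structure by the previous point. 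The main obstacle I anticipate is \textbf{not} any single hard computation but rather the bookkeeping of sign conventions: one must be careful that the ordered product $\prod_i g_i^{\eps_i}$ is genuinely invariant under the equivalence relation defining 1- and 2-morphisms in $\mathcal T_{\zz^{\mathcal A}}$ (the pivotal relations of $\Fdp \K^{\D^G}$, the cyclic-permutation quotient in~\eqref{eq:D1-group}, and the constraint~\eqref{eq:condition-edge}), and that the resulting $F$ is strictly — not merely weakly — compatible with $\sta$ and with the tensorator; all of this is forced by the combinatorics of how cubes are stacked, but it must be spelled out diagram by diagram, and the only genuinely nontrivial input is the stratification-independence of $\zz^{\mathcal A}$ (already established via \cite[Thm.\,8.4]{Hqft1} and Lemma~\ref{lemma:HTFT-spheres}) guaranteeing that all composites of distinguished 3-morphisms are distinguished.
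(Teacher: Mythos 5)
Your proposal is correct and follows essentially the same route as the paper: the same functor $F$ (product of group labels on 1-morphisms, collapse of 2-morphisms to identities, distinguished element $\mapsto$ identity on 3-morphisms via Lemma~\ref{lemma:HTFT-spheres}), with functoriality resting on the fact that composites of distinguished 3-morphisms are distinguished by stratification-independence of $\zz^{\mathcal A}$, and the same verification of the duality compatibilities and the three surjectivity conditions. Your extra checks (tensorator, fold, triangulator) are more explicit than the paper's but add nothing structurally different.
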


\begin{proof}
  We define a functor $F: \mathcal T_{\zz^{\mathcal A}}(X,Y) \rightarrow \underline{G}$ as follows:
On the single object, $F$  is the identity, and 1-morphisms  
 $\alpha=(g_{1}^{\eps_{1}}, g_{2}^{\eps_{2}}, \ldots, g_{m}^{\eps_{m}})$ are sent to $F(\alpha)= g_{1}^{\eps_{1}} \cdot g_{2}^{\eps_{2}} \cdot \ldots \cdot g_{m}^{\eps_{m}} \in G$. 
Every 2-morphism gets mapped to the identity. 
This is well defined since there exists a 2-morphism between two 1-morphisms $\alpha$ and $\beta$ iff $F(\alpha)=F(\beta)$ by definition of $\D^{G}$. 
By Lemma~\ref{lemma:HTFT-spheres}, every 3-morphism in $\mathcal T_{\zz^{\mathcal A}}$ can be uniquely written as $\lambda \cdot x$ with $\lambda \in \C$ and $x$ the distinguished element of Lemma~\ref{lemma:HTFT-spheres}. 
We define the functor on 3-morphism via $F(\lambda \cdot x)= \lambda \cdot 1$, where the identity is the identity on the identity 2-morphism of $\underline{G}$.

Since the composition of 3-morphisms maps distinguished 3-morphisms to distinguished 3-morphisms, it follows that $F$ defines a functor on the categories of 2- and 3-morphisms.
The compatibility of $F$ with the other two compositions of 3-morphisms follows analogously.
Clearly, $F$ is a strict functor on the level of 1- and 2-morphisms. Hence $F$ is a well-defined functor of Gray categories. 

It follows directly that $F(\alpha^{\#})=F(\alpha)^{\#}$ for all 1-morphisms $\alpha$. Since $F$ maps all 2-morphisms to the identity, it is compatible with the duality on the level of 2-morphisms as well. 
This means that $F$ is a functor of Gray category with duals. 

It remains to show that~$F$ is an equivalence. Clearly, it is fully faithful on 3-morphisms due to Lemma~\ref{lemma:HTFT-spheres}. Moreover it is obvious that $F$ is essentially surjective 
on 2-morphisms and triessentially surjective on the single object. 
To show that $F$ is biessentially surjective, we define for a 1-morphism $g \in \underline{G}$  the 1-morphism in $\mathcal T_{\zz^{\mathcal A}}$ consisting of a single 2-stratum labelled with~$g$. 
The functor~$F$ applied to this 1-morphism is~$g$, hence~$F$ is biessentially surjective, and the proof is complete. 
\end{proof}

\end{document}